\newcommand\details[1]{}  % hide extra details
\newcommand\abs[1]{\left|#1\right|}
\newcommand\norm[1]{\left\|#1\right\|}
\numberwithin{equation}{section}
\DeclareMathOperator{\tr}{tr}
\DeclareMathOperator{\rank}{rank}
\DeclareMathOperator{\supp}{supp}
\newcommand{\Prob}{\mathbb{P}}
\newcommand{\E}{\mathbb{E}}
\renewcommand\Re{\operatorname{Re}}
\renewcommand\Im{\operatorname{Im}}
\newcommand{\eps}{\varepsilon}
\renewcommand{\d}{\, d }
\newcommand\transpose[1]{#1^{\mathrm T}}
\newcommand{\pfrac}[2]{\left(\frac{#1}{#2}\right)}
\theoremstyle{plain}
  \newtheorem{theorem}{Theorem}[section]
  \newtheorem{conjecture}[theorem]{Conjecture}
  \newtheorem{lemma}[theorem]{Lemma}
  \newtheorem{corollary}[theorem]{Corollary}
  \newtheorem{proposition}[theorem]{Proposition}
\theoremstyle{definition}
  \newtheorem{definition}[theorem]{Definition}
  \newtheorem{example}[theorem]{Example}
  \newtheorem{question}[theorem]{Question}
\theoremstyle{remark}
  \newtheorem{remark}[theorem]{Remark}
\newcommand\sixthree{C_{\ref{lemma:intbnd}}}
\newcommand\sixthreed{D_{\ref{lemma:intbnd}}}
\newcommand\Csixone{C_{\ref{thm:non}}}
\newcommand\Jmat{\mathbb{J}}
\newcommand\pone{R}
\newcommand\ptwo{P}
\begin{document}
%\title{A non-asymptotic replacement principle for matrix perturbations}
%\title{Perturbations of non-normal matrices: non-asymptotic results and rate of convergence} 
%\title{Perturbations of non-normal matrices with applications to random perturbations of Toeplitz matrices} 
%\title{Quantitative perturbation results for non-normal matrices}
%\title{Quantitative perturbation results for non-normal matrices and random perturbations of Toeplitz matrices}
%\title{Quantitative perturbation results for non-normal matrices with applications to random perturbations of Toeplitz matrices}
\title{Quantitative results for banded Toeplitz matrices subject to random and deterministic perturbations}

\author[S. O'Rourke]{Sean O'Rourke}
\address{Department of Mathematics, University of Colorado at Boulder, Boulder, CO 80309  }
\email{sean.d.orourke@colorado.edu}

\author[P. Wood]{Philip Matchett Wood}
\thanks{The first author has been partially supported in part by National Science Foundation grant number DMS-1810500.  The second author was partially supported by National Security Agency (NSA) Young Investigator Grant number H98230-14-1-0149.} 
\address{Department of Mathematics, Harvard University, Science Center Room 325
1 Oxford Street, Cambridge, MA 02138
}
\email{pmwood@math.harvard.edu}

\begin{abstract}
We consider the eigenvalues of a fixed, non-normal matrix subject to a small additive perturbation.  In particular, we consider the case when the fixed matrix is a banded Toeplitz matrix, where the bandwidth is allowed to grow slowly with the dimension, and the perturbation matrix is drawn from one of several different random matrix ensembles.  We establish a number of non-asymptotic results for the eigenvalues of this model, including a local law and a rate of convergence in Wasserstein distance of the empirical spectral measure to its limiting distribution.  In addition, we define the \emph{classical locations} of the eigenvalues and prove a rigidity result showing that, on average, the eigenvalues concentrate closely around their classical locations.  While proving these results we also establish a number of auxiliary results that may be of independent interest, including a quantitative version of the Tao--Vu replacement principle, a general least singular value bound that applies to adversarial models, and a description of the limiting empirical spectral measure for random multiplicative perturbations.  
\end{abstract}

\maketitle

%\tableofcontents

\section{Introduction}

Due to the spectral theorem, the eigenvalues of Hermitian matrices are stable under small perturbations.  For example, when $A$ and $B$ are $n \times n$ Hermitian matrices, Weyl's perturbation theorem (see \cite[Corollary III.2.6]{Bhatia}) guarantees that
\begin{equation} \label{eq:weyleig}
	\max_{ 1 \leq j \leq n} |\lambda_j(A) - \lambda_j(B)| \leq \|A-B\|,
\end{equation} 
where $\lambda_1(M) \geq \cdots \geq \lambda_n(M)$ are the ordered eigenvalues of the $n \times n$ Hermitian matrix $M$ and $\|M\|$ is its spectral norm.  In contrast, the spectrum of a non-normal matrix can be extremely sensitive to small perturbations if there is pseudospectrum present \cite{MR2155029}.  Consider the case of the $n \times n$ matrix
\begin{equation} \label{eq:defT}
	\Jmat := \begin{bmatrix} 0 & 1 & 0 & 0 & \cdots & 0 \\ 0 & 0 & 1 & 0 & \cdots & 0 \\ \vdots &\vdots & \ddots & \ddots & \ddots & \vdots \\ \vdots & \vdots &  & \ddots & \ddots & 0 \\ 0 & 0 & \cdots & \cdots & 0 & 1 \\ 0 & 0 & \cdots & \cdots & \cdots & 0 \end{bmatrix} 
\end{equation} 
with ones on the super-diagonal and zeros everywhere else.  If  $\eps > 0$ and if $\ptwo$ is the $n \times n$ matrix with 1 in the $(n, 1)$-entry and zeros everywhere else, then the eigenvalues of $\Jmat +  \eps \ptwo$ lie on the circle $\{z \in \mathbb{C} :  |z| = \eps^{1/n} \}$ in the complex plane, while all eigenvalues of $\Jmat$ are zero.  In fact, by writing out the characteristic polynomial, it is not difficult to see that the eigenvalues of $\Jmat + \eps P$ are the $n$-th roots of $\eps$ in the complex plane;  see the red squares ($\color{red}\square\color{black}$) in Figure~\ref{fig:intro}.  In particular, even if $\eps$ is polynomially small in $n$ (say, $\eps = n^{-C}$ for a constant $C >0$), the eigenvalues of $\Jmat + \eps P$ are still $O(\log n/n)$ distance\footnote{See Section \ref{sec:notation} for a complete description of the asymptotic notation used here and in the sequel.} away from the $n$-th roots of unity.  

\begin{figure}
\includegraphics[scale=0.5]{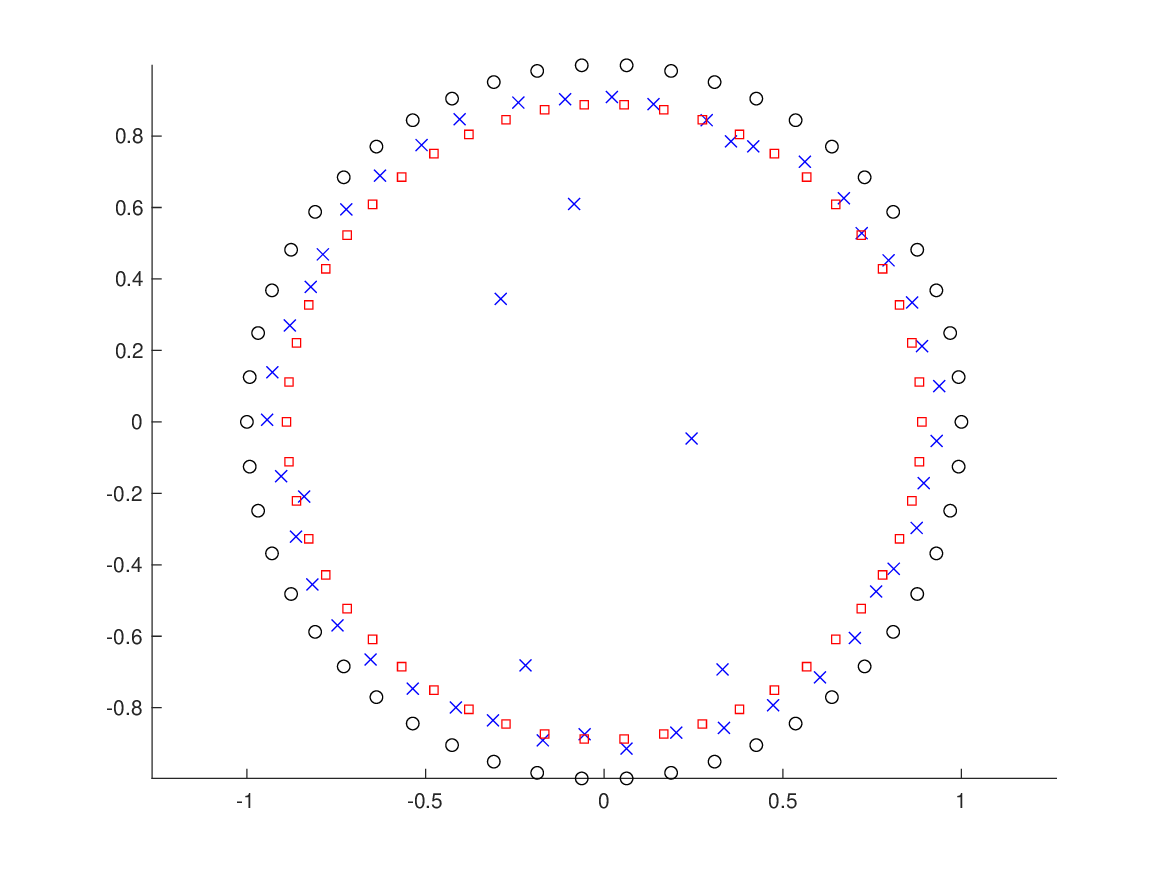}
\caption{For $n=50$ and $\gamma=1.5$, this figure shows plots of the $n$-th roots of unity (black circles {\Large $\color{black}\circ\color{black}$}), the eigenvalues of $\Jmat+n^{-\gamma}P$ (red squares $\color{red}\square\color{black}$), and the eigenvalues of $\Jmat+n^{-\gamma}E$ (blue {\Large $\color{blue}\times\color{black}$} symbols), where $P$ is an $n$ by $n$ matrix with $(n,1)$ entry equal to 1 and all other entries zero, and $E$ is an $n$ by $n$ matrix with iid standard complex Gaussian entries.  This figure uses small values of $n$ and $\gamma$ to make the general phenomenon in Theorem~\ref{thm:sample} visually apparent; see Section~\ref{sec:ESM} for figures with larger values of $n$ and $\gamma$.}
\label{fig:intro}
\end{figure}

A similar phenomenon can be observed when the perturbation matrix is random.  Indeed, suppose the entries of $E$ are independent and identically distributed (iid) standard complex Gaussian random variables.  For $\gamma > 1/2$, it follows from the work of Guionnet, Zeitouni, and the second author \cite{MR3134007} that the empirical spectral measure\footnote{See \eqref{eq:defESM} for the definition of the empirical spectral measure.} of $\Jmat + n^{-\gamma} E$ converges weakly in probability to the uniform probability measure on the unit circle as $n$ tends to infinity.  More generally, this result holds when the random matrix $E$ is replaced by other ensembles of random matrices \cite{2001.09024}.  

Figure~\ref{fig:intro} shows the eigenvalues of $\Jmat + n^{-\gamma} E$ (blue {\Large $\color{blue}\times\color{black}$} symbols) as well as the $n$-th roots of unity (black circles {\Large$\color{black}\circ\color{black}$}).  The figure clearly indicates a connection between the eigenvalues and the roots of unity.  For example, one can observe: 
\begin{enumerate}[(i)]
\item Even for small values of $n$, it appears there is a nearly one-to-one pairing between the eigenvalues and the roots of unity.  
\item While some eigenvalues of $\Jmat + n^{-\gamma} E$ lie closer to the origin, the vast majority of eigenvalues lie close to the unit circle.  
\item There is at least one eigenvalue of $\Jmat + n^{-\gamma} E$ near each root of unity, i.e., there are no ``gaps'' in the spectrum near the unit circle, even when the dimension is small.    
\end{enumerate}

The goal of the paper is to explore these properties and the pairing phenomenon between the eigenvalues and roots.  Our results go beyond the matrix $\Jmat$ and focus on classes of Toeplitz and non-normal matrices.  In addition, we do not require the random perturbation $E$ to have Gaussian entries, or even require it to have independent entries in our most general results.   Unlike the result cited above from \cite{MR3134007}, our results are non-asymptotic and focus on quantitatively describing the connection between the eigenvalues and roots observed in Figure~\ref{fig:intro}.  

\subsection{An illustrating example} \label{sec:illustrating_example}
Since we explore a variety of different spectral properties, we have organized the paper into several sections, and each section is devoted to a single property.  While this organization simplifies the presentation, it also makes it difficult to understand how the results connect together to provide a more complete picture of the spectral properties.  For the reader's convenience, Theorem \ref{thm:sample} below aggregates several results together in one specific example.  We emphasize that Theorem \ref{thm:sample} only provides one specific example of our results; our main results are significantly more general than what is stated in Theorem \ref{thm:sample}.  For an $n \times n$ matrix $A$ (not necessarily Hermitian), we let $\lambda_1(A), \ldots, \lambda_n(A) \in \mathbb{C}$ denote its eigenvalues, counted with algebraic multiplicity. For concreteness, we order the eigenvalues in lexicographic order:  we first sort the values by real part in decreasing order, and, in the event of a tie, we sort in decreasing order by the imaginary part.  For simplicity, we do not always state how the implicit constants in our asymptotic notation depend on the other parameters in Theorem \ref{thm:sample}.  

\begin{theorem}[Example main results for the matrix $\Jmat$] \label{thm:sample}
Let $E$ be an $n \times n$ matrix with iid Rademacher entries (taking the values $+1$ and $-1$ each with probability $1/2$) and let $\lambda_1', \ldots, \lambda_n'$ be the $n$-th roots of unity.  Then the following holds:
\begin{enumerate}[(i)]
\item (local law) \label{item:locallaw} Let $\varphi: \mathbb{C} \to \mathbb{C}$ be a smooth function with compact support, and fix $z_0 \in \mathbb{C}$.  For any $a \geq 0$, let $\varphi_{z_0}(z) = \varphi(n^a (z - z_0))$ be the rescaling of $\varphi$ to size order $n^{-a}$ around $z_0$.  If $\gamma \geq 2 +a$, then, with probability $1 - o(1)$, 
\[ \left| \sum_{i=1}^n \varphi_{z_0}( \lambda_i(\Jmat + n^{-\gamma} E)) - \sum_{i=1}^n \varphi_{z_0}(\lambda_i') \right| = O_{\varphi}( \log n). \]
\item (pairing) \label{item:pairing} For $\gamma > 1/2$ and any $p \geq 1$, there exists $\eps > 0$, so that, with probability $1 - o(1)$, 
\begin{equation} \label{eq:ellpbnd}
	\min_{\sigma} \left( \frac{1}{n} \sum_{i=1}^n \left| \lambda_i(\Jmat + n^{-\gamma} E) - \lambda_{\sigma(i)}' \right|^p \right)^{1/p} = O(n^{-\eps}), 
\end{equation} 
where the minimum is over all permutations $\sigma: \{1, \ldots, n\} \to \{1, \ldots, n\}$.  
\item (distance) \label{item:dist} For $\gamma \geq 3$ and any $\eps > 0$, with probability $1 - o(1)$, 
\[ \max_{1 \leq k \leq n} \min_{1 \leq i \leq n} \left| \lambda_k' - \lambda_i(\Jmat + n^{-\gamma} E) \right| = O \left( \frac{ \log^{1 + \eps} n}{n} \right). \]
\item (inliers) \label{item:inliers} If $\gamma \geq 2$, then for any $r \in (0,1)$, with probability $1 - o(1)$, there are at most $O_r(\log n)$ eigenvalues of $\Jmat + n^{-\gamma} E$ in the disk of radius $r$ centered at the origin.  Moreover, if $\gamma \geq 5$, then for any $\eps > 0$, with probability $1$, there are at most $O_{\eps}(1)$ eigenvalues in the disk of radius $1/4 - \eps$.  
%\item (small dimension) If $3 \leq n < \frac{\gamma -2.1}{\log 5}$, then, with probability $1$, all the eigenvalues of $\Jmat + n^{-\gamma}E$ are contained in the disk of radius $1/4$ centered at the origin.  
\end{enumerate}
\end{theorem}

Theorem \ref{thm:sample} describes the spectral properties observed in Figure~\ref{fig:intro}.  For example, conclusion \eqref{item:pairing} quantitatively describes the one-to-one pairing phenomenon observed in the figure by measuring the average $\ell^p$-distance between the eigenvalues and the roots of unity; the minimum over all permutations $\sigma$ is due to the fact that there is no natural ordering of the complex numbers.  
%Stated another way, we can view the $n$-th roots of unity as the \emph{classical locations} of the eigenvalues of $A + n^{-\gamma} E$.  
Later we will show how the bound in \eqref{eq:ellpbnd} can be used to establish a rate of convergence for the empirical spectral measure of $\Jmat + n^{-\gamma} E$ to its limiting distribution in Wasserstein distance.  

As can be seen in Figure~\ref{fig:intro}, some of the eigenvalues of $\Jmat + n^{-\gamma}E$ lie away from the unit circle and some of them can even be close to the origin.  However, conclusion \eqref{item:inliers} shows that there cannot be too many of these so-called ``inliers.''  The second bound given in \eqref{item:inliers} is  similar to a bound given by Davies and Hager in \cite{MR2490477} but saves a factor of $\log n$.  
Despite the inlier eigenvalues, the bound in \eqref{item:dist} shows that every $n$-th root of unity is $O((\log n)^{1 + \eps}/n)$ distance away from an eigenvalue of $\Jmat + n^{-\gamma} E$.  

The local law stated in \eqref{item:locallaw} is one of the main tools we will use to establish some of the other local spectral properties we observed in Figure~\ref{fig:intro}.  The choice of $a \geq 0$ allows one to control how many eigenvalues contribute to the sum, with a larger value for $a$ corresponding to fewer contributing eigenvalues.  The case $a = 0$ describes the macroscopic (i.e., global) behavior of the eigenvalues, while $a > 0$ corresponds to the local, mesoscopic behavior of the eigenvalues.  Roughly speaking, the local law shows that, up to a small logarithmic error, the local eigenvalue behavior of $\Jmat + n^{-\gamma}E$ is similar to the behavior of the roots of unity.  We have stated the local law here to match other versions of the local law for non-Hermitian matrices in the random matrix theory literature, including \cite{MR3683369,MR3770875,MR3622892,MR3230004,MR3278919}.  Interestingly, the error term in Theorem \ref{thm:sample} is quite different than the error term for the local circular law \cite{MR3230002,MR3683369,MR3770875,MR3230004,MR3278919}, which hints that the behavior of the eigenvalues of $\Jmat + n^{-\gamma} E$ may be significantly different from the behavior for other random matrix ensembles, even at the local, mesoscopic level.  We explore this idea more in Section \ref{sec:locallaw}.

Theorem \ref{thm:sample} is stated only in the case when the random matrix $E$ contains iid Rademacher entries.  This is merely for convenience, and most of our results hold for much more general ensembles of random matrices.  
Similarly, while Theorem \ref{thm:sample} focuses on the matrix $\Jmat$, our main results apply to a much larger class of deterministic matrices.  One such class, which generalizes $\Jmat$, is the collection of banded Toeplitz matrices.  

\begin{definition}[Banded Toeplitz matrix] \label{def:toep}
Let $\{a_j \}_{j \in \mathbb{Z}}$ be a sequence of complex numbers, indexed by the integers, and let $k \geq 0$ be an integer.  We say that $A = (A_{ij})_{i,j=1}^n$ is an $n \times n$ \emph{Toeplitz matrix with symbol $\{a_j \}_{j \in \mathbb{Z}}$ truncated at $k$} if 
\[ A_{ij} = \left\{
     \begin{array}{ll}
       a_{i-j} & \text{ if } |i - j | \leq k \\ 
       0 & \text{ if }  | i - j | > k.
     \end{array}
   \right. \]
That is, the matrix $A$ has the form 
\begin{equation} \label{eq:ToeplitzA}
 A =
\begin{bmatrix}
  a_0 & a_{-1}   & a_{-2} & \cdots & a_{-k} & 0 & \cdots & 0  \\
  a_1 & a_0      & a_{-1} & \ddots &  &\ddots &    \ddots   & \vdots \\
  a_2 & a_1      & \ddots & \ddots & \ddots & &\ddots & 0 \\ 
 \vdots & \ddots & \ddots & \ddots &\ddots & \ddots &  & a_{-k}\\
 a_k  & & \ddots & \ddots & \ddots & \ddots & \ddots & \vdots \\
 0 & \ddots &  & \ddots & \ddots & \ddots & \ddots & a_{-2}  \\
 \vdots &   \ddots     & \ddots &     & \ddots    & \ddots & \ddots & a_{-1} \\
0 & \cdots & 0 & a_k & \cdots    & a_2    & a_1 & a_0
\end{bmatrix}. 
\end{equation} 
\end{definition}

We refer the reader to \cite{MR2179973} and references therein for further details about the spectral properties of banded Toeplitz matrices.  

Given a sequence $\{a_j \}_{j \in \mathbb{Z}}$ of complex numbers and an integer $k \geq 0$, we use the following convention for summations:
\begin{equation} \label{eq:sum_convention}
	\sum_{|j| \leq k} a_{j} = \sum_{ \substack{j \in \mathbb{Z} \\ |j| \leq k} } a_{j}. 
\end{equation} 

\begin{figure}
\begin{center}
\includegraphics[scale=0.4]{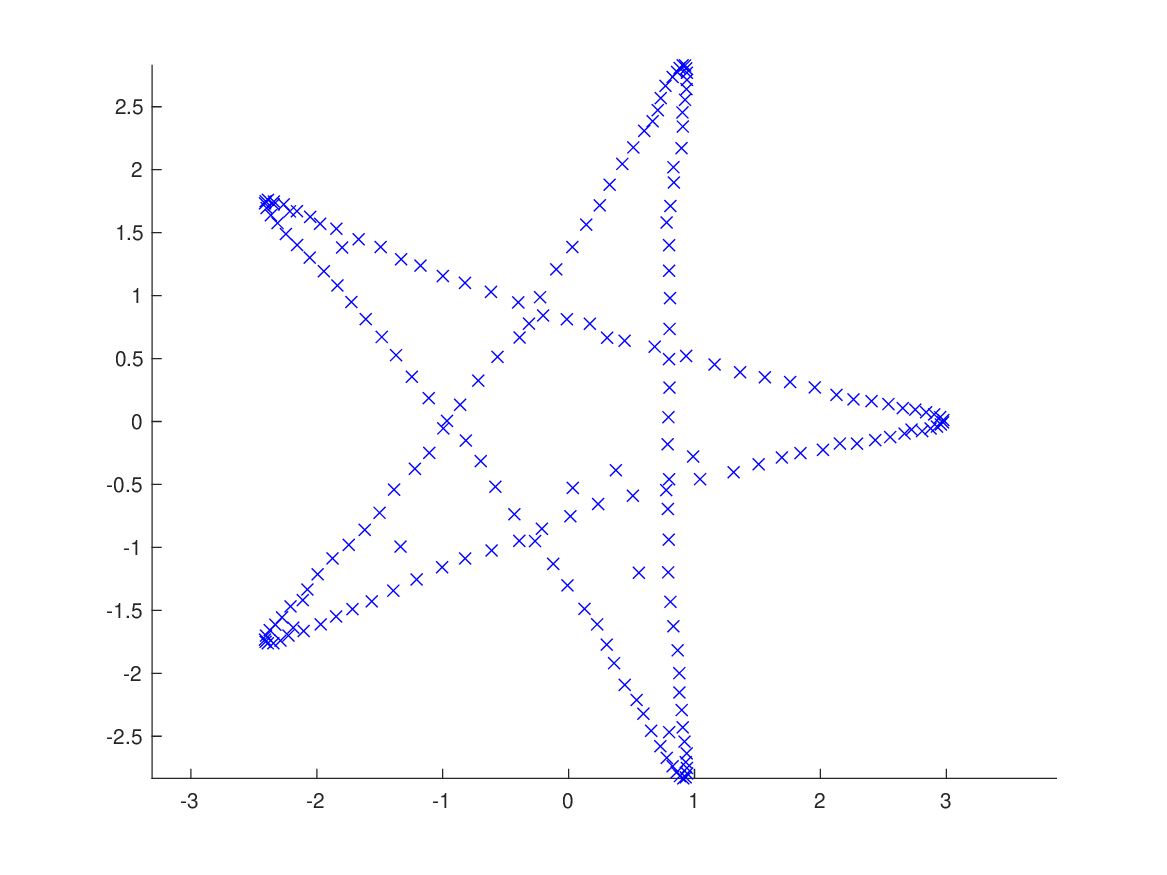}
\includegraphics[scale=0.4]{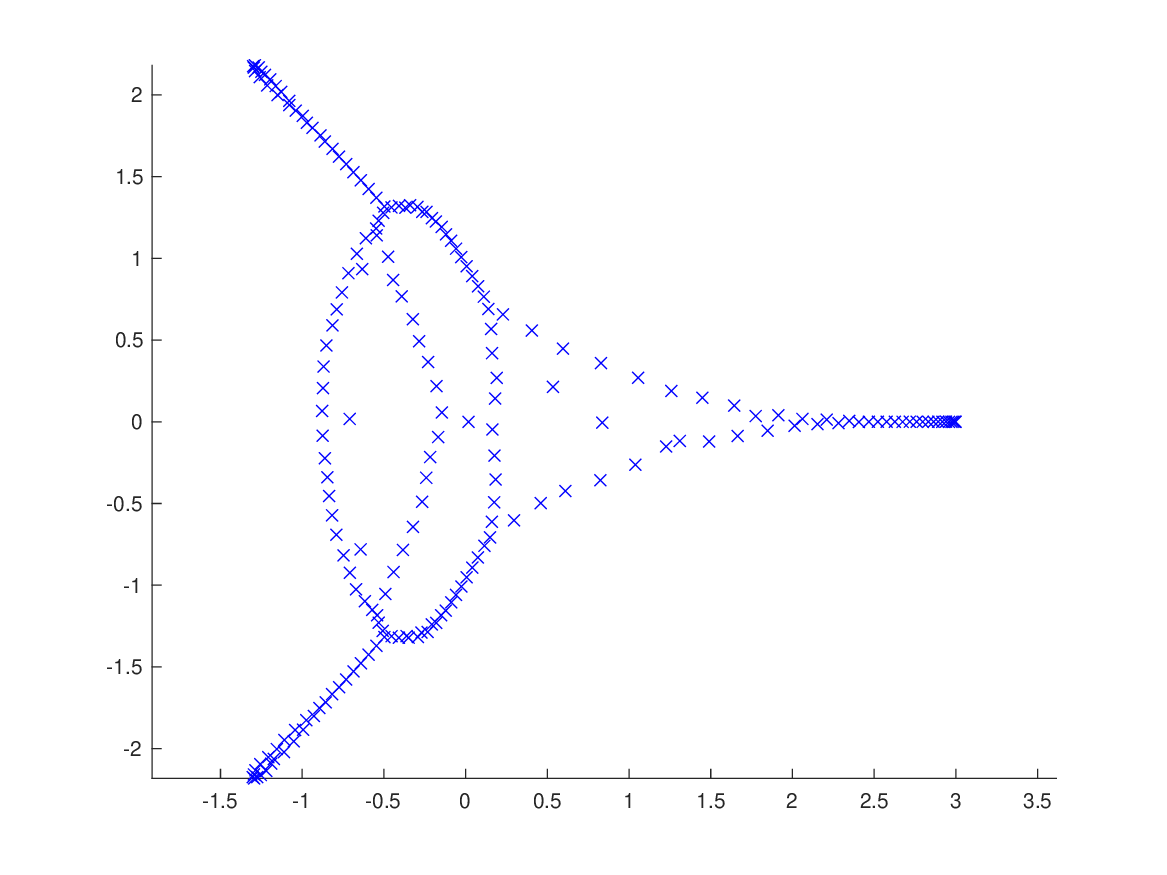}
\end{center}
\caption{Above are plots of two perturbed banded Toeplitz $n$ by $n$ matrices, on the left with symbol defined by $a_{-2}=2$, 
% a_{-1}=0, a_0=0, a_1=0,a_2=0
 $a_3 =-1$ and all other $a_j$ equal to zero; and on the right with symbol defined by $a_{-2}=1$, $a_{-1}=1$, 
%a_0=0, a_1=0,a_2=0,
 $a_3 =-1$ and all other $a_j$ equal to zero.  Each banded Toeplitz matrix was perturbed by adding a random matrix $n^{-\gamma} E$, where $E$ has iid standard complex Gaussian entries, where $n=200$ and $\gamma = 1.5$.}
\label{fig:justrand}
\end{figure}

Let $A$ be a Toeplitz matrix with symbol $\{a_j\}_{j \in \mathbb{Z}}$ truncated at $k$. 
As can be seen in Figure~\ref{fig:justrand}, we cannot expect the eigenvalues of $A + n^{-\gamma} E$ to always be near the unit circle.  In other words, it does not generally make sense to compare the eigenvalues of $A + n^{-\gamma} E$ to the roots of unity, as we did in Theorem \ref{thm:sample}.  This raises the question: 
\begin{question} \label{question:det}
	What deterministic values should we compare the eigenvalues to?
\end{question}

Let $S^1 := \{z \in \mathbb{C} : |z| = 1\}$ be the unit circle in the complex plane, and define the function $f: S^1 \to \mathbb{C}$ by
\[ f(\omega) := \sum_{|j| \leq k} a_j \omega^j, \]
where we use the summation convention introduced in \eqref{eq:sum_convention}.  
The function $f$ is often called the \emph{symbol} of the banded Toeplitz matrix $A$.  
We will show (see Theorem \ref{thm:pert-toep}) that, under some appropriate assumptions, the empirical spectral measure of $A + n^{-\gamma} E$ converges to the same distribution as $f(U)$, where $U$ is a random variable uniformly distributed on the unit circle $S^1$.  In other words, the limiting empirical spectral measure is the push-forward of the uniform probability measure on $S^1$ by the symbol $f$.  

This leads us to the following answer to Question \ref{question:det}: 
If $\lambda_1', \ldots, \lambda_n'$ are again the $n$-th roots of unity, we shall compare the eigenvalues of $A + n^{-\gamma} E$ to the deterministic points $f(\lambda_1'), \ldots, f(\lambda_n')$.   We often refer to $f(\lambda_1'), \ldots, f(\lambda_n')$ as the \emph{classical locations} of the eigenvalues, and these new deterministic points approximate the eigenvalues of $A + n^{-\gamma} E$ just as the roots of unity approximated the eigenvalues of $\Jmat + n^{-\gamma} E$; see Figure~\ref{fig:justrand+classical}. 
%In other words, the classical locations for the eigenvalues of $A + n^{-\gamma} E$ are the push-forward of the roots of unity by $f$.  

\begin{figure}
\includegraphics[scale=0.4]{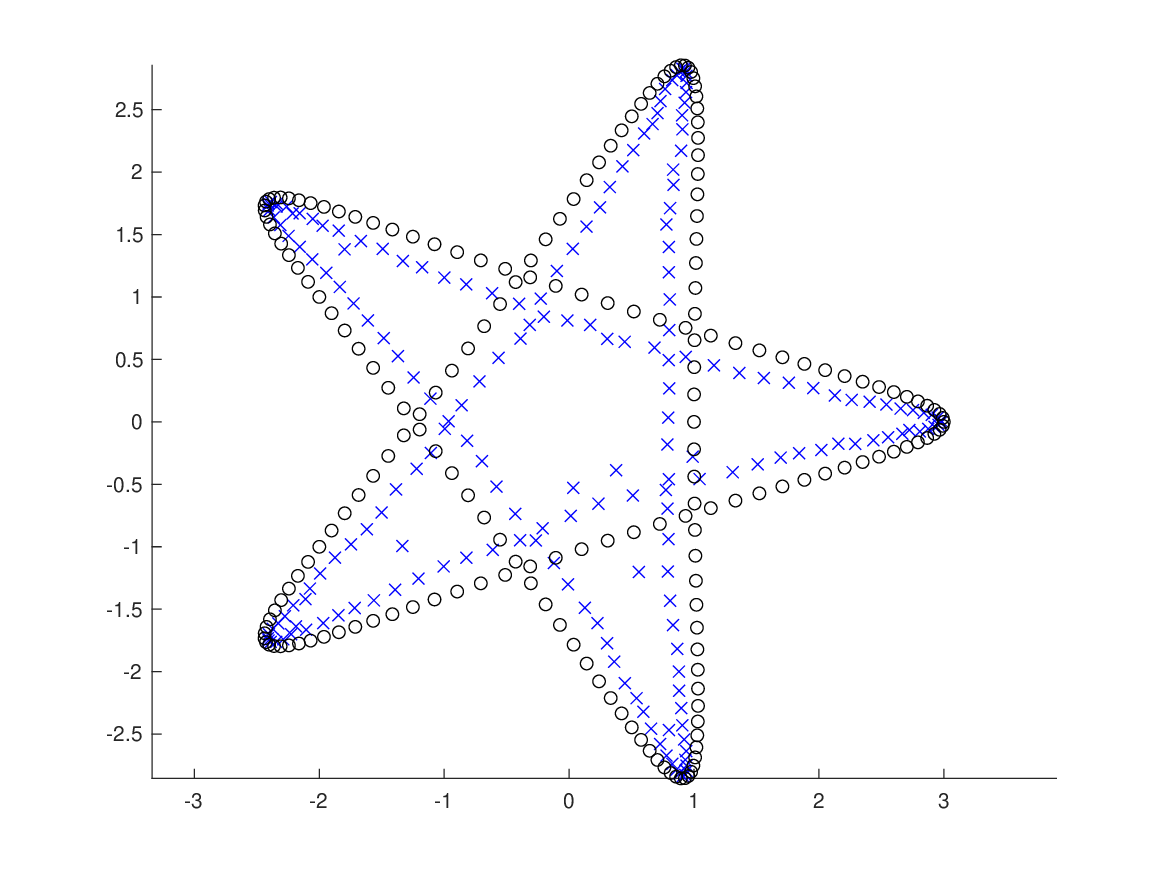}
\includegraphics[scale=0.4]{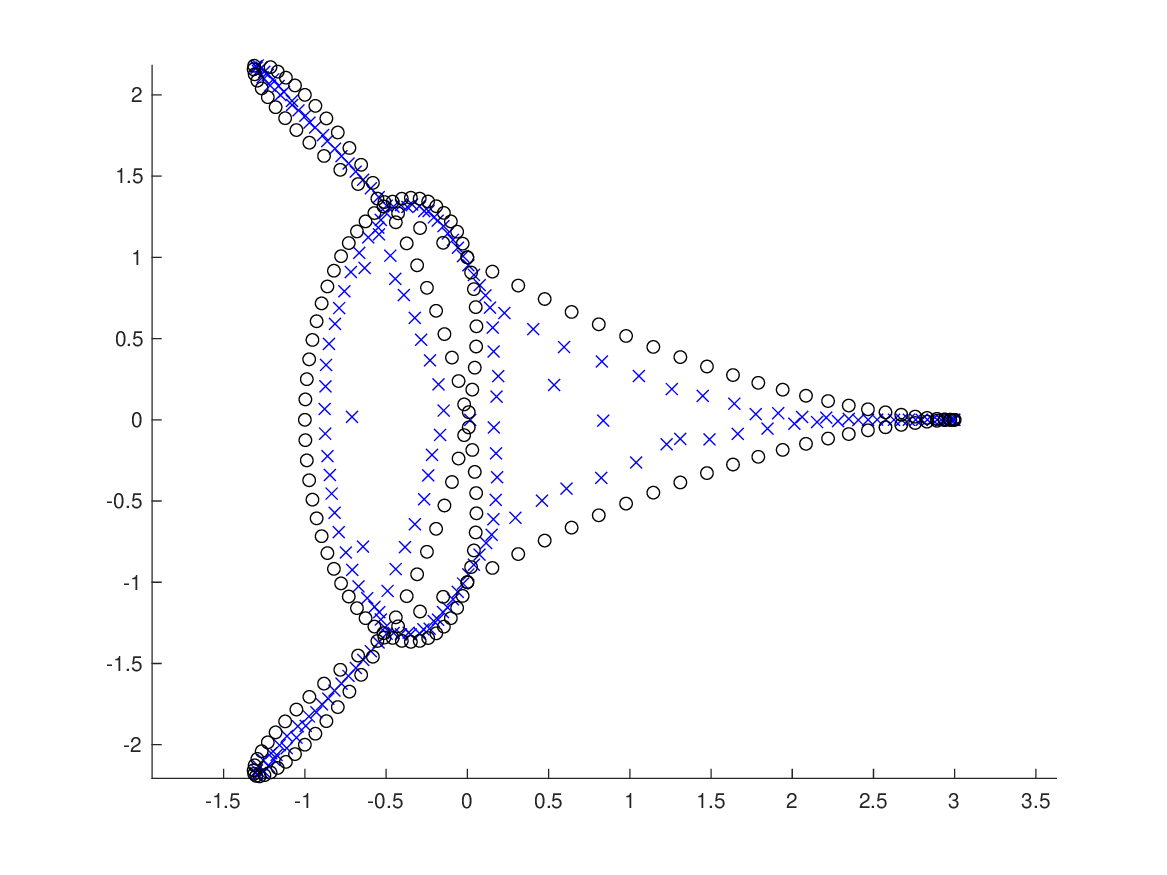}
\caption{The blue $\color{blue}\times\color{black}$ symbols plot the same eigenvalues of perturbed banded Toeplitz matrices as in Figure~\ref{fig:justrand}. Each example has been overlaid with
black circles~{\Large$\circ$} marking the deterministic classical locations of the eigenvalues, as described after Question~\ref{question:det}.  Note the reasonably close agreement in the random eigenvalues and the corresponding deterministic classical locations.}
\label{fig:justrand+classical}
\end{figure}

Roughly speaking, our main results show that Theorem \ref{thm:sample} holds when $\Jmat$ is replaced by a banded Toeplitz matrix $A$, provided the roots of unity $\lambda_1', \ldots, \lambda_n'$ are replaced with the classical locations $f(\lambda_1'), \ldots, f(\lambda_n')$.  More importantly, our results also aim to show why $f(\lambda_1'), \ldots, f(\lambda_n')$ are the correct choices for the classical locations of the eigenvalues.  

\subsection{Contribution of this paper and comparison to existing results}
One of the main goals of this paper is to obtain non-asymptotic results for matrix models of the form $A + n^{-\gamma} E$, where $A$ is a banded Toeplitz matrix and $E$ is a random matrix.  In particular, we are interested in results that hold even when the dimension $n$ is fairly small.  As such, we have attempted to use methods where we can explicitly state all constants.  For example, in the results from Theorem \ref{thm:sample}, we try to explicitly state or provide ways to compute all of the implicit constants from our asymptotic notation.  We have also attempted to state our results as generally as possible, applying not only to banded Toeplitz matrices with growing bandwidth but also to very general classes of random matrices $E$.  

While many results in the field require the matrix $E$ to be random, in some cases our results can be shown to hold also for general classes of deterministic matrices.  An example of this is given in the second statement of part \eqref{item:inliers} from Theorem \ref{thm:sample}: the  $O_{\eps}(1)$ bound on the number of eigenvalues of $\Jmat + n^{-\gamma}E$ in the disk of radius $1/4 - \eps$ holds with probability $1$, meaning it holds for every possible realization of the signs for the entries of $E$.  In other words, out of the $2^{n^2}$ possible choices of signs for the entries of $E$, there is not a single choice that violates this property.  Here, our methods are limited to the disk of radius $1/4$ due to technical issues in the proof, but we suspect this result is part of a larger phenomenon.  

We also note that in many asymptotic results in the field, the value of $\gamma > 1/2$ is irrelevant (see, for instance, \cite{FPZeitouni_regularization_2015,basak_regularization_2019,basak_spectrum_2020} and references therein).  This can be seen as a type of universality since the limit of the empirical spectral measure is independent of $\gamma$. In Theorem \ref{thm:sample} there are several minimum bounds required on $\gamma$ (some of these lower bounds will be relaxed in the forthcoming sections).  For non-asymptotic results (and in particular when $n$ is small) it seems likely (based on some numerical results, see Figure~\ref{fig:non-asymptotic}) that the behavior of the eigenvalues does depend on $\gamma$.  One early result quantifying how the sizes of $n$ and $\gamma$ affects the behavior of the eigenvalues is due to Davies and Hager in \cite{MR2490477} in the case of the matrix $\Jmat$ (defined in \eqref{eq:defT}).  While our work attempts to explore this relationship in more generality, there are still many open questions.  

\begin{figure}
\includegraphics[scale=0.27]{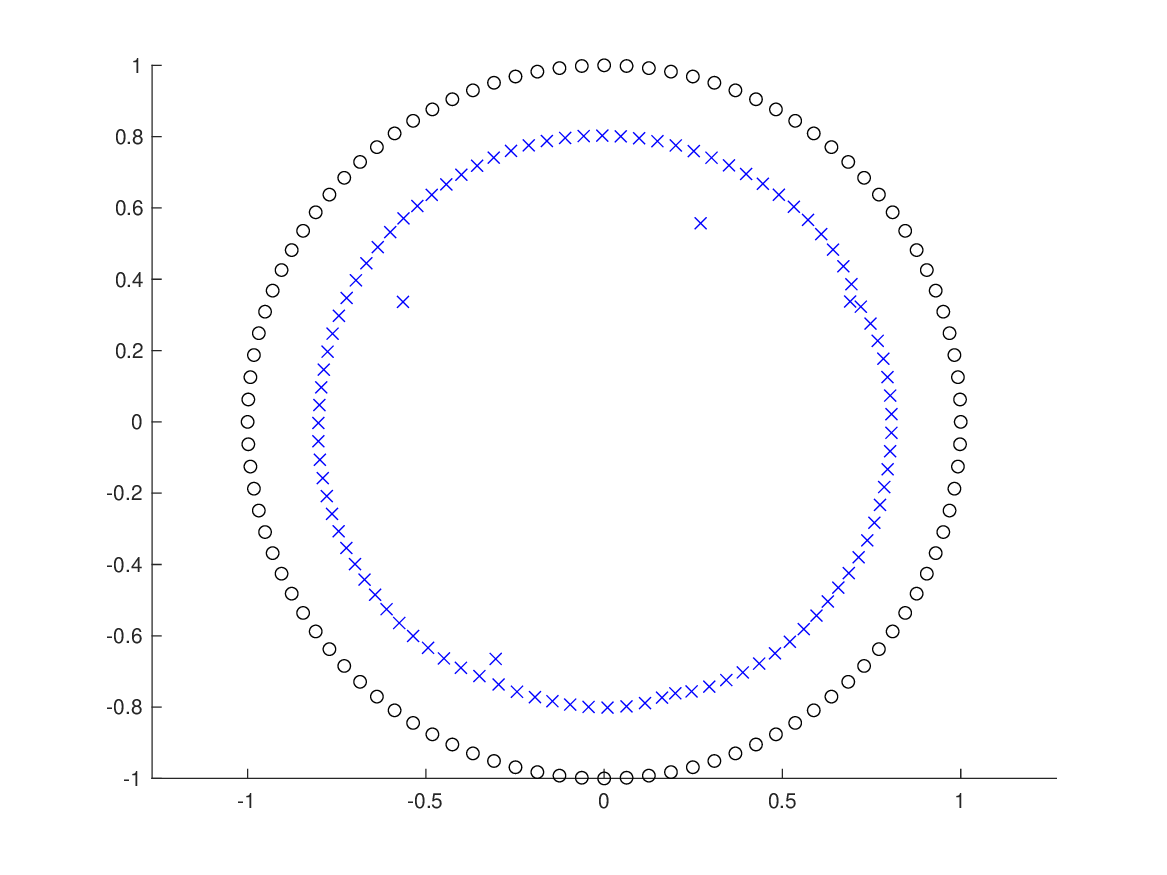}
\includegraphics[scale=0.27]{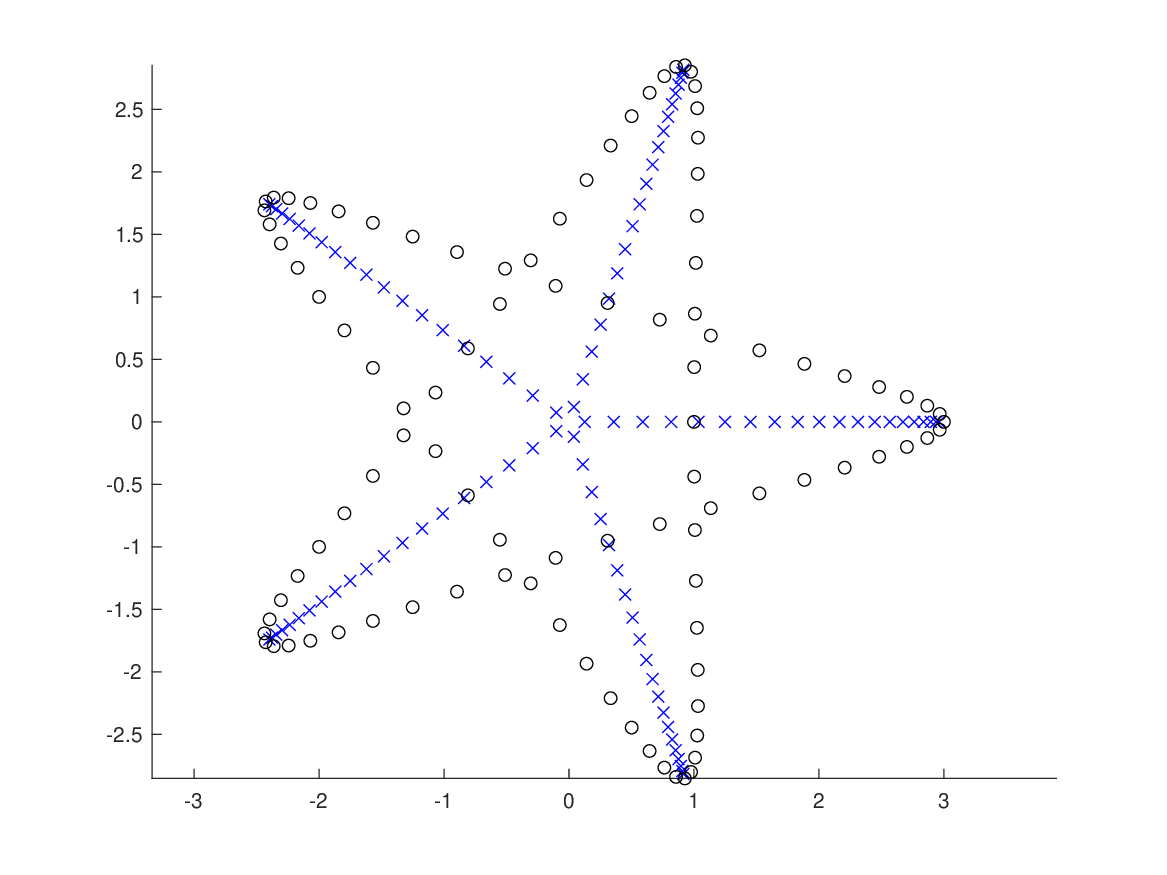}
\includegraphics[scale=0.27]{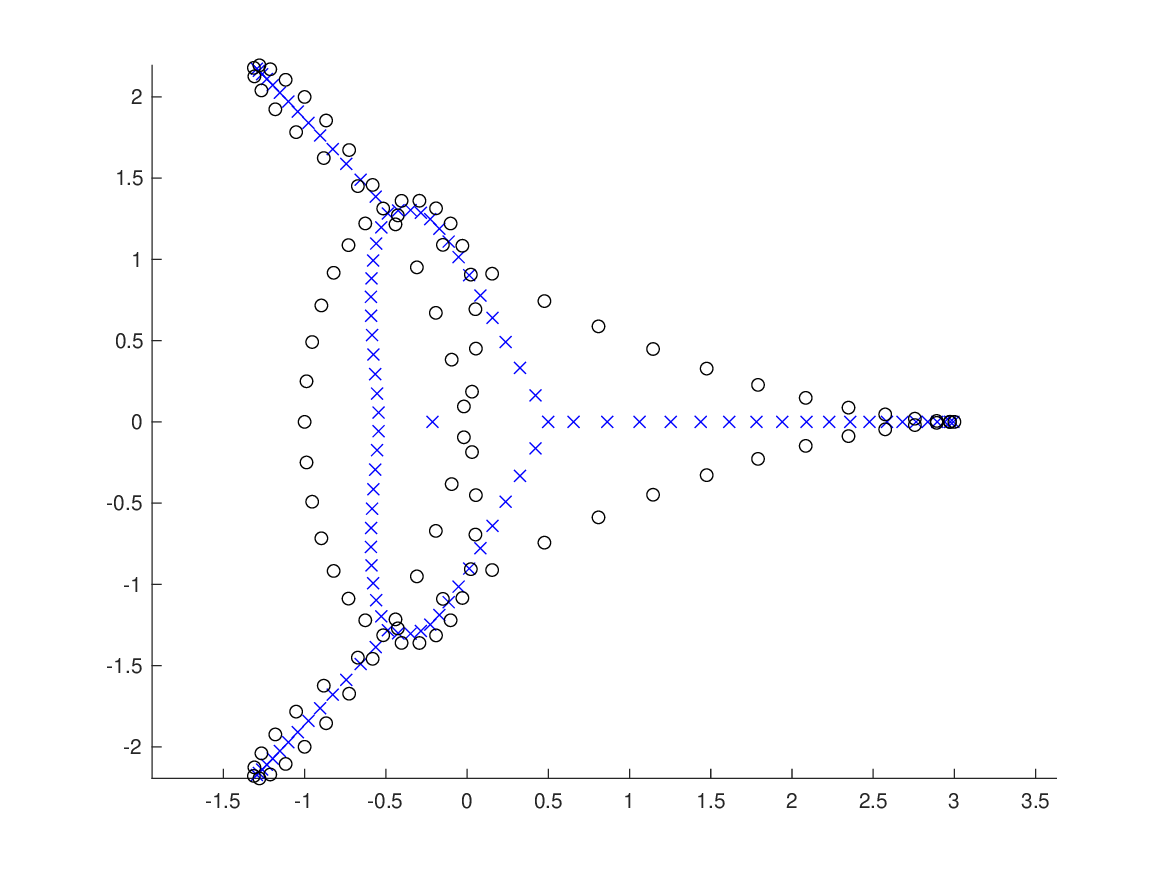}
\caption{Above are three plots showing the eigenvalues for an $n$ by $n$ perturbed banded Toeplitz matrix (blue $\color{blue}\times\color{black}$ symbols) compared to the corresponding classical locations of the eigenvalues (black circles~{\Large$\circ$}), where $n=100$, $\gamma=5$, and the perturbation is $n^{-\gamma}E$ where $E$ has iid standard complex Gaussian entries.  While results in Sections~\ref{sec:ratec} and \ref{sec:locallaw} show that the classical locations are a good approximation for the random eigenvalues when $n$ is large, it is evident from the plots above that for relatively small $n=100$ and relatively large $\gamma =5$, the approximation is not yet close in these cases.}
\label{fig:non-asymptotic}
\end{figure}

Lastly, we wish to point out that while some of our results are similar to existing results in the literature, our methods of proof are substantially different.  We utilize a comparison method (discussed in more detail in Section \ref{sec:outline}).  Comparison and replacement methods have been used extensively in the random matrix theory literature as they often allow one to compare the eigenvalues of $A + n^{-\gamma}E$ to the eigenvalues of $A + n^{-\gamma} \tilde E$ for two different random matrices $E$ and $\tilde E$.  Here, we take a different approach by comparing the eigenvalues of $A + n^{-\gamma} E$ to the eigenvalues of $\tilde A + n^{-\gamma} E$.  By replacing the deterministic matrix only, our method is robust and can be applied to many different random matrix models; in some of our most general results, we will not even need the entries of $E$ to be independent.    Moreover, this method naturally explains the choice for the classical locations of the eigenvalues discussed above.  

We will discuss how our results compare with other works in the literature after we have introduced our main results.  For now, we simply focus on Theorem \ref{thm:sample}.  Perhaps the closest results to part \eqref{item:locallaw} of Theorem \ref{thm:sample} come from the work of Sj\"{o}strand and Vogel \cite{MR4200678}.  They showed precise asymptotic bounds for the number of eigenvalues in smooth domains of the matrix $A + n^{-\gamma} E$, where $A$ is a banded Toeplitz matrix and $E$ is a random matrix with iid standard complex Gaussian entries.  Our results differ from those in \cite{MR4200678} in that we consider linear statistics of the eigenvalues, rather than the counting function of the eigenvalues.  As we will see in Sections \ref{sec:ratec} and \ref{sec:locallaw}, our results also apply to much more general choices for the matrix model $E$.  We also show how results such as part \eqref{item:locallaw} of Theorem \ref{thm:sample} can be used to establish the pairing result in part \eqref{item:pairing} as well as a rate of convergence in Wasserstein distance; these results are quite technical to prove and do not seem to follow immediately from the local law.  

Lastly, we mention that part \eqref{item:inliers} of Theorem \ref{thm:sample} is similar to the work of Davies and Hager \cite{MR2490477}.  Some of the results in \cite{MR2490477} are stronger than ours (especially for small values of $n$).  However, our results go beyond the matrix $\Jmat$, while the results in \cite{MR2490477} are limited to Jordan matrices; we also demonstrate cases in which the $O(\log n)$ error can in fact be replaced by a $O(1)$ error.

\subsection{Organization of the paper}
The paper is organized as follows.  In Section \ref{sec:tools}, we introduce the preliminary material we will need to state and prove our main results.  In particular, we list all of the notation used in the paper as well as the main definitions and tools used in the proofs.  Section \ref{sec:tools} also contains a number of additional examples involving matrices in Jordan normal form.  In Section \ref{sec:ESM}, we compute the limiting empirical spectral measure for random perturbations of banded Toeplitz matrices with growing bandwidth.  The results in this section appear fairly standard, although we were not able to find them in the literature.  We include Section~\ref{sec:ESM} for completeness and because the proofs highlight our main comparison techniques.  We establish a generalization of part \eqref{item:pairing} from Theorem \ref{thm:sample} in Section \ref{sec:ratec} as well as a rate of convergence in Wasserstein distance for the empirical spectral measure to its limiting distribution.  In Section \ref{sec:app}, we establish several deterministic results, including the bound from part \eqref{item:inliers} of Theorem \ref{thm:sample} discussed above.  A general version of the local law is presented and proven in Section \ref{sec:locallaw}.  We also include a number of examples and numerical simulations throughout the paper.  

The appendix contains a number of auxiliary results and arguments worked out in greater detail.  For example, in Section \ref{sec:tools}, we introduce a number of tools that will be used in the forthcoming proofs; although these tools are fairly standard in the literature, we could not find them in precisely the forms required here, and so the proofs have been provided (in full detail) in the appendix.  
%Most experts will likely not need the appendix, and we provide the details in the appendix simply for completeness.  

\subsection*{Acknowledgments}
Part of this research was conducted by the second author at the International Centre for Theoretical Sciences (ICTS) during a visit for the program ``Universality in random structures: Interfaces, Matrices, Sandpiles'' (Code: ICTS/urs2019/01).  The first author thanks Elliot Paquette and Anirban Basak for useful conversations and suggestions.  The first author also thanks Mark Embree for helpful comments and references.

\section{Notation, definitions, and tools} \label{sec:tools}

This section introduces the notation, tools, and main definitions we will use throughout the paper.  In Section~\ref{sec:notation}, we highlight some notation and conventions, and in Section~\ref{sec:theory}, we develop and discuss the main tools utilized in our proofs, Theorems \ref{thm:non}, \ref{thm:replacement}, and \ref{thm:replrank}. Section \ref{sec:outline} provides a high-level overview of our methods, and in Section~\ref{sec:works}, we discuss related work in the literature.  The proofs for Theorems \ref{thm:non}, \ref{thm:replacement}, \ref{thm:replrank}, and Proposition \ref{prop:small-cancel} 
can be found in Appendix~\ref{sec:prooftheory}. 

\subsection{Notation} \label{sec:notation}

For a vector $x \in \mathbb{C}^n$, $\|x\|$ denotes its Euclidean norm.  
For a matrix $A$, $A^\mathrm{T}$ is its transpose and $A^\ast$ denotes the conjugate transpose of $A$.  $\|A\|$ is the spectral norm of $A$ and $\|A\|_2$ is its Frobenius norm, defined as 
\begin{equation} \label{eq:deffrob}
	\|A\|_2 := \sqrt{ \tr (A A^\ast) } = \sqrt{ \tr (A^\ast A)}.
\end{equation} 
All matrices under consideration in this article are assumed to have complex entries, unless otherwise indicated.  We will use $I_n$ to denote the $n \times n$ identity matrix; often we will write $I$ when its size can be deduced from context.  

We let $\lambda_1(A), \ldots, \lambda_n(A) \in \mathbb{C}$ be the eigenvalues of the $n \times n$ matrix $A$, counted with algebraic multiplicity.  Recall that, unless otherwise notes, we order the eigenvalues in lexicographic order:  we first sort the values by real part in decreasing order, and, in the event of a tie, we sort in decreasing order by the imaginary part.  The empirical spectral measure $\mu_A$ of $A$ is defined as the probability measure
\begin{equation} \label{eq:defESM}
	\mu_A := \frac{1}{n} \sum_{j=1}^n \delta_{\lambda_j(A)}, 
\end{equation} 
where $\delta_z$ denotes the point mass at $z \in \mathbb{C}$. 

%We let $\lambda_1(A), \ldots, \lambda_n(A) \in \mathbb{C}$ be the eigenvalues of the $n \times n$ matrix $A$ (counted with algebraic multiplicity).  The empirical spectral measure $\mu_A$ of $A$ is defined in \eqref{eq:defESM}. The logarithmic potential of $\mu_A$ will be denoted by $\mathcal{L}_A$ and is defined in \eqref{eq:logpotESM}.  

The singular values of the $n \times n$ matrix $A$ are the eigenvalues of $\sqrt{A A^\ast}$.  We let $\sigma_1(A) \geq \cdots \geq \sigma_n(A)$ denote the ordered singular values of $A$ and $\nu_A$ be the empirical measure constructed from the singular values of $A$:
\begin{equation} \label{eq:defsingEM}
	\nu_A := \frac{1}{n} \sum_{j=1}^n \delta_{\sigma_j(A)}. 
\end{equation} 
We often use $\sigma_{\min}(A) := \sigma_n(A)$ to denote the smallest singular value of $A$, which can be computed by the variational characterization
\begin{equation} \label{eq:varminsing}
	\sigma_{\min}(A) = \min_{x \in \mathbb{C}^n : \|x\| = 1} \| A x\|. 
\end{equation}

For two probability measures $\mu$ and $\nu$ on $\mathbb{C}$, the $L^1$-Wasserstein distance between $\mu$ and $\nu$ is denoted as $W_1(\mu, \nu)$ and defined by
\begin{equation} \label{eq:defW1}
	W_1(\mu, \nu) := \inf_{\pi} \int |x - y| d \pi(x,y), 
\end{equation} 
where the infimum is over all probability measures $\pi$ on $\mathbb{C} \times \mathbb{C}$ with marginals $\mu$ and $\nu$.

We use $\log (x)$ to denote the natural logarithm of $x$, and $[n]$ to denote the discrete interval $\{1, \ldots, n\}$.  For a finite set $S$, we use $|S|$ to denote the cardinality of $S$.
Also, we use $\sqrt{-1}$ to denote the imaginary unit, and we reserve $i$ as an index.   

Let $C_c^\infty(\mathbb{C})$ be the set of smooth, compactly supported functions $\varphi: \mathbb{C} \to \mathbb{C}$.  We will use $\supp(\varphi)$ to denote the support of $\varphi$ and $\|\varphi\|_{\infty}$ to denote its $L^\infty$-norm.  

Let
\[ B(z,r) := \{ w \in \mathbb{C} : |z - w| < r \} \]
be the open ball of radius $r > 0$ centered at $z \in \mathbb{C}$ in the complex plane, and let
%$\mathfrak{R}(z, r_1, r_2)$ denotes the rectangle in the complex plane defined in \eqref{eq:rectR}.  
$S^1$ be the unit circle $\{z \in \mathbb{C} : |z| = 1\}$ in the complex plane centered at the origin.  The quantifiers ``almost everywhere'' and ``almost all'' will be with respect to the Lebesgue measure on $\mathbb{C}$.

Asymptotic notation is used under the assumption that $n$ tends to infinity.  We use $X = O(Y)$, $Y=\Omega(X)$, $X \ll Y$, or $Y \gg X$ to denote the estimate $|X| \leq C Y$ for some constant $C > 0$, independent of $n$, and all $n \geq C$.  If $C$ depends on other parameters, e.g. $C = C_{k_1, k_2, \ldots, k_p}$, we indicate this with subscripts, e.g. $X = O_{k_1, k_2, \ldots, k_p}(Y)$.  The notation $X = o(Y)$ denotes the estimate $|X| \leq c_n Y$ for some sequence $(c_n)$ that converges to zero as $n \to \infty$, and, following a similar convention, $X = \omega(Y)$ means $|X| \geq c_n Y$ for some sequence $(c_n)$ that converges to infinity as $n \to \infty$.  Finally, we write $X = \Theta(Y)$ if $X \ll Y \ll X$.

%In this paper, we consider the eigenvalues of perturbed non-normal matrices.  Our main results are non-asymptotic, and allow us to locate the eigenvalues on both the macroscopic and mesoscopic scales.  In Section \ref{sec:theory}, we present our main theoretical tools and results for working with perturbations of non-normal matrices.  In Section \ref{sec:app}, we apply our results to study the model $A+ \ptwo$, where $A$ is a banded Toeplitz matrix (such as $\Jmat$ above) and $\ptwo$ is a random or deterministic perturbation with small spectral norm.  Our results apply to the case when the bandwidth of $A$ grows with the dimension, and we also obtain a rate of convergence for the empirical spectral measure in Wasserstein distance.  We give some other examples and applications of our results in Section \ref{sec:examples}.  We discuss several closely related works throughout the article and provide further references and comparisons in Section \ref{sec:works}.  The introduction concludes with a description of our notation in Section \ref{sec:notation} and an outline of our methods in Section \ref{sec:outline}.  

\subsection{Tools} \label{sec:theory}

For a probability measure $\mu$ on $\mathbb{C}$ that integrates $\log|\cdot|$ in a neighborhood of infinity, its logarithmic potential $\mathcal{L}_\mu$ is the function $\mathcal{L}_\mu: \mathbb{C} \to [-\infty, +\infty)$ given by
\[ \mathcal{L}_\mu(z) := \int_{\mathbb{C}} \log |w -z| \ d \mu(w). \]
It follows from Fubini's theorem that the logarithmic potential is finite almost everywhere.  
%Here and in the sequel, ``almost everywhere'' and ``almost all'' will be with respect to the Lebesgue measure on $\mathbb{C}$.  

The logarithmic potential is one of the key tools used to study the eigenvalues of non-Hermitian random matrices \cite{MR2908617}.  The logarithmic potential of $\mu_A$  is given by
\begin{equation} \label{eq:logpotESM}
	\mathcal{L}_{A}(z) \equiv \mathcal{L}_{\mu_A}(z) := \int_{\mathbb{C}} \log|w-z| \ d \mu_A(w) = \frac{1}{n} \sum_{j=1}^n \log|\lambda_j(A) - z| = \frac{1}{n} \log |\det (A - zI)|, 
\end{equation} 
where $\det(A - zI)$ is the determinant of $A - zI$, $z \in \mathbb{C}$, and $I$ is the identity matrix.  
Many of tools we introduce in Section~\ref{sec:theory} focus on understanding and estimating the logarithmic potential of the empirical spectral measure.
 
%As noted above, many of tools we discuss in this section focus on understanding and estimating the logarithmic potential of the empirical spectral measure.  
In many cases, convergence of the logarithmic potential for almost every $z \in \mathbb{C}$ is enough to guarantee the convergence of the empirical spectral measure; see, for instance, \cite[Theorem 2.8.3]{TaoBook} or \cite{tao_random_2010-1,LogPot, GTcirc}.   The following replacement principle due to Tao and Vu \cite{tao_random_2010-1} compares the empirical spectral measures of two random matrices.  

\begin{theorem}[Replacement principle \cite{tao_random_2010-1}]\label{thm:TVrepl}
 Suppose for each $n$ that $M_1$ and $M_2$ are $n \times n$ ensembles of random matrices. Assume that
 \begin{enumerate}
  \item[(i)] the expression $\frac 1n \norm{M_1}^2_2 +\frac 1n \norm{M_2}^2_2$ is bounded in probability (resp, almost surely); and
  
  \item[(ii)] for almost all complex numbers $z$,
  \[ \mathcal{L}_{M_1}(z) - \mathcal{L}_{M_2}(z) \]
  converges in probability (resp., almost surely) to zero as $n \to \infty$ and, in particular, for each fixed $z$, these logarithmic potentials are finite with probability tending to $1$ as $n$ tends to infinity (resp., almost surely nonzero for all but finitely many $n$).
\end{enumerate}
Then, $\mu_{M_1}-\mu_{M_2}$ converges in probability (resp., almost surely) to zero as $n \to \infty$.
\end{theorem}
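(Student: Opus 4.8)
The plan is to convert the statement about empirical spectral measures into one about logarithmic potentials via the distributional identity $\tfrac{1}{2\pi}\Delta\log|\cdot| = \delta_0$, and then to upgrade the pointwise-in-$z$ convergence furnished by hypothesis (ii) to an $L^1_{\mathrm{loc}}$ convergence by means of a uniform integrability bound extracted from hypothesis (i).

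First I would reduce matters to smooth test functions and to the logarithmic potentials. It suffices to show that $\int_{\mathbb{C}} f\, d\mu_{M_1} - \int_{\mathbb{C}} f\, d\mu_{M_2} \to 0$ in probability (resp.\ almost surely) for every $f \in C_c^\infty(\mathbb{C})$: indeed, the elementary bound $\tfrac1n\sum_{j=1}^n|\lambda_j(M)|^2 \le \tfrac1n\|M\|_2^2$ together with hypothesis (i) and Markov's inequality shows that $\mu_{M_1}$ and $\mu_{M_2}$ are tight with probability tending to $1$ (resp.\ almost surely for large $n$), so convergence against smooth compactly supported test functions propagates to all bounded continuous ones. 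Fix such an $f$, put $K := \supp f$, and recall that $\mathcal{L}_M \in L^1_{\mathrm{loc}}(\mathbb{C})$ for every $n\times n$ matrix $M$, so that $\int_{\mathbb{C}} f\, d\mu_M = \tfrac{1}{2\pi}\int_{\mathbb{C}} (\Delta f)(z)\,\mathcal{L}_M(z)\, dz$. Thus the task reduces to showing $\int_K (\Delta f)(z)\bigl(\mathcal{L}_{M_1}(z) - \mathcal{L}_{M_2}(z)\bigr)\, dz \to 0$, and since $\Delta f$ is bounded it is enough to prove $\int_K \bigl|\mathcal{L}_{M_1}(z) - \mathcal{L}_{M_2}(z)\bigr|\, dz \to 0$.

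Next I would establish a uniform integrability bound: for a fixed $\delta \in (0,1)$, that $\int_K |\mathcal{L}_{M_i}(z)|^{1+\delta}\, dz$ is bounded in probability (resp.\ almost surely for large $n$), for $i=1,2$. For the positive part, concavity of the logarithm and the bounds $\tfrac1n\sum_j|\lambda_j(M)|^2 \le \tfrac1n\|M\|_2^2$ and $\tfrac1n|\tr M| \le \sqrt{\tfrac1n\|M\|_2^2}$ give
\[ \mathcal{L}_M(z) = \frac{1}{2n}\sum_{j=1}^n\log\bigl(|\lambda_j(M)-z|^2\bigr) \le \frac12\log\Bigl(\frac1n\sum_{j=1}^n|\lambda_j(M)-z|^2\Bigr) \le \log\Bigl(|z| + \sqrt{\tfrac1n\|M\|_2^2}\Bigr), \]
which on the compact set $K$ is bounded purely in terms of $\tfrac1n\|M\|_2^2$, hence is controlled by hypothesis (i). For the negative part, write $\mathcal{L}_M(z) = \int_{\mathbb{C}}\log|w-z|\, d\mu_M(w)$ and split at $|w-z| = 1$ to get $(\mathcal{L}_M(z))_- \le \int_{|w-z|\le 1}\log\tfrac{1}{|w-z|}\, d\mu_M(w)$; applying Jensen's inequality to $t\mapsto t^{1+\delta}$ (valid since $\mu_M$ is a probability measure) and then Fubini's theorem gives
\[ \int_K \bigl(\mathcal{L}_M(z)\bigr)_-^{1+\delta}\, dz \le \int_{\mathbb{C}}\Bigl(\int_{|u|\le1}\bigl(\log\tfrac{1}{|u|}\bigr)^{1+\delta}\, du\Bigr) d\mu_M(w) = c_\delta < \infty, \]
a deterministic constant. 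Combining the two parts with the triangle inequality yields the claimed bound for $\mathcal{L}_{M_1}-\mathcal{L}_{M_2}$.

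Finally I would pass to the limit. Setting $F_n(z) := \mathcal{L}_{M_1}(z) - \mathcal{L}_{M_2}(z)$, hypothesis (ii) ensures $F_n$ is well defined for a.e.\ $z$ with probability tending to $1$ (resp.\ a.s.\ for large $n$) and $F_n(z)\to 0$ in probability (resp.\ a.s.) for a.e.\ $z$. Combined with the uniform $L^{1+\delta}(K)$ bound, a standard uniform-integrability argument gives $\int_K |F_n(z)|\, dz\to 0$ in probability (resp.\ almost surely): in the in-probability case one runs it along an arbitrary subsequence, extracting a further subsequence on which the norm bounds hold and $F_n(z)\to 0$ for a.e.\ $z$ almost surely (using Fubini over a countable dense set of $z$), and observes that the limit does not depend on the subsequence. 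This gives $\int_K (\Delta f)(z)F_n(z)\, dz\to 0$, completing the argument. The step I expect to require the most care is this last one: reconciling the two modes of convergence --- almost everywhere in the spectral variable $z$ versus in probability (or almost surely) in the randomness --- and checking that the integrability is genuinely uniform so that limit and integral may be interchanged. The estimates in the middle step are routine once the splitting at $|w-z| = 1$ is made, and the reduction in the second step is standard potential theory.
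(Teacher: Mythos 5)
The paper does not supply its own proof of this theorem: it is quoted verbatim from Tao and Vu \cite{tao_random_2010-1}, where it appears as the replacement principle, and the proof is deferred entirely to that reference. So the comparison is against the Tao--Vu argument rather than anything internal to this paper.

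Your proof is correct and is essentially the Tao--Vu proof. The reduction to smooth compactly supported test functions via tightness, Green's formula $\int f\,d\mu_M = \tfrac{1}{2\pi}\int(\Delta f)\mathcal{L}_M$, the two-sided control of $\mathcal{L}_M$ on compacts (the upper bound from concavity of $\log$ together with the Frobenius norm hypothesis, the lower bound from the Jensen--Fubini estimate giving a deterministic $L^{1+\delta}$ constant for the negative part), and the passage to the limit via uniform integrability are precisely the ingredients of their argument. One minor imprecision in your last step: you do not need a ``countable dense set of $z$''. The cleaner version is to integrate the bounded quantity $\min(|F_n(z)|,1)$ over $K$, apply Fubini on $\Omega \times K$ and dominated convergence to conclude $\int_K \min(|F_n|,1)\,dz \to 0$ in probability (resp.\ a.s.\ along a subsequence), and then splice that with the $L^{1+\delta}(K)$ bound: for $R>1$,
\[
\int_K |F_n|\,dz \le (1+R)\int_K \min(|F_n|,1)\,dz + R^{-\delta}\int_K |F_n|^{1+\delta}\,dz,
\]
which gives $\int_K |F_n|\,dz \to 0$ upon letting $R\to\infty$ after $n\to\infty$. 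With that small repair your outline is a faithful rendition of the original proof.
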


Since we are interested in non-asymptotic results, our first main tool is a non-asymptotic version of Theorem \ref{thm:TVrepl}, which quantitatively measures how close $\mu_{M_1}$ is to $\mu_{M_2}$ when the dimension $n$ is finite.  
Recall that $C_c^\infty(\mathbb{C})$ is the set of smooth, compactly supported functions $\varphi: \mathbb{C} \to \mathbb{C}$, and recall that $\supp(\varphi)$ denotes the support of $\varphi$.  
%$\|M\|$ denotes the spectral norm of the matrix $M$.  
%For an $n \times n$ matrix $M$, $\lambda_1(M), \ldots \lambda_n(M)$ are its eigenvalues.  $I$ will denote the identity matrix.  

\newcommand{\diam}{D}
\newcommand{\sdist}{\mathfrak{D}}
\begin{theorem}[Non-asymptotic replacement principle] \label{thm:non}
Let $M_1$ and $M_2$ be two $n \times n$ random matrices (not necessarily independent).  Let $\eps, \eta > 0$, $T > 2$ (all possibly depending on $n$), and take $\varphi \in C_c^\infty(\mathbb{C})$.  Assume the following: 
\begin{enumerate}
\item (Norm bound) $\Prob(  \| M_1 \| + \|M_2 \| \geq T ) \leq \eps$.
\item (Concentration of log determinants) For $Z$ uniformly distributed on $\supp(\Delta \varphi)$, independent of $M_1$ and $M_2$, we have
\begin{equation} \label{eq:concentration}
	%\Prob \left( \left|\frac1n \log | \det(ZI - M_1)| - \frac1n \log | \det(ZI - M_2)| \right| \geq \eta \right) \leq \eps. 
	\Prob \left( \left| \mathcal{L}_{M_1}(Z) - \mathcal{L}_{M_2}(Z) \right| \geq \eta \right) \leq \eps. 
\end{equation} 
\end{enumerate}
Then there exists a constant $\Csixone > 0$ depending only on $\varphi$ such that for every integer $m \geq 1$
\[ \left| \int_{\mathbb{C}} \varphi \ d \mu_{M_1} - \int_{\mathbb{C}} \varphi \ d \mu_{M_2} \right| \leq \Csixone \left( \eta + \frac{\log T}{m \sqrt{\eps}} \right) \]
with probability at least $1 - 2(m+1) \eps$.   The constant $\Csixone$ is described explicitly in \eqref{e:ThmConst} and is usually relatively simple; for example, if $\supp \varphi$ is contained in the unit disk, then one may take $\Csixone= 14\sqrt \pi \| \Delta \varphi\|_\infty$, where  $\| \cdot\|_\infty$ denotes the $L^\infty$-norm.
\end{theorem}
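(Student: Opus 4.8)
The plan is to pass from the eigenvalue counting measure to the logarithmic potential via the standard identity $\mu_A = \frac{1}{2\pi}\Delta \mathcal{L}_A$ in the sense of distributions, and then to control everything in terms of the hypotheses at the single random point $Z$. Concretely, for $\varphi \in C_c^\infty(\mathbb{C})$ we write, using integration by parts,
\[
\int_{\mathbb{C}} \varphi \, d\mu_{M_1} - \int_{\mathbb{C}} \varphi \, d\mu_{M_2}
= \frac{1}{2\pi}\int_{\mathbb{C}} \Delta\varphi(z)\,\bigl(\mathcal{L}_{M_1}(z) - \mathcal{L}_{M_2}(z)\bigr)\, dz.
\]
Since $\Delta\varphi$ is supported on a fixed compact set $S := \supp(\Delta\varphi)$, this is $\frac{|S|}{2\pi}\,\E_Z\!\bigl[\Delta\varphi(Z)\bigl(\mathcal{L}_{M_1}(Z) - \mathcal{L}_{M_2}(Z)\bigr)\bigr]$, where $Z$ is uniform on $S$. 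So the whole problem reduces to bounding an expectation of $\mathcal{L}_{M_1}(Z) - \mathcal{L}_{M_2}(Z)$, and hypothesis (2) says this difference is small ($\le \eta$) with probability $\ge 1-\eps$.

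The main obstacle — and the reason the $\frac{\log T}{m\sqrt\eps}$ term appears — is that on the bad event of probability $\le \eps$, the log-determinant difference need not be small; worse, $\mathcal{L}_{M_i}(Z)$ can be $-\infty$ when $Z$ is an eigenvalue, so a crude bound on $\E[|\mathcal{L}_{M_1}(Z) - \mathcal{L}_{M_2}(Z)|]$ fails. To handle this I would split each $\mathcal{L}_{M_i}(Z) = \frac1n\sum_j \log|\lambda_j(M_i) - Z|$ into the part where $|\lambda_j(M_i) - Z| \ge 1$ (bounded above by $\log(2T)$ using the norm bound, on the event from hypothesis (1)) and the part where $|\lambda_j(M_i) - Z| < 1$, which contributes only a negative amount. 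The negative tail $\frac1n\sum_{j:\,|\lambda_j - Z|<1}\log|\lambda_j(M_i) - Z|$ is controlled in $L^1$ by integrating over $Z$: for a single eigenvalue $\lambda$, $\int_{|z-\lambda|<1}|\log|z-\lambda||\,dz$ is a finite absolute constant, so averaging over the (at most $n$) eigenvalues and over $Z$ uniform on $S$ gives an $L^1(dZ)$ bound of order $1$ independent of $n$ (this is exactly where $\|\Delta\varphi\|_\infty$ and $|S|$ enter the constant $\Csixone$). Combining, $\mathcal{L}_{M_1}(Z) - \mathcal{L}_{M_2}(Z)$ has an $L^1$ bound of order $\log T$ after intersecting with the good norm event.

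With this integrability in hand, I would run a truncation/layer-cake argument to convert "small with probability $1-\eps$'' plus "$L^1$ of size $\log T$'' into the stated high-probability bound. Write $X := |\mathcal{L}_{M_1}(Z) - \mathcal{L}_{M_2}(Z)|$ (intersected with the norm-good event, the complement costing $\eps$ in probability). We know $\Prob(X \ge \eta) \le \eps$ and, for a suitable truncation, $\E[X \wedge (\text{something})] \lesssim \log T$. Then by Markov's inequality applied at $m$ dyadic-type levels, or more directly by bounding $\E[X\,\indicator{X \ge \eta}] \le$ (Cauchy–Schwarz or a direct tail integral) $\lesssim \log T\cdot\sqrt\eps$ using $\Prob(X\ge\eta)\le\eps$ together with the $L^2$- or $L^1$-type control, one gets that with probability at least $1 - 2(m+1)\eps$ the deviation is at most $\Csixone(\eta + \frac{\log T}{m\sqrt\eps})$: the $\eta$ term is the typical size, and the $\frac{\log T}{m\sqrt\eps}$ term pays for the rare large-deviation event, with $m$ a free parameter trading probability for accuracy. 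Finally I would read off the explicit constant: tracking $\frac{|S|}{2\pi}$, $\|\Delta\varphi\|_\infty$, and the absolute constant from $\int_{|z|<1}|\log|z||\,dz = \pi/2$ (giving $\int_{|z|<1}2|\log|z||\,dz \le 2\pi$, etc.), one arrives at $\Csixone = 14\sqrt\pi\,\|\Delta\varphi\|_\infty$ when $\supp\varphi$ lies in the unit disk, matching \eqref{e:ThmConst}. I expect the delicate point to be making the truncation levels and the split between the $\eta$-term and the error-term interact cleanly enough to produce precisely the factor $\frac{1}{m\sqrt\eps}$ rather than something weaker.
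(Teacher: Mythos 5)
Your proposal is correct in substance, but it reaches the conclusion by a genuinely different mechanism than the paper. The paper converts $\frac{1}{2\pi}\int\Delta\varphi\,(\mathcal{L}_{M_1}-\mathcal{L}_{M_2})$ into an empirical average over $m$ \emph{independent} sample points $Z_1,\dots,Z_m$ uniform on $\supp(\Delta\varphi)$, controls the Monte Carlo error by the Tao--Vu sampling lemma (Chebyshev for the empirical mean, fed by the same $L^2$ bound $\int|\Delta\varphi|^2|\mathcal{L}_{M_i}|^2\lesssim\log^2 T$ that you derive; this is Lemma \ref{lemma:intbnd}), and then bounds each $|F(Z_j)|$ by $\|\Delta\varphi\|_\infty\,\eta$ via \eqref{eq:concentration} and a union bound over the $m$ samples --- so $m$ is literally a number of samples. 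You instead keep the exact integral, write it as a conditional expectation $\E_Z[\,\cdot\mid M_1,M_2]$, split $X:=|\mathcal{L}_{M_1}(Z)-\mathcal{L}_{M_2}(Z)|$ into $X\mathbf{1}_{X<\eta}\le\eta$ plus a tail, and handle the tail by Cauchy--Schwarz over the joint $(Z,M_1,M_2)$-randomness. The step you flag as delicate is in fact just Markov's inequality over the matrix randomness: on the norm-good event $G$ one has $\E\big[X\mathbf{1}_{X\ge\eta}\mathbf{1}_G\big]\le(\E[X^2\mathbf{1}_G])^{1/2}\,\eps^{1/2}\lesssim\log T\sqrt{\eps}$, hence $\Prob\big(G\cap\{\E_Z[X\mathbf{1}_{X\ge\eta}\mid M]\ge C\log T/(m\sqrt{\eps})\}\big)\le m\eps$, and adding the $\eps$ cost of $G^c$ gives failure probability $(m+1)\eps$ --- slightly better than the stated $2(m+1)\eps$. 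Two small caveats: Cauchy--Schwarz requires the $L^2$ (not merely $L^1$) integrability of $\log|z-\lambda|$ over $\supp(\Delta\varphi)$, uniformly in $|\lambda|\le T$, which is exactly what the paper's Lemma \ref{lemma:intbnd} supplies and which you should prove rather than only the $L^1$ version; and your route will produce a constant different from the paper's explicit $14\sqrt{\pi}\|\Delta\varphi\|_\infty$, which is harmless since the theorem only requires some constant depending on $\varphi$. Your approach buys a more direct argument with no auxiliary sampling and no union bound over $m$ points; the paper's buys the reusable sampling lemma and a cleanly quotable explicit constant.
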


\begin{remark} \label{rem:mlim}
We have formulated Theorem \ref{thm:non} in probabilistic terms.  However, if the matrices $M_1$ and $M_2$ are deterministic and we can take $\eps$ arbitrarily small, the result can be reformulated.  Indeed, taking $\eps = m^{-3/2}$ and letting $m$ approach infinity gives  
\[ \left| \int_{\mathbb{C}} \varphi \ d \mu_{M_1} - \int_{\mathbb{C}} \varphi \ d \mu_{M_2} \right| \leq \Csixone \eta \]
provided 
\[ \left| \mathcal{L}_{M_1}(z) - \mathcal{L}_{M_2}(z) \right| < \eta \]
for almost every $z \in \supp(\varphi)$. 
\end{remark}

Since 
\[ \int_{\mathbb{C}} \varphi \ d \mu_{M_i}= \frac{1}{n} \sum_{j=1}^n \varphi(\lambda_j(M_i)) \]
for $i=1,2$, it is often useful to let $\varphi$ approximate an indicator function.  In particular, by allowing $\varphi$ to depend on $n$, we will use Theorem \ref{thm:non} (along with the explicit formula for $\Csixone$ given in \eqref{e:ThmConst}) to establish a local law and rate of convergence for the empirical spectral measure of a Toeplitz matrix subject to a random perturbation.  Such local laws  describe the mesoscopic behavior of the eigenvalues.  Similar local laws have been established in the random matrix theory literature for a variety of ensembles, see \cite{MR3306005,MR3230002,MR3683369,MR3770875,MR3230004,MR3278919,MR3000559,MR3697772,MR3704770,MR3183577,MR4021251,MR3800840,MR4091114,MR4061437,MR3622895,MR3622892,MR3602820,MR3791390,MR2481753,MR3688032,MR4125959,MR3757532,MR3915283,MR3098073,MR3406427} and references therein for a partial list of such results.  
%We will use Theorem \ref{thm:non} to establish a rate of convergence for the empirical spectral measure of Toeplitz matrices subject to small random perturbations in Section \ref{sec:app}.  

In order to use Theorem \ref{thm:non}, we need to be able to control the difference $\mathcal{L}_{M_1}(z) - \mathcal{L}_{M_2}(z)$ for a dense enough collection of $z \in \mathbb{C}$.  Our next two theorems provide such bounds.

%Recall that the singular values of the $n \times n$ matrix $A$ are the eigenvalues of $\sqrt{A A^\ast}$, where $A^\ast$ is the conjugate transpose of $A$.  We let $\sigma_1(A) \geq \cdots \geq \sigma_n(A)$ denote the ordered singular values of $A$ and $\nu_A$ be the empirical measure constructed from the singular values of $A$ as defined in \eqref{eq:defsingEM}.  We will often write
%\[ \sigma_{\min}(A) := \sigma_n(A) \]
%to denote the smallest singular value of $A$.  

Our next result can be compared to Weyl's perturbation theorem (see \eqref{eq:weyleig}), which implies that the eigenvalues of Hermitian matrices are stable under small perturbations.    
Theorem \ref{thm:replacement} below shows that the logarithmic determinant (and hence the empirical spectral measure by Theorem~\ref{thm:non}) of an arbitrary matrix is also stable under small perturbations, provided the smallest singular values of the matrix are not too extreme.  

\begin{theorem}[Norm comparison principle] \label{thm:replacement}
Let $M_1$ and $M_2$ be $n \times n$ matrices.  Take $z \in \mathbb{C}$ and $\eps \in (0,1/2)$, and assume that
\begin{equation} 
	\sigma_{\min} := \min\{ \sigma_{\min}(M_i - z I) : i=1,2 \} > 0
\end{equation}
and $\| M_1 - M_2 \| < \eps/2$.  Then
\begin{equation} \label{eq:conclusion}
	\left| \mathcal{L}_{M_1}(z) - \mathcal{L}_{M_2}(z) \right| \leq 
6\left(| \log(\eps/2)| + |\log \sigma_{\min} | \right) \nu_{M_2 - zI}([0, \eps]) +  \frac{2}{\eps}\| M_1 - M_2 \|. 
	\end{equation} 
\end{theorem}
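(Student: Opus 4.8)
The plan is to pass from determinants to singular values and then compare the two logarithmic potentials term by term. Set $B_i := M_i - zI$ for $i=1,2$, so that $B_1 - B_2 = M_1 - M_2$ and $\| B_1 - B_2 \| < \eps/2$. By \eqref{eq:logpotESM}, and using the hypothesis $\sigma_{\min} > 0$ to ensure every $\log\sigma_j(B_i)$ is finite,
\[ \mathcal{L}_{M_1}(z) - \mathcal{L}_{M_2}(z) = \frac1n \sum_{j=1}^n \bigl( \log \sigma_j(B_1) - \log \sigma_j(B_2) \bigr). \]
The one external input is Weyl's perturbation inequality for singular values (immediate from the min--max characterization): $\abs{\sigma_j(B_1) - \sigma_j(B_2)} \le \| B_1 - B_2 \| < \eps/2$ for every $j$.

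I would then split $\{1,\dots,n\}$ according to the size of $\sigma_j(B_2)$, writing $J_{>} := \{ j : \sigma_j(B_2) > \eps \}$ and $J_{\le} := \{ j : \sigma_j(B_2) \le \eps \}$. On $J_{>}$ the perturbation bound forces $\sigma_j(B_1) > \eps/2$ as well, so both singular values are at least $\eps/2$; the elementary inequality $\abs{\log a - \log b} \le \abs{a-b}/\min\{a,b\}$ then gives $\abs{\log\sigma_j(B_1) - \log\sigma_j(B_2)} \le \tfrac{2}{\eps}\| M_1 - M_2 \|$, and averaging over these (at most $n$) indices yields the second term of \eqref{eq:conclusion}. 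On $J_{\le}$ both $\sigma_j(B_1)$ and $\sigma_j(B_2)$ lie in $[\sigma_{\min}, \tfrac32\eps] \subseteq [\sigma_{\min}, 1)$ (this is where $\eps < 1/2$ enters), so each $\abs{\log\sigma_j(B_i)}$ is at most $\abs{\log\sigma_{\min}}$, hence certainly at most $\abs{\log(\eps/2)} + \abs{\log\sigma_{\min}}$; since $\abs{J_{\le}} = n\,\nu_{M_2 - zI}([0,\eps])$, averaging over $J_{\le}$ produces the first term of \eqref{eq:conclusion}. Adding the two contributions gives the stated bound (in fact with a constant comfortably below $6$, which I would not bother to optimize).

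The argument is entirely elementary, so there is no genuine obstacle; the only point needing care is the bookkeeping in the case analysis. One has to check that the size-$\eps/2$ perturbation can neither push a ``small'' singular value of $B_1$ out of $[\sigma_{\min},1)$ (guaranteed by $\eps < 1/2$) nor push a ``large'' singular value of $B_1$ below $\eps/2$ (guaranteed by $\| M_1 - M_2 \| < \eps/2$); these are precisely the facts that license the crude logarithm bound on the block $J_{\le}$ and the quotient bound on the block $J_{>}$. It is worth emphasizing that the hypothesis $\sigma_{\min} > 0$ does real work here: it keeps the potentially singular block $J_{\le}$ from sending $\mathcal{L}_{M_1}(z) - \mathcal{L}_{M_2}(z)$ to infinity, and it is the reason the first term of \eqref{eq:conclusion} degrades only logarithmically as $\sigma_{\min} \to 0$.
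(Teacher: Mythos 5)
Your proof is correct, and it is genuinely simpler than the one in the paper. The paper rewrites each $\mathcal{L}_{M_i}(z)$ as $\int \log x \, d\nu_{M_i - zI}(x)$, splits the domain of integration at $\eps/2$, handles the small-singular-value range via an integration-by-parts comparison lemma (Lemma \ref{lemma:ibp}) together with the Weyl-based inclusion $\nu_{M_1-zI}([0,\eps/2]) \leq \nu_{M_2-zI}([0,\eps])$, and handles the range $(\eps/2, D]$ by pairing singular values and separately accounting for the possible mismatch between the number of singular values of $M_1 - zI$ and of $M_2 - zI$ falling in that window. You instead compare the two log-determinants term by term through the \emph{matched} (same-index) singular values and partition the indices only according to whether $\sigma_j(M_2 - zI)$ exceeds $\eps$; Weyl's inequality then guarantees that on $J_{>}$ both matched singular values stay above $\eps/2$ (licensing the Lipschitz bound) and on $J_{\le}$ both stay in $[\sigma_{\min}, 1)$ (licensing the crude logarithm bound), with $|J_{\le}| = n\,\nu_{M_2-zI}([0,\eps])$ by definition. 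This eliminates both the auxiliary lemma and the mismatch bookkeeping, and it delivers the sharper bound $2\,|\log \sigma_{\min}|\,\nu_{M_2 - zI}([0,\eps]) + \tfrac{2}{\eps}\|M_1 - M_2\|$, which implies \eqref{eq:conclusion}. The only thing the paper's route buys in exchange is that Lemma \ref{lemma:ibp} is reused in the proof of Theorem \ref{thm:replrank}, where a term-by-term pairing is not available because the perturbation there is controlled in rank rather than in norm; for Theorem \ref{thm:replacement} itself your argument is preferable.
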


Theorem~\ref{thm:replacement} has two useful quantitative features.
First, the right-hand side of \eqref{eq:conclusion} depends only on $\nu_{M_2 - zI}$ and not on $\nu_{M_1 - zI}$; this means one only needs to control the number of small singular values for one of the matrices.  
Second, Theorem~\ref{thm:replacement} makes no assumptions on the randomness of the matrices $M_1$ and $M_2$.  In particular, one can take the matrix $M_2$ to be entirely deterministic, and in some cases, the eigenvalues can be explicitly computed.  We give examples using these features in Sections \ref{sec:examples} and \ref{sec:app} below. 

In cases where the smallest singular values are comparable and the second smallest singular values are bounded from below, it is possible to prove a more precise result along the same lines as Theorem~\ref{thm:replacement}.  We will use Proposition~\ref{prop:small-cancel} below in Section~\ref{sec:deterministic} to derive a non-asymptotic result.
% since we are able to show there that the smallest singular values of two perturbation have the same order of magnitude.

\begin{proposition}\label{prop:small-cancel}
Let $M_1$ and $M_2$ be $n$ by $n$ matrices, let $z \in \mathbb{C}$, and let $a,b,$ and $\epsilon$ be positive real numbers (which may depend on $n$).  If the second-smallest singular values satisfy $\sigma_{n-1}(M_i-zI)\ge \epsilon$ for $i=1,2$ and the smallest singular values satisfy $a \le \sigma_n(M_i - zI) \le b$ for $i=1,2$, then
$$\abs{\mathcal L_{M_1}(z) - \mathcal L_{M_2}(z)} \le \frac{\log(b/a)}{n} + \frac{1}{\epsilon} \|M_1 - M_2\|.$$
\end{proposition}

We now turn to our final result of the subsection.  Recall the following interlacing result for the eigenvalues of Hermitian matrices (see \cite[Exercise III.2.4]{Bhatia}). 
If $A$ is an $n \times n$ Hermitian matrix with eigenvalues $\lambda_1(A) \geq \cdots \geq \lambda_n(A)$ and $E$ is a positive semi-definite Hermitian matrix of rank one, then the eigenvalues $\lambda_1(A + E) \geq \cdots \geq \lambda_n(A+E)$ of $A+E$ interlace with the eigenvalues of $A$:
\begin{equation} \label{eq:interlace}
	\lambda_i(A+E) \geq \lambda_i(A) \geq \lambda_{i+1}(A+E) 
\end{equation} 
for $1 \leq i \leq n-1$.  In many cases, \eqref{eq:interlace} implies that low rank perturbations of Hermitian matrices do not change the spectrum significantly.  Our next main result captures a similar behavior for the logarithmic potential (and hence the empirical spectral measure by Theorem \ref{thm:non}) of arbitrary matrices whose smallest and largest singular values are not too extreme.

\begin{theorem}[Rank comparison principle] \label{thm:replrank}
Let $M_1$ and $M_2$ be $n \times n$ matrices.  Take $z \in \mathbb{C}$, and assume that
\[ \sigma_{\min} := \min\{ \sigma_{\min}(M_i - z I) : i =1,2 \} > 0. \]
Similarly, define
\[ \sigma_{\max} := \max \{ \| M_i - z I \| : i = 1, 2 \}. \]
Then
\[ \left| \mathcal{L}_{M_1}(z) - \mathcal{L}_{M_2}(z) \right| \leq 2 \left( | \log \sigma_{\min} | + |\log \sigma_{\max} | \right) \frac{\rank (M _1 - M_2)}{n}. \]
\end{theorem}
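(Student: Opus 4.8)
The plan is to reduce the statement to the elementary identity $\mathcal{L}_{M_i}(z) = \frac1n \sum_{j=1}^n \log \sigma_j(M_i - zI)$, which follows from \eqref{eq:logpotESM} by writing $|\det(M_i - zI)|$ as the product of the singular values of $M_i - zI$; the hypothesis $\sigma_{\min} > 0$ guarantees both determinants are nonzero, so these logarithms are finite. Writing $A := M_1 - zI$, $B := M_2 - zI$ and $r := \rank(M_1 - M_2) = \rank(A - B)$, it then suffices to prove that $\bigl| \sum_{j=1}^n \bigl( \log \sigma_j(A) - \log \sigma_j(B) \bigr) \bigr| \le 2 r \bigl( |\log \sigma_{\min}| + |\log \sigma_{\max}| \bigr)$.

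The key input is Weyl's perturbation inequality for singular values (see, e.g., \cite{Bhatia}): $\sigma_{i+j-1}(X+Y) \le \sigma_i(X) + \sigma_j(Y)$ for all admissible index pairs. Applying it with $X = A$, $Y = B - A$ and $j = r+1$, and using that $\sigma_{r+1}(B - A) = 0$ since $B-A$ has rank $r$, yields $\sigma_{i+r}(B) \le \sigma_i(A)$ for $1 \le i \le n-r$; exchanging the roles of $A$ and $B$ gives $\sigma_{i+r}(A) \le \sigma_i(B)$ for $1 \le i \le n-r$. Since every singular value of $A$ and of $B$ lies in $[\sigma_{\min}, \sigma_{\max}]$, taking logarithms is legitimate and the non-increasing sequences $a_j := \log \sigma_j(A)$ and $b_j := \log \sigma_j(B)$ satisfy $a_{i+r} \le b_i$ and $b_{i+r} \le a_i$ for $1 \le i \le n-r$. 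I would then estimate $S := \sum_{j=1}^n a_j - \sum_{j=1}^n b_j$ by reindexing: $\sum_{j=r+1}^n a_j = \sum_{i=1}^{n-r} a_{i+r} \le \sum_{i=1}^{n-r} b_i$, so $\sum_{j=1}^n a_j \le \sum_{j=1}^r a_j + \sum_{j=1}^{n-r} b_j$, whence $S \le \sum_{j=1}^r a_j - \sum_{j=n-r+1}^n b_j$, a sum of only $2r$ terms. As each $|a_j|, |b_j| \le \max\{|\log \sigma_{\min}|, |\log \sigma_{\max}|\} \le |\log \sigma_{\min}| + |\log \sigma_{\max}|$, this gives $S \le 2 r ( |\log \sigma_{\min}| + |\log \sigma_{\max}| )$, and the matching lower bound on $S$ follows by swapping $M_1$ and $M_2$. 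Dividing by $n$ yields the claim.

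There is essentially no deep obstacle: the only points that need care are the index bookkeeping in the reindexing step — in particular the degenerate case $r = n$, where the reindexing bound is vacuous but the crude estimate $|S| \le \sum_j |a_j| + \sum_j |b_j| \le 2n ( |\log \sigma_{\min}| + |\log \sigma_{\max}| )$ already suffices — and checking that the stated hypotheses are exactly what is required to invoke Weyl's inequality and to pass to logarithms.
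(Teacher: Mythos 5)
Your proof is correct, and it takes a genuinely different route from the paper's. The paper first rewrites each $\mathcal{L}_{M_i}(z)$ as $\int_{\sigma_{\min}}^{\sigma_{\max}}\log(x)\,d\nu_{M_i-zI}(x)$, invokes Lemma \ref{lemma:ibp} to reduce the problem to bounding the Kolmogorov-type distance $\|\nu_{M_1-zI}-\nu_{M_2-zI}\|_{[\sigma_{\min},\sigma_{\max}]}$, and then passes to the squared singular values so as to apply the rank inequality of Theorem A.44 in \cite{BSbook}, which bounds that distance by $\rank(M_1-M_2)/n$. You instead work directly with the sums $\frac1n\sum_j\log\sigma_j(M_i-zI)$, use the additive Weyl inequality to get the shift bounds $\sigma_{i+r}(B)\le\sigma_i(A)$ and $\sigma_{i+r}(A)\le\sigma_i(B)$ for a rank-$r$ difference, and telescope so that only $2r$ boundary terms survive, each controlled by $\max\{|\log\sigma_{\min}|,|\log\sigma_{\max}|\}$. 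Your argument is more self-contained (it needs neither the integration-by-parts lemma nor the external rank inequality, whose proof is itself an interlacing argument, so the two routes are close cousins at bottom) and it even yields the marginally sharper constant $2\max\{|\log\sigma_{\min}|,|\log\sigma_{\max}|\}$ in place of $2(|\log\sigma_{\min}|+|\log\sigma_{\max}|)$; what the paper's route buys is structural parallelism with the proof of Theorem \ref{thm:replacement}, which runs through the same Lemma \ref{lemma:ibp} and the same Kolmogorov-distance bookkeeping. Your handling of the edge cases ($r=n$ via the crude bound, and implicitly $r=0$ where both sides vanish) is also fine.
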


Theorems \ref{thm:replacement} and \ref{thm:replrank} are closely related to a number of techniques used in the random matrix theory literature to study non-Hermitian matrices, including those found in \cite{MR1428519,basak_regularization_2019,MR2409368,MR2490477}.  The authors are not aware of any works where these results are stated in the deterministic forms given above.

\subsection{Overview and outline of the methods} \label{sec:outline}

In this section, we outline the methods used in the proofs of our main results.  We begin with some details concerning the proofs of Theorems \ref{thm:replacement} and \ref{thm:replrank}.  Recall from \eqref{eq:logpotESM} that the logarithmic potential $\mathcal{L}_M$ of the empirical spectral measure $\mu_M$ of an $n \times n$ matrix $M$ is given by
\[ \mathcal{L}_M(z) = \frac{1}{n} \log |\det(M - zI)| = \frac{1}{n} \sum_{j=1}^n \log \sigma_j(M - zI), \]
where we used the fact that the absolute value of the determinant is given as the product of singular values.  This technique, which allows us to focus on the singular values rather than the eigenvalues, is at the heart of Girko's Hermitization technique in random matrix theory; see, for instance, \cite{MR3808330,MR773436,MR1310560,MR2130247,MR2046403,MR2908617,MR1428519,MR2409368,tao_random_2010-1} and references therein.  Our proofs of Theorems \ref{thm:replacement} and \ref{thm:replrank} utilize the fact that the singular values of $M-zI$ are stable under small perturbations (as well as low rank perturbations).  Heuristically, if none of the singular values of $M-zI$ are too extreme (to avoid the poles of  $\log |\cdot|$ at zero and infinity), the logarithmic potential $\mathcal{L}_M$ would also be stable.  
Our proof uses Weyl's perturbation theorem for singular values (cf. \eqref{eq:weyleig}), see Theorem 1.3 in \cite{chafai_lectnotes_2009} or Problem III.6.13 in \cite{Bhatia}. 
 
\begin{theorem}[Weyl's perturbation theorem for singular values] \label{thm:weyl}
If $M_1$ and $M_2$ are two $n \times n$ matrices, then 
\[ \max_{1 \leq j \leq n} | \sigma_j(M_1) - \sigma_j(M_2)| \leq \|M_1 - M_2\|. \]
\end{theorem}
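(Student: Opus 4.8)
The plan is to reduce the singular-value statement to the already-established Hermitian case \eqref{eq:weyleig} via a Hermitian dilation. For an $n \times n$ matrix $M$, form the $2n \times 2n$ Hermitian matrix
\[ \widetilde{M} := \begin{bmatrix} 0 & M \\ M^\ast & 0 \end{bmatrix}, \]
often called the Jordan--Wielandt matrix or Hermitian dilation of $M$. Writing a singular value decomposition $M = U \Sigma V^\ast$ and conjugating $\widetilde{M}$ by the block-diagonal unitary $\mathrm{diag}(U, V)$, one checks directly that the eigenvalues of $\widetilde{M}$ are exactly $\pm \sigma_1(M), \ldots, \pm \sigma_n(M)$. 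In particular, the $n$ largest eigenvalues of $\widetilde{M}$, listed in decreasing order, are $\sigma_1(M) \geq \cdots \geq \sigma_n(M)$, and $\|\widetilde{M}\| = \sigma_1(M) = \|M\|$.

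First I would apply \eqref{eq:weyleig} to the pair of $2n \times 2n$ Hermitian matrices $\widetilde{M_1}$ and $\widetilde{M_2}$, obtaining
\[ \max_{1 \leq j \leq 2n} \left| \lambda_j(\widetilde{M_1}) - \lambda_j(\widetilde{M_2}) \right| \leq \| \widetilde{M_1} - \widetilde{M_2} \|. \]
Next I would note that $\widetilde{M_1} - \widetilde{M_2}$ is itself the Hermitian dilation of $M_1 - M_2$, so the remark above gives $\| \widetilde{M_1} - \widetilde{M_2} \| = \| M_1 - M_2 \|$. Finally, restricting the maximum over $j$ to the indices $1, \ldots, n$ corresponding to the nonnegative eigenvalues, and using $\lambda_j(\widetilde{M_i}) = \sigma_j(M_i)$ for $1 \leq j \leq n$ and $i = 1, 2$, yields
\[ \max_{1 \leq j \leq n} \left| \sigma_j(M_1) - \sigma_j(M_2) \right| \leq \| M_1 - M_2 \|, \]
which is the claim.

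There is no substantial obstacle here; the only point needing a moment's care is the elementary verification that the spectrum of $\widetilde{M}$ is $\{ \pm \sigma_j(M) \}$, together with the norm identity $\| \widetilde{M} \| = \| M \|$, both of which follow immediately from the singular value decomposition. An alternative and equally short route avoids dilations: using the Courant--Fischer characterization $\sigma_j(M) = \max_{\dim W = j} \min_{x \in W, \|x\| = 1} \| M x \|$, for any $j$-dimensional subspace $W$ and any unit vector $x \in W$ one has $\| M_1 x \| \leq \| M_2 x \| + \| (M_1 - M_2) x \| \leq \| M_2 x \| + \| M_1 - M_2 \|$; taking the minimum over unit $x \in W$ and then the maximum over such $W$ gives $\sigma_j(M_1) \leq \sigma_j(M_2) + \| M_1 - M_2 \|$, and swapping the roles of $M_1$ and $M_2$ completes the proof.
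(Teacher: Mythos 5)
Your argument is correct. Note, however, that the paper does not prove Theorem \ref{thm:weyl} at all: it is quoted as a known result with references to Chafa\"{i}'s lecture notes and to Bhatia (Problem III.6.13), so there is no in-paper proof to compare against. Your Hermitian-dilation reduction to \eqref{eq:weyleig} is one of the two standard proofs of this fact, and all the steps check out: the spectrum of the dilation $\widetilde{M}$ is indeed $\{\pm\sigma_j(M)\}$ (so that, in decreasing order, $\lambda_j(\widetilde{M}) = \sigma_j(M)$ for $1 \le j \le n$), the dilation is linear in $M$ so $\widetilde{M_1}-\widetilde{M_2}$ is the dilation of $M_1-M_2$, and $\|\widetilde{M}\|=\|M\|$. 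The alternative Courant--Fischer argument you sketch at the end is also complete and correct, and has the minor advantage of not requiring the Hermitian Weyl inequality as input. Either version would serve as a self-contained proof of the cited theorem.
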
 

The proof of Theorem \ref{thm:non} is based on a Monte Carlo sampling method developed by Tao and Vu to study random polynomials \cite{TV-univ_zeros}.  

We will use Theorems \ref{thm:non}, \ref{thm:replacement}, and \ref{thm:replrank} to prove our other main results.  Let us focus on how we would prove parts \eqref{item:locallaw} and \eqref{item:pairing} from Theorem \ref{thm:sample}.  Recall that $\Jmat$ is defined in \eqref{eq:defT}, and let $\ptwo$ be the $n \times n$ deterministic matrix with $1$ in the $(n,1)$-entry and zeros everywhere else.  It is easy to check that the eigenvalues of $\Jmat + \ptwo$ are the $n$-th roots of unity.  In order to establish Theorem \ref{thm:sample}, we will compare the eigenvalues of $\Jmat + n^{-\gamma} E$ to the eigenvalues of $\Jmat + \ptwo$.  More generally, for any banded Toeplitz matrix $A$, we show the existence of a low-rank, deterministic matrix $A'$ so that $A + A'$ is a circulant matrix.  We then compare the eigenvalues of $A + n^{-\gamma}E$ to the eigenvalues of $A + A'$.   Since the eigenvalues of circulant matrices are well-known and easy to compute (see Lemma \ref{lemma:circulant}), we can give an explicit description of the eigenvalues of $A + A'$ in terms of the roots of unity and the symbol of $A$.  

We will use Theorem \ref{thm:non} to compare the eigenvalues of $A + n^{-\gamma}E$ to the eigenvalues of $A + A'$.  However, Theorem \ref{thm:non} requires estimating the difference between the logarithmic potentials of the two matrices.  For this we use Theorems \ref{thm:replacement} and \ref{thm:replrank}.  
%For studying perturbations $A + n^{-\alpha - \gamma} E$ of a Toeplitz matrix $A$, we rely on a comparison method to compare the eigenvalues with those of a circulant matrix.  For instance, if $A$ is a banded Toeplitz matrix, we show the existence of a low rank matrix $A'$ so that $A + A'$ is a deterministic circulant matrix.  Since the eigenvalues of circulant matrices are well-known and easy to compute (see Lemma \ref{lemma:circulant}), we can give an explicit description of the limiting empirical spectral measure of $\mu_{A + A'}$.  
Our method can be summarized by the following sequence of comparisons:
\[ \mathcal{L}_{A + n^{-\gamma}E} \approx \mathcal{L}_{A + A' + n^{-\gamma}E} \approx \mathcal{L}_{A + A'}, \]
where the first approximation utilizes Theorem \ref{thm:replrank} (since $A'$ has low rank) and the second uses Theorem \ref{thm:replacement} (since $n^{-\gamma}E$ has small norm).  For both of these approximations we will need to control both the largest and smallest singular values in order to apply Theorems \ref{thm:replacement} and \ref{thm:replrank}.  As is typical in non-Hermitian random matrix theory, the largest singular values are fairly easy to bound.  The smallest singular values of $A + n^{-\gamma}E$ and $A + A' + n^{-\gamma}E$ are controlled by using the randomness of $E$ (since we can view these matrices as deterministic perturbations of a random matrix); for these bounds we use several off-the-shelf results.  Since $A + A'$ is deterministic, we can no longer employ standard random matrix theory bounds.  Instead, we use the fact that Theorem \ref{thm:non} introduced an additional source of randomness.  In particular, since the value $Z$ from Theorem \ref{thm:non} is random, we need to control the smallest singular value of $A + A' - ZI$.  Here, we can use the fact that $Z$ is random (and continuously distributed) to show that, with high probability, it avoids the values of $z \in \mathbb{C}$ for which $A + A' - zI$ has a very small singular value.  

The proof of part \eqref{item:pairing} from Theorem \ref{thm:sample} is based on the local law from part \eqref{item:locallaw}.  Indeed, if we take $\varphi$ in part \eqref{item:locallaw} to be an approximate indicator function, we can approximate the number of eigenvalues of $\Jmat + n^{-\gamma} E$ (or more generally $A + n^{-\gamma} E$) by the number of roots of unity (alternatively, eigenvalues of $A + A'$) in the same region.  From here, we use a divide-and-conquer approach to divide the plane into small rectangular regions and apply the local law to an approximate indicator function on each rectangle.  While the method is fairly simple, the technical details require careful control of the error terms as well as a delicate balance between the size of each rectangle and the number of roots of unity that fall into them.  

%We can then pass from an approximation of the logarithmic potentials to an approximation involving the empirical spectral measures using Theorem \ref{thm:non}.  

%This paper is organized as follows.  We prove Theorems \ref{thm:non}, \ref{thm:replacement}, and \ref{thm:replrank} in Appendix \ref{sec:prooftheory}. In Section \ref{sec:ESM}, we present some preliminary results on circulant matrices and provide the proof of Theorem \ref{thm:pert-toep}.  The proof of Theorem \ref{thm:rate2} is presented in Section \ref{sec:rate}.  We prove Proposition \ref{p:smalln} in Section \ref{s:det-pert}.  The two appendices contain auxiliary results: Appendix \ref{sec:jordan} contains some deterministic results used in the proof of Proposition \ref{p:smalln} and Appendix \ref{sec:multi} contains some bounds relating to Corollary \ref{cor:multi}.  

\subsection{Related works} \label{sec:works}

The use of the logarithmic potential to study the eigenvalues of non-Hermitian random matrices has a long history in the field, including in the investigation of the famous circular law; we refer the reader to the works \cite{MR3808330,MR773436,MR1310560,MR2130247,MR2046403,MR1428519,MR2409368,tao_random_2010-1,GTcirc,TaoBook} and references therein as well as the survey \cite{MR2908617} for further historical details.   Our main methods for studying the logarithmic potential of the empirical spectral measure are similar to several works in the random matrix theory literature, including \cite{MR1428519,basak_regularization_2019,MR2409368,tao_random_2010-1}.  Many of the methods introduced in these works are comparison methods, which show how the spectrum of one matrix can be compared to another in order to compute the limiting eigenvalue distribution.  For instance, the results in \cite{MR2409368,tao_random_2010-1} show conditions under which small perturbations (or low rank perturbations) of random matrices do not change the limiting distributions of the eigenvalues.  The replacement principle (Theorem \ref{thm:TVrepl}), which was a major inspiration for the current article, is another example appearing in \cite{tao_random_2010-1}.  The non-asymptotic replacement principle introduced above was motivated by similar methods used by Tao and Vu to study roots of random polynomials \cite{TV-univ_zeros}.  

In recent years, a number of results have exploited the logarithmic potential to understand the local behavior of the eigenvalues of non-Hermitian random matrices, including local laws and related rates of convergence for the empirical spectral measure.  These results are too numerous to list in entirety but include \cite{1912.08856,MR3325952,MR3820329,MR3230002,MR3683369,MR3770875,MR3622892,MR3791390,MR3230004,MR3278919} and references therein.  

Perturbations of the matrix $\Jmat$ (defined in \eqref{eq:defT}) were investigated by Davies and Hager in \cite{MR2490477}.  Similar to our results, they investigated both random perturbations having small norm as well as low rank perturbations using a relevant Grushin problem.  
The pseudospectrum of Toeplitz matrices was investigated in \cite{MR1148398}.  
Random and deterministic perturbations of Toeplitz matrices have also been considered in \cite{MR1987715} and \cite{MR1972739}.  
The limiting distribution of the eigenvalues for Gaussian perturbations of non-normal matrices was investigated by \'{S}niady \cite{MR1929504} and later generalized by Guionnet, Zeitouni, and the second author \cite{MR3134007} by analyzing the logarithmic potential and using tools from free probability theory.  

Extensions of these results have appeared for the cases of Gaussian perturbations \cite{FPZeitouni_regularization_2015,MR4201590,MR3584192,MR4200678,basak_regularization_2019} as well as more general perturbations \cite{2001.09024,basak_spectrum_2020}.  In particular, the methods in \cite{MR4201590} are strong enough to handle general Toeplitz matrices.  We conjecture that our results should also hold for such a general class of Toeplitz matrices, but our present methods require the restriction to banded Toeplitz matrices.  
We also emphasize the connection with the work of Vogel and  Zeitouni in \cite{2001.09024}.  Similar to our results, Vogel and  Zeitouni provide a (nearly) deterministic comparison result for deterministic matrices subject to small random perturbations.  In specific cases, our main results can recover their bounds.  Our non-asymptotic results (such as Theorem \ref{thm:rate2}) extend their results in certain cases by providing a rate of convergence in Wasserstein distance.  
Very recently, the eigenvectors of random perturbations of Toeplitz matrices were investigated and shown to be localized by Basak, Vogel, and Zeitouni \cite{2103.17148}.  

Localized random perturbations of infinite banded Laurent matrices were studied in \cite{MR1870436}, where it is shown that the spectrum can be approximated by perturbed circulant matrices.  Our methods (discussed in Section \ref{sec:outline}) similarly use a connection between the eigenvalues of perturbed Toeplitz matrices and those of circulant matrices.  

We will discuss further details about how our results compare to these existing results in the literature after we introduce our main results in the sections below.

\subsection{A non-asymptotic example} \label{sec:examples}

\newcommand\eevent{\mathcal E_r}
\newcommand\smin{\sigma_{\mathrm{min}}}
\renewcommand\Pr\Prob

We will use Theorem \ref{thm:replacement} extensively in the coming sections.  Before doing so, we present a simpler case where Theorem \ref{thm:replacement} can be applied to perturbations of matrices in Jordan canonical form to derive a non-asymptotic result. 
%For simplicity, we have left the lower bound on $n$ unspecified; however, as in Proposition~\ref{p:smalln},  there are cases where it is possible to produce an effective lower bound for $n$ (one must specify constraints on the test function $\varphi$, the type of randomness in the perturbation, and the parameter $t_n$ described below).   
Asymptotic results that cover similar types of matrices with general types of random perturbations have appeared in \cite{Wood2016,basak_spectrum_2020,2001.09024} and references therein.  
%See Section~\ref{sec:deterministic} for another non-asymptotic result which relies on tools from this paper.

\begin{example}[$A$ has blocks of size $o(\log n)$] \label{eg:smallblocks}
The following is a demonstration of a non-asymptotic result that holds for values of $n$ as small as $11$. 
% with the assumption that $\gamma \ge 4$.  It is possible to extend the result to any $\gamma >0$ if one considers random perturbations and assumes a different constant lower bound on $n$.

Let $\gamma\ge 4$ and $n \ge 11$, and suppose that $A$ is an $n$ by $n$ matrix in Jordan canonical form with all eigenvalues equal to zero for simplicity, and where each block has size at most $m:= \log(n)/t_n\ge 1$ for all $n \ge 1$, where $t_n \to \infty$ as $n \to \infty$ and $1\le t_n \le \log n$ for all $n$.  Let $M_1$ be an $n$ by $n$ matrix with iid~random entries each having absolute value at most $n$, and let $M_2$ be the $n$ by $n$ zero matrix.  By applying Theorem~\ref{thm:replacement} and Theorem~\ref{thm:non} using a smooth test function $\varphi$ with compact support, one can show that

$$\left| \int_{\mathbb C} \varphi \, d \mu_{A} - \int_{\mathbb C} \varphi \, d \mu_{A+n^{-\gamma} M_1} \right| \le C \left( n^{-\gamma/2} + 
e^{-\gamma t_n/20}\right),
$$
with probability at least $1-c'(e^{-\gamma t_n/10}+ e^{-3\gamma t_n/10})$, 
where $C$ and $c'$ are constants depending only on $\varphi$. This shows that even for small $n$, the eigenvalues of the perturbed matrix are all close to zero (the value of the unperturbed eigenvalues). We give a more detailed sketch of this example in Section~\ref{a:s2egs} of Appendix \ref{sec:prooftheory}. 
\end{example}

\section{Limiting empirical spectral measure} \label{sec:ESM}

%\subsection{Applications to perturbations of banded Toeplitz matrices} \label{sec:app}
This section studies the limiting empirical spectral distribution of the model $A+ n^{ - \gamma} E$, where $A$ is a banded Toeplitz matrix (as in Definition \ref{def:toep}) and $E$ is a random or deterministic perturbation.  Here, $\gamma > 0$ is chosen so that 
\begin{equation} \label{eq:normE2}
	n^{-\gamma} \| E \| = o(1).
\end{equation}  

While our strongest results are in Sections~\ref{sec:ratec}, \ref{sec:deterministic}, and \ref{sec:locallaw}, we have included this section since it illustrates some of our proof techniques in a much simpler setting compared to those presented in subsequent sections.   

We begin with our most general result for the limiting empirical spectral measure.  
%Recall that $S^1$ denotes the unit circle $\{z \in \mathbb{C} : |z| = 1\}$ in the complex plane centered at the origin.  

\begin{theorem}\label{thm:pert-toep}
Let $\{a_j \}_{j \in \mathbb{Z}}$ be a sequence of complex numbers, indexed by the integers, so that
\begin{equation} \label{eq:aj}
	\sum_{j \in \mathbb{Z}} |j a_j| < \infty. 
\end{equation} 
Let $k_n$ be a sequence of non-negative integers that converges to $k \in [0 , \infty]$ as $n \to \infty$ and satisfies 
\begin{equation} \label{eq:kno}
	k_n = o \left( \frac{n}{\log n} \right). 
\end{equation} 
Let $A$ be an $n \times n$ Toeplitz matrix with symbol $\{a_j \}_{j \in \mathbb{Z}}$ truncated at $k_n$, and take $\gamma > 0$. Let $E$ be an $n \times n$ random matrix which satisfies:
\begin{enumerate}
\item there exists $\alpha \geq 0$ so that 
\begin{equation} \label{eq:normE}
	\|E\| = O(n^\alpha)
\end{equation}
with probability $1 - o(1)$; and
\item for almost every $z \in \mathbb{C}$, there exists $\kappa_z > 0$ so that 
\begin{equation} \label{eq:lsvE}
	\Prob (\sigma_{\min}(A + n^{-\alpha - \gamma}E - zI) \leq n^{-\kappa_z} ) = o(1). 
\end{equation} 
\end{enumerate} 
Then there exists a (deterministic) probability measure $\mu$ on $\mathbb{C}$ so that the empirical spectral measure $\mu_{A + n^{-\alpha - \gamma} E}$ of $A + n^{-\alpha - \gamma} E$ converges weakly in probability to $\mu$.  Moreover, $\mu$ is the distribution of 
\begin{equation} \label{eq:Uj}
	\sum_{|j| \leq k} a_j U^j, 
\end{equation} 
where $U$ is a random variable uniformly distributed on $S^1$.  Here, we use the convention that if $k = \infty$, 
\[ \sum_{|j| \leq k} a_j U^j = \sum_{j \in \mathbb{Z}} a_j U^j. \]
\end{theorem}

\begin{remark} \label{rem:aj}
Condition \eqref{eq:aj} is analogous to Eq. (1.2) in \cite{MR4201590}.  We also note that condition \eqref{eq:aj} implies 
\[ \sum_{j \in \mathbb{Z}} |a_j|^2 < \infty, \]
and we will use this fact in the forthcoming proofs.  
\end{remark}

\begin{figure}
\includegraphics[scale=0.4]{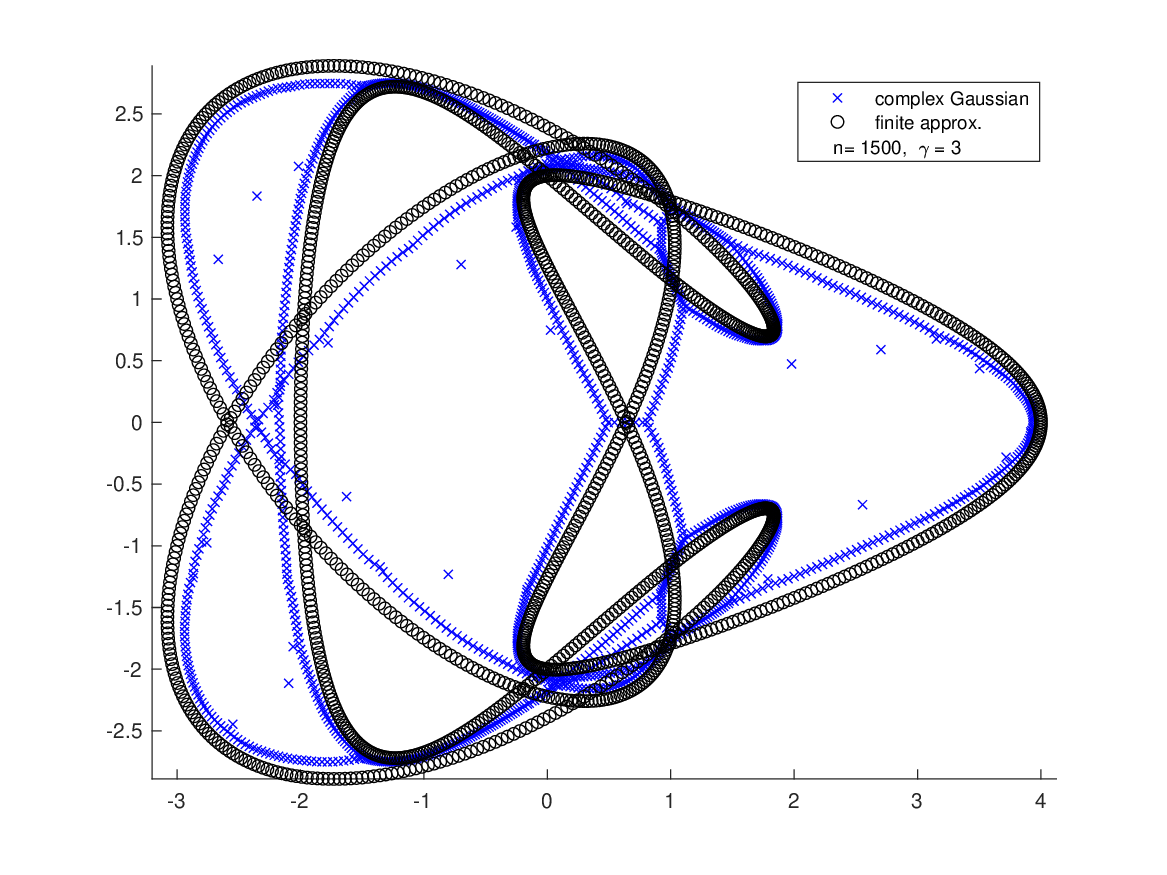}
\includegraphics[scale=0.4]{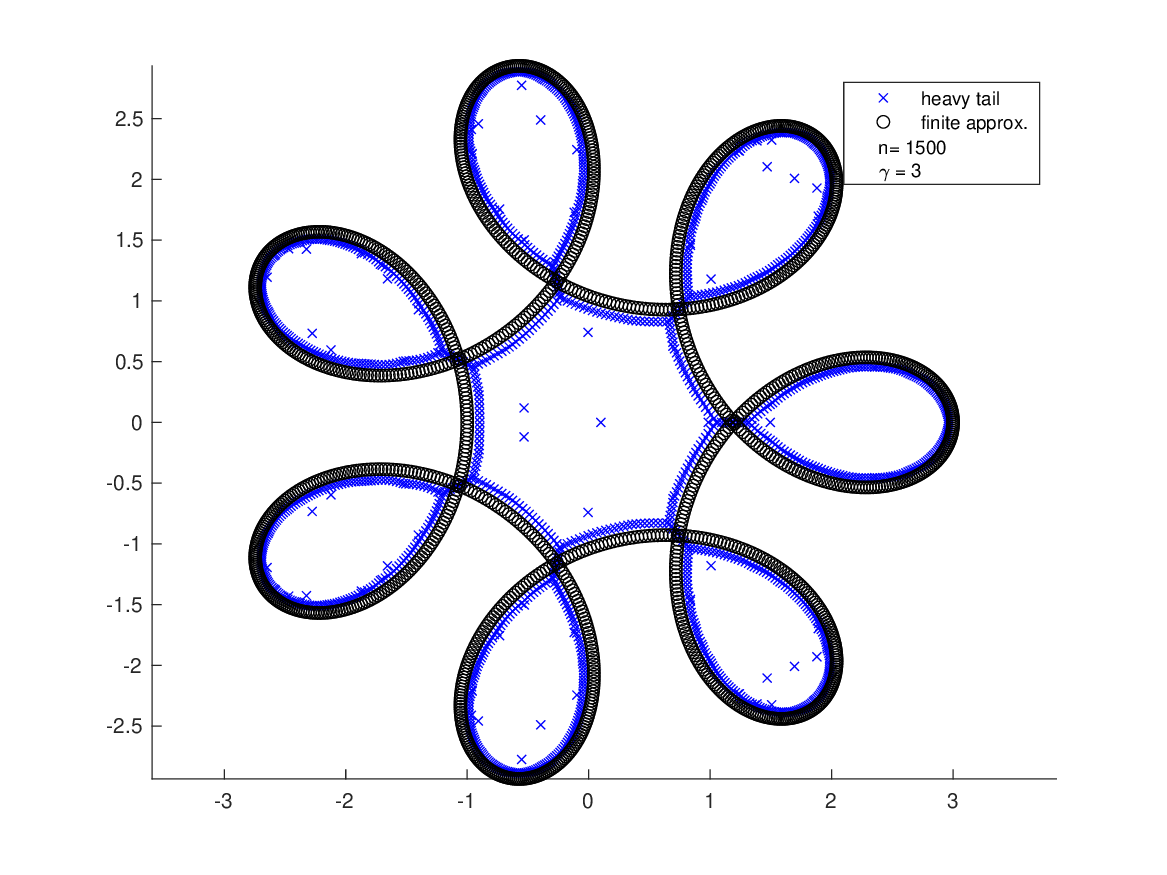}
(a) \hspace{3in} (b)
\medskip

\includegraphics[scale=0.4]{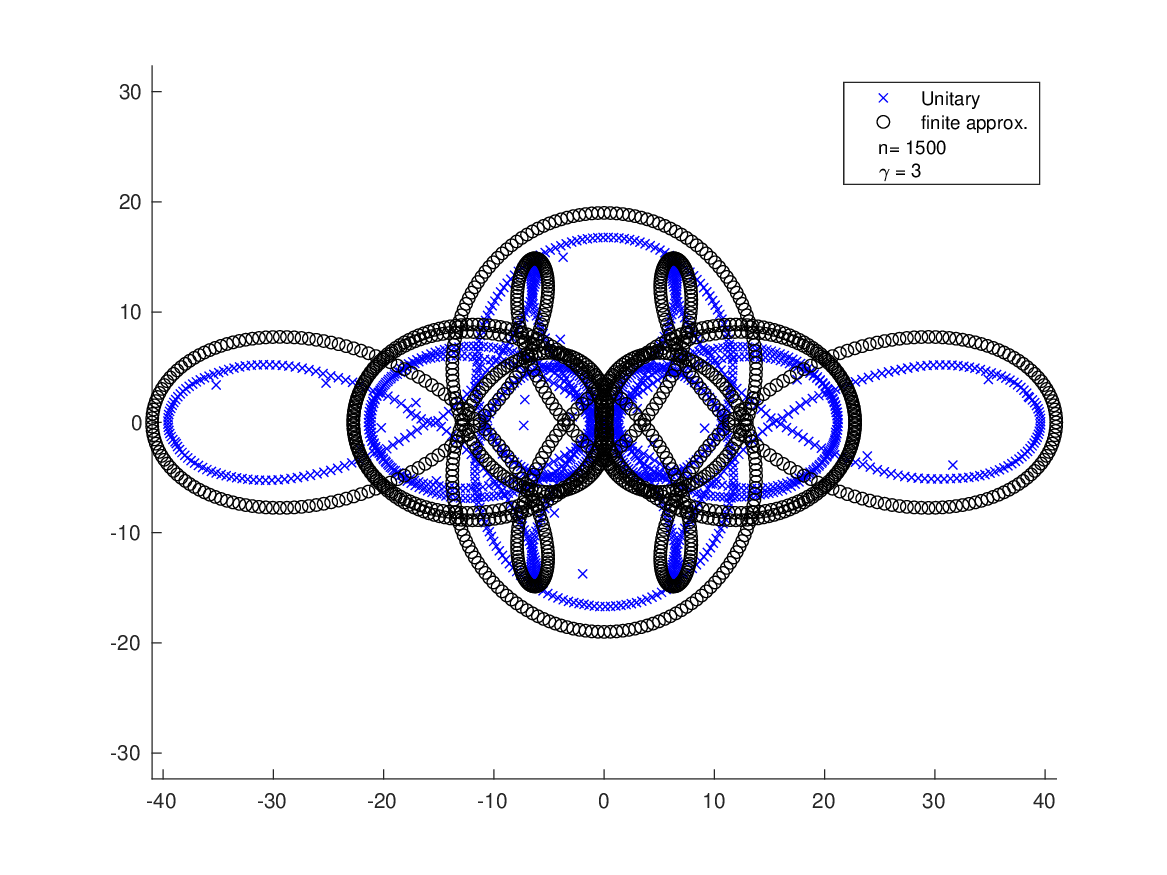}
\includegraphics[scale=0.4]{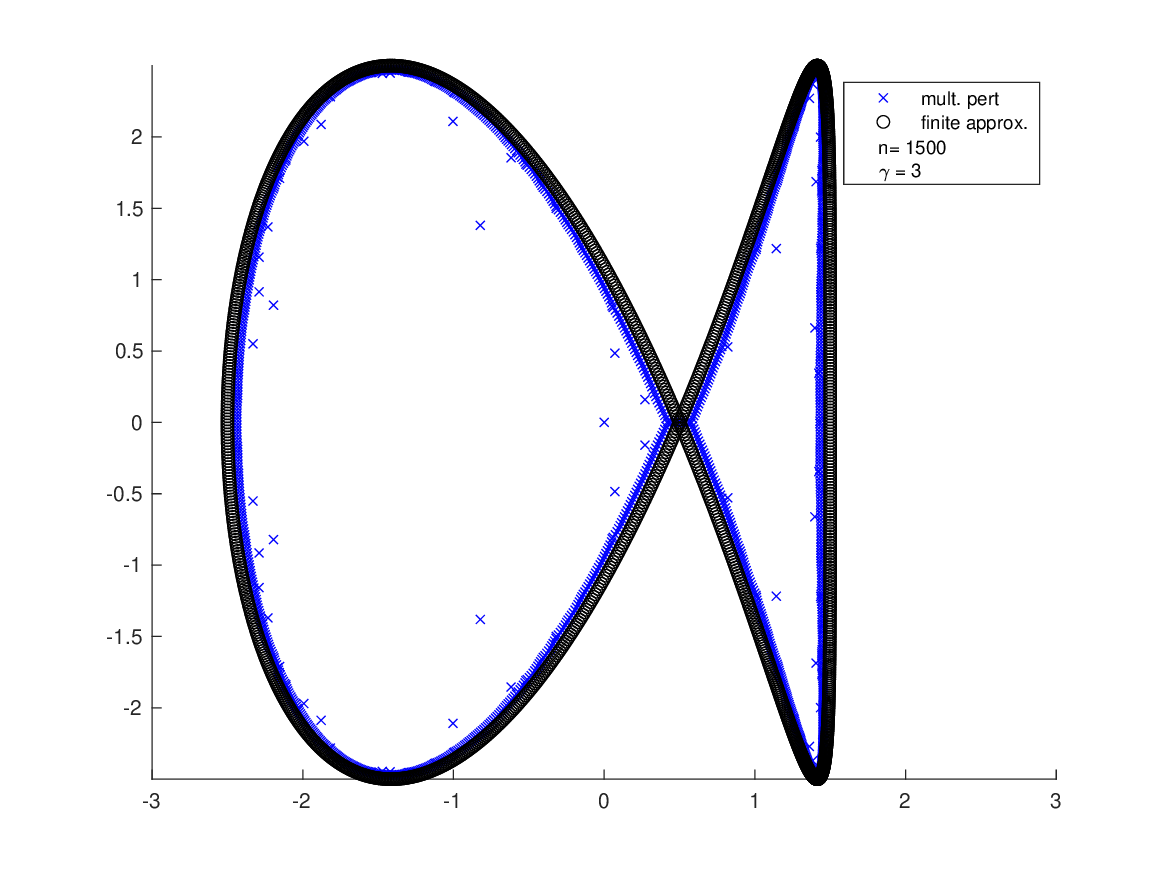}
(d) \hspace{3in} (c)
\caption{Above are plots of different Toeplitz matrices perturbed by different types of random noise.  In each case, $n=1500$ and the noise is scaled by $n^{-\gamma}$ where $\gamma = 3$.  Clockwise from the upper left, we have 
(a) $M_1+n^{-\gamma}G$ where $G$ has iid~complex standard Gaussian entries (see Corollary~\ref{cor:pert-toep} (1)),
(b) $M_2+n^{-\gamma}H$ where $H$ has iid~entries with heavy-tailed distribution $U_{[0,1]}^{-1/2}$ where $U_{[0,1]}$ is uniform on the real interval $[0,1]$ (see Theorem~\ref{thm:pert-toep}), 
(c) $M_3(I+n^{-\gamma}G)$, where $G$ has iid~real standard Gaussian entries (see Corollary~\ref{cor:multi}),
and 
(d) $M_4+n^{-\gamma}U$ where $U$ is a Haar unitary matrix (see Corollary~\ref{cor:pert-toep} (3)).
The Toeplitz matrices $M_i$ are defined as follows (any values of $a_j$ not specified are assumed to be zero): the symbol for $M_1$ is defined by $(a_{-5},\ldots,
%a_{-4},a_{-3},a_{-2},a_{-1},a_0,a_1,a_2,a_3,a_4,a_5,
a_6)= (1,0,1,0,1,0,0,1,0,-1,0,1)$, the symbol for $M_2$ is defined by $(a_{-1},a_0,\ldots,a_5,a_6)=(2,0,\ldots,0,1)$, the symbol for 
$M_3$ is defined by $(a_{-2},a_{-1},a_0,a_1,a_2) = (1,1,0,1,-1.5)$, and
the symbol for $M_4$ is defined by $(a_{-11}, \ldots, 
%a_{-10},a_{-9},a_{-8},a_{-7},a_{-6},a_{-5},a_{-4},a_{-3},a_{-2},a_{-1},a_0,a_1,a_2,a_3 a_4,
a_5)=(10,0,0,0,1,0,1,0,9,0,1,0,5,0,9,0,5)$.  The finite approximation comes from a corresponding circulant matrix, which is described in Lemma~\ref{lemma:lowrank}.
%in Section~\ref{sec:circ:tools}.
}
\label{fig:zoo}
\end{figure}

While we could not locate the results in this section in the literature, it is likely that some existing results on Toeplitz matrices (such as \cite{MR4201590,
basak_regularization_2019,
basak_spectrum_2020,MR4200678}) can be modified to cover the banded Toeplitz matrices we study here. One notable feature of Theorem~\ref{thm:pert-toep} is that it covers Toeplitz matrices with a growing number of diagonals and for general random perturbations, and further makes no additional assumptions on $\gamma$ beyond $\gamma >0$.  References \cite{basak_regularization_2019, basak_spectrum_2020} study Toeplitz matrices with a fixed number of diagonals and apply when $\gamma >0$ with Gaussian perturbations or general random perturbations, respectively.
The main result of \cite{MR4201590} applies to Toeplitz matrices with any number of diagonals under Gaussian perturbations with $\gamma > 2.5$, and \cite{MR4200678} studies a slowly growing number of diagonals with general random perturbations and $\gamma>3.5$; both of these papers also produce more refined information on the locations of eigenvalues, which can be compared to Theorem~\ref{thm:rate2} below.  

The condition on $k_n$ in \eqref{eq:kno} is likely an artifact of our proof, and we believe this condition can be relaxed.  
%In fact, our methods show that these conditions can be relaxed for specific choices of $A$. 
The assumptions on $E$ in Theorem \ref{thm:pert-toep} are very general and apply to a wide range of random matrix ensembles including matrices with iid light-tailed entries, matrices with iid heavy-tailed entries, elliptic random matrices, and random unitary matrices.   In the following corollary we specialize Theorem \ref{thm:pert-toep} to a few of these cases.  See Figure~\ref{fig:zoo} for eigenvalue plots in some example cases.

\begin{corollary} \label{cor:pert-toep}
Let $\{a_j \}_{j \in \mathbb{Z}}$ be a sequence of complex numbers, indexed by the integers, so that \eqref{eq:aj} holds.  
Let $k_n$ be a sequence of non-negative integers that converges to $k \in [0 , \infty]$ as $n \to \infty$ and satisfies \eqref{eq:kno}.  
Let $A$ be an $n \times n$ Toeplitz matrix with symbol $\{a_j \}_{j \in \mathbb{Z}}$ truncated at $k_n$.  
Assume one of the following conditions on the $n \times n$ matrix $E$ and the parameter $\gamma > 0$:
\begin{enumerate}
\item $E$ is an $n \times n$ random matrix whose entries are iid copies of a random variable with mean zero, unit variance, and finite fourth moment, and $\gamma > 1/2$.
\item $E$ is an $n \times n$ random matrix whose entries are iid copies of a random variable with mean zero and unit variance, and $\gamma > 1$.
\item $E$ is an $n \times n$ random matrix uniformly distributed on the unitary group $\mathcal{U}(n)$, and $\gamma > 0$.  
\end{enumerate}
Then there exists a (deterministic) probability measure $\mu$ on $\mathbb{C}$ so that the empirical spectral measure $\mu_{A + n^{ - \gamma } E}$ of $A + n^{ - \gamma } E$ converges weakly in probability to $\mu$.  Moreover, in all three cases $\mu$ is the distribution of the random variable given in \eqref{eq:Uj}, 
where $U$ is a random variable uniformly distributed on $S^1$.  
\end{corollary}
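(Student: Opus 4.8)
The plan is to deduce all three cases from Theorem~\ref{thm:pert-toep} by checking its two hypotheses, \eqref{eq:normE} and \eqref{eq:lsvE}, for the ensemble $E$ at hand and then reconciling the exponents. A preliminary observation I would record is that \eqref{eq:aj} forces $\sum_{j\in\mathbb Z}|a_j|<\infty$ (since $|a_j|\le|ja_j|$ for $|j|\ge1$); writing $A$ as a linear combination of the shift $T$ of \eqref{eq:defT}, its transpose, and their powers -- each of operator norm at most one -- this yields $\|A\|\le\sum_{j\in\mathbb Z}|a_j|=O(1)$ uniformly in $n$, so that $A-zI$ is, for each fixed $z$, a deterministic matrix of bounded operator norm.

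First I would verify the norm bound \eqref{eq:normE}. In Case~(1) the entries of $E$ are iid with mean zero, unit variance and finite fourth moment, so standard operator-norm bounds for iid matrices (e.g.\ the Bai--Yin law) give $\|E\|=O(\sqrt n)$ with probability $1-o(1)$, and I would take $\alpha=1/2$. In Case~(2), since $\E\|E\|_2^2=n^2$, the weak law of large numbers applied to $\|E\|_2^2=\sum_{i,j}|E_{ij}|^2$ gives $\|E\|\le\|E\|_2\le 2n$ with probability $1-o(1)$, so I would take $\alpha=1$. In Case~(3) one has $\|E\|=1$ deterministically, so $\alpha=0$. In each case the hypothesis on $\gamma$ ($\gamma>1/2$, $\gamma>1$, $\gamma>0$ respectively) ensures $\gamma>\alpha$; setting $\tilde\gamma:=\gamma-\alpha>0$ one has $A+n^{-\alpha-\tilde\gamma}E=A+n^{-\gamma}E$, so Theorem~\ref{thm:pert-toep} applied with $\gamma$ there understood as $\tilde\gamma$ will deliver the corollary -- provided \eqref{eq:lsvE} also holds.

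Next I would verify the least singular value bound \eqref{eq:lsvE}: for (Lebesgue-)almost every $z$ I need some $\kappa_z>0$ with $\Prob\big(\sigma_{\min}(A-zI+n^{-\gamma}E)\le n^{-\kappa_z}\big)=o(1)$. The factorization $A-zI+n^{-\gamma}E=n^{-\gamma}\big(n^\gamma(A-zI)+E\big)$ reduces this to a lower bound on the least singular value of the fixed matrix $F:=n^\gamma(A-zI)$, of polynomially bounded norm $\|F\|=O(n^\gamma)$, perturbed by $E$. For Cases~(1) and~(2) this is a quantitative invertibility statement for a deterministic matrix plus an iid random matrix -- the type of estimate $\sigma_{\min}(F+E)\ge n^{-B}$ (with probability $1-o(1)$ and $B$ depending on $\gamma$ and on the moment assumptions) that underlies the circular law, cf.\ \cite{tao_random_2010-1,basak_regularization_2019} and the works discussed in Section~\ref{sec:works}; for Case~(3) the analogous estimate for a deterministic matrix plus a Haar unitary matrix is what is needed. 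In all cases this yields $\sigma_{\min}(A-zI+n^{-\gamma}E)\ge n^{-\gamma-B}$ with probability $1-o(1)$, i.e.\ \eqref{eq:lsvE} with $\kappa_z=\gamma+B$ (in fact uniformly for $z$ in a fixed compact set).

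With both hypotheses in hand, Theorem~\ref{thm:pert-toep} (with $\gamma$ replaced by $\tilde\gamma$) produces a deterministic $\mu$ to which $\mu_{A+n^{-\gamma}E}$ converges weakly in probability and identifies it as the law of $\sum_{|j|\le k}a_jU^j$ with $U$ uniform on $S^1$; since this limit does not depend on $\gamma$, this is precisely the assertion. The hard part is the invertibility input \eqref{eq:lsvE}: the iid case with a fourth moment (Case~(1)) is classical; the bare-variance case (Case~(2)) is exactly why the larger threshold $\gamma>1$ is imposed, since only then is $\alpha=1$ admissible; and the Haar-unitary case (Case~(3)) requires an invertibility bound tailored to unitary perturbations, which I expect to be the main obstacle.
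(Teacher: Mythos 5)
Your proposal is correct and follows essentially the same route as the paper: verify the norm bound \eqref{eq:normE} (Bai--Yin with $\alpha=1/2$; law of large numbers on the Frobenius norm with $\alpha=1$; $\|E\|=1$ with $\alpha=0$) and the least singular value bound \eqref{eq:lsvE}, then invoke Theorem~\ref{thm:pert-toep} -- and your explicit reconciliation $\tilde\gamma=\gamma-\alpha>0$ is exactly the (implicit) bookkeeping the paper performs. The only piece you leave open, the invertibility input for cases (1)--(2) and for the Haar-unitary case (3), is supplied in the paper by the smoothed-analysis bound of Tao--Vu \cite[Theorem 2.1]{MR2409368} and by \cite[Theorem 1.1]{MR3164983}, respectively, applied after precisely the rescaling $A-zI+n^{-\gamma}E=n^{-\gamma}(n^{\gamma}(A-zI)+E)$ you describe.
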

\begin{proof}
In order to utilize Theorem \ref{thm:pert-toep}, we only need to verify that $E$ satisfies \eqref{eq:normE} and \eqref{eq:lsvE} in each of the cases above.  For the first case, these bounds (with $\alpha = 1/2$) follow from \cite[Theorem 5.8]{BSbook} and \cite[Theorem 2.1]{MR2409368}.  The second case follows (with $\alpha =1$) from \cite[Theorem 2.1]{MR2409368} and the fact that
\[ \frac{1}{n^2} \| E \|^2 \leq \frac{1}{n^2} \|E\|_2^2 \longrightarrow 1 \]
almost surely by the law of large numbers.  In the third case, $\|E\| = 1$ since $E$ is unitary.  In this last case, the least singular value bound in \eqref{eq:lsvE} follows from \cite[Theorem 1.1]{MR3164983}.  
\end{proof}

The choice of $\gamma$ in each of the three cases given in Corollary \ref{cor:pert-toep} is so that \eqref{eq:normE2} is satisfied.    
Theorem \ref{thm:pert-toep}  and its corollary are similar to several recent works concerning fixed matrices perturbed by random matrices \cite{FPZeitouni_regularization_2015,MR4168388,basak_regularization_2019,basak_spectrum_2020,MR4201590,MR3584192,MR4200678,2001.09024}.  The case when $E$ contains independent Gaussian entires was investigated in \cite{FPZeitouni_regularization_2015,MR4201590,MR3584192,MR4200678,basak_regularization_2019}.  Since Theorem \ref{thm:pert-toep} applies to a large class of perturbations $E$, it is more closely related to the results in \cite{2001.09024,basak_spectrum_2020}.  Our assumptions on $E$ differ from these previous works, allowing us to apply our results to a slightly different classes of perturbations, including deterministic matrices (see Section \ref{sec:deterministic}).  In addition, our results allow the bandwidth of $A$ to grow with $n$, which differs from the results in \cite{basak_spectrum_2020}.  See Figure \ref{fig:growing} for an example with growing bandwidth.
Our method of proof also differs from the methods used in \cite{2001.09024,basak_spectrum_2020} and suggests a concrete way to accurately approximate the eigenvalues for reasonably small $n$ by comparing them to the eigenvalues of a deterministic circulant matrix; see Conjecture~\ref{conj:A'} for further details about this approximation.  Some numerical simulations are presented in Figure~\ref{fig:star}.  We also note that Theorem \ref{thm:pert-toep} allows us to consider multiplicative perturbations (see also Figure~\ref{fig:zoo}(c)).

\begin{figure}
\includegraphics[scale=0.4]{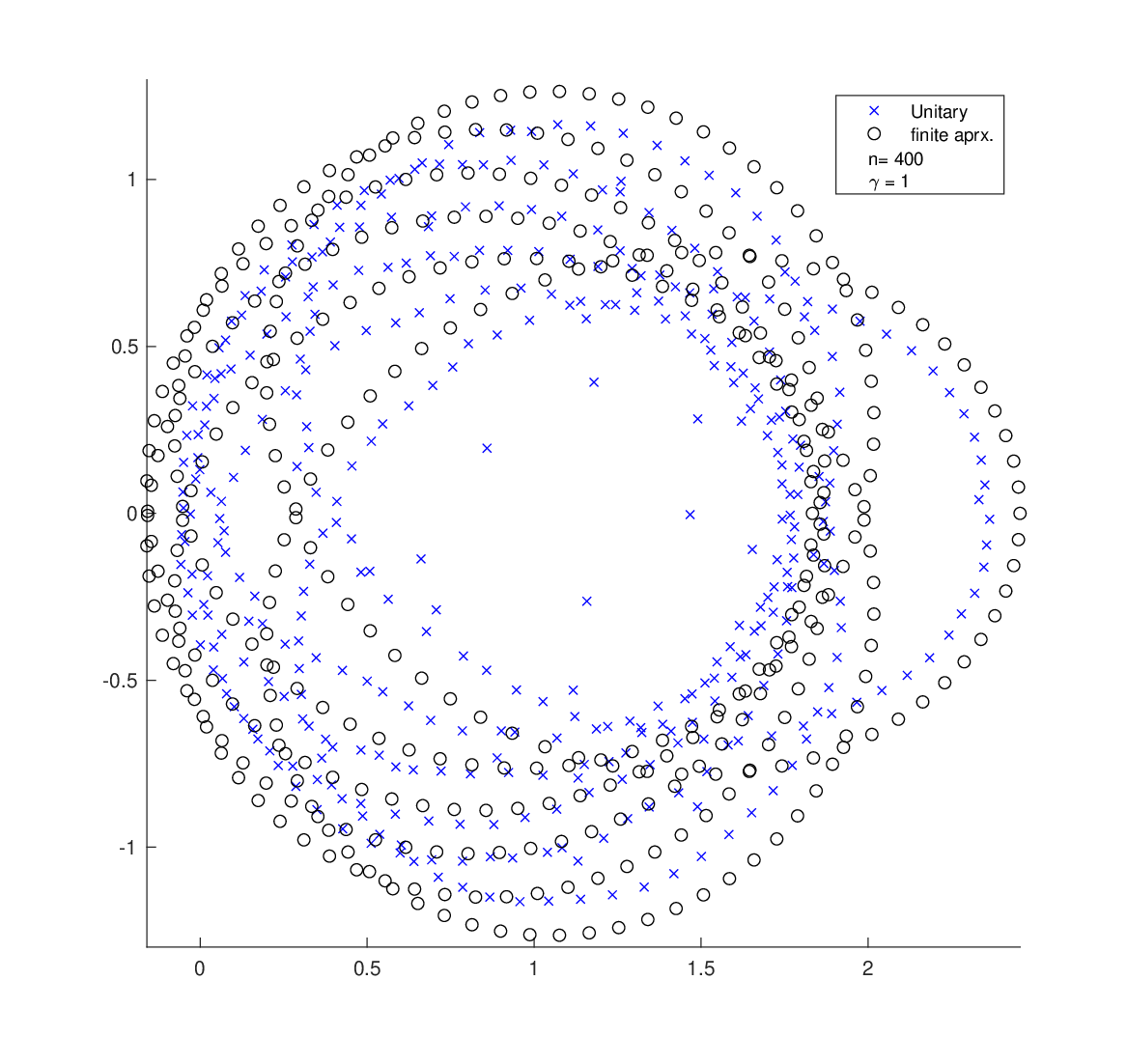}
\includegraphics[scale=0.4]{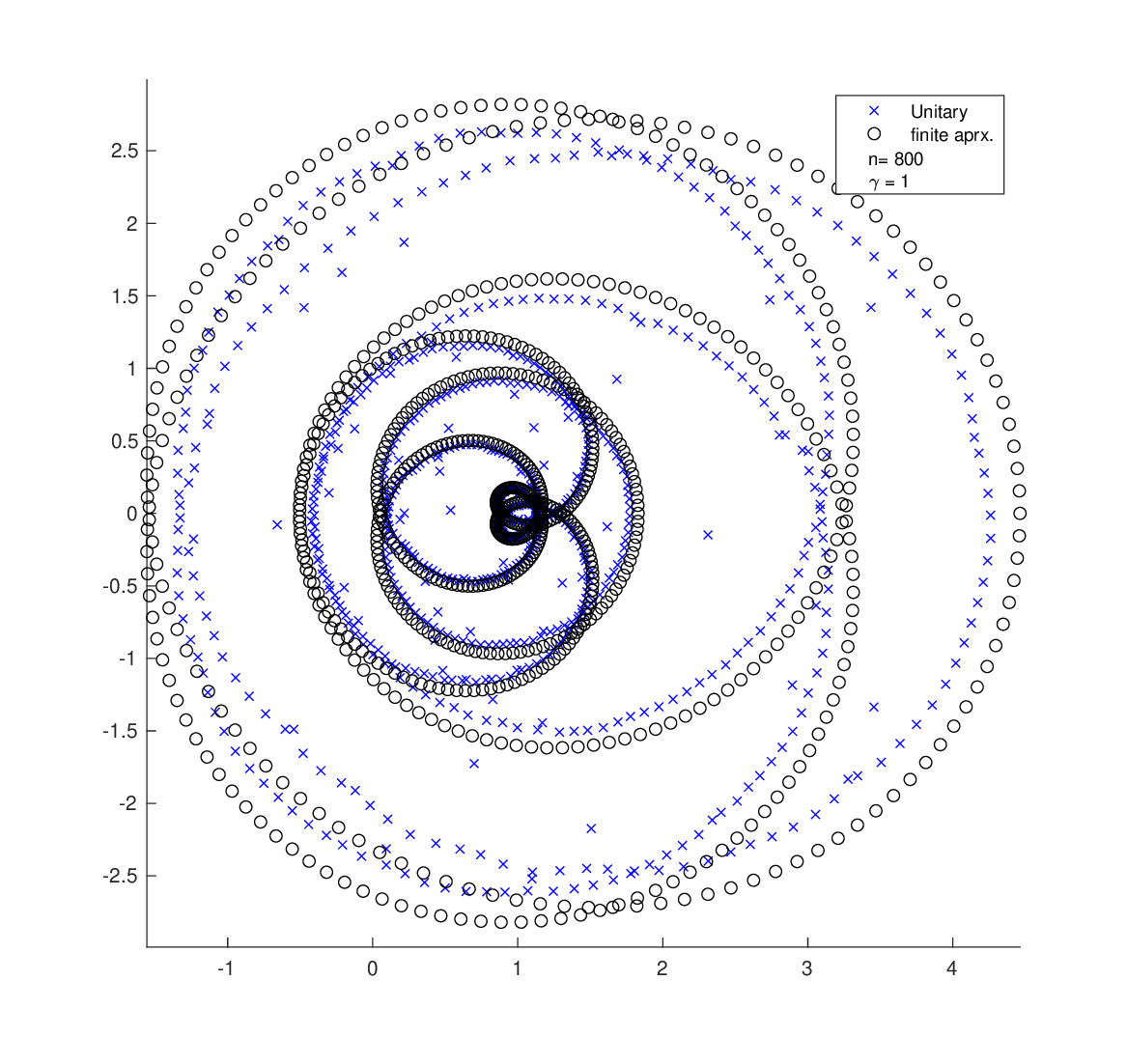}

\includegraphics[scale=0.4]{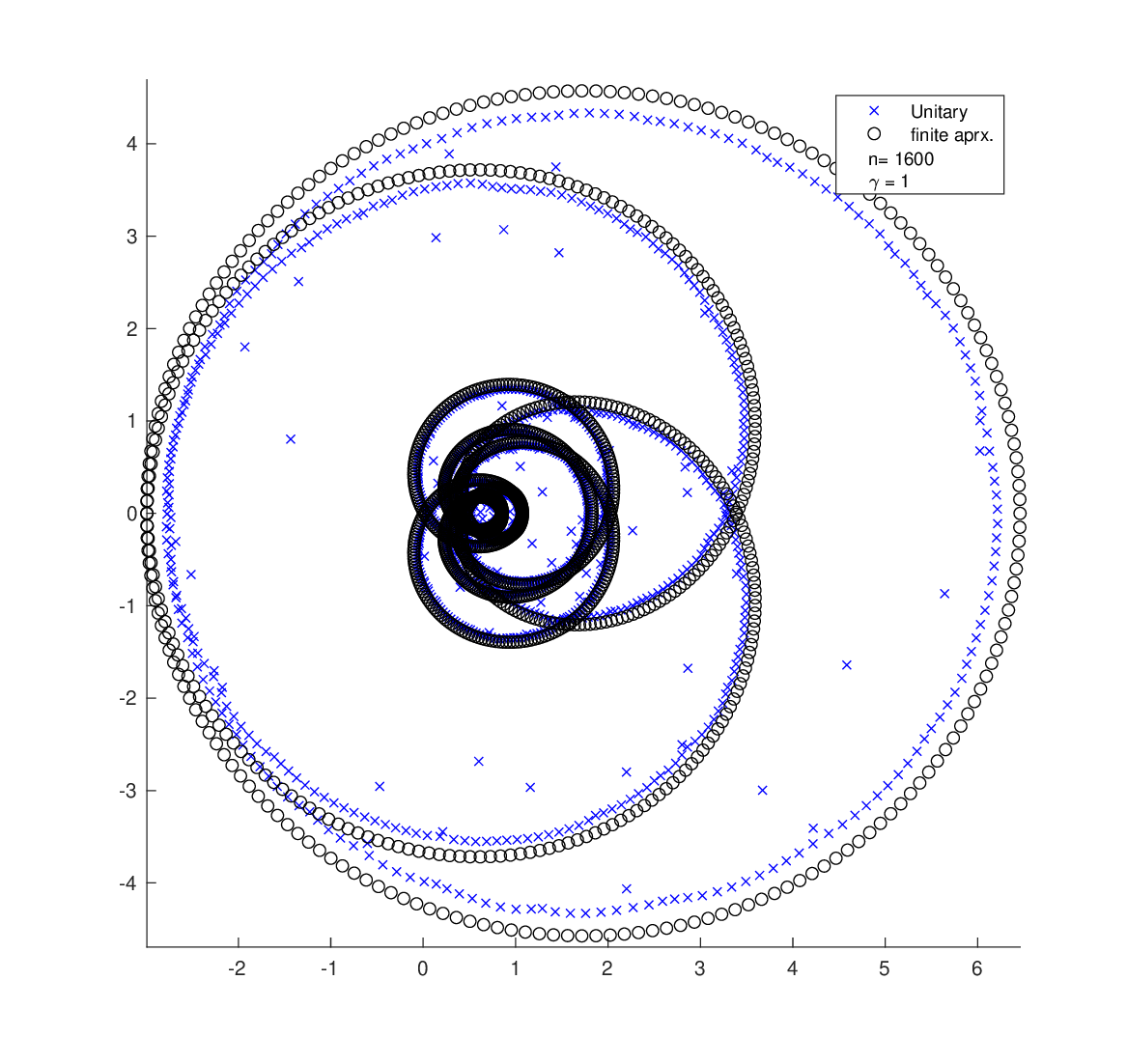}
\includegraphics[scale=0.4]{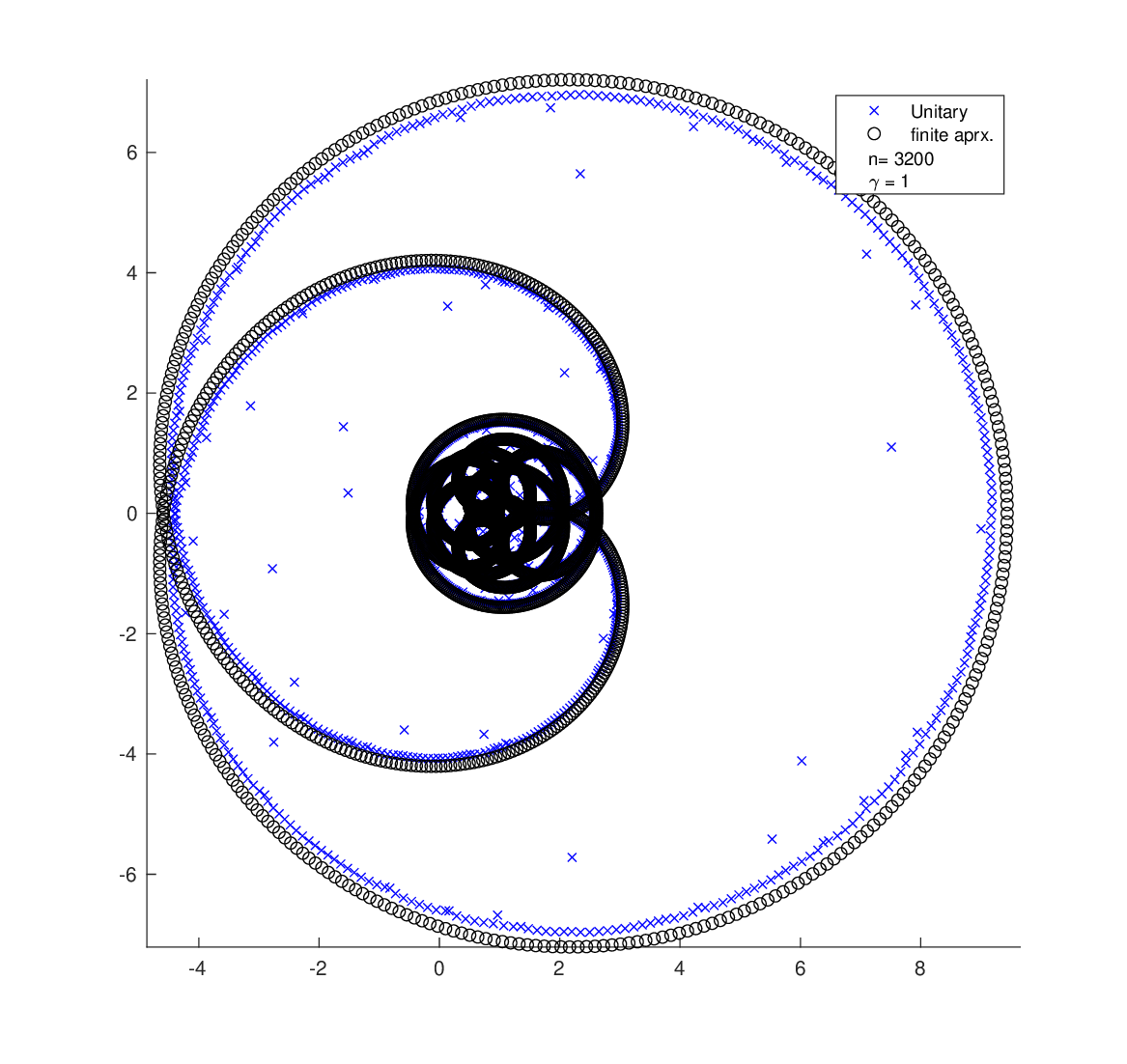}
\caption{Above are plots of the eigenvalues of a perturbed Toeplitz matrix $M$ truncated at $k_n$, where $k_n = \left\lfloor n^{1/3}\right\rfloor-1$ (which tends to infinity with $n$) and where the symbol for $M$ is defined by $a_{-j}=1/(j+1)^{2.1}$ for $j\ge 0$ and 
and $a_j = (0.999)^j$ for $j \ge 6$ and $a_j=0$ for $1\le j\le 5$. 
The eigenvalues of $M+n^{-\gamma} U$, where $U$ is a Haar uniform random unitary matrix, are plotted as blue \color{blue}$\times$\color{black}\  symbols,
and the eigenvalues of the corresponding finite approximation are plotted as a black circles ($\circ$). By Theorem~\ref{thm:pert-toep} (see also Corollary~\ref{cor:pert-toep} part (3)), the empirical spectral measure of $M+n^{-\gamma} U$ converges weakly in probability to a deterministic measure, which is also the limit of the finite approximations. 
The finite approximations are the eigenvalues of the circulant matrix $C$ described in Section~\ref{sec:circ:tools}. 
%defined by 
%$ c_{n-j} = a_{-j}$ for $1\le j \le \left\lfloor  n^{1/3}   \right\rfloor-1$ and $c_{j}= a_j$ for $0\le j \le \left\lfloor  n^{1/3}   \right\rfloor$ and all other $c_j$ are equal to zero.  
The finite approximation appears to be reasonably close to the random eigenvalues in this case, even though the finite approximation is still visibly changing for the values of $n$ shown above.
}
\label{fig:growing}
\end{figure}

\begin{figure}
\centerline{$n=50$\hspace{3in} $n=200$}
\includegraphics[scale=.4]{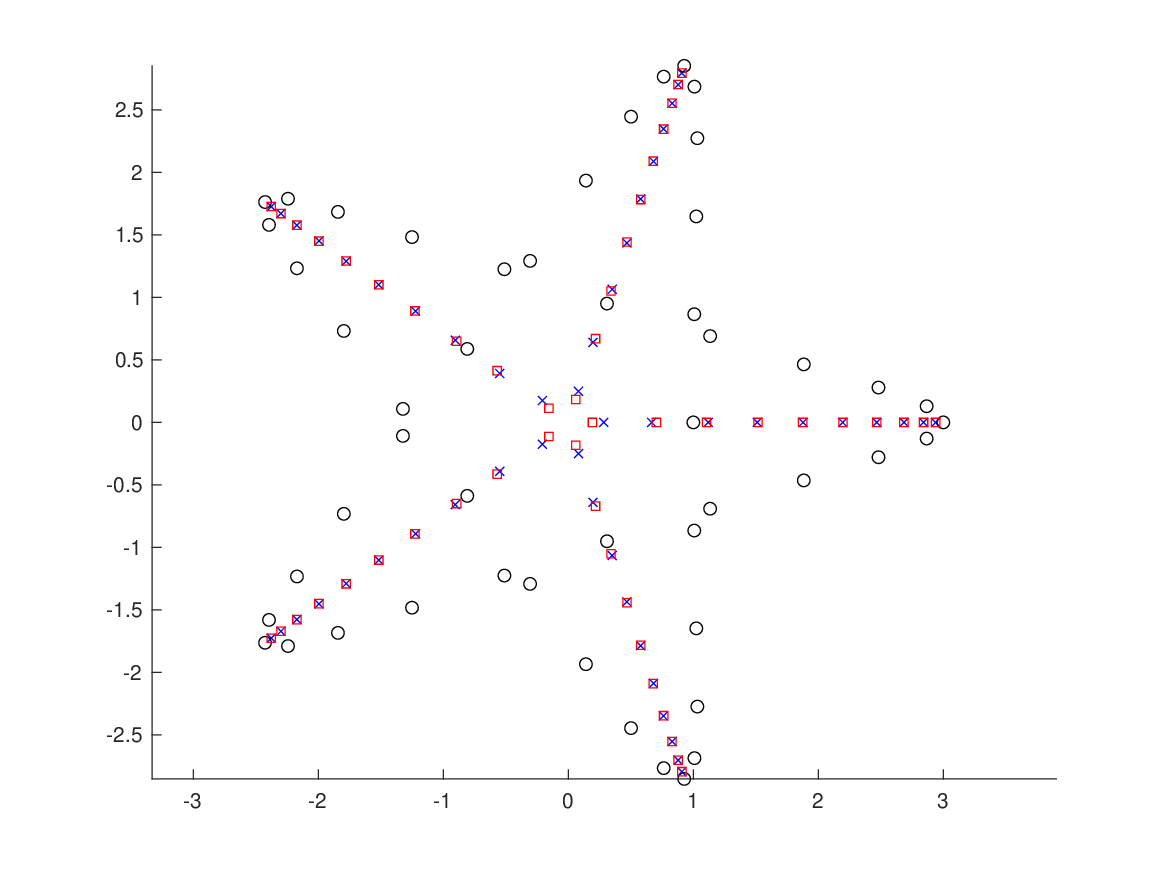}
\includegraphics[scale=.4]{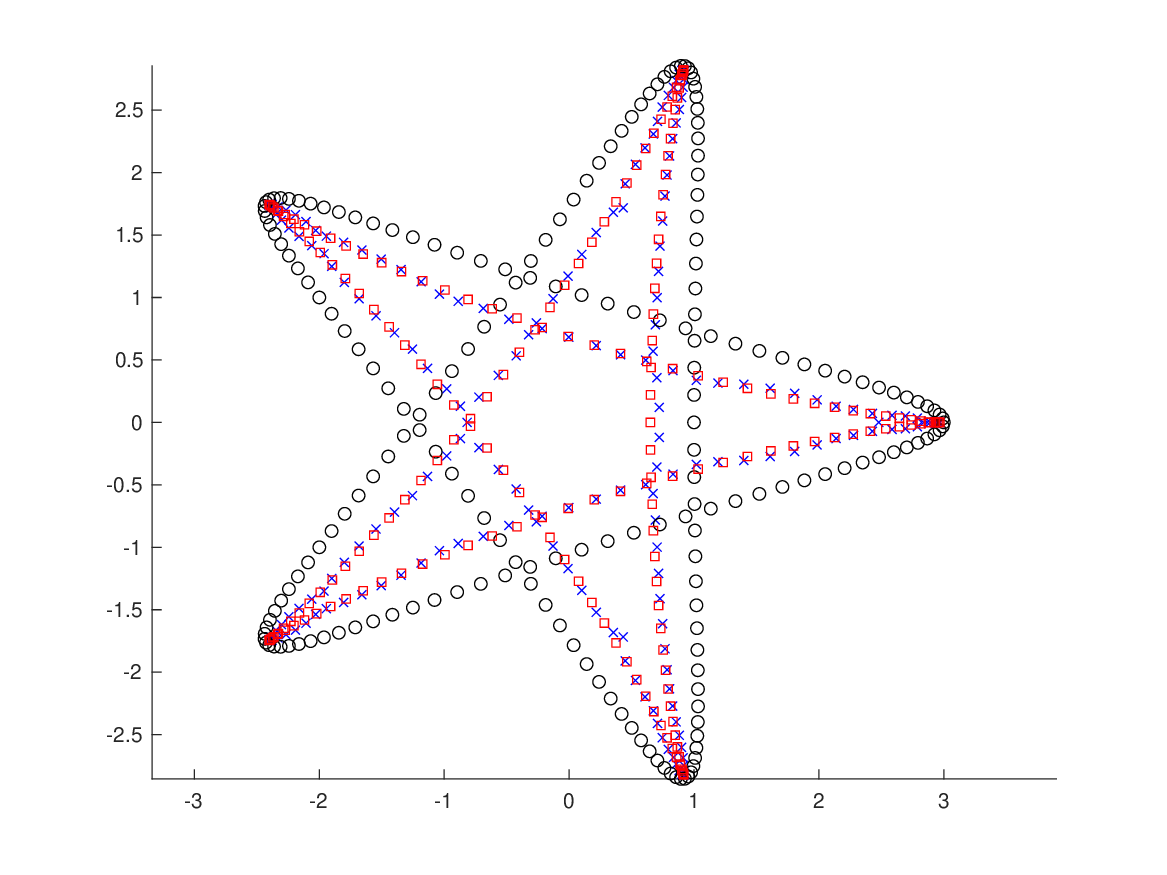}

\centerline{$n=500$\hspace{3in} $n=2000$}
\includegraphics[scale=.4]{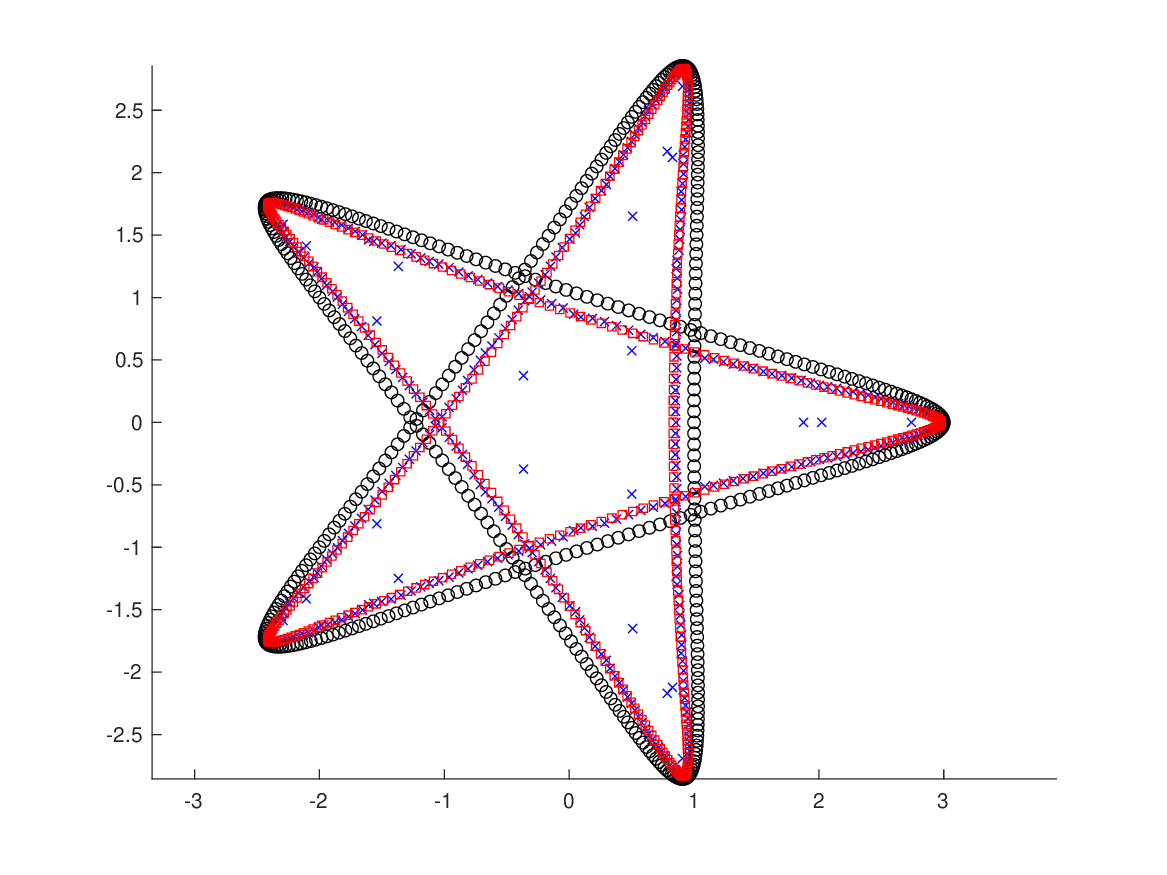}
\includegraphics[scale=.4]{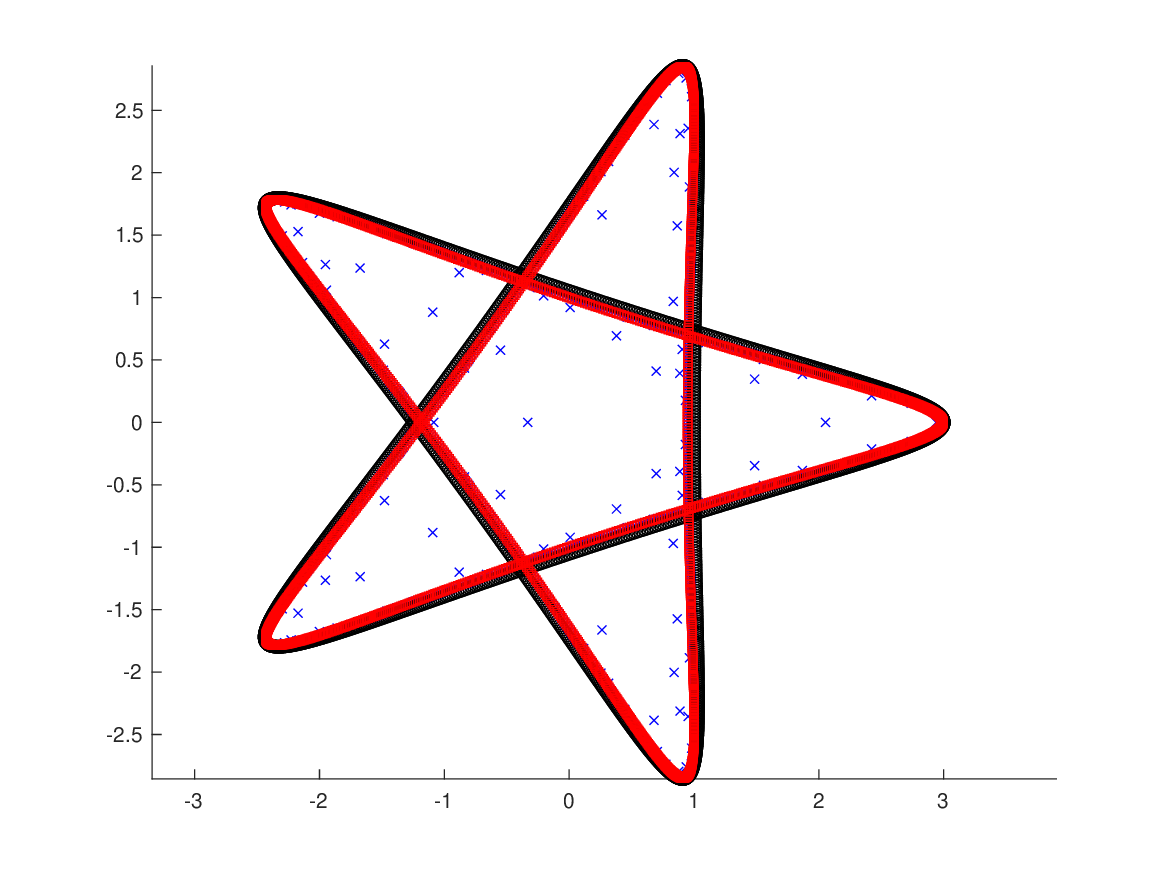}
\caption{Above is a series of plots of the eigenvalues of a perturbed Toeplitz matrix $M$ with symbol defined by $a_{-2}=2$, 
%, a_{-1}=0, a_0=0, a_1=0,a_2=0,
$a_3 =-1$ and all other $a_j$ equal to zero.  The plots are for values of $n$ ranging from 100 to 4000 and also show two different approximations of the eigenvalues.  
The eigenvalues of $M +n^{-2} G$, where $G$ is an iid~real Gaussian perturbation, are plotted as blue \color{blue}$\times$\color{black}\  symbols, and the eigenvalues of the corresponding finite approximation
are plotted as black circles ($\circ$). The finite approximation comes from the 
circulant matrix $C$ (with $c_{n-j}=a_{-j}$ for $1\le j\le n-1$ and $c_j=a_j$ for $0\le j \le n-1$%
\details{; so here $c_{n-2} = 2$
%, c_{n-1}=0, c_0=0, c_1=0,c_2=0$ 
and $c_3=-1$ and all other $c_j$ equal zero}) 
described in Section~\ref{sec:circ:tools}.  
By Theorem~\ref{thm:pert-toep} and Lemma~\ref{lemma:circulant}, we know that the spectral measure $\mu_{M+n^{-2}G}$ is approximated by the spectral measure $\mu_C$.  
Interestingly, the scaled matrix described in Conjecture~\ref{conj:A'}, whose eigenvalues are plotted as red squares ($\color{red} \square \color{black}$), appears to be a very good match for the eigenvalues of $M+n^{-2}G$, even for small values of $n$. }
\label{fig:star}
\end{figure}

\begin{corollary}[Multiplicative perturbation] \label{cor:multi}
Let $\{a_j \}_{j \in \mathbb{Z}}$ be a sequence of complex numbers, indexed by the integers, so that \eqref{eq:aj} holds.  
Let $k_n$ be a sequence of non-negative integers that converges to $k \in [0 , \infty]$ as $n \to \infty$ and satisfies \eqref{eq:kno}.  
Let $A$ be an $n \times n$ Toeplitz matrix with symbol $\{a_j \}_{j \in \mathbb{Z}}$ truncated at $k_n$.  Let $E'$ be an $n \times n$ random matrix whose entries are iid copies of a real standard normal random variable, and take $\gamma > 1$.  
Then there exists a probability measure $\mu$ on $\mathbb{C}$ so that the empirical spectral measure $\mu_{A  (I + n^{ - \gamma -1/2} E')}$ of $A (I+ n^{ - \gamma -1/2} E')$ converges weakly in probability to $\mu$.  Moreover, $\mu$ is the distribution given in \eqref{eq:Uj}, 
where $U$ is a random variable uniformly distributed on $S^1$.  
\end{corollary}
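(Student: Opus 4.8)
My plan is to realize the multiplicative perturbation as an additive one and apply Theorem~\ref{thm:pert-toep}. Since $A(I + n^{-\gamma-1/2}E') = A + n^{-\gamma-1/2}AE'$, set $E := AE'$; with $\alpha := 1/2$ we have $n^{-\gamma-1/2}AE' = n^{-\alpha-\gamma}E$, so (as $\gamma > 1 > 0$) Theorem~\ref{thm:pert-toep}, applied with this $E$ and $\alpha$, will yield that $\mu_{A(I+n^{-\gamma-1/2}E')}$ converges weakly in probability to the distribution of $\sum_{|j|\le k}a_jU^j$ with $U$ uniform on $S^1$ — exactly the asserted conclusion — provided the two hypotheses \eqref{eq:normE} and \eqref{eq:lsvE} hold for $E = AE'$.

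The norm hypothesis \eqref{eq:normE} is routine: by \eqref{eq:aj} one has $\sum_j |a_j| < \infty$, so the truncated Toeplitz matrix $A$ satisfies $\|A\| \le \sum_{|j| \le k_n}|a_j| \le \sum_j |a_j|$ uniformly in $n$, and combined with the standard bound $\|E'\| = O(\sqrt n)$ with probability $1-o(1)$ for an iid Gaussian matrix this gives $\|E\| = \|AE'\| \le \|A\|\|E'\| = O(n^{1/2})$ with probability $1-o(1)$, i.e.\ \eqref{eq:normE} with $\alpha = 1/2$. The real content is \eqref{eq:lsvE}: for a.e.\ $z$ we must produce $\kappa_z > 0$ with $\Prob(\sigma_{\min}(M) \le n^{-\kappa_z}) = o(1)$, where $M := A(I+\delta E') - zI = (A-zI) + \delta AE'$ and $\delta := n^{-\gamma-1/2}$; we discard as the (null) exceptional set the symbol curve $\{\sum_j a_j w^j : |w| = 1\}$ together with the countable set of all eigenvalues of the matrices $A_n$, $n\ge 1$. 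We argue by leaving out one column. If $g_1,\dots,g_n$ are the independent $N(0,I_n)$ columns of $E'$, then the $j$-th column of $M$ is $C_j = (A-zI)e_j + \delta A g_j$, and the elementary bound $\sigma_{\min}(M) \ge n^{-1/2}\min_j \dist(C_j, H_j)$, with $H_j := \mathrm{span}\{C_i : i\ne j\}$, reduces matters to controlling $\Prob(\dist(C_j,H_j)\le t)$ for each $j$ and each polynomially small $t$. Conditioning on $\{g_i : i\ne j\}$ fixes $H_j$, a hyperplane with unit normal $u_j$ (a.s.), and then, in the remaining randomness $g_j$,
\[ \dist(C_j,H_j) = \bigl|\langle u_j, (A-zI)e_j\rangle + \delta\langle A^\ast u_j, g_j\rangle\bigr| \]
is a one-dimensional Gaussian of standard deviation $\delta\|A^\ast u_j\|$ shifted by a constant, so Gaussian anti-concentration gives $\Prob(\dist(C_j,H_j)\le t \mid \{g_i\}_{i\ne j}) \le Ct/(\delta\|A^\ast u_j\|)$ whenever $\|A^\ast u_j\| > 0$. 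Everything thus reduces to a \emph{deterministic} polynomial lower bound $\|A^\ast u_j\| \ge n^{-\beta_z}$, valid for all $j$; a union bound over $j$ then yields \eqref{eq:lsvE} with, say, $\kappa_z = \beta_z + \gamma + 3$.

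The deterministic bound on $\|A^\ast u_j\|$ is where the work lies, and it is the main obstacle. For $\delta = 0$ the normal is a unit multiple of $(A^\ast - \bar zI)^{-1}e_j$ (the $j$-th row of $(A-zI)^{-1}$, conjugated), so $A^\ast u_j$ is a unit multiple of $e_j + \bar z (A^\ast - \bar zI)^{-1}e_j$ and, after dividing by $\|(A^\ast - \bar zI)^{-1}e_j\|$, one must bound this below; the same estimate must then be shown stable under the small-norm perturbation of the spanning vectors by the columns $\delta A g_i$. The subtlety — exactly the pseudospectral sensitivity illustrated by \eqref{eq:defT} in the introduction — is that $\sigma_{\min}(A-zI)$ can be exponentially small in $n$, so one cannot afford the crude bound $\|A^\ast u_j\| \ge \sigma_{\min}(A)$; instead one writes $A^\ast u_j = (A^\ast - \bar z I)u_j + \bar z u_j$ and shows the two summands cannot both be small, using the explicit geometric decay of the entries of $(A-zI)^{-1}$ for a banded Toeplitz matrix, a decay rate governed by the zeros of the symbol $w\mapsto \sum_j a_j w^j - z$, none of which lie on $|w|=1$ when $z$ is off the symbol curve. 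Once $\|A^\ast u_j\| \ge n^{-\beta_z}$ is established, \eqref{eq:lsvE} follows as above, and Theorem~\ref{thm:pert-toep} delivers the corollary.
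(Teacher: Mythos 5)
Your reduction is the same as the paper's: write $A(I+n^{-\gamma-1/2}E') = A + n^{-\gamma-1/2}AE'$, set $E:=AE'$, $\alpha:=1/2$, verify \eqref{eq:normE} via $\|A\|\le\sum_j|a_j|$ and $\|E'\|=O(\sqrt n)$, and invoke Theorem~\ref{thm:pert-toep}. The norm half is fine. But the least singular value bound \eqref{eq:lsvE} --- which you yourself call ``the real content'' --- is not proved. Your distance-to-hyperplane argument reduces everything to a deterministic lower bound $\|A^\ast u_j\|\ge n^{-\beta_z}$ for the random unit normal $u_j$ of the span of the other (randomly perturbed) columns, and you explicitly leave that bound as ``the main obstacle,'' offering only a heuristic (``the two summands cannot both be small''). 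This is a genuine gap, and not a cosmetic one: $\|A^\ast u_j\|$ being small means precisely that $u_j$ nearly lies in $\ker A^\ast$ (e.g.\ for $A=T$ from \eqref{eq:defT} one has $T^\ast e_n=0$, so the bound fails outright for $u_j=e_n$), and ruling out that the normal to a hyperplane spanned by perturbed columns of $A-zI$ aligns with such a direction requires quantitative control that is nowhere supplied. Note also that your sketch never uses $\gamma>1$, whereas the corollary's hypothesis requires it --- a sign that the constraint must enter in exactly the step you skipped.

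The paper sidesteps this entirely (Proposition~\ref{prop:lsv}): for $z\ne 0$ factor
\[
A(I+\delta E')-zI=\bigl[A - z(I+\delta E')^{-1}\bigr](I+\delta E'),\qquad \delta:=n^{-1/2-\gamma},
\]
use $\sigma_{\min}(M_1M_2)\ge\sigma_{\min}(M_1)\sigma_{\min}(M_2)$, and expand $(I+\delta E')^{-1}=I-\delta E'+O(n^{-2\gamma})$ in a Neumann series. This converts the multiplicative perturbation into an \emph{additive} Gaussian perturbation $A-zI+z\delta E'$ with nonzero noise level $|z|\delta$, to which the Sankar--Spielman--Teng smoothed-analysis bound applies directly, giving $\sigma_{\min}\gtrsim |z|\,n^{-1-\gamma-\eps}$; the condition $\gamma>1$ is exactly what makes this dominate the $O(n^{-2\gamma})$ Neumann error. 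If you want to salvage your route you must actually prove the uniform polynomial lower bound on $\|A^\ast u_j\|$ (including stability under the perturbation of the spanning columns); otherwise, replace that step with the factorization argument above.
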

\begin{proof}
Since $A  (I + n^{ - \gamma -1/2} E') = A + n^{-\gamma-1/2} A E'$, we will apply Theorem \ref{thm:pert-toep} with $E := A E'$.  It remains to check that $E$ satisfies \eqref{eq:normE} and \eqref{eq:lsvE}.  The bound in \eqref{eq:normE} with $\alpha = 1/2$ follows from Proposition \ref{prop:Anorm} (which provides an upper bound on $\|A\|$) and \cite[Corollary 5.35]{MR2963170}  (which provides an upper bound on $\|E'\|$).  The least singular value bound in \eqref{eq:lsvE} follows from Proposition \ref{prop:lsv}.  
\end{proof}

We now turn to the proof of Theorem \ref{thm:pert-toep}.  We begin with some preliminary details about circulant matrices.  

\subsection{Circulant matrices}

A {circulant matrix} is an $n \times n$ matrix of the form 
\begin{equation} \label{eq:circ}
	C=
\begin{bmatrix}
c_0     & c_{n-1} & \dots  & c_{2} & c_{1}  \\
c_{1} & c_0    & c_{n-1} &         & c_{2}  \\
\vdots  & c_{1}& c_0    & \ddots  & \vdots   \\
c_{n-2}  &        & \ddots & \ddots  & c_{n-1}   \\
c_{n-1}  & c_{n-2} & \dots  & c_{1} & c_0 \\
\end{bmatrix},
\end{equation} 
where $c_0, c_1, \ldots, c_{n-1} \in \mathbb{C}$.  In this case, we say $c_0, c_1, \ldots, c_n$ {generate} the matrix $C$.  

\begin{lemma}[Properties of circulant matrices] \label{lemma:circulant}
Let $C$ be the $n \times n$ circulant matrix in \eqref{eq:circ} generated by $c_0, c_1, \ldots, c_{n-1} \in \mathbb{C}$.  
\begin{enumerate}[(i)]
\item The eigenvalues of $C$ are given by 
\begin{equation} \label{eq:circeig}
	\lambda_j = c_0 + c_{n-1} \omega_n^j + c_{n-2} \omega_n^{2j} + \cdots c_1 \omega_n^{(n-1)j}, \qquad j = 0, 1, \ldots, n-1, 
\end{equation} 
where 
\begin{equation} \label{eq:omegan}
	\omega_n := \exp \left( \frac{2 \pi \sqrt{-1}}{n} \right)
\end{equation} 
is a primitive $n$-th root of unity.
The corresponding normalized eigenvectors are given by
\[ v_j := \frac{1}{\sqrt{n}} (1, \omega_n^j, \omega_n^{2j}, \ldots, \omega_n^{(n-1)j})^{\mathrm{T}}, \qquad j = 0, 1, \ldots, n-1. \]
\item $C$ is a normal matrix.
\item \label{item:circsing} The (unordered) singular values of $C$ are given by 
\[ \sigma_j := |\lambda_j|, \qquad j = 0, 1, \ldots, n-1, \]
where $\lambda_0, \lambda_1, \ldots, \lambda_{n-1}$ are the eigenvalues of $C$ given in \eqref{eq:circeig}.  
\end{enumerate}
\end{lemma}
\begin{proof}
A simple computation shows that $C$ has eigenvalue $\lambda_j$ with corresponding eigenvector $v_j$ for $j=0, 1, \ldots, n-1$.  In addition, since the eigenvectors $v_0, \ldots, v_{n-1}$ are orthonormal, it follows from Theorem 2.5.3 in \cite{HJ} that $C$ is normal.  Part \eqref{item:circsing} follows since the singular values of any normal matrix are given by the absolute values of its eigenvalues; see 2.6.P15 in \cite{HJ}.  
\end{proof}

\subsection{Required tools} \label{sec:circ:tools}

We begin the proof of Theorem \ref{thm:pert-toep} with a number of lemmata.  

\begin{lemma} \label{lemma:measurezero}
Let $\{a_j \}_{j \in \mathbb{Z}}$ be a sequence of complex numbers, indexed by the integers, which satisfy \eqref{eq:aj}.  
For each $n \geq 1$, let $k_n$ be a non-negative integer, and define a sequence of functions $f_n: S^1 \to \mathbb{C}$ by 
\[ f_n(\omega) = \sum_{|j| \leq k_n} a_j w^j. \]
In addition, let the function $f: S^1 \to \mathbb{C}$ be given by
\begin{equation} \label{def:fS1}
	f(\omega) = \sum_{j \in \mathbb{Z}} a_j w^j. 
\end{equation} 
Then the following properties hold: 
\begin{enumerate}[(i)]
\item \label{item:measurezero} the image $f(S^1)$ has Lebesgue measure zero in $\mathbb{C}$; 
\item \label{item:unionzero} the set of images $\cup_{n=1}^\infty f_n(S^1)$ has Lebesgue measure zero in $\mathbb{C}$; and
\item \label{item:distance} if $k_n \to \infty$ as $n \to \infty$, then, for each $z \in \mathbb{C} \setminus f(S^1)$, there exists $\eps > 0$ and $N \in \mathbb{N}$ so that $z$ is at least distance $\eps$ from $\cup_{n=N}^\infty f_n(S^1)$.
\end{enumerate}
\end{lemma}
\begin{proof}
Under condition \eqref{eq:aj}, it follows from Theorem 6.28 in \cite{ProbTheory} that the function $t \mapsto f( e^{\sqrt{-1} t})$ is differentiable and 
\[ \frac{d}{dt} f( e^{\sqrt{-1} t}) = \sum_{j \in \mathbb{Z}} j \sqrt{-1} a_j e^{\sqrt{-1} tj }. \]
In particular, the derivative satisfies the bound
\[ \left| \frac{d}{dt} f( e^{\sqrt{-1} t})  \right| \leq \sum_{j \in \mathbb{Z}} |j a_j| < \infty \]
for all $t \in \mathbb{R}$.  By Lemma 7.25 in \cite{RCRudin}, it follows that $f(S^1)$ has Lebesgue measure zero in $\mathbb{C}$.  

Conclusion \eqref{item:unionzero} now follows from \eqref{item:measurezero}.  Indeed, by taking $a_j = 0$ for all $|j| > k$,  conclusion \eqref{item:measurezero} also applies to functions of the form
\[ f(\omega) = \sum_{|j| \leq k} a_j \omega^j \]
for any integer $k \geq 0$.  Since the countable union of sets of Lebesgue measure zero has Lebesgue measure zero, the proof of \eqref{item:unionzero} is complete.  

In order to prove \eqref{item:distance}, let $z \in \mathbb{C} \setminus f(S^1)$. Since $f$ is differentiable, $f$ is continuous, and hence $f(S^1)$ is compact.  Thus, there exists $\eps > 0$ with the property that $z$ is distance $2\eps$ from $f(S^1)$.  Since $k_n \to \infty$, it follows that there exists $N \in \mathbb{N}$ so that 
\[ \sup_{\omega \in S^1} |f_n(\omega) - f(\omega)| \leq \sum_{|j| > k_n} |a_j| < \eps \]
for all $n \geq N$.  This implies that  $z$ is at most distance $\eps$ from $f_n(S^1)$ for each $n \geq N$. Indeed, if there exists $n \geq N$ and $\omega \in S^1$ so that $|z - f_n(\omega)| < \eps$, then $|z - f(\omega)| < 2 \eps$, a contradiction.   
\end{proof}

Recall the definition of the $L^1$-Wasserstein distance $W_1(\mu, \nu)$ between $\mu$ and $\nu$ given in \eqref{eq:defW1}.  

\begin{lemma} \label{lemma:wassbnd}
Let $k$ be a non-negative integer, and define $f: S^1 \to \mathbb{C}$ by
\[ f(\omega) = \sum_{|j| \leq k} a_j \omega^j \]
for some complex numbers $a_j$ with $-k \leq j \leq k$.  Let $\mu$ be distribution of $f(U)$, where $U$ is a random variable uniformly distributed on $S^1$, and let $\mu_n$ be the probability measure given by 
\[ \mu_n = \frac{1}{n} \sum_{i=0}^{n-1} \delta_{f(\omega_n^i)}, \]
where $\omega_n$ is a primitive $n$-th root of unity as in \eqref{eq:omegan}.  Then, there exists a constant $C >0$ (depending only on $k$, $f$, and $\{a_j\}_{|j| \leq k}$) so that 
\begin{equation} \label{eq:wassbnd}
	W_1(\mu_n, \mu) \leq \frac{C}{n}. 
\end{equation} 
\end{lemma}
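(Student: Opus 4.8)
The plan is to realize both $\mu_n$ and $\mu$ as pushforwards under $f$ of measures on $S^1$ that are $O(1/n)$-close in $W_1$-distance, and then transfer this closeness through $f$ using that $f$ is Lipschitz on $S^1$.

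The first ingredient is the Lipschitz continuity of $f$ on $S^1$. A direct computation (cf.\ the proof of Lemma \ref{lemma:measurezero}) shows that $t \mapsto f(e^{\sqrt{-1}t})$ has derivative $\sum_{|j|\le k}\sqrt{-1}\,j a_j e^{\sqrt{-1}jt}$, whose modulus is bounded by $L := \sum_{|j|\le k}|j|\,|a_j|$; hence $|f(\omega)-f(\omega')| \le L\,\ell(\omega,\omega')$, where $\ell$ denotes arc length on $S^1$. Since arc length and the chordal (Euclidean) distance on $S^1$ agree up to a factor of at most $\pi/2$, $f$ is Lipschitz on $S^1$ with respect to the ambient Euclidean metric on $\mathbb{C}$, with constant at most $\tfrac{\pi}{2}L$.

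The second ingredient is an explicit coupling of the normalized counting measure $\tfrac1n\sum_{i=0}^{n-1}\delta_{\omega_n^i}$ on the $n$-th roots of unity (with $\omega_n$ as in \eqref{eq:omegan}) with the uniform probability measure on $S^1$. I would partition $S^1$ into $n$ arcs of equal length $2\pi/n$, each having some $\omega_n^i$ as its midpoint (in the sense of arc length), and transport the uniform measure restricted to each arc to the point mass at that midpoint. Every point is then moved a distance at most $\pi/n$, so this coupling witnesses $W_1 \le \pi/n$ between the two measures on $S^1$. Pushing this coupling forward through $f\times f$ yields a coupling of $\mu_n$ and $\mu$, and combining the displacement bound with the Lipschitz estimate of the previous paragraph gives $W_1(\mu_n,\mu)\le \tfrac{\pi}{2}L\cdot\tfrac{\pi}{n}$, which is \eqref{eq:wassbnd} with $C := \tfrac{\pi^2}{2}\sum_{|j|\le k}|j|\,|a_j|$, a constant depending only on $k$, $f$, and $\{a_j\}_{|j|\le k}$, as required.

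I do not anticipate a genuine obstacle here; the only points requiring mild care are the comparison between the geodesic and chordal metrics on $S^1$ and keeping track of the constant so that it depends only on the stated data. As an alternative to the explicit coupling, one could instead invoke the Kantorovich--Rubinstein dual formulation of $W_1$ and observe that, for any $1$-Lipschitz $g$, the difference $\int g\,d\mu_n-\int g\,d\mu$ is exactly the quadrature error of the $n$-point equispaced rule applied to the Lipschitz periodic function $g\circ f$ on $S^1$; a term-by-term estimate over the $n$ subarcs bounds this error by $O(L/n)$ uniformly in $g$, giving the same conclusion.
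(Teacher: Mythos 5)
Your proposal is correct and follows essentially the same route as the paper: the paper also constructs an explicit coupling between the uniform measure on $S^1$ and the point masses at the $n$-th roots of unity (rounding the uniform angle down to the nearest $2\pi i/n$ rather than to the midpoint of an arc, which gives displacement $2\pi/n$ instead of your $\pi/n$) and then transfers the bound through $f$ via the elementary Lipschitz estimate $|f(U)-f(\psi_n)|\le\sum_{|j|\le k}|a_j|\,|U^j-\psi_n^j|\le C|U-\psi_n|$. The only cosmetic difference is that you route the Lipschitz bound through the derivative and the arc-length/chord comparison, while the paper bounds $|U^j-\psi_n^j|$ directly; both yield a constant of the same form.
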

\begin{proof}
Let $\theta$ be a random variable, uniform on $[0, 2\pi)$.  Define another random variable $\phi_n$ as follows.  Take $\phi_n = 2 \pi \frac{i-1}{n}$ whenever $2 \pi \frac{i-1}{n} \leq \theta < 2 \pi \frac{i}{n}$ for some integer $1 \leq i \leq n$.  Then $U := e^{\sqrt{-1} \theta}$ is uniformly distributed on $S^1$ and, taking $\psi_n := e^{\sqrt{-1} \phi_n}$, we see that $f(\psi_n)$ has distribution $\mu_n$.  By construction, it follows that
\begin{equation} \label{eq:Upsin}
	| U - \psi_n | \leq \frac{2 \pi }{n} 
\end{equation} 
almost surely.  We then see that
\[ |f(U) - f(\psi_n)| \leq \sum_{|j| \leq k} |a_j| |U^j - \psi_n^j| \leq C | U - \psi_n| \]
for some constant $C > 0$ depending only on $k$, $f$, and $\{a_j\}_{|j| \leq k}$.  In view of \eqref{eq:Upsin}, we conclude that
\[ \E |f(U) - f(\psi_n)| \leq \frac{2 \pi C}{n}, \]
which in the language of measures (and after adjusting the constant $C$) gives \eqref{eq:wassbnd}.  
\end{proof}

\begin{lemma} \label{lemma:distconvergence}
Let $\{a_j \}_{j \in \mathbb{Z}}$ be a sequence of complex numbers, indexed by the integers, satisfying 
\begin{equation} \label{eq:jaj2}
	\sum_{j \in \mathbb{Z}} |a_j| < \infty,
\end{equation} 
and let $k_n$ be a non-negative integer sequence tending to infinity.  Define $f, f_n: S^1 \to \mathbb{C}$ by
\[ f_n(\omega) = \sum_{|j| \leq k_n} a_j w^j \qquad \text{ and } \qquad f(\omega) = \sum_{j \in \mathbb{Z}} a_j w^j. \]
Let $\mu_n$ to be the probability measure on $\mathbb{C}$ given by 
\[ \mu_n = \frac{1}{n} \sum_{i=0}^{n-1} \delta_{f_n(\omega_n^i)}, \]
where $\omega_n$ is a primitive $n$-th root of unity as in \eqref{eq:omegan}.  Let $\mu$ be the distribution of $f(U)$, where $U$ is a random variable, uniform on $S^1$.  Then $\mu_n \rightarrow \mu$ weakly as $n \to \infty$.  
\end{lemma}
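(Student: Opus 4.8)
The plan is to prove convergence in the $L^1$-Wasserstein metric $W_1$, which in our situation is equivalent to weak convergence. Since $\sum_{j\in\mathbb{Z}}|a_j|<\infty$ by \eqref{eq:jaj2}, the Weierstrass $M$-test shows that the series defining $f$ converges uniformly on $S^1$; hence $f$ is continuous, and a fortiori uniformly continuous, on the compact set $S^1$. The same holds for each $f_n$, and $|f_n(\omega)|,|f(\omega)|\le R:=\sum_{j\in\mathbb{Z}}|a_j|$ for every $\omega\in S^1$, so all the measures $\mu_n$ and $\mu$ are supported in the fixed closed disk $\{w\in\mathbb{C}:|w|\le R\}$. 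On a compact subset of $\mathbb{C}$ the distance $W_1$ metrizes weak convergence, so it suffices to show $W_1(\mu_n,\mu)\to0$. I would introduce the auxiliary measure $\nu_n:=\frac1n\sum_{i=0}^{n-1}\delta_{f(\omega_n^i)}$ (the empirical measure of the untruncated symbol at the $n$-th roots of unity) and split $W_1(\mu_n,\mu)\le W_1(\mu_n,\nu_n)+W_1(\nu_n,\mu)$.

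For the first term I would use the coupling of $\mu_n$ and $\nu_n$ that pairs $f_n(\omega_n^i)$ with $f(\omega_n^i)$ for each $i$, which gives
\[ W_1(\mu_n,\nu_n)\le\frac1n\sum_{i=0}^{n-1}\bigl|f_n(\omega_n^i)-f(\omega_n^i)\bigr|\le\sup_{\omega\in S^1}|f_n(\omega)-f(\omega)|\le\sum_{|j|>k_n}|a_j|. \]
Since $k_n\to\infty$ and $\sum_{j\in\mathbb{Z}}|a_j|<\infty$, the right-hand side tends to $0$.

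For the second term I would reuse the coupling from the proof of Lemma~\ref{lemma:wassbnd}: let $\theta$ be uniform on $[0,2\pi)$, set $\phi_n=2\pi\frac{i-1}{n}$ on the event $2\pi\frac{i-1}{n}\le\theta<2\pi\frac in$, and put $U:=e^{\sqrt{-1}\theta}$ and $\psi_n:=e^{\sqrt{-1}\phi_n}$. Then $U$ is uniform on $S^1$, so $f(U)$ has law $\mu$, while $f(\psi_n)$ has law $\nu_n$, and $|U-\psi_n|\le 2\pi/n$ almost surely. Writing $\omega_f(\delta):=\sup\{|f(u)-f(v)|:u,v\in S^1,\ |u-v|\le\delta\}$ for the modulus of continuity of $f$, this yields $W_1(\nu_n,\mu)\le\E|f(U)-f(\psi_n)|\le\omega_f(2\pi/n)$, which tends to $0$ because $f$ is uniformly continuous on $S^1$. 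Combining the two bounds gives $W_1(\mu_n,\mu)\to0$, hence $\mu_n\to\mu$ weakly, as claimed.

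The step requiring the most care is the second one: Lemma~\ref{lemma:wassbnd} cannot be quoted directly here, because its constant depends on $\{a_j\}_{|j|\le k}$ and need not stay bounded as the truncation level $k_n\to\infty$ under the mere $\ell^1$ hypothesis \eqref{eq:jaj2} (one would need $\sum_{j}|ja_j|<\infty$ to obtain a Lipschitz bound for $f$). Routing the argument through $\nu_n$ and invoking only uniform — rather than Lipschitz — continuity of the limiting symbol $f$ is precisely what sidesteps this difficulty; the remaining estimates are routine.
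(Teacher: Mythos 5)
Your proof is correct and follows essentially the same route as the paper's: both pass through the intermediate measure $\nu_n=\frac1n\sum_i\delta_{f(\omega_n^i)}$ (the law of $f(\psi_n)$), control $\mu_n$ versus $\nu_n$ by the uniform tail bound $\sum_{|j|>k_n}|a_j|$, and control $\nu_n$ versus $\mu$ via the coupling $|U-\psi_n|\le 2\pi/n$ together with continuity of $f$ — the paper phrases the last step through the continuous mapping theorem while you phrase it through the modulus of continuity and $W_1$. Your observation that Lemma~\ref{lemma:wassbnd} cannot be applied directly to $f$ under the mere $\ell^1$ hypothesis is accurate, and the paper sidesteps it the same way you do, by invoking Lemma~\ref{lemma:wassbnd} only for the identity map and then using continuity of $f$.
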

\begin{proof}
Let $\psi_n$ be a random variable uniformly distributed on $1, \omega_n, \omega_n^2, \ldots, \omega_n^{n-1}$.  Then $f_n(\psi_n)$ has distribution $\mu_n$.  Since $k_n$ tends to infinity, it follows from \eqref{eq:jaj2} that
\[ \sup_{\omega \in S^1} |f_n(w) - f(w)| \leq \sum_{|j| > k_n} |a_j| = o(1). \]
This implies that 
\[ |f_n(\psi_n) - f(\psi_n)| = o(1) \]
almost surely.  

It remains to show that $f(\psi_n)$ converges in distribution to $f(U)$.  As a consequence of Lemma \ref{lemma:wassbnd} (by taking the identity function for $f$), it follows that $\psi_n \to U$ in distribution as $n \to \infty$ (since convergence in Wasserstein distance implies convergence in distribution; see \cite{Dudley}).  In addition, under condition \eqref{eq:jaj2}, it follows from Theorem 6.27 in \cite{ProbTheory} that $f$ is continuous.  Thus, by the continuous mapping theorem it follows that $f(\psi_n) \to f(U)$ in distribution as $n \to \infty$.  Combining the above, we see that $f_n(\psi_n) \to f(U)$ in distribution, which in the language of measures means that $\mu_n \to \mu$ weakly as $n \to \infty$.  
\end{proof}

\begin{lemma} \label{lemma:lowrank}
Let $\{a_j \}_{j \in \mathbb{Z}}$ be a sequence of complex numbers, indexed by the integers, and let $k < \frac{n}{2}$ be a non-negative integer.  Define $f: S^1 \to \mathbb{C}$ by
\[ f(\omega) = \sum_{|j| \leq k} a_j \omega^j \qquad. \]
Let $A$ be the $n \times n$ Toeplitz matrix with symbol $\{a_j \}_{j \in \mathbb{Z}}$ truncated at $k$.  Then there exists an $n \times n$ matrix $A'$ with rank at most $2k$ so that $A + A'$ is circulant with eigenvalues given by $f(1), f(\omega_n), f(\omega_n^2), \ldots, f(\omega_n^{n-1})$, where $\omega_n$ is a primitive $n$-th root of unity as in \eqref{eq:omegan}.  Moreover, $A'$ can be chosen so that the Frobenius norm $\|A'\|_2$ satisfies 
\begin{equation} \label{eq:A'F}
	\|A'\|_2^2 \leq n \sum_{|j| \leq k } |a_j|^2. 
\end{equation} 
\end{lemma}
\begin{proof}
We define $A'$ so that $C = A + A'$, where $C$ is the circulant matrix given in \eqref{eq:circ} with $c_{j} = a_j$ for $0 \leq j \leq k$, $c_{n-j} = a_{-j}$ for $1 \leq j \leq k$, and $c_j = 0$ otherwise.  Then $A'$ has at most $2k$ rows that are non-zero, and hence $\rank(A') \leq 2k$.  In view of Lemma \ref{lemma:circulant}, the eigenvalues of $C^{\mathrm{T}}$ (and hence of $C$) are given by $f(1), f(\omega_n), f(\omega_n^2), \ldots, f(\omega_n^{n-1})$.  The bound in \eqref{eq:A'F} follows from the construction above.  
\end{proof}

\subsection{Proof of Theorem \ref{thm:pert-toep}}
With the above results in hand, we can now complete the proof of Theorem \ref{thm:pert-toep}.

\begin{proof}[Proof of Theorem \ref{thm:pert-toep}]
If $k_n$ converges to an integer $k$, then it must be the case that $k_n = k$ for all sufficiently large $n$.  In this case, we may assume that $a_j = 0$ for all $|j| > k$.  Therefore, without loss of generality, it suffices to assume that $k_n$ tends to infinity and satisfies \eqref{eq:kno}.  

We begin with some notation.  Define the functions $f_n, f: S^1 \to \mathbb{C}$ by 
\[ f_n(\omega) = \sum_{|j| \leq k_n} a_j w^j \qquad \text{ and } \qquad f(\omega) = \sum_{j \in \mathbb{Z}} a_j \omega^j. \]
We recall that $\|A\|_2$ denotes the Frobenius norm of $A$.  By definition, it follows that
\begin{equation} \label{eq:AF}
	\frac{1}{n} \|A\|_2^2 \leq \sum_{j \in \mathbb{Z}} |a_j|^2, 
\end{equation} 
and so by \eqref{eq:aj}
\begin{equation} \label{eq:normA}
	\|A \|^2 \leq n \sum_{j \in \mathbb{Z}} |a_j|^2 = O(n). 
\end{equation} 
In addition, by Weyl's perturbation theorem (see Theorem~\ref{thm:weyl}) and \eqref{eq:normE}
\begin{equation} \label{eq:AEF}
	\frac{1}{n} \|A + n^{-\alpha - \gamma} E \|_2^2  \leq 2\sum_{j \in \mathbb{Z}} |a_j|^2 + 1 
\end{equation} 
with probability $1 - o(1)$.  Let $A'$ be the matrix from Lemma \ref{lemma:lowrank} so that $C := A+ A'$ is circulant, $\rank(A') \leq 2k_n$, and 
\begin{equation} \label{eq:A'F2}
	\frac{1}{n} \|A'\|_2^2 \leq \sum_{j \in \mathbb{Z}} |a_j|^2.
\end{equation} 
In addition, it follows from Lemma \ref{lemma:lowrank} that $f_n(1), f_n(\omega_n), f_n(\omega_n^2), \ldots, f_n(\omega_n^{n-1})$ are the eigenvalues of $C$, where $\omega_n$ is a primitive $n$-th root of unity as in \eqref{eq:omegan}.  In view of \eqref{eq:AF} and \eqref{eq:A'F2}, we have
\begin{equation} \label{eq:CF}
	\frac{1}{n} \| C \|_2^2 \leq 4  \sum_{j \in \mathbb{Z}} |a_j|^2,
\end{equation} 
and hence 
\begin{equation} \label{eq:normC}
	\|C \|^2 \leq 4 n \sum_{j \in \mathbb{Z}} |a_j|^2 = O(n). 
\end{equation} 

Our goal is to apply the replacement principle, Theorem \ref{thm:TVrepl}, to show that 
\[ \mu_{A + n^{-\alpha - \gamma} E} - \mu_{C} \longrightarrow 0 \]
weakly in probability as $n \to \infty$.  This would complete the proof since Lemma \ref{lemma:distconvergence} implies that $\mu_{C} \to \mu$ weakly as $n \to \infty$.  The Frobenius norm condition of Theorem \ref{thm:TVrepl} follows immediately from \eqref{eq:aj} (see Remark \ref{rem:aj}), \eqref{eq:AEF}, and \eqref{eq:CF}, so it remains to compare the logarithmic determinants of $A + n^{-\alpha - \gamma}E$ and $C$.  

Fix $z \in \mathbb{C}$ with $z \notin f(S^1)$ and such that \eqref{eq:lsvE} holds.  The set of $z \in \mathbb{C}$ which fail to satisfy these properties has Lebesgue measure zero by Lemma \ref{lemma:measurezero} and the assumptions on $E$.  Lemma \ref{lemma:measurezero} and Lemma \ref{lemma:circulant} imply that there exists $\eps' > 0$ so 
\begin{equation} \label{eq:lsvC}
	\sigma_{\min}(C - zI) \geq \eps'
\end{equation} 
for all sufficiently large $n$.  Thus, by Weyl's perturbation theorem (see Theorem~\ref{thm:weyl}) and \eqref{eq:normE}, 
\begin{equation} \label{eq:lsvCE}
	\sigma_{\min}(C + n^{-\alpha-\gamma}E - zI) \geq \frac{\eps'}{2}
\end{equation} 
with probability $1 - o(1)$.  
Applying Theorem \ref{thm:replrank} (using \eqref{eq:normE}, \eqref{eq:normA} and \eqref{eq:normC} to bound the norms and \eqref{eq:lsvE} and \eqref{eq:lsvCE} to bound the smallest singular values), we see that
\[ \left| \mathcal{L}_{A + n^{-\alpha-\gamma}E}(z) - \mathcal{L}_{C + n^{-\alpha-\gamma}E}(z) \right| = O \left( \frac{ \rank(A') \log n}{n} \right) \]
with probability $1 - o(1)$.  In view of \eqref{eq:kno} and the fact that $\rank(A') \leq 2k_n$ we obtain
\[ \left| \mathcal{L}_{A + n^{-\alpha-\gamma}E}(z) - \mathcal{L}_{C + n^{-\alpha-\gamma}E}(z) \right| \longrightarrow 0 \]
in probability.  

Applying Theorem \ref{thm:replacement} (using \eqref{eq:lsvC} and \eqref{eq:lsvCE} to bound the smallest singular values, \eqref{eq:normE} to bound the norm, and taking $\eps$ in Theorem \ref{thm:replacement} to be $\eps'/4$), we obtain
\[ \left| \mathcal{L}_{C + n^{-\gamma-\alpha}E}(z) - \mathcal{L}_{C}(z) \right| = O(n^{-\gamma}) \]
with probability $1 - o(1)$. Combining the bounds above, we conclude that
\[ \left| \mathcal{L}_{A + n^{-\alpha-\gamma}E}(z) - \mathcal{L}_{C}(z) \right| \longrightarrow 0 \]
in probability as $n \to \infty$.  This confirms the last condition in Theorem \ref{thm:TVrepl}, and hence the proof of the theorem is complete.  
\end{proof}

We end this section with a conjecture that is suggested by the proof of Theorem~\ref{thm:pert-toep}.  

\begin{conjecture} \label{conj:A'}
Let $A$ satisfy the conditions of Theorem~\ref{thm:pert-toep}, 
and let $A'$ be the matrix from Lemma~\ref{lemma:lowrank}.  Then, we conjecture that the empirical spectral measure $\mu_{A+n^{-\gamma}A'}$ converges weakly in probability to $\mu$ where $\mu$ is the distribution of $\sum_{|j|\le k } a_j U^j$ where $U$ is a random variable uniformly distributed on $S^1$.  
\end{conjecture}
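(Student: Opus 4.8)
The plan is to reduce Conjecture~\ref{conj:A'} to the convergence $\mu_C \to \mu$ for the circulant matrix $C := A + A'$, already established in Lemma~\ref{lemma:distconvergence}, by comparing the logarithmic potentials of $A + n^{-\gamma} A'$ and $C$. Since $A + n^{-\gamma}A'$ and $C$ differ by $(1 - n^{-\gamma})A'$, which has rank at most $2 k_n$, the rank comparison principle (Theorem~\ref{thm:replrank}) is the natural tool: for every $z$ at which the relevant extreme singular values are positive and finite,
\[
 \bigl| \mathcal{L}_{A + n^{-\gamma}A'}(z) - \mathcal{L}_{C}(z) \bigr| \;\le\; 2\bigl( |\log \sigma_{\min}| + |\log \sigma_{\max}| \bigr)\,\frac{2 k_n}{n},
\]
with $\sigma_{\min} = \min\{ \sigma_{\min}(A + n^{-\gamma}A' - zI),\, \sigma_{\min}(C - zI) \}$ and $\sigma_{\max} = \max\{ \|A + n^{-\gamma}A' - zI\|,\, \|C - zI\| \}$. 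One would then invoke the replacement principle (Theorem~\ref{thm:TVrepl}), whose Frobenius-norm hypothesis is immediate from $\tfrac1n\|C\|_2^2 = O(1)$ and $\tfrac1n\|A + n^{-\gamma}A'\|_2^2 = O(1)$ (cf.\ \eqref{eq:AF}, \eqref{eq:CF}, \eqref{eq:A'F}), to deduce $\mu_{A + n^{-\gamma}A'} - \mu_C \to 0$ weakly, and hence $\mu_{A + n^{-\gamma}A'} \to \mu$. Since both matrices are deterministic, one could equally well run the argument through Theorem~\ref{thm:non} and Remark~\ref{rem:mlim}.

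All the quantities in the displayed bound are harmless except $\sigma_{\min}(A + n^{-\gamma}A' - zI)$. Indeed $k_n = o(n/\log n)$ by \eqref{eq:kno}; the spectral norms are $O(\sqrt n)$ by \eqref{eq:normA}, \eqref{eq:normC} and \eqref{eq:A'F}, so $\tfrac{k_n}{n}|\log\sigma_{\max}| = o(1)$; and for every $z \notin f(S^1)$, Lemma~\ref{lemma:measurezero} and Lemma~\ref{lemma:circulant} give $\sigma_{\min}(C - zI) \ge \eps'$ for some $\eps' > 0$ and all large $n$, so $\tfrac{k_n}{n}|\log\sigma_{\min}(C - zI)| = o(1)$ as well. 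Thus the proof reduces to establishing that, for almost every $z$,
\[
 \sigma_{\min}\bigl( A + n^{-\gamma}A' - zI \bigr) \;\ge\; e^{-o(n/k_n)} \qquad (\text{and in particular } > 0).
\]

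This bound is the main obstacle, and is precisely why the statement is only a conjecture. In Theorem~\ref{thm:pert-toep} the analogous estimate \eqref{eq:lsvE} is furnished by the randomness of $E$ through quantitative least-singular-value bounds (e.g.\ \cite[Theorem~2.1]{MR2409368}); here $A + n^{-\gamma}A'$ is a fixed matrix, for which no comparable generic lower bound exists, and a priori $\sigma_{\min}(A + n^{-\gamma}A' - zI)$ could be super-polynomially small on a set of positive Lebesgue measure. A plausible route exploits the low-rank structure directly: writing $A' = U V^\ast$ with $U, V$ of rank at most $2k_n$ (via the singular value decomposition, so that $\|V\| = 1$ and $\|U\| = O(\sqrt n)$), the matrix determinant lemma gives
\[
 \mathcal{L}_{A + n^{-\gamma}A'}(z) = \mathcal{L}_{C}(z) + \frac{1}{n} \log \bigl| \det\bigl( I - (1 - n^{-\gamma}) V^\ast (C - zI)^{-1} U \bigr) \bigr|,
\]
so that the correction is a genuine $2 k_n \times 2 k_n$ determinant whose upper bound $O(k_n \log n / n)$ follows at once from $\|(C - zI)^{-1}\| \le 1/\eps'$. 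The difficulty is the matching lower bound: one must show that this small determinant, a meromorphic function of $z$ with poles among the $f_n(\omega_n^j)$, does not approach zero faster than $e^{-o(n/k_n)}$ off a null set. Its zeros are exactly the eigenvalues of $A + n^{-\gamma}A'$ not shared with $C$, so ruling out their clustering is tantamount to the conjectured eigenvalue count itself; one expects that either a pseudospectral computation --- in the spirit of the identity $\det(T + n^{-\gamma}A' - zI) = z^n - n^{-\gamma}$, which already proves the conjecture when $A = T$ --- or a careful analysis of the two corner blocks that make up $A'$ will be needed. Given such a least-singular-value input, the remainder of the argument is routine.
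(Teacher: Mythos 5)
This statement is a \emph{conjecture} in the paper: the authors offer no proof, only numerical evidence, so there is nothing on their side to compare your argument against line by line. Your reduction is nevertheless the natural one and mirrors exactly the comparison scheme the paper uses to prove Theorem~\ref{thm:pert-toep}: since $(A+n^{-\gamma}A') - C = (n^{-\gamma}-1)A'$ has rank at most $2k_n$, Theorem~\ref{thm:replrank} plus the replacement principle (or Theorem~\ref{thm:non} with Remark~\ref{rem:mlim}, as both matrices are deterministic) would give $\mu_{A+n^{-\gamma}A'}-\mu_C\to 0$, and Lemma~\ref{lemma:distconvergence} finishes, \emph{provided} one can show $\sigma_{\min}(A+n^{-\gamma}A'-zI)\geq e^{-o(n/k_n)}$ for almost every $z$. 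All the other ingredients you cite ($k_n=o(n/\log n)$, the $O(\log n)$ control of $|\log\sigma_{\max}|$, the lower bound $\sigma_{\min}(C-zI)\geq \eps'$ off $f(S^1)$ from Lemmas~\ref{lemma:measurezero} and~\ref{lemma:circulant}) are correctly sourced and do go through.

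But, as you yourself say, the least-singular-value bound for the deterministic matrix $A+n^{-\gamma}A'-zI$ is not established, and it cannot be dismissed as routine: in the proof of Theorem~\ref{thm:pert-toep} the analogous input \eqref{eq:lsvE} is an \emph{assumption} on the perturbation, verified in the corollaries only by invoking anti-concentration results that exploit randomness (e.g.\ \cite[Theorem 2.1]{MR2409368}); no such generic bound exists for a fixed matrix, and for highly non-normal $A$ the quantity $\sigma_{\min}(A+n^{-\gamma}A'-zI)$ could a priori be exponentially small on a set of positive measure, which would defeat the $O(k_n\log\sigma_{\min}^{-1}/n)$ bound. Your determinant-lemma reformulation and the exactly solvable case $A=T$ (where $\det(zI-T-n^{-\gamma}A')=z^n-n^{-\gamma}$ settles everything) are a sensible way to attack this, but they do not close the gap in general. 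So the proposal is a correct and well-organized reduction of the conjecture to its essential difficulty, not a proof; the conjecture remains open after your argument, for precisely the reason you identify.
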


Numerical evidence supports Conjecture \ref{conj:A'}.  In fact, numerical evidence suggests that the eigenvalues of $A + n^{-\gamma} A'$ better approximate the eigenvalues of $A+n^{-\alpha-\gamma}E$ than $A + A'$ (which we use in the proof Theorem~\ref{thm:pert-toep}).

\section{Pairing between eigenvalues and a rate of convergence} \label{sec:ratec}
Our next result focuses on the pairing between eigenvalues seen in part \eqref{item:pairing} of Theorem \ref{thm:sample}.  As a consequence of this pairing, we also obtain a rate of convergence, in Wasserstein distance, for the empirical spectral distribution of $A + n^{-\gamma }E$ to its limiting distribution.  
Recall that for two probability measures $\mu$ and $\nu$ on $\mathbb{C}$, the $L^1$-Wasserstein distance between $\mu$ and $\nu$ is given by \eqref{eq:defW1}.  
%\begin{equation} \label{eq:defW1}
%	W_1(\mu, \nu) := \inf_{\pi} \int |x - y| d \pi(x,y), 
%\end{equation} 
%where the infimum is over all probability measures $\pi$ on $\mathbb{C} \times \mathbb{C}$ with marginals $\mu$ and $\nu$.  

For $z \in \mathbb{C}$ and $r_1, r_2 > 0$, we define the closed rectangular box  
\begin{equation} \label{eq:rectR}
	\mathfrak{R}(z, r_1, r_2) := \{ w \in \mathbb{C} : | \Re(w) - \Re(z)| \leq r_1, | \Im(w) - \Im(z) | \leq r_2 \} 
\end{equation} 
in the complex plane.  
For a finite set $S$, recall that $|S|$ denotes the cardinality of $S$.  %$[n]$ denotes the discrete interval $\{1, 2, \ldots, n\}$.  

\begin{theorem} \label{thm:rate2}
Let $k \geq 0$ be a fixed integer, let $\{a_j\}_{j \in \mathbb{Z}}$ be a sequence of complex numbers indexed by the integers, and let $A$ be the $n \times n$ Toeplitz matrix with symbol $\{a_j\}_{j \in \mathbb{Z}}$ truncated at $k$.  Define the function $f: S^1 \to \mathbb{C}$ as
\[ f(\omega) = \sum_{|j| \leq k} a_j \omega^j, \]
and let $\omega_n$ be a primitive $n$-th root of unity as in \eqref{eq:omegan}.  
Assume the classical locations $f(1), f(\omega_n), \ldots, f(\omega_n^{n-1})$ do not concentrate in any one rectangle, that is, assume there 
exists $\eps_0 > 0$ and $c_0 \geq 1$ so that, for any $0 < \eps' \leq \eps_0$, 
\begin{equation} \label{eq:numeigdisk2}
	\sup_{z \in \mathbb{C}}| \{ 0 \leq j \leq n-1 : f(\omega_n^j) \in \mathfrak{R}(z, n^{-\eps'}, n^{-c_0 \eps'})\cup \mathfrak{R}(z, n^{-c_0 \eps'}, n^{-\eps'}) \} | = O_{\eps'} (n^{1 - 3\eps'}). 
\end{equation}  
Let $\gamma, \delta > 0$, and let $E$ be an $n \times n$ random matrix so that 
\begin{enumerate}[(i)]
\item \label{item:normbnd2} there exists $M > 0$ and $\alpha \geq 0$ so that
\[ \|E\| \leq M n^{\alpha} \]
with probability $1 - O(n^{-\delta})$.  
\item \label{item:lsvbound2} there exists $\kappa > 0$ so that 
\begin{equation} \label{eq:lsvbnd22}
	\sup_{z \in \mathbb{C}, |z| \leq \beta} \Prob( \sigma_{\min}(A + n^{-\alpha - \gamma} E  - z I) \leq n^{-\kappa}) = O \left( n^{-\delta} \right) 
\end{equation} 
for $\beta := \sqrt{2} \left( \sum_{|j| \leq k} |a_j| + M + 1 \right)$.  
\end{enumerate}
Then, for any $p \geq 1$, there exists $C, \eps > 0$ (depending on $p$, $k$, $f$, $\{a_j\}_{|j| \leq k}$, $\eps_0$, $c_0$, $\gamma$, $\delta$, and the constants from assumptions \eqref{item:normbnd2}, \eqref{item:lsvbound2}, and \eqref{eq:numeigdisk2}) so that
\begin{equation} \label{eq:pairing:main}
	\min_{\sigma: [n] \to [n]} \left( \frac{1}{n} \sum_{j=1}^n \left| \lambda_j(A + n^{-\gamma - \alpha} E) - f \left(\omega_n^{\sigma(j)} \right) \right|^p \right)^{1/p} \leq Cn^{-\eps} 
\end{equation} 
with probability $1 - o(1)$.  
In addition, there exists $C, \eps > 0$ (depending on $k$, $f$, $\{a_j\}_{|j| \leq k}$, $\eps_0$, $c_0$, $\gamma$, $\delta$, and the constants from assumptions \eqref{item:normbnd2}, \eqref{item:lsvbound2}, and \eqref{eq:numeigdisk2}) so that 
\begin{equation} \label{eq:wass:main}
	W_1( \mu_{A + n^{-\gamma - \alpha}E}, \mu) \leq C n^{-\eps} 
\end{equation} 
with probability $1-o(1)$, where $\mu$ is the distribution of $f(U)$ and $U$ is a random variable uniformly distributed on $S^1$. 
\end{theorem}

Condition \eqref{eq:numeigdisk2} is technical and requires that the points $f(\omega_n^j)$ (and hence the curve $f(S^1)$ itself) not concentrate in any small region in the plane.  We have chosen to use rectangles as this matches the geometric construction given in the proof, but other shapes could also be used with appropriate modifications to the proof.  If $A$ is Hermitian then $f$ is real-valued and will fail to satisfy \eqref{eq:numeigdisk2}.  However, the eigenvalues can be rotated by a phase (i.e., by considering $e^{\sqrt{-1} \theta} (A + n^{-\gamma - \alpha}E)$ for an appropriate choice of $\theta \in [0, 2 \pi)$), so that Theorem \ref{thm:rate2} is applicable.  The assumptions on $E$ in Theorem \ref{thm:rate2} are general and apply to a variety of random matrix ensembles.  We give a few examples of Theorem \ref{thm:rate2} below.

\begin{example} \label{exam:iid}
Consider the $n \times n$ matrix $\Jmat$ given in \eqref{eq:defT}.  $A := \Jmat$ is a Toeplitz matrix with $a_{-1} = 1$ and $a_j = 0$ for all $j \neq -1$.  Thus, 
\[ f(\omega) = \frac{1}{\omega} = \overline{\omega}. \]
It is easy to check that $f(\omega_n^j)$, $0 \leq j \leq n-1$ are uniformly spaced on $S^1$, and it follows that condition \eqref{eq:numeigdisk2} is satisfied with $c_0 = 6$ and any $\eps_0 > 0$.  
Let $E$ be an $n \times n$ random matrix whose entries are iid copies of a random variable with mean zero, unit variance, and finite fourth moment.  It follows from Proposition \ref{prop:norm} that $\|E\| \leq n^{\alpha}$ with probability at least $1 - O(n^{1/2-\alpha})$ for any $\alpha > 1/2$.  
The least singular value bound in \eqref{eq:lsvbnd22} follows from \cite[Theorem 2.1]{MR2409368}.  Therefore, for any $\gamma > 0$, Theorem \ref{thm:rate2} can be applied (where we take $\alpha = 1/2 + \gamma/2$ and $\gamma$ in Theorem \ref{thm:rate2} is taken to be $\gamma/2$) to obtain 
\begin{equation} \label{eq:pairing:Jmat}
	\min_{\sigma: [n] \to [n]} \frac{1}{n} \sum_{j=1}^n \left| \lambda_j(\Jmat + n^{-1/2 - \gamma} E) - \omega_n^{\sigma(j)} \right| \leq C n^{-\eps} 
\end{equation} 
and 
\begin{equation} \label{eq:wass:Jmat}
	W_1(\mu_{\Jmat + n^{-1/2 - \gamma} E}, \mu) \leq C n^{-\eps}
\end{equation} 
with probability $1 - o(1)$ for constants $C, \eps > 0$, where $\mu$ is the uniform probability measure on the unit circle $S^1$.  In particular, \eqref{eq:pairing:Jmat} implies that, with probability $1 - o(1)$, there is a one-to-one pairing between the eigenvalues and roots of unity so that the average distance between an eigenvalue and its paired root of unity is $O(n^{-\eps})$.  
\end{example}

\begin{example}
Theorem \ref{thm:rate2} also applies to matrices with heavy-tailed entries.  For example, let $A := \Jmat$ be the same matrix as in Example \ref{exam:iid}. Let $E$ be an $n \times n$ random matrix whose entries are iid copies of a non-constant random variable $\xi$ with $\E|\xi|^\eta < \infty$ for some $\eta > 0$ (and no other moment assumptions).  
A bound on the norm of $E$ follows from the arguments in \cite[Lemma 56]{2010.01261}.  
The least singular value bound in \eqref{eq:lsvbnd22} follows from \cite[Theorem 32]{2010.01261} (alternatively, see \cite[Lemma A.1]{MR2908617}).  
Therefore, we conclude from Theorem \ref{thm:rate2} that there exists $\gamma_0 > 0$ (depending only on $\xi$) so that for any $\gamma > \gamma_0$, \eqref{eq:pairing:Jmat} and \eqref{eq:wass:Jmat} hold with probability $1 - o(1)$ for some constants $C, \eps > 0$, even though $E$ contains heavy-tailed entries.  
\end{example}

\begin{example} 
Let $A$ be the Toeplitz matrix in \eqref{eq:ToeplitzA} with $a_{-1} = 4$, $a_1 = 1$, and $a_j =0$ for all other $j \in \mathbb{Z}$.  Then
\[ f(\omega) = \frac{4}{\omega} + \omega \]
so that $f(S^1)$ is an ellipse.  It can be checked that $f$ satisfies \eqref{eq:numeigdisk2} for $\eps_0$ sufficiently small and $c_0 = 6$.  
Let $E$ be an $n \times n$ random matrix uniformly distributed on the unitary group $\mathcal{U}(n)$.  Then $\|E\| = 1$ (with probability $1$), and for any $\gamma > 0$, the least singular value bound in \eqref{eq:lsvbnd22} follows from \cite[Theorem 1.1]{MR3164983}.  Therefore, Theorem \ref{thm:rate2} implies that there exists $C, \eps > 0$ so that
\[ \min_{\sigma: [n] \to [n]} \frac{1}{n} \sum_{j=1}^n \left| \lambda_j(A + n^{- \gamma} E) - f\left( \omega_n^{\sigma(j)}\right) \right| \leq C n^{-\eps} \]
and
\[ W_1(\mu_{A + n^{ - \gamma} E}, \mu) \leq C n^{-\eps} \]
with probability $1 - o(1)$, where $\mu$ is the distribution of $f(U)$ and $U$ is a random variable uniformly distributed on $S^1$.  
\end{example}

We have stated Theorem \ref{thm:rate2} for the case when $k$ is fixed.  However, our method allows $k$ to slowly grow with the dimension $n$.  Since the growth rate of $k$ is technical to state (and depends on many of the other parameters in Theorem \ref{thm:rate2}), we have chosen to state the theorem only for $k$ fixed.  

We conjecture that, under certain conditions such as when the entries of $E$ are iid standard normal random variables, the optimal rate of convergence for the Wasserstein distance in Theorem \ref{thm:rate2} is $O(\log n/n)$.  Our method only allows us to conclude a bound of the from $O(n^{-\eps})$ for small values of $\eps > 0$, and it is unclear how the optimal rate of convergence may depend on $\gamma$ and the other parameters in Theorem~\ref{thm:rate2}.  

Theorem \ref{thm:rate2} is related to the results of Sj\"{o}strand and Vogel \cite{MR4200678}, which show precise asymptotic bounds for the number of eigenvalues in smooth domains for a banded Toeplitz matrix perturbed by a random matrix with iid standard complex Gaussian entries.  Our results differ from those in \cite{MR4200678} in a number of ways.  Our results apply to much more general choices for the random matrix model, even including matrices with dependent entries.  While Theorem \ref{thm:rate2} compares the eigenvalues to the classical locations, the results in \cite{MR4200678} focus on comparisons with the limiting spectral distribution.  This distinction can make a difference for certain applications; for example, the rate of convergence bound in \eqref{eq:wass:main} follows trivially from \eqref{eq:pairing:main} (see the proof below).  However, we do not know a trivial way to establish the same rate of convergence from the results in \cite{MR4200678}.  Theorem \ref{thm:rate2} also holds for different values of $\alpha$ and $\gamma$ compared to the results in \cite{MR4200678}.  Lastly we mention that our method of proof uses a comparison method, which is quite different than the Grushin reduction method used in \cite{MR4200678}.   

Theorem \ref{thm:rate2} is also related to the results of Basak and Zeitouni \cite{MR4168388} concerning the outliers of $A + n^{-\alpha-\gamma}E$.  Since outlying eigenvalues are a positive, $n$-independent distance away from the classical locations, the more outliers there are, the worse the pairing in \eqref{eq:pairing:main} will be.   In fact, Theorem \ref{thm:rate2} can be used to give an upper bound on the number of outlier eigenvalues.  However, our results in Section \ref{sec:locallaw} tend to provide better bounds for the outliers in many cases.  

Theorem \ref{thm:rate2} is similar to other rate of convergence results in the random matrix theory literature, see for example \cite{MR4254801,MR2013983,MR4302281,MR3820329,MR3837270,MR1217559,MR1217560,BSbook,MR2171668,MR2105745,MR2766657} and references therein.  We draw special attention to the works \cite{MR3325952,2111.03595,1912.08856}, which directly influenced Theorem \ref{thm:rate2}.

\subsection{Proof of Theorem \ref{thm:rate2}}

The rest of this section is devoted to the proof of Theorem \ref{thm:rate2}.  Assume the setup and notation of Theorem \ref{thm:rate2}.  We allow the implicit constants in our asymptotic notation (such as $O(\cdot)$ and $\ll$) to depend on the parameters and constants of Theorem \ref{thm:rate2} (such as $p$, $k$, $f$, $\{a_j\}_{|j| \leq k}$, $\eps_0$, $c_0$, $\gamma$, $\delta$, and the constants from assumptions \eqref{item:normbnd2}, \eqref{item:lsvbound2}, and \eqref{eq:numeigdisk2}) without denoting this dependence.  

Define the event
\[ \Omega := \{ \|E\| \leq M n^{\alpha} \}, \]
which by supposition holds with probability $1 - O(n^{-\delta})$.  On $\Omega$, for $n$ sufficiently large, 
\begin{equation} \label{eq:nbAE}
	\|A + n^{-\alpha - \gamma} E \| < \|A\| + M \leq \sum_{|j| \leq k} |a_j| + M,  
\end{equation} 
where the bound for $\|A\|$ follows from Proposition \ref{prop:Anorm} in Appendix \ref{sec:multi}.  

By Lemma \ref{lemma:lowrank}, there exists an $n \times n$ deterministic matrix $A'$ with rank at most $2k$ so that $A + A'$ is circulant with eigenvalues given by $f(1), f(\omega_n), f(\omega_n^2), \ldots, f(\omega_n^{n-1})$.  
%In particular, this means that the empirical spectral measure of $A + A'$ is precisely $\mu_n$:
%\begin{equation} \label{eq:mun}
%	\mu_n = \mu_{A + A'}. 
%\end{equation}
From Lemma \ref{lemma:circulant}, we see that the singular values of $A + A'$ are then $|f(1)|, |f(\omega_n)|, |f(\omega_n^2)|, \ldots, |f(\omega_n^{n-1})|$.  This implies that 
\begin{equation} \label{eq:nbAA'}
	\|A + A'\| \leq \sup_{\omega \in S^1} |f(\omega)| \leq \sum_{|j| \leq k} |a_j|, 
\end{equation} 
and hence 
\begin{equation} \label{eq:nbAA'E}
	\|A + A' + n^{-\alpha - \gamma} E \| < \sum_{|j| \leq k} |a_j| + M 
\end{equation} 
on the event $\Omega$.  

Define the box $R$ in the complex plane by  
\[ R = \left\{ z \in \mathbb{C} : -\sum_{|j| \leq k} |a_j| - M - 1/2 \leq \Re(z), \Im(z) < \sum_{|j| \leq k} |a_j| + M + 1/2 \right\}. \] 
Notice that 
\begin{equation} \label{eq:BRbnd}
	B \left (0, \sum_{|j| \leq k} |a_j| + M + 1/4 \right) \subset R \subset B \left( 0, \beta \right). 
\end{equation} 
A set $S$ in the complex plane of the form
\[ S = \{ z \in \mathbb{C} : a \leq \Re(z) < b, c \leq \Im(z) < d \} \]
with $b - a = d - c > 0$ is called a \emph{square}, and we say $b-a$ is the \emph{side length} and $(a+b)/2 + \sqrt{-1} (c+d)/2$ is the \emph{center} of $S$.  For example, $R$ is a square with center $0$ and side length $2\left( \sum_{|j| \leq k} |a_j|\right) + 2M + 1$.   Let $R_1, \ldots, R_L$ be a partition of $R$ into disjoint squares, all with equal side length of $\Theta(n^{-a})$ for some $a > 0$ to be chosen later.  In particular, we note that 
\[ \bigcup_{i=1}^L R_i = R. \]
A volume argument shows that 
\begin{equation} \label{eq:Lbnd}
	L = O(n^{2a}). 
\end{equation} 

For $1 \leq i \leq L$, we define $X_i$ to be the number of eigenvalues of $A + n^{-\alpha - \gamma} E$ in $R_i$, and set $Y_i$ to be the number of eigenvalues of $A + A'$ in $R_i$.  The following lemma represents the key technical result we need for the proof.  
\begin{lemma} \label{lemma:compareXiYi}
Under the assumptions of Theorem \ref{thm:rate2}, for any sufficiently small $a, \eps > 0$, there exists a constant $C > 0$ so that 
\begin{equation} \label{eq:XiYibound}
	\sup_{1 \leq i \leq L } |X_i - Y_i| \leq C \left( n^{1 - \eps + 6c_0a} + n^{1 - 3a} \right) 
\end{equation} 
with probability $1 - o(1)$.  (Recall that the squares $R_i$ have side length $\Theta(n^{-a})$ and $c_0$ is the constant from \eqref{eq:numeigdisk2}.) Here the sufficient smallness of $a$ and $\eps$ depends on $k$, $\{a_j\}_{|j| \leq k}$, $f$, $\eps_0$, $c_0$, $\gamma$, $\delta$, and the constants from assumptions \eqref{item:normbnd2}, \eqref{item:lsvbound2}, and \eqref{eq:numeigdisk2} in Theorem \ref{thm:rate2}; the constant $C$ depends on $a$ and $\eps$ as well as these other parameters.  
\end{lemma}

Before proving Lemma \ref{lemma:compareXiYi}, we first complete the proof of Theorem \ref{thm:rate2}.  
%In view of \eqref{eq:mun}, it suffices to show that
%\[ W_1(\mu_{A + n^{-\alpha-\gamma}E}, \mu_{A + A'}) \leq C n^{-\eps} \]
%with probability $1 - o(1)$.  
Recall that $\lambda_1(B), \ldots, \lambda_n(B)$ denote the eigenvalues of the $n \times n$ matrix $B$, and as noted in Section \ref{sec:illustrating_example}, we order the eigenvalues by lexicographic order.  

Observe that, for any permutation $\sigma:[n] \to [n]$, we have
\[ W_1(\mu_{A + n^{-\alpha-\gamma}E}, \mu_{A + A'}) \leq \frac{1}{n} \sum_{j=1}^n \left| \lambda_j(A + n^{-\alpha-\gamma}E) - \lambda_{\sigma(j)}(A + A') \right|. \]
Thus, \eqref{eq:wass:main} follows by applying \eqref{eq:pairing:main} (with $p=1$), Lemma \ref{lemma:wassbnd}, and the triangle inequality for the Wasserstein metric. 

It remains to prove \eqref{eq:pairing:main}.  Fix $p \geq 1$.  We will establish \eqref{eq:pairing:main} by constructing a permutation $\sigma: [n] \to [n]$ so that 
\[ \frac{1}{n} \sum_{j=1}^n \left| \lambda_j(A + n^{-\gamma - \alpha} E) - \lambda_{\sigma(j)}(A + A') \right|^p = O(n^{-\eps}) \]
with probability $1-o(1)$.  
To do so, we work on the event 
\[ \mathcal{F} := \left\{ \sup_{1 \leq i \leq L } |X_i - Y_i| \leq C \left( n^{1 - \eps + 6c_0a} + n^{1 - 3a} \right)  \right\} \bigcap \Omega. \]
By Lemma \ref{lemma:compareXiYi} and assumption \eqref{item:normbnd2} from Theorem \ref{thm:rate2}, $\mathcal{F}$ holds with probability $1-o(1)$ for $\eps, a > 0$ sufficiently small (in particular we will take $8c_0a < \eps$) and $C > 0$ sufficiently large.  

Notice that the permutation $\sigma$ defines a paring between the eigenvalues of $A + n^{-\alpha-\gamma}E$ and the eigenvalues of $A + A'$.  Thus, in order to define $\sigma$, we may equivalently construct such a pairing, i.e., we say $\lambda_i(A + n^{-\alpha-\gamma}E)$ and $\lambda_j(A + A')$ are paired if and only if $\sigma(i) = j$.  

We now construct $\sigma$ on the event $\mathcal{F}$.  Indeed, $\sigma$ itself will be random, so for each outcome in $\mathcal{F}$, we construct a possibly different permutation $\sigma$. Fix an outcome in $\mathcal{F}$, and observe from \eqref{eq:nbAE} and \eqref{eq:nbAA'} that all the eigenvalues of both matrices are contained in $R$ (by \eqref{eq:BRbnd} and the fact that $\mathcal{F} \subset \Omega$).  This means all the eigenvalues are contained in the squares $R_1, \ldots, R_L$.  As we construct the permutation $\sigma$, we will say the index $i$ (or the pairing of $i$) is \emph{good} if both $\lambda_i(A + n^{-\alpha-\gamma}E)$ and $\lambda_{\sigma(i)}(A + A')$ are in the same square $R_k$, $1 \leq k \leq L$; otherwise we call the index $i$ (or pairing of $i$) \emph{bad}.  To start, arbitrarily choose $\min\{X_1, Y_1\}$ eigenvalues of $A + n^{-\alpha-\gamma}E$ in $R_1$ and pair them arbitrarily with $\min\{X_1, Y_1\}$ eigenvalues of $A + A'$ in $R_1$.  After this first step, there may remain some eigenvalues in $R_1$ that are unpaired; we will leave them unpaired until the last step.  Next, repeat the procedure for $R_2$:  arbitrarily choose $\min\{X_2, Y_2\}$ eigenvalues of $A + n^{-\alpha-\gamma}E$ in $R_2$ and pair them arbitrarily with $\min\{X_2, Y_2\}$ eigenvalues of $A + A'$ in $R_2$.  Continue in this way, choosing $\min\{X_k, Y_k\}$ eigenvalues of $A + n^{-\alpha-\gamma}E$ in $R_k$ and pairing them arbitrarily with $\min\{X_k, Y_k\}$ eigenvalues of $A + A'$ in $R_k$ for all $1 \leq k \leq L$.  So far, all the pairings we have made are good pairings.  To complete the construction of $\sigma$, now pair all the remaining unpaired eigenvalues of $A + n^{-\alpha-\gamma}E$ arbitrarily with the remaining unpaired eigenvalues of $A + A'$; all the pairings in this last step are bad pairings.  This procedure constructs the random permutation $\sigma$ on the event $\mathcal{F}$; we will only work with $\sigma$ on $\mathcal{F}$, but $\sigma$ can easily be extended to the entire probability space by taking $\sigma$ to be the identity permutation on $\mathcal{F}^c$.  

We then have
\begin{align*}
	\frac{1}{n} \sum_{i=1}^n \left| \lambda_i(A + n^{-\alpha-\gamma}E) - \lambda_\sigma(i)(A + A') \right|^p &\leq \frac{1}{n} \sum_{i \text{ is good}} \left| \lambda_i(A + n^{-\alpha-\gamma}E) - \lambda_{\sigma(i)}(A + A') \right|^p \\
	 &\qquad + \frac{1}{n} \sum_{i \text{ is bad}} \left| \lambda_i(A + n^{-\alpha-\gamma}E) - \lambda_{\sigma(i)}(A + A') \right|^p.  
\end{align*}
On the one hand, if $i$ is good, then both $\lambda_i(A + n^{-\alpha-\gamma}E)$ and $\lambda_{\sigma(i)}(A + A')$ lie in the same square, so the distance between them is at most the diameter of the square, which by construction is $\Theta(n^{-a})$.   On the other hand, if $i$ is bad, then both $\lambda_i(A + n^{-\alpha-\gamma}E)$ and $\lambda_{\sigma(i)}(A + A')$ lie in $R$ which has diameter less than $2\beta = O(1)$ (see \eqref{eq:BRbnd}).  However, we note that there cannot be too many bad indices.  Indeed, after we make the good pairings, each square $R_k$ has at most $C(n^{1 - \eps + 6c_0a} + n^{1 - 3a})$ unpaired eigenvalues remaining on the event $\mathcal{F}$ for some constant $C > 0$.  In view of \eqref{eq:Lbnd} then, there are at most $O(n^{1 - \eps + 8c_0a} + n^{1 - a})$ total bad indices.  Therefore, using that there are at most $n$ good indices, we conclude that
\begin{align*}
	\frac{1}{n} \sum_{i=1}^n \left| \lambda_i(A + n^{-\alpha-\gamma}E) - \lambda_\sigma(i)(A + A') \right|^p &\ll \frac{1}{n} \sum_{i \text{ is good}} n^{-ap} + \frac{1}{n} \sum_{i \text{ is bad}} (2\beta)^p \\
	&\ll n^{-ap} +   n^{- \eps + 8c_0a}
\end{align*}
on the event $\mathcal{F}$.  Choosing  $a, \eps$ sufficiently small with $8c_0a < \eps$ completes the proof of Theorem \ref{thm:rate2}, and it only remains to prove Lemma \ref{lemma:compareXiYi}.  

\subsection{Proof of Lemma \ref{lemma:compareXiYi}}
We conclude this section with the proof of Lemma \ref{lemma:compareXiYi}.  

Let $a, \eps > 0$ be sufficiently small to be chosen later, and recall that the squares $R_i$ all have the same have side length $s = \Theta(n^{-a})$.  
For $1 \leq i \leq L$, we let $\tilde{R}_i$ and $\breve{R}_i$ be the squares with the same center as $R_i$ but with side lengths of $s + n^{-c_0 a}$ and $s - n^{-c_0 a}$, respectively, where $c_0$ is the constant from \eqref{eq:numeigdisk2}.  For $1 \leq i \leq L$, define smooth functions $\tilde{\varphi}_i, \breve{\varphi}_i: \mathbb{C} \to [0,1]$ so that $\tilde{\varphi}_i$ is supported on $\tilde{R}_i$ with $\tilde{\varphi}_i(z) = 1$ for $z \in R_i$ and $\breve{\varphi}_i$ is supported on $R_i$ with $\breve{\varphi}_i(z) = 1$ for $z \in \breve{R}_i$.  We can construct $\tilde{\varphi}_i$ and $\breve{\varphi}_i$ using products of bump functions in such a way that
\begin{equation} \label{eq:lapbnd}
	\sup_{1 \leq i \leq L} \left( \| \Delta \tilde{\varphi}_i \|_{\infty} + \| \Delta \breve{\varphi}_i \|_{\infty} \right) = O(n^{6c_0a}) 
\end{equation}
by construction of the squares $R_i, \tilde{R}_i$, and $\breve{R}_i$.  

By construction of these functions we find
\begin{equation} \label{eq:Xibnd}
	\sum_{j=1}^n \breve{\varphi}_i( \lambda_j(A + n^{-\alpha - \gamma}E)) \leq X_i \leq \sum_{j=1}^n \tilde{\varphi}_i(\lambda_j(A + n^{-\alpha-\gamma}E))
\end{equation} 
and
\begin{equation} \label{eq:firstYibnd}
	\sum_{j=1}^n \breve{\varphi}_i( \lambda_j(A + A')) \leq Y_i \leq \sum_{j=1}^n \tilde{\varphi}_i(\lambda_j(A + A'))
\end{equation} 
for $1 \leq i \leq L$.  
By taking $a \leq \eps_0$, we apply \eqref{eq:numeigdisk2} to find that 
\[ \sup_{1 \leq i \leq L} \left| \sum_{j=1}^n \left( \tilde{\varphi}_i(\lambda_j(A + A'))  - \breve{\varphi}_i(\lambda_j(A + A')) \right) \right| = O(n^{1 - 3a}). \]
(No union bound is required here since the eigenvalues $f(1), f(\omega_n), f(\omega_n^2), \ldots, f(\omega_n^{n-1})$ of $A + A'$ are deterministic.)
Thus, we can write \eqref{eq:firstYibnd} as
\begin{equation} \label{eq:Yibnd}
\sum_{j=1}^n	\tilde{\varphi}_i(\lambda_j(A + A')) - O(n^{1 - 3a}) \leq Y_i \leq \sum_{j=1}^n \breve{\varphi}_i( \lambda_j(A + A')) + O(n^{1 - 3a}) 
\end{equation} 
uniformly for $1 \leq i \leq L$.  Subtracting \eqref{eq:Yibnd} from \eqref{eq:Xibnd} yields
\begin{align} \label{eq:XiYidiff}
	\sum_{j=1}^n \breve{\varphi}_i( \lambda_j(A + n^{-\alpha - \gamma}E)) &- \sum_{j=1}^n \breve{\varphi}_i( \lambda_j(A + A')) - O(n^{1 - 3a})   \\ 
	& \leq X_i  - Y_i  \leq \sum_{j=1}^n \tilde{\varphi}_i(\lambda_j(A + n^{-\alpha-\gamma}E)) - \sum_{j=1}^n \tilde{\varphi}_i(\lambda_j(A + A')) + O(n^{1 - 3a}) \nonumber
\end{align} 
uniformly for $1 \leq i \leq L$.  
The goal is to compare $X_i$ and $Y_i$ by comparing the differences of the sums above using Theorem \ref{thm:non}.   

To this end, take $b := 100c_0a$.  
For $1 \leq i \leq L$, let $\tilde Z_i$ and $\breve Z_i$ be random variables, independent of all other sources of randomness, uniform on the supports of $\tilde \varphi_i$ and $\breve \varphi_i$, respectively.  Let $Q$ be all the points in the complex plane at distance at most $n^{-b}$ from $f(S^1)$.  For $n$ sufficiently large, $Q \subset B(0, \beta)$.  In addition, since $f(S^1)$ has arc length $O(1)$, it follows that $Q$ has Lebesgue measure $O(n^{-b})$.  

Define the events
\[ \mathcal{E}_i := \{ \sigma_{\min}(A + n^{-\alpha - \gamma} E - \tilde Z_i I) > n^{-\kappa} \} \cap \{ \tilde Z_i \not \in Q \} \cap \{ \sigma_{\min}(A + n^{-\alpha - \gamma} E - \breve Z_i I) > n^{-\kappa} \} \cap \{ \breve Z_i \not \in Q \} \]
for $1 \leq i \leq L$.  It follows from \eqref{eq:lsvbnd22} (by conditioning on $\tilde Z_i, \breve Z_i$) and the bound on the Lebesgue measure of $Q$ that 
\[ \sup_{1 \leq i \leq L} \Prob( \mathcal{E}_i^c) = O(n^{-98c_0a}) \]
for $a$ sufficiently small (in particular this requires $98c_0a < \delta$).  Hence, by the union bound (see \eqref{eq:Lbnd}), the event 
\[ \mathcal{E} := \Omega \bigcap \left( \bigcap_{i=1}^L \mathcal{E}_i \right) \]
holds with probability $1 - O(n^{-96c_0a})$.  

We now work on the event $\mathcal{E}$.  Indeed, since $\mathcal{E} \subset \Omega$, it follows that the norm bounds in \eqref{eq:nbAE}, \eqref{eq:nbAA'}, and \eqref{eq:nbAA'E} hold on the event $\mathcal{E}$.  Moreover, note that for $z \not\in Q$, $\sigma_{\min}(A + A' - zI) \geq n^{-b}$ as the singular values of $A + A' - zI$ are the values $|f(1) - z|, |f(\omega_n) -z|, |f(\omega_n^2) -z|, \ldots, |f(\omega_n^{n-1}) -z|$ by Lemma \ref{lemma:circulant}.  By Weyl's perturbation theorem (Theorem \ref{thm:weyl}) taking $a$ sufficiently small so that $b < \gamma$, we see that 
\[ \sigma_{\min}(A + A' + n^{-\alpha-\gamma}E - \tilde{Z}_i I) \geq \frac{n^{-b}}{2}, \qquad\qquad 1 \leq i \leq L \]
for $n$ sufficiently large on the event $\mathcal{E}$.  

Therefore, by Theorem \ref{thm:replrank},  
\begin{equation} \label{eq:Lbnd1}
	\sup_{1 \leq i \leq n} \left| \mathcal{L}_{A + n^{-\alpha-\gamma}E}(\tilde Z_i) - \mathcal{L}_{A + A' + n^{-\alpha-\gamma}E}(\tilde Z_i) \right| = O \left( \frac{\log n}{n} \right) 
\end{equation} 
on the event $\mathcal{E}$ since $A'$ has rank at most $2k = O(1)$.  

We next apply Theorem \ref{thm:replacement}.  On the event $\mathcal{E}$, $\nu_{A + A' - \tilde Z_iI}([0, n^{-2b}]) = 0$ for all $1 \leq i \leq L$ (from the discussion above), and hence Theorem \ref{thm:replacement} implies that 
\[ \sup_{1 \leq i \leq L} \left| \mathcal{L}_{A + A' + n^{-\alpha-\gamma}E}(\tilde Z_i) - \mathcal{L}_{A + A'}(\tilde Z_i) \right| \ll n^{2b-\gamma}  \]
on the event $\mathcal{E}$.  Taking $a$ (and hence $b$) sufficiently small yields 
\begin{equation} \label{eq:Lbnd2}
	\sup_{1 \leq i \leq L} \left| \mathcal{L}_{A + A' + n^{-\alpha-\gamma}E}(\tilde Z_i) - \mathcal{L}_{A + A'}(\tilde Z_i) \right| \ll n^{-\eps} 
\end{equation}
for $\eps < \gamma/2$.  
Combining \eqref{eq:Lbnd1} and \eqref{eq:Lbnd2} shows that
\begin{equation} \label{eq:supLrandbnd}
	\sup_{1 \leq i \leq L} \left| \mathcal{L}_{A + n^{-\alpha-\gamma}E}(\tilde Z_i) - \mathcal{L}_{A+A'}(\tilde Z_i) \right| = O(n^{-\eps}) 
\end{equation}
on the event $\mathcal{E}$ for any $\eps < \min\{ \gamma/2, 1/2\}$.  By repeating the argument above with $\breve Z_i$ taking the place of $\tilde Z_i$, we similarly find that 
\begin{equation} \label{eq:supLrandbnd2}
	\sup_{1 \leq i \leq L} \left| \mathcal{L}_{A + n^{-\alpha-\gamma}E}(\breve Z_i) - \mathcal{L}_{A+A'}(\breve Z_i) \right| = O(n^{-\eps}) 
\end{equation}
on the event $\mathcal{E}$.  

Using \eqref{eq:supLrandbnd} and \eqref{eq:supLrandbnd2}, we now apply Theorem \ref{thm:non} (we continue to use the norm bounds in \eqref{eq:nbAE}, \eqref{eq:nbAA'}, and \eqref{eq:nbAA'E} which all hold with probability $1 - O(n^{-\delta})$).  Indeed, applying Theorem \ref{thm:non} (using the description of $\Csixone$ given in \eqref{e:ThmConst}) with $m = \lfloor n^{60c_0a} \rfloor$, \eqref{eq:lapbnd}, and the union bound, it follows that 
\begin{equation} \label{eq:finaltilde}
	\sup_{1 \leq i \leq L } \left| \sum_{j=1}^n \tilde \varphi_i(\lambda_j(A + n^{-\alpha-\gamma}E) - \sum_{j=1}^n \tilde \varphi_i ( \lambda_j(A + A')) \right| \ll n^{1 - \eps + 6c_0a} + n^{1 - 6c_0a} 
\end{equation}
with probability $1 - O(n^{-34c_0a})$.  Repeating the argument for $\breve \varphi_i$, $1 \leq i \leq L$ (using \eqref{eq:supLrandbnd2}), we similarly obtain
\begin{equation} \label{eq:finalbreve}
	\sup_{1 \leq i \leq L } \left| \sum_{j=1}^n \breve \varphi_i(\lambda_j(A + n^{-\alpha-\gamma}E) - \sum_{j=1}^n \breve \varphi_i ( \lambda_j(A + A')) \right| \ll n^{1 - \eps + 6c_0a} + n^{1 - 6c_0a} 
\end{equation}
with probability $1 - O(n^{-34c_0a})$. 

Combining \eqref{eq:finaltilde} and \eqref{eq:finalbreve} with \eqref{eq:XiYidiff}, we conclude that
\[ \sup_{1 \leq i \leq L} |X_i - Y_i| \ll n^{1-\eps + 6c_0a} + n^{1 - 6c_0a} + n^{1-3a} \ll n^{1-\eps + 6c_0a} + n^{1-3a} \]
with probability $1 - O(n^{-34c_0a})$.  This completes the proof of the lemma.

\section{Deterministic, non-asymptotic results} \label{sec:deterministic} \label{sec:app}

Many of our results also hold in the case when $E$ is deterministic as the following propositions show.  Below, we derive non-asymptotic bounds on the locations of the eigenvalues for deterministic perturbations of a Jordan block matrix.  There are many asymptotic results in the literature for perturbed Jordan canonical form matrices, for example \cite{MR3134007,Wood2016,FPZeitouni_regularization_2015,basak_regularization_2019,basak_spectrum_2020}, and we consider a more general matrix in Jordan canonical form in Section~\ref{sec:examples}.  Davies and Hager \cite{MR2490477} prove a non-asymptotic result bounding the magnitudes of the eigenvalues of the perturbed Jordan block matrix $\Jmat$, and an application of the Poisson-Jensen formula results in a factor of $\log n$ in the final bound.  Propositions~\ref{p:smalln} and \ref{prop:leaving-the-disk} below avoid the extra factor of $\log n$.

\begin{proposition}\label{p:smalln}
Let $n\ge 7$ be an integer and let $\gamma\ge 5$ and $1/20 > \epsilon>0$ be real numbers. Let $\Jmat$ be an $n$ by $n$ Jordan block matrix with eigenvalue zero (see \eqref{eq:defT}), and let $\pone$ be any matrix in which each entry is $n^{-\gamma}$ or $-n^{-\gamma}$.
If $n \le \frac12 \gamma \log \gamma$, then there are at most $O_\epsilon(1)$ eigenvalues of $\Jmat+\pone$ that fall outside the disk with radius $1/4-\epsilon$ centered at the origin, and if  $n \ge 2 \gamma \log \gamma$, then there are at most $O_\epsilon(1)$ eigenvalues that fall inside the disk with radius $1/4-\epsilon$.
\end{proposition}

\begin{comment} % Old version of proposition
\begin{proposition}\label{p:smalln}
Let $\gamma\ge 5$ be a real number, and assume that $n\ge \gamma^2$ is an integer.  Let $\Jmat$ be the $n$ by $n$ matrix given in \eqref{eq:defT} with all entries on the super diagonal equal to 1 and all other entries equal to zero.  Let $\pone$ be an arbitrary deterministic $n$ by $n$ matrix where each entry is $\pm n^{-\gamma}$. Then, for any smooth function $\varphi: \mathbb{C} \to \mathbb{C}$ with support contained in $\{z \in \mathbb{C} : |z| < 1/4\}$, we have that 
$$\abs{ \sum_{i=1}^n \varphi(\lambda_i(\Jmat+\pone)) }  \le 6 \|\Delta \varphi\|_\infty \gamma\log n,$$
where $\|\Delta \varphi\|_\infty$ is the $L^\infty$-norm of $\Delta \varphi$.  
Furthermore, for any $\epsilon >0$ at most $O_\epsilon(\log n)$ eigenvalues of $\Jmat+\pone$ can lie inside the disk of radius $1/4 -\epsilon$.  
\end{proposition}
\end{comment}

Proposition~\ref{p:smalln} guarantees the same eigenvalue behavior for $\Jmat+\pone$, regardless of the choice of signs for the entries of $\pone$.  In this way, Proposition \ref{p:smalln} can be viewed as describing a model in which regardless of the way an adversary chooses signs for the matrix $\pone$, the eigenvalues always have the same non-asymptotic behavior.  The restriction to the disk of radius $1/4$ in Proposition \ref{p:smalln} is for simplicity and can likely be extended to any disk of radius less than one using similar methods.   In what follows, we prove Proposition~\ref{prop:leaving-the-disk}, a more general version of Proposition~\ref{p:smalln} that provides explicit, non-asymptotic bounds.
More generally, we conjecture that the eigenvalues of $\Jmat+\pone$, regardless of the choice of signs for the entries of $\pone$, will behave in a similar fashion as the eigenvalues of $\Jmat + n^{-\gamma} E$ when $E$ is a random matrix with iid standard normal entries.

\begin{proposition}\label{prop:leaving-the-disk}
Let $n$ be a positive integer and let $\Jmat$ be the $n$ by $n$ matrix given in \eqref{eq:defT} with all entries on the super diagonal equal to 1 and all other entries equal to zero.  Let $\gamma$ be a positive real number, let $\pone$ be an arbitrary deterministic $n$ by $n$ matrix where each entry is $\pm n^{-\gamma}$, and let $\ptwo$ be a matrix with entry $(n,1)$ equal to $n^{-\gamma}$ and all other entries zero. 
\begin{enumerate}
\item[(i)] If $n \le \frac{\gamma \log \gamma - \log 3}{\log 5}$ and $n \ge 7$, then for any smooth function $\varphi: \mathbb{C} \to \mathbb{C}$ with support of $\Delta \varphi$ contained in $\{z \in \mathbb{C} : 1/5 \le |z| < 1/4\}$, we have that 

$$\abs{ \sum_{i=1}^n \varphi(\lambda_i(\Jmat+\pone)) - \varphi(\lambda_i(\Jmat+\ptwo)) }  \le  3 \|\Delta \varphi\|_\infty. $$

\item[(ii)] If $n\ge 2 \gamma\log \gamma$ and $\gamma \ge 5$, then for any smooth function $\varphi: \mathbb{C} \to \mathbb{C}$ with support  of $\Delta \varphi$ contained in $\{z \in \mathbb{C} : |z| < 1/4\}$, we have that 
$$\abs{ \sum_{i=1}^n \varphi(\lambda_i(\Jmat+\pone)) }  \le 3 \|\Delta \varphi\|_\infty.$$
\end{enumerate}
Above, $\|\Delta \varphi\|_\infty$ denotes the $L^\infty$-norm of $\Delta \varphi$.  
\end{proposition}

Proposition~\ref{prop:leaving-the-disk} quantifies the fact that the eigenvalues of $\Jmat+\pone$ are very close to the origin when $n$ is small, and then rapidly shift to being close to roots of unity as $n$ increases.  For example, if $\epsilon >0$ is small and if $\varphi$ is chosen to be a smooth function that is one when $|z| \le 1/4 -\epsilon$ and zero when $|z|\ge 1/4$, then part (i) shows that when $n \le \frac{\gamma \log \gamma -\log 3}{\log 5}$ and $n \ge 7$, there are at most a constant number of eigenvalues that fall \emph{outside} the disk of radius $1/4-\epsilon$.  On the other hand, for large $n$, part (ii) shows that when $n \ge 2\gamma \log \gamma$ and $\gamma \ge 5$, there are at most a constant number of eigenvalues that fall \emph{inside} the disk of radius $1/4-\epsilon$ (note that $\varphi(\lambda_i(\Jmat+\ptwo))=0$ for all $i$ in this case).  Furthermore, these results hold for small values $n$ and $\gamma$, without any appeal to asymptotic behavior.   One can interpret Proposition~\ref{prop:leaving-the-disk} as proving a version of Conjecture~\ref{conj:A'} for the special case of the matrix $\Jmat$.  An example of Proposition~\ref{prop:leaving-the-disk} is shown in Figure~\ref{fig:prop-small-n}.

\begin{figure}
\includegraphics[scale=0.8]{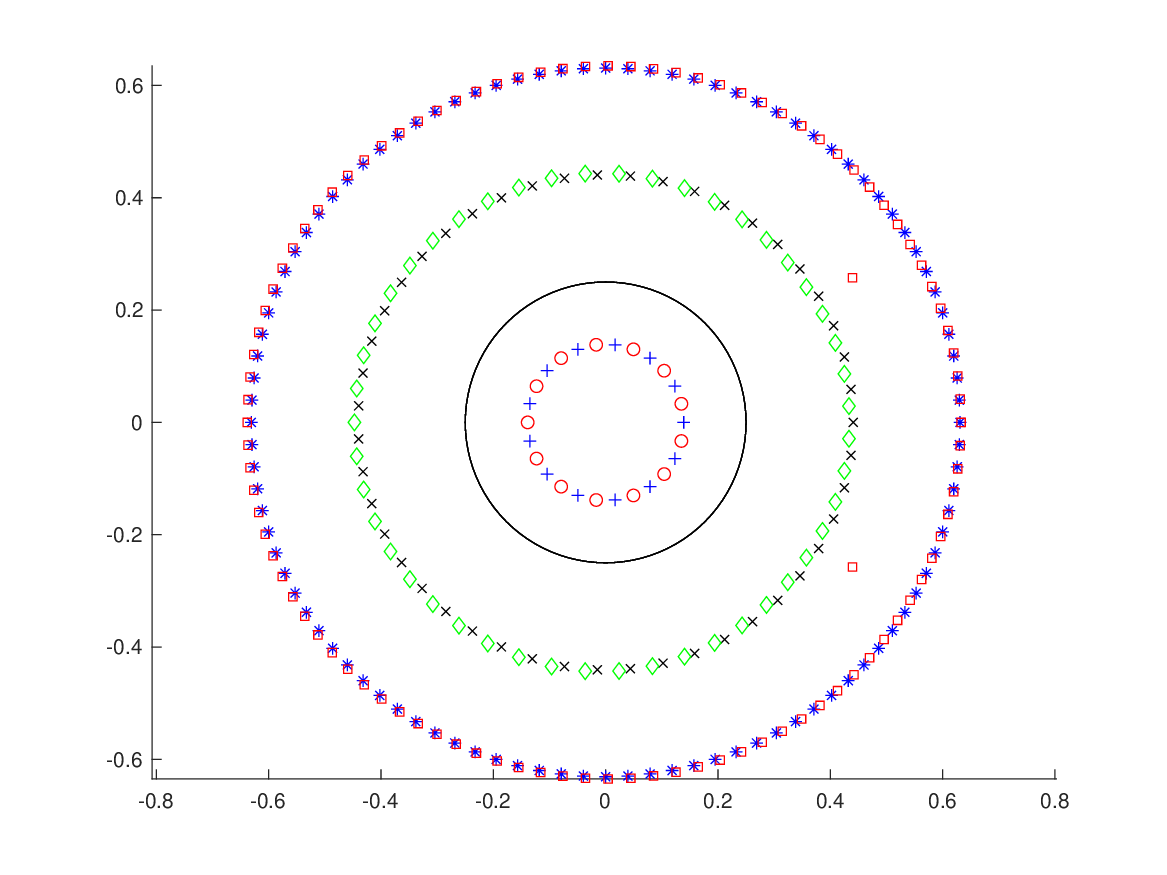}
\caption{The plot above shows the eigenvalues of $\Jmat+\ptwo$ and $\Jmat+ \pone$, where $\Jmat$ is as defined in \eqref{eq:defT}, the matrix $\ptwo$ has $(n,1)$ entry equal to $n^{-\gamma}$ and all other entries equal to zero, and $\pone$ is a matrix with iid $\pm n^{-\gamma}$ entries, where $\gamma =10$.  The eigenvalues for  $\Jmat+\ptwo$ and $\Jmat+ \pone$ are plotted for $n=13$ (blue plus signs $\color{blue}+\color{black}$ and red circles $\color{red}\circ\color{black}$, respectively), $n=47$ (black times signs $\times$ and green diamonds $\color{green}\diamond\color{black}$, respectively), and $n=100$ (blue asterisks $\color{blue}\ast\color{black}$ and red squares $\color{red}\square\color{black}$, respectively).  A circle with radius $1/4$ centered at the origin is also drawn in black.  For small $\epsilon >0$, Proposition~\ref{prop:leaving-the-disk}(ii) shows that there are only constantly many eigenvalues inside the $1/4-\epsilon$ disk when $n=47$ or $100$, and Proposition~\ref{prop:leaving-the-disk}(i) shows that there are only constantly many eigenvalues \emph{outside} the $1/4-\epsilon$ disk when $n=13$. Note that the spectrum appears to be extremely regular for $n=13$ and $n=47$, with all eigenvalues having nearly the same magnitude; however, in this instance when $n=100$, one can see that there are a few eigenvalues with different magnitude (see the red squares $\color{red}\square\color{black}$ at approximately $(0.42,\pm 0.24)$), which is typical behavior.} 
\label{fig:prop-small-n}
\end{figure}

To prove Proposition~\ref{prop:leaving-the-disk}, we will combine Theorem~\ref{thm:non} with Proposition~\ref{prop:small-cancel} (a variant Theorem~\ref{thm:replacement}) by comparing the perturbation $\pone$ with a perturbation by an $n$ by $n$ matrix $\ptwo$ that has its $(n,1)$ entry equal to $n^{-\gamma}$ and all other entries equal to zero.  
In order to use Theorem~\ref{thm:non}, we will need to understand the smallest singular values of $\Jmat+\pone-zI$ and $\Jmat+\ptwo-zI$ for any $z$ satisfying $|z|\le 1/4$, and we will use the following two lemmas to prove sharp lower bounds on the smallest singular value.
% (see Lemmas~\ref{lem:smallsing}, \ref{lem:bdd-very-small-noise}, and \ref{lem:upper-bd-small-sing}).  

%Singular value bound small noise
\begin{lemma}\label{lem:smallsing}
Let $n$ be a positive integer, let $\gamma \ge 5$, let $|z| \leq 1/4$, let $n \ge 2 \gamma \log \gamma$, and let $\Jmat$ and $\pone$ be defined as in Proposition~\ref{prop:leaving-the-disk}. Then, the smallest singular value of $\Jmat+\pone-zI$ is at least $0.15 n^{-\gamma}$.
\end{lemma}

%Singular value bound _very_ small noise
\begin{lemma}\label{lem:bdd-very-small-noise}
Let $n \ge 7$, let $z$ be a complex number with $|z|\le 1/4$, let $\Jmat$ be the $n$ by $n$ matrix defined in \eqref{eq:defT}, and let $\pone$ be an $n$ by $n$ matrix with each entry uniformly bounded in absolute value by $|z|^n/3$.  Then the smallest singular value of $\Jmat+\pone-zI$ is at least $0.19|z|^n$.
\end{lemma}

%Proposition~\ref{prop:leaving-the-disk}(i) uses Lemma~\ref{lem:bdd-very-small-noise} for a lower bound on the smallest singular value in the same way that the proof above for Proposition~\ref{prop:leaving-the-disk}(ii) uses Lemma~\ref{lem:smallsing}.  
To prove these lower bounds, we follow an approach similar to the one outlined by Rudelson and Vershynin in \cite{RVershynin_nonasym_2010}, showing that for any unit vector $x=(x_1,\dots,x_n)^{\mathrm{T}}$, we must have that $\|(\Jmat+\pone-zI)x\| \ge c n^{-\gamma}$ where $c$ is an absolute constant (in particular, we will take $c=0.15$).  We will consider two cases: first where $x$ does not have entries approximating a geometric progression, and second where $x$ does have entries that approximate a geometric progression, which is formalized in Lemma~\ref{lem:bigcoord}.  The intuition for using these two cases is that the smallest singular value for $\Jmat-zI$ is exponentially small in $|z|$ when $|z|$ is small, and the unit vector 
\begin{equation}\label{e:vzero}
v_0(z)=(1,z, z^2, z^3,\dots, z^{n-1})\pfrac{1-|z|^2}{1-|z|^{2n}}^{1/2} 
\end{equation}
produces the exponentially small vector $(\Jmat-zI)v_0(z)= (0,\dots, 0, z^n)\pfrac{1-|z|^2}{1-|z|^{2n}}^{1/2}$; thus, one might expect that the unit singular vector for the smallest singular value of a perturbation of $\Jmat-zI$ would have a similar structure. 

We also need a correspondingly sharp upper bound on the smallest singular value, and for that we use the following lemma in both cases of Proposition~\ref{prop:leaving-the-disk}.

%Small Sing Upper Bound
\begin{lemma}\label{lem:upper-bd-small-sing}
Let $n\ge 7$ be a positive integer and let $D$ be a real number depending on $n$ that satisfies $Dn^{3/2} \le 0.001$.  Let $z$ be a complex number with $|z|\le 1/4$, let $\Jmat$ be the $n$ by $n$ matrix defined in \eqref{eq:defT}, and let $\pone$ be an $n$ by $n$ matrix with each entry bounded in absolute value by $D$. Then, the smallest singular value of $\Jmat+\pone-zI$ is at most $1.0011\left(|z|^n + D\pfrac{1.001}{(1-|z|)^2}\right)$.
\end{lemma}

Here, the proof is constructive, showing that, for a given perturbation $\pone$, one can construct a perturbation of the vector $v_0(z)$ from Equation~\eqref{e:vzero} that exhibits enough cancellation to produce a sufficiently small singular value.  Full proofs of Lemmas~\ref{lem:smallsing}, \ref{lem:bdd-very-small-noise}, and \ref{lem:upper-bd-small-sing} appear in Section~\ref{subsec:PropProof}.

\begin{proof}[Proof of Proposition~\ref{prop:leaving-the-disk}(ii)]
We will use bounds on the smallest singular value in combination with Theorem~\ref{thm:non} and Proposition~\ref{prop:small-cancel}.
We will compare the two matrices $\Jmat+\pone$ and $\Jmat+\ptwo$, where we recall that $\ptwo$ is the $n$ by $n$ matrix with entry $(n,1)$ equal to $n^{-\gamma}$ and all other entries equal to zero, and that $\pone$ is a deterministic matrix in which each entry is $-n^{-\gamma}$ or $n^{-\gamma}$.  

From \details{the first lower bound in~}Lemma~\ref{lem:determinstic-singbd-plus-epsilon}, we know that the smallest singular value of $\Jmat+\ptwo-zI$ is at least $ \frac{n^{-\gamma}\sqrt{(1-(1/4))^3}}{\sqrt{2n^{-2\gamma+1} +8}} \ge 0.229n^{-\gamma}$, where we used the assumptions from Proposition~\ref{prop:leaving-the-disk} that $\gamma \ge 5$ and $n \ge 2 \gamma\log\gamma$.  Also, by Lemma~\ref{lem:upper-bd-small-sing} with $D=n^{-\gamma}$, we know that the smallest singular value of $\Jmat+\ptwo-zI$ is at most $(1.0011)\left(\pfrac{1}{4}^n + \frac{n^{-\gamma}16.016}{9}\right)\le 1.0011\left(0.0003 n^{-\gamma} +\frac{n^{-\gamma}16.016}{9}\right) \le 1.782n^{-\gamma}$ when $n\ge 2\gamma \log \gamma$ and $\gamma\ge 5$.

From Lemma~\ref{lem:smallsing}, we know that the smallest singular value of $\Jmat+\pone-zI$ is at least $ 0.15 n^{-\gamma}$, and by Lemma~\ref{lem:upper-bd-small-sing}, we know that the smallest singular value of $\Jmat+\pone- zI$ is at most $1.782 n^{-\gamma}$\details{~(by the same reasoning is in the previous paragraph)}.

Weyl's perturbation theorem (Theorem~\ref{thm:weyl}) along with Lemma~\ref{lem:det-sing-bd} shows that $\sigma_{n-1} (\Jmat+\pone-zI), \sigma_{n-1}(\Jmat+\ptwo-zI) \ge 3/4-n^{-\gamma+1} > 0.74 $.
We may now apply Proposition~\ref{prop:small-cancel} with $\epsilon =0.74$, using the facts that $0.15n^{-\gamma} \le \sigma_n(\Jmat+\pone-zI), \sigma_n(\Jmat+\ptwo -zI) \le 1.782 n^{-\gamma}$, to get that
$$\abs{ \mathcal L_{\Jmat+\pone}(z)- \mathcal L_{\Jmat+\ptwo}(z) } \le \frac{\log(1.782/0.15)}{n}+\frac1{0.74}2n^{-\gamma+1} 
\details{\le \frac1n\left( \log(1.782/0.15)+\frac{17^{-3}}{0.37}\right)  }
< \frac{2.476}{n},
$$
when $n \ge 2\gamma\log \gamma$ and $\gamma\ge 5$\details{, which implies $n \ge 17$}.

We can now use the inequality above to apply Theorem~\ref{thm:non}.  Note that $\|\Jmat+\pone-zI\|+\|\Jmat+\ptwo-zI\| \le 2(|z|+1) +2n^{-\gamma+1}\le 2.5 +2n^{-\gamma+1} \le e$ whenever $\gamma\ge 5$ and $n\ge \gamma\log\gamma$.  Thus, we may take $T=e$ to satisfy the first assumption of Theorem~\ref{thm:non} with any positive $\eps$. Also, by the application of Proposition~\ref{prop:small-cancel}, in the previous paragraph, we may satisfy the second condition of Theorem~\ref{thm:non} by taking $\eta =\frac{2.476}n$, again using any positive value for $\eps$.  We also note that from the assumptions on $\varphi$, we have that  
$\Csixone \le 2\frac{\sqrt{\pi}}{4}  \|\Delta \varphi\|_\infty \pfrac32 \log_2(2.75)/\sqrt \pi <  (1.095)\|\Delta\varphi\|_\infty$.

Because the bound in Theorem~\ref{thm:non} works for any integer $m$, and because we are able to satisfy the two assumptions with any positive $\varepsilon$, we may follow Remark~\ref{rem:mlim} and set $\eps = 1/m^{3/2}$ and take the limit as $m\to\infty$, in which case $\frac{\log T}{m\sqrt\eps}\to 0$ and $2(m+1)\eps \to 0$, thus proving that
$$\frac1n\abs{ \sum_{i=1}^n \varphi(\lambda_i(\Jmat+\pone)) } \le \Csixone \eta  < 
1.095\|\Delta \varphi\|\frac{2.476}{n} <  3 \frac{\|\Delta \varphi\|_\infty}{n}, $$
given the assumptions that $\gamma \ge 5$ and $n \ge 2\gamma\log \gamma$. Here, we used that, under the assumptions on $\gamma$ and $n$, none of the eigenvalues of $\Jmat+\ptwo$ lie in $\{z \in \mathbb{C} : |z| \leq 1/4\}$ and so $\sum_{i=1}^n \varphi(\lambda_i(\Jmat + \ptwo)) = 0$.    
\end{proof}

The proof of Proposition~\ref{prop:leaving-the-disk}(i) is similar to the proof of Proposition~\ref{prop:leaving-the-disk}(ii).  We present the proof of Proposition~\ref{prop:leaving-the-disk}(i) in Appendix~\ref{sec:deter-lemmas}, along with the proofs of Lemmas~\ref{lem:smallsing}, \ref{lem:bdd-very-small-noise}, and \ref{lem:upper-bd-small-sing}.

\section{Local law} \label{sec:locallaw}

This section is devoted to a local law for perturbed banded Toeplitz matrices, which compares the eigenvalues to the classical locations at small scales.  The following main result is a generalized version of conclusion \eqref{item:locallaw} from Theorem \ref{thm:sample}.  

\begin{theorem}[Local law] \label{thm:local}
Fix $C_0 > 0$, and let $\{a_j \}_{j \in \mathbb{Z}}$ be a sequence of complex numbers, indexed by the integers, so that $|a_j| \leq C_0$ for all $j \in \mathbb{Z}$. Let $k < n/2$ be a non-negative integer, and let $A$ be an $n \times n$ Toeplitz matrix with symbol $\{a_j \}_{j \in \mathbb{Z}}$ truncated at $k$. 
Let $\gamma > 0$, and let $E$ be an $n \times n$ random matrix which satisfies one of the following:
\begin{enumerate}[(i)]
\item \label{item:iidentries} the entries of $E$ are iid copies of a random variable $\xi$ with mean zero and unit variance.  
\item $E$ is a Haar distributed unitary random matrix. 
\end{enumerate} 
In addition, let the function $f: S^1 \to \mathbb{C}$ be given by
\[ f(\omega) = \sum_{|j| \leq k} a_j \omega^j, \]
and define the classical locations $\lambda_i' = f(\omega_n^i)$ for $1 \leq i \leq n$, where $\omega_n$ is the primitive $n$-th root of unity defined in \eqref{eq:omegan}.  
Let $\varphi: \mathbb{C} \to \mathbb{C}$ be a smooth function supported in the ball $B(0, C_0)$, and let $z_0 \in \mathbb{C}$ with $|z_0| \leq C_0$.  For any $a \geq 0$, let $\varphi_{z_0}(z) = \varphi(n^a (z - z_0))$ be the rescaling of $\varphi$ to size order $n^{-a}$ around $z_0$.  Then, for any $\kappa > 0$, there exists a constant $C > 0$ (depending only on $\varphi$, $\gamma$, $a$, $C_0$, $\kappa$, and the distribution of $\xi$ in case \eqref{item:iidentries}) so that
\begin{equation} \label{eq:llconc}
	\left| \sum_{i=1}^n \varphi_{z_0}( \lambda_i(A + n^{-\gamma} E)) - \sum_{i=1}^n \varphi_{z_0}(\lambda_i') \right| \leq C \left( k \log n + n^{a+1 - \gamma} \|E\|_2 \right) 
\end{equation} 
with probability at least $1 - C n^{-\kappa}$.  
Moreover, the bound in \eqref{eq:llconc} also holds for any $a, a' \geq 0$ when 
\begin{equation} \label{eq:lllogvarphi}
	\varphi_{z_0}(z) = \varphi \left( \frac{n^{a}}{(\log n)^{a'}} \left( z - z_0 \right) \right). 
\end{equation} 
\end{theorem}

A few remarks concerning Theorem \ref{thm:local} are in order.  We have stated the local law here in terms of a test function $\varphi$, similar to other versions of the local law for non-Hermitian matrices in the random matrix theory literature, including \cite{MR3683369,MR3770875,MR3622892,MR3230004,MR3278919}.  Interestingly, the error term in Theorem \ref{thm:local} is quite different than the error term for the local circular law \cite{MR3230002,MR3683369,MR3770875,MR3230004,MR3278919} and holds at much smaller scales.  This is likely due to the fact that the limiting spectral distribution for perturbed Toeplitz matrices is supported on a one-dimensional curve (see Theorem \ref{thm:pert-toep}), while the limiting distribution for iid matrices is supported on the two-dimensional unit disk.   We also note that when $\gamma$ is sufficiently large and $k = O(1)$, the error term in \eqref{eq:llconc} is $O(\log n)$, even for large values of $a$, which is significantly different than the polynomial bounds found in \cite{MR3230002,MR3683369,MR3770875,MR3230004,MR3278919} which grow larger as $a$ increases.  

While $a$ can be taken arbitrary large in Theorem \ref{thm:local}, the error term in \eqref{eq:llconc} can become larger than $\sum_{i=1}^n \varphi_{z_0}(\lambda_i')$ when $a \geq 1$.  We have stated Theorem \ref{thm:local} for only two ensembles of random matrices, but the method holds more generally provided the random matrix $E$ satisfies conditions similar to those in Theorem \ref{thm:rate2}.  For simplicity, we have not tried to state a more general version of Theorem \ref{thm:local}.  

Part \eqref{item:locallaw} from Theorem \ref{thm:sample} is just a simplified version of of Theorem \ref{thm:local} with $A = \Jmat$.  In addition, conclusion \eqref{item:dist} and the $O_r(\log n)$ bound in part \eqref{item:inliers} of Theorem \ref{thm:sample} also follow as direct corollaries of Theorem \ref{thm:local}.  

Theorem \ref{thm:local} can be used to compare two randomly perturbed Toeplitz matrices. For example, if $A$ is a banded Toeplitz matrix satisfying the conditions of Theorem \ref{thm:local} and both $E$ and $E'$ are $n \times n$ random matrices satisfying the assumptions of the theorem, then we can conclude that
\[ \left| \sum_{i=1}^n \varphi_{z_0}( \lambda_i(A + n^{-\gamma} E)) - \sum_{i=1}^n \varphi_{z_0}(\lambda_i(A + n^{-\gamma} E')) \right| \leq C \left( k \log n + n^{a+1 - \gamma} \left( \|E\|_2 + \|E'\|_2 \right) \right) \]
with probability at least $1 - 2C n^{-\kappa}$.  This shows a strong type of universality, where even the local behaviors of the eigenvalues are universal, irregardless of the ensemble for the random perturbation.  
  
Perhaps the closest results to Theorem \ref{thm:local} in the literature come from the work of Sj\"{o}strand and Vogel \cite{MR4200678}.  They show precise asymptotic bounds for the number of eigenvalues in smooth domains of the matrix $A + n^{-\gamma} E$, where $A$ is a banded Toeplitz matrix and $E$ is a random matrix with iid standard complex Gaussian entries.  There are a few differences between their work and Theorem \ref{thm:local}.  For example, Theorem \ref{thm:local} applies to a larger class of random matrices compared to the results in \cite{MR4200678}.  In addition, Theorem \ref{thm:local} applies to the linear statistics of the eigenvalues, rather than the counting function of the eigenvalues.  There are also some similarities between Theorem \ref{thm:local} and the work of Davies and Hager \cite{MR2490477}, which provides some bounds on the radial locations of the eigenvalues of $\Jmat + n^{-\gamma}E$, where $\Jmat$ is defined in \eqref{eq:defT} and $E$ is a random matrix.  We note that Theorem \ref{thm:local} locates the eigenvalues at a finer scale, does not involve only the radial components, and applies to a larger class of banded Toeplitz matrices.  

It remains an open question whether the bound in \eqref{eq:llconc} is sharp (for any choice of parameters).  In particular, it is an interesting question to understand the optimal dependence on $\gamma$ and $a$ in the error term.  

\subsection{Proof of Theorem \ref{thm:local}}
%We conclude this section with the proof of Theorem \ref{thm:local}.  
In order to prove Theorem \ref{thm:local}, we will need the following sampling lemma from \cite{TV-univ_zeros}.  
\begin{lemma}[Monte Carlo sampling lemma; Lemma 6.1 from \cite{TV-univ_zeros}] \label{lemma:monte}
Let $(X, \rho)$ be a probability space, and let $F: X \to \mathbb{C}$ be a square integrable function.  Let $m\geq 1$, let $Z_1, \ldots, Z_m$ be drawn independently at random from $X$ with distribution $\rho$, and let $S$ be the empirical average
\[ S := \frac{1}{m} ( F(Z_1) + \cdots + F(Z_m) ). \]
Then, for any $\delta > 0$, one has the bound
\[ \left| S - \int_X F \d \rho \right| \leq \frac{1}{\sqrt{m \delta} } \left(  \int_X \left| F - \int_X F \d \rho \right|^2 \d \rho \right)^{1/2} \]
with probability at least $1 - \delta$.   
\end{lemma}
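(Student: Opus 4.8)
The plan is to reduce the statement to a one-line application of Chebyshev's inequality, after a careful second-moment computation that accommodates the complex-valued $F$. Write $\mu := \int_X F \d\rho$ for the mean and $\sigma^2 := \int_X |F - \mu|^2 \d\rho$ for the variance; square integrability of $F$ ensures $\mu$ is finite and $\sigma^2 < \infty$. First I would note $\E[S] = \mu$ by linearity of expectation, and then expand
\[ |S - \mu|^2 = \frac{1}{m^2}\Bigl|\sum_{i=1}^m \bigl(F(Z_i) - \mu\bigr)\Bigr|^2 = \frac{1}{m^2}\sum_{i,j=1}^m \bigl(F(Z_i) - \mu\bigr)\overline{\bigl(F(Z_j) - \mu\bigr)}. \]
Since $Z_1, \ldots, Z_m$ are independent and each $F(Z_i) - \mu$ has mean zero, the off-diagonal terms ($i \neq j$) have vanishing expectation, whence $\E[|S - \mu|^2] = \frac{1}{m^2}\sum_{i=1}^m \E[|F(Z_i) - \mu|^2] = \sigma^2/m$.

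Next I would apply Markov's inequality to the nonnegative random variable $|S - \mu|^2$: for any $\lambda > 0$,
\[ \Prob\bigl(|S - \mu| \geq \lambda\bigr) = \Prob\bigl(|S - \mu|^2 \geq \lambda^2\bigr) \leq \frac{\E[|S - \mu|^2]}{\lambda^2} = \frac{\sigma^2}{m\lambda^2}. \]
Choosing $\lambda := \sigma/\sqrt{m\delta}$ makes the right-hand side exactly $\delta$, so with probability at least $1 - \delta$ we have $|S - \mu| < \sigma/\sqrt{m\delta} = \frac{1}{\sqrt{m\delta}}\bigl(\int_X |F - \mu|^2 \d\rho\bigr)^{1/2}$, which is the asserted bound (in fact with strict inequality, hence a fortiori the non-strict form stated).

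There is no genuine obstacle here — the lemma is essentially Chebyshev's inequality for an average of iid summands. The only items deserving a word of care are bookkeeping ones: because $F$ takes complex values one should use $|w|^2 = w\bar{w}$ and the vanishing of the mixed expectations rather than any real-variance identity; and in the degenerate case $\sigma = 0$ one has $F = \mu$ $\rho$-almost everywhere, so $S = \mu$ almost surely and the inequality holds trivially. No measure-theoretic subtleties arise beyond the assumed square integrability.
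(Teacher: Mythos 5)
Your proof is correct and is exactly the standard argument: the paper itself does not reprove this lemma (it cites it from Tao--Vu), and the proof there is the same second-moment computation followed by Chebyshev's inequality that you give. Your handling of the complex-valued case via $|w|^2 = w\bar{w}$ and the degenerate case $\sigma = 0$ is appropriate and complete.
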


We now turn to the proof of Theorem \ref{thm:local}.  Assume $\varphi$ is nonzero as the result is trivial otherwise.  We will only consider the case when $\varphi_{z_0}(z) = \varphi(n^a (z - z_0))$ as the case when $\varphi$ is given by \eqref{eq:lllogvarphi} is nearly identical.  

Throughout the proof, in order to ease notation, we will not always indicate when an implicit constant in the asymptotic notation depends on $\varphi$, $\gamma$, $\kappa$, $a$, $C_0$, or the distribution of $\xi$.  

Let $A'$ be the matrix from Lemma \ref{lemma:lowrank} with rank at most $2k$ so that $A + A'$ has eigenvalues $\lambda_1', \ldots, \lambda_n'$.  By Lemma \ref{lemma:circulant}, $A + A'$ is normal.  It follows from the deterministic bound for normal matrices due to Sun \cite{MR1407668} that there is a permutation $\sigma: [n] \to [n]$ so that
\[ \sqrt{\sum_{i=1}^n \left| \lambda_i( A + A' + n^{-\gamma} E) - \lambda_{\sigma(i)}' \right|^2} \leq  n^{1/2-\gamma} \|E\|_2. \]
Since $\varphi$ is smooth and compactly supported, it follows that $\varphi_{z_0}$ is Lipschitz continuous with Lipschitz constant $O_\varphi(n^a)$.  Thus, by the Cauchy--Schwarz inequality, we have
\begin{align*}
	\left| \sum_{i=1}^n \varphi_{z_0} (\lambda_i(A + A' + n^{-\gamma}E)) - \sum_{i=1}^n \varphi_{z_0}(\lambda_i') \right| &\ll_\varphi n^a \sum_{i=1}^n \left| \lambda_i(A + A' + n^{-\gamma}E) - \lambda_{\sigma(i)}' \right| \\
	&\leq n^{1/2 + a} \sqrt{\sum_{i=1}^n \left| \lambda_i( A + A' + n^{-\gamma} E) - \lambda_{\sigma(i)}' \right|^2}  \\
	&\leq n^{1 + a - \gamma} \|E\|_2.
\end{align*}

Therefore, it suffices to show that there exists a constant $C > 0$ so that
\begin{equation} \label{eq:llsumsbndstep2}
	\left| \sum_{i=1}^n \varphi_{z_0}( \lambda_i(A + n^{-\gamma} E)) - \sum_{i=1}^n \varphi_{z_0}(\lambda_i(A + A' + n^{-\gamma}E)) \right| \leq C k \log n  
\end{equation} 
with probability at least $1 - C n^{-\kappa}$.  
We begin with the case when the entries of $E$ are iid copies of a random variable $\xi$ with mean zero and unit variance.  In this case, it follows that $\E \|E\|^2 \leq \E \|E\|^2_2 = n^2$, and so by Markov's inequality we have 
\begin{equation} \label{eq:llEnorm}
	\|E \| \leq n^{\kappa + 1} 
\end{equation} 
with probability at least $1 - n^{-2\kappa}$.  
In addition, we note that
\begin{equation} \label{eq:llAnorm}
	\|A\| \leq \|A\|_2 \leq n \max_{|j| \leq k} |a_j| = O(n)
\end{equation} 
and similarly, by \eqref{eq:A'F}, 
\begin{equation} \label{eq:llA'norm}
	\|A + A' \| = O(n). 
\end{equation} 

By Green's formula (see, for instance, Section 2.4.1 in \cite{HKPV}) and an integral substitution, we can write 
\[ \sum_{i=1}^n \varphi_{z_0}(\lambda_i(A + n^{-\gamma} E)) = \frac{1}{2 \pi} \int_{\mathbb{C}} \Delta \varphi(z) \log | \det( A + n^{-\gamma}E - z_0I - n^{-a} zI) | \d^2 z \]
and
\[ \sum_{i=1}^n \varphi_{z_0}(\lambda_i(A + A' + n^{-\gamma} E)) = \frac{1}{2 \pi} \int_{\mathbb{C}} \Delta \varphi(z) \log | \det( A + A' + n^{-\gamma}E - z_0I - n^{-a} zI) | \d^2 z. \]
Thus, we obtain
\begin{equation} \label{eq:llsumbnd}
	\sum_{i=1}^n \varphi_{z_0}(\lambda_i(A + n^{-\gamma} E)) - \sum_{i=1}^n \varphi_{z_0}(\lambda_i(A + A' + n^{-\gamma} E)) = \int_{K} f_n(z) \d^2 z, 
\end{equation} 
where 
 $K := \supp (\Delta \varphi)$ and 
\[ f_n(z) := \frac{1}{2 \pi} \Delta \varphi(z) \left( \log | \det( A + n^{-\gamma}E - z_0I - n^{-a} zI)|  - \log | \det( A + A' + n^{-\gamma}E - z_0I - n^{-a} zI) | \right). \]

Our goal is to apply Lemma \ref{lemma:monte} to the function $f_n$.  To this end, we have
\begin{align*} 
	\int_{K} &|f_n(z)|^2 \d^2 z \\
	&\ll_{\varphi} \int_{K} \log^2 | \det (A + n^{-\gamma}E - z_0I - n^{-a} zI)| \d^2 z + \int_K \log^2 |\det( A + A' + n^{-\gamma}E - z_0I - n^{-a} zI) | \d^2 z, 
\end{align*} 
where $\Delta \varphi$ was bounded by its supremum norm and absorbed into the implicit constant.  Since, by \eqref{eq:llEnorm} and \eqref{eq:llAnorm}, $\|A + n^{-\gamma} E \| = O(n^{1 + \kappa})$ with probability $1 - O(n^{-\kappa}$), we have
\begin{align*}
	\int_K \log^2 | \det (A + n^{-\gamma}&E - z_0I - n^{-a} zI)| \d^2 z \\
	&= \int_K \left( \sum_{i=1}^n \log | \lambda_i(A + n^{-\gamma}E - z_0I - n^{-a} zI)| \right)^2 \d^2 z \\
	&\ll n^2 \sup_{\lambda \in \mathbb{C} : |\lambda| \leq O(n^{1 + \kappa})} \int_{K} \log^2 | \lambda - z_0 - n^{-a} z | \d^2 z \\
	&\ll n^2 \left( \log^2 n + \sup_{\lambda \in \mathbb{C} : |\lambda| \leq O(n^{1 + \kappa})} \int_K \log^2 | n^{a} (\lambda - z_0) - z| \d^2 z \right)
\end{align*} 
on the same event.  Here, we used the Cauchy--Schwarz inequality in the first inequality and the fact that $K \subset B(0, C_0)$ has Lebesgue measure at most $\pi C_0^2 = O(1)$ in the second bound.  Since $\log | \cdot |$ is locally square integrable, we conclude that
\[ \int_K \log^2 | \det (A + n^{-\gamma}E - z_0I - n^{-a} zI)| \d^2 z \ll n^2 \log^2 n \]
with probability $1 - O(n^{-\kappa}$).  An analogous argument (using \eqref{eq:llA'norm} instead of \eqref{eq:llAnorm}) shows that
\[ \int_K \log^2 |\det( A + A' + n^{-\gamma}E - z_0I - n^{-a} zI) | \d^2 z \ll  n^2 \log^2 n \]
with probability $1 - O(n^{-\kappa}$).  We conclude that
\begin{equation} \label{eq:llintKfn}
	\int_{K} |f_n(z)|^2 \d^2 z \ll n^2 \log^2 n 
\end{equation} 
with probability $1 - O(n^{-\kappa}$).

Let $|K|$ be the Lebesgue measure of $K$; as noted above $K \subset B(0, C_0)$ and so $|K| = O(1)$.    Set $m := \lceil n^{4 + 2 \kappa} \rceil$, and let $Z_1, \ldots Z_m$ be iid random variables which are uniform on $K$. Then, by Lemma \ref{lemma:monte} and \eqref{eq:llintKfn}, we have
\[ \int_K f_n(z) = \frac{|K|}{m} \sum_{i=1}^m f_n(Z_i) + O \left( \frac{ n \log n}{ m^{1/4}} \right) \]
with probability $1 - O(m^{-1/2}) - O(n^{-\kappa})$.  By our choice of $m$, this implies that
\begin{equation} \label{eq:llintKfnz}
	\int_K f_n(z) = \frac{|K|}{m} \sum_{i=1}^m f_n(Z_i) + o(1) 
\end{equation} 
with probability $1 - O(n^{-\kappa})$.  Since $|K| = O(1)$, it remains to show that
\begin{equation} \label{eq:llfnsup}
	\sup_{i \in [m]} |f_n(Z_i)| \ll k \log n 
\end{equation} 
with probability $1 - O(n^{-\kappa})$.  

From \eqref{eq:llEnorm} and the least singular value bound given in Theorem 2.1 of \cite{MR2409368} (by conditioning on $Z_i, 1 \leq i \leq m$), there exists $b > 0$ (depending on $\kappa, \gamma$, $C_0$ and the distribution of $\xi$) so that the event 
\begin{align*}
	\Omega &:= \left \{ \|E\| \leq n^{1 + \kappa} \right\} \bigcap \\
	&\qquad \bigcap_{i=1}^m \left\{ \sigma_{\min}(A + n^{-\gamma} E- z_0I - n^{-a} Z_iI) \geq n^{-b}, \sigma_{\min}(A + A' + n^{-\gamma} E- z_0I - n^{-a} Z_iI) \geq n^{-b} \right\} 
\end{align*} 
holds with probability $1 - O(n^{-\kappa})$.  Therefore, by Theorem \ref{thm:replrank}, it follows that
\[ \sup_{i \in [m]} |f_n(Z_i)| \ll \|\Delta \varphi\|_{\infty} k \log n \]
on $\Omega$ since $\rank(A') \leq 2k$.  Here, $\|\Delta \varphi\|_{\infty} $ is the $L^\infty$-norm of $\Delta \varphi$.  This establishes \eqref{eq:llfnsup}.  Combining \eqref{eq:llfnsup} with \eqref{eq:llsumbnd} and \eqref{eq:llintKfnz}, we conclude that
\[ \left| \sum_{i=1}^n \varphi_{z_0}(\lambda_i(A + n^{-\gamma} E)) - \sum_{i=1}^n \varphi_{z_0}(\lambda_i(A + A' + n^{-\gamma} E)) \right| \ll k \log n \]
with probability $1 - O(n^{-\kappa})$, which yields \eqref{eq:llsumsbndstep2} and completes the proof in the case when $E$ has iid entries with mean zero and unit variance.    

The case when $E$ is a Haar distributed unitary matrix is similar.  In this case, one trivially has $\|E\| = 1$.  In addition, from Theorem 1.1 of \cite{MR3164983}, there exists $b > 0$ so that the event $\Omega$ holds with probability $1 - O(n^{-\kappa})$.  With these changes, the proof in this case is nearly identical to the proof given above; we omit the details.

\appendix

\section{Proofs of Theorems \ref{thm:non}, \ref{thm:replacement}, \ref{thm:replrank}, and Proposition~\ref{prop:small-cancel}} \label{sec:prooftheory} 

In this section, we give proofs of Theorems \ref{thm:non}, \ref{thm:replacement},  \ref{thm:replrank}, and Proposition~\ref{prop:small-cancel}, and provide further details for Example~\ref{eg:smallblocks}.   

\subsection{Proof of Theorem \ref{thm:replacement}}

We begin with the proof of Theorem \ref{thm:replacement}.  We will need the following lemma.  

\begin{lemma}[Lemma 4.3 from \cite{2008.03850}] \label{lemma:ibp}
For any probability measures $\mu$ and $\nu$ on $\mathbb{R}$ and any $0 < a < b$,
\[ \left| \int_{a}^b \log(x) d \mu(x) - \int_a^b \log(x) d \nu(x) \right| \leq 2 [ | \log b | + |\log a| ] \| \mu - \nu \|_{[a,b]}, \]
where
	\[ \| \mu - \nu \|_{[a,b]} := \sup_{x \in [a,b]} | \mu( [a,x]) - \nu([a,x]) |. \]
\end{lemma}

\details{
\begin{proof}
We rewrite
\begin{align*}
	\int_a^b \log(x) d \mu(x) = \log(b) \mu([a,b]) - \int_{a}^b \int_x ^b \frac{1}{t} dt d\mu(x). 
\end{align*}
Applying Fubini's theorem, we deduce that
\[ \int_{a}^b \int_x ^b \frac{1}{t} dt d\mu(x) = \int_a^b \frac{ \mu([a,t]) }{t} dt. \]
Similarly, the same equalities apply to $\nu$.  Thus, we obtain that
\begin{align*}
	\left| \int_{a}^b \log(x) d \mu(x) - \int_a^b \log(x) d \nu(x) \right| &\leq |\log (b)| | \mu([a,b]) - \nu([a,b]) | + \left| \int_a^b \frac{ \mu([a,t]) - \nu([a,t]) }{t} dt \right| \\
	&\leq | \log b| \| \mu - \nu \|_{[a,b]} + \| \mu - \nu \|_{[a,b]} \int_{a}^b \frac{1}{t} dt,
\end{align*}
from which the conclusion follows.  
\end{proof}
}

We now turn to the proof of Theorem \ref{thm:replacement}.  For simplicity we define 
\[ D:= \max\{1, \| M_1 - zI\|, \| M_2 - zI \| \}. \]
By writing the logarithmic potential in terms of the logarithm of the determinant, we find
\begin{align*}
	\mathcal{L}_{ M_i} (z) &= \int_0^\infty \log(x) d \nu_{ M_i - zI} (x) \\
	&= \int_{\sigma_{\min}}^D \log(x) d \nu_{M_i - zI}(x) \\
	&= \int_{[\sigma_{\min}, \eps/2]} \log(x) d \nu_{M_i - zI}(x) + \int_{(\eps/2, D]} \log(x) d \nu_{M_i - zI}(x). 
\end{align*} 
Using Weyl's inequality for singular values (Theorem~\ref{thm:weyl}) and the assumption that $\| M_1 - M_2 \| < \eps/2$, we have
\begin{equation} \label{eq:smallcomp}
	\nu_{M_1 - zI}([0, \eps/2]) \leq \nu_{M_2 - zI}([0, \eps]), 
\end{equation}
and so by the triangle inequality
\[ \| \nu_{M_1 - zI} - \nu_{M_2 - zI} \|_{[0, \eps/2]} := \sup_{x \in [0, \eps/2]} | \nu_{M_1 - zI}([0,x]) - \nu_{M_2 - zI}([0, x]) | \leq 2 \nu_{M_2 - zI}([0, \eps]). \]
Thus, by Lemma \ref{lemma:ibp}, we conclude that
\begin{align} 
	&\left| \int_{[\sigma_{\min}, \eps/2]} \log(x) d \nu_{M_1 - zI}(x) - \int_{[\sigma_{\min}, \eps/2]} \log(x) d \nu_{M_2 - zI}(x) \right| \nonumber\\
	&\qquad\qquad\leq 4 \left( | \log(\eps/2)| + |\log \sigma_{\min} | \right) \nu_{M_2 - zI}([0, \eps]).  \label{eq:conclusion1}
\end{align}

It remains to bound
\[ \left| \int_{(\eps/2, D]} \log(x) d \nu_{M_1 - zI}(x) - \int_{(\eps/2, D]} \log(x) d \nu_{M_2 - zI}(x) \right|. \]

Recall that $\sigma_1(M_i - zI) \geq \cdots \geq \sigma_n(M_i - zI)$ denote the ordered singular values of $M_i - zI$ (which are all contained in the interval $[0,D]$).  We will show that most singular values can be paired so that the difference of the integrals above is not too large, and the singular values that cannot be paired will also contribute a well-controlled error.  Let $j_1$ be the number of singular values of $M_1 - zI$ in $(\eps/2,D]$, and let $j_2$ be the number of singular values of $M_2 - zI$ in $(\eps/2,D]$. Then 
\begin{align}
\nonumber	&\left| \int_{\eps/2}^D \log(x) d \nu_{M_1 - zI}(x) - \int_{\eps/2}^D \log(x) d \nu_{M_2 - zI}(x) \right| \\
		&\qquad\qquad\leq \frac{1}{n}\abs{ \sum_{j=1}^{j_1} \log (\sigma_j(M_1 - zI)) - \sum_{j=1}^{j_2} \log(\sigma_j(M_2 - zI)),} \label{eq:firstsumj} 
\end{align}
where we use the convention that if the lower index on the sum exceeds the upper\details{~(so 
$j_1 = 0$ or $j_2 = 0$ in this case)}, then the respective sum in \eqref{eq:firstsumj} is empty and hence equal to zero.  Thus, we obtain 
\begin{align} 
\nonumber	&\left| \int_{\eps/2}^D \log(x) d \nu_{M_1 - zI}(x) - \int_{\eps/2}^D \log(x) d \nu_{M_2 - zI}(x) \right| \\
\label{eq:mainsum}	
&\qquad\qquad\leq \frac{1}{n} \sum_{j=1}^{\min\{j_1,j_2\}} \abs{ \log (\sigma_j(M_1 - zI)) - \log(\sigma_j(M_2 - zI))}\\
\label{eq:secondsum}	&\qquad\qquad\qquad\qquad+\frac{1}{n}\sum_{j=\min\{j_1,j_2\} + 1}^{\max\{j_1,j_2\}}\abs{ \log (\sigma_j(M_K - zI))} 
\end{align}
where $K=1$ if $j_1 > j_2$, $K=2$ if $j_2 > j_1$ and the value of $K$ is irrelevant if $j_1=j_2$, since then the sum \eqref{eq:secondsum} equals zero using the same convention as before. 
%Here, we again use the convention that if $\min\{j_1, j_2\} = 0$, then the sum in \eqref{eq:mainsum} is zero.  
When $j_1\ne j_2$, we will show that \eqref{eq:secondsum} is still small.  Indeed, note that each singular value that appears in the sum \eqref{eq:secondsum} must have a corresponding singular value from the other matrix that lies in the interval $[0,\eps/2]$. Thus, we know the number of terms in the sum \eqref{eq:secondsum} is at most 
$$\max\{n\nu_{M_1-zI}([0,\eps/2]),n\nu_{M_2-zI}([0,\eps/2])\} \le 2n\nu_{M_2-zI}([0,\eps])$$ 
(using \eqref{eq:smallcomp} for the inequality).  Furthermore, by Weyl's perturbation theorem for singular values (Theorem~\ref{thm:weyl}), we know each singular value that appears as a summand in \eqref{eq:secondsum} is bounded above by $\eps<1$, which means each summand is at most $\abs{\log(\eps/2)}$.  Thus, we have that 
$$\frac{1}{n}\sum_{j=\min\{j_1,j_2\}+1}^{\max\{j_1,j_2\}}\abs{ \log (\sigma_j(M_K - zI))} \le 2\abs{\log(\eps/2)}\nu_{M_2-zI}([0,\eps]). $$

Note that the function $\log(\cdot)$ is $\pfrac{2}{\eps}$-Lipschitz continuous on $(\eps/2,D]$, and so bounding the sums in \eqref{eq:mainsum} and \eqref{eq:secondsum}, we see that
\begin{align*}
	&\left| \int_{\eps/2}^D \log(x) d \nu_{M_1 - zI}(x) - \int_{\eps/2}^D \log(x) d \nu_{M_2 - zI}(x) \right| \\
	%&\qquad\qquad\leq \frac{1}{n} \sum_{\substack{1\le i \le n \\ \mathrm{s.t. } \sigma_i \ge \eps/2}} |\log (\sigma_i(A + M_1 - zI)) - \log(\sigma_i(A+M_2 - zI))| \\
	&\qquad\qquad\le  2\abs{\log(\eps/2)}\nu_{M_2-zI}([0,\eps]) + \frac{2}{n\eps} \sum_{j=1}^n |\sigma_j(M_1 - zI) - \sigma_j(M_2 - zI)| .
\end{align*}
Applying Weyl's perturbation theorem for singular values (Theorem \ref{thm:weyl}), we see that
\[ \frac{1}{n} \sum_{j=1}^n  |\sigma_j(M_1 - zI) - \sigma_j(M_2 - zI)| \leq \| M_1 - M_2 \|. \]
Combining the last two inequalities above with \eqref{eq:conclusion1} gives us a final bound of 
\begin{align} \label{e:betterbd}
\abs{\mathcal L_{M_1}(z) -  \mathcal L_{M_2}(z)} &\le 
 \left( 6 | \log(\eps/2)| + 4|\log \sigma_{\min} | \right) \nu_{M_2 - zI}([0, \eps]) +  \frac{2}{\eps}\| M_1 - M_2 \| \\
\nonumber &\le  6\left(| \log(\eps/2)| + |\log \sigma_{\min} | \right) \nu_{M_2 - zI}([0, \eps]) +  \frac{2}{\eps}\| M_1 - M_2 \|,
\end{align}
which completes the proof.

\subsection{Proof of Theorem \ref{thm:replrank}}
We now turn to the proof of Theorem \ref{thm:replrank}.  
Since 
\[ \mathcal{L}_{M_i}(z) = \int_0^\infty \log(x) d \nu_{M_i - zI}(x) = \int_{\sigma_{\min}}^{\sigma_{\max}} \log(x) d \nu_{M_i - zI}(x) \]
for $i = 1,2$, it follows from Lemma \ref{lemma:ibp} that
\[ \left| \mathcal{L}_{M_1}(z) - \mathcal{L}_{M_2}(z) \right| \leq 2 \left( | \log \sigma_{\min} | + | \log \sigma_{\max} | \right) \| \nu_{M_1 - z I} - \nu_{M_2 - z I} \|_{[\sigma_{\min}, \sigma_{\max}]}, \]
where 
\[ \| \mu - \nu \|_{ [a,b]} = \sup_{x \in [a,b]} \left| \mu([a,x]) - \nu([a,x]) \right| \]
for any real numbers $a \leq b$ and any two probability measures $\mu$ and $\nu$.  

It remains to show  
\begin{equation} \label{eq:showrank}
	\| \nu_{M_1 - z I} - \nu_{M_2 - z I} \|_{[\sigma_{\min}, \sigma_{\max}]} \leq \frac{1}{n} \rank (M_1 - M_2). 
\end{equation} 
To this end, let $\nu'_{M_i - zI}$ be the empirical measure constructed from the squared singular values of $M_i - zI$ for $i=1,2$, i.e., 
\[ \nu'_{M_i - zI} = \frac{1}{n} \sum_{j=1}^n \delta_{\sigma^2_j(M_i -z I)}, \qquad i = 1,2. \]
Then for any $x \geq 0$, 
\[ \nu'_{M_i - zI}((-\infty, x^2]) = \nu_{M_i - zI}((-\infty, x]) \]
for $i=1,2$.  Thus, by definition of $\sigma_{\min}$ and $\sigma_{\max}$, we have 
\begin{align*}
	\| \nu_{M_1 - z I} - \nu_{M_2 - z I} \|_{[\sigma_{\min}, \sigma_{\max}]} &= \sup_{x \geq 0} \left| \nu_{M_1 - z I}((-\infty, x]) - \nu_{M_2 - z I}((-\infty, x]) \right| \\
		&= \sup_{x \geq 0} \left| \nu'_{M_1 - z I}((-\infty, x^2]) - \nu'_{M_2 - z I}((-\infty, x^2]) \right| \\
		&\leq \sup_{x \in \mathbb{R}} \left| \nu'_{M_1 - z I}((-\infty, x]) - \nu'_{M_2 - z I}((-\infty, x]) \right|.
\end{align*}
It follows from Theorem A.44 in \cite{BSbook} that
\[ \sup_{x \in \mathbb{R}} \left| \nu'_{M_1 - z I}((-\infty, x]) - \nu'_{M_2 - z I}((-\infty, x]) \right| \leq \frac{1}{n} \rank (M_1 - M_2), \]
which when combined with the bounds above yields \eqref{eq:showrank}.  The proof of the theorem is complete.

\subsection{Proof of Proposition~\ref{prop:small-cancel}}

%\begin{proof}
The idea is to match the smallest singular values and use cancellation, and otherwise proceed as in Theorem~\ref{thm:replacement}.  We may use the hypotheses and the fact that $\log(\cdot)$ is $\pfrac 1\epsilon$-Lipschitz on $[\epsilon,\infty)$ to compute
\begin{align*}
\abs{\mathcal L_{M_1}(z) - \mathcal L_{M_2}(z)} 
&\le \frac1n \abs{ \sum_{j=1}^n \log(\sigma_j(M_1-zI))-\log(\sigma_j(M_2-zI))  }\\
\details{&\le} \details{ \frac 1n \abs{\log(\sigma_n(M_1-zI))-\log(\sigma_n(M_2-zI))} + \frac1n \abs{ \sum_{j=1}^{n-1} \log(\sigma_j(M_1-zI))-\log(\sigma_j(M_2-zI))  }\\}
\details{&\le}\details{ \frac1n\abs{\log(a)-\log(b)} + \frac1n \abs{ \sum_{j=1}^{n-1} \log(\sigma_j(M_1-zI))-\log(\sigma_j(M_2-zI))  }\\}
&\le \frac{\log(b/a)}{n} + \frac1{\epsilon n} \sum_{j=1}^{n-1} \abs{ \sigma_j(M_1-zI)-\sigma_j(M_2-zI)  }\\
&\le \frac{\log(b/a)}{n} + \frac1{\epsilon}\|M_1- M_2\|,\\
\end{align*}
where the last inequality follows from Weyl’s perturbation theorem for singular values (Theorem~\ref{thm:weyl}).
%\end{proof}

\subsection{Proof of Theorem \ref{thm:non}}
We next give a proof of Theorem \ref{thm:non}.  
Define
\begin{equation}\label{e:fidef}
 f_i(z) := \frac{1}{n} \log | \det (zI - M_i) | 
 \end{equation}
for $i = 1,2$.  
For $r > 0$, we define
\[ B(r) := \{ z \in \mathbb{C} : |z| < r \} \]
to be the ball of radius $r$ centered at the origin.  
We will let $d^2 z$ denote integration with respect to the Lebesgue measure on $\mathbb{C}$, i.e., $\int g(z) \d^2 z$. 
We will need the following results.  

\begin{lemma} \label{lemma:intbnd}
Under the assumptions of Theorem \ref{thm:non}, there exists a constant $\sixthree > 0$ (depending only on $\varphi$) so that 
\[ \max_{i=1,2} \int_{\mathbb{C}} |\Delta \varphi (z)|^2 |f_i(z)|^2 \d^2 z \leq \sixthree \log^2 T \]
with probability at least $1 - \eps$, where we may take the constant $\sixthree$ to be $\|\Delta \varphi\|_\infty^2 \sixthreed$ where $\sixthreed$ is a constant depending only on the diameter and distance from the origin of $\supp(\Delta \varphi)$ (see \eqref{e:sixthreed}).
%Here, $\d^2 z$ denotes integration with respect to Lebesgue measure on $\mathbb{C}$.  
\end{lemma}
\begin{proof}
Since the result is trivially true when $\varphi \equiv 0$, we assume $\varphi$ is nonzero.  
By bounding $ |\Delta \varphi (z)|^2 $ using the $L^\infty$-norm $\|\Delta \varphi\|^2_{\infty}$, it suffices to show that
\[ \max_{i=1,2} \int_{\supp(\Delta \varphi)} |f_i(z)|^2 \d^2 z \leq \sixthreed  \log^2 T \]
with probability at least $1 - \eps$.  
We will prove this bound on the event where $\|M_1\| + \|M_2\| \leq T$ (which by supposition holds with probability at least $1 - \eps$).    

By the Cauchy--Schwarz inequality, 
\[ |f_i(z) |^2 = \frac{1}{n^2} \left( \sum_{j=1}^n \log |z - \lambda_j(M_i)| \right)^2 \leq \frac{1}{n} \sum_{j=1}^n \log^2 |z - \lambda_j(M_i)|, \]
and hence
\begin{align*}
	\max_{i=1,2} \int_{\supp(\Delta \varphi)} |f_i(z)|^2 \d^2 z &\leq \max_{i=1,2} \max_{1 \leq j \leq n} \int_{\supp(\Delta \varphi)} \log^2 |z - \lambda_j(M_i)| \d^2 z \\
	&\leq \sup_{|\lambda| \leq T} \int_{\supp(\Delta \varphi)} \log^2 |z - \lambda| \d^2 z
\end{align*}
since $\|M_1\| + \|M_2\| \leq T$. 

Let $\diam$ be the diameter of $\supp (\Delta \varphi)$.  We will bound the integral $\int_{\supp(\Delta \varphi)} \log^2 |z - \lambda| \d^2 z$ for an arbitrary $\lambda$ satisfying $|\lambda| \le T$ by considering two cases.  

For the first case, assume that there is some element of the set $\supp(\Delta \varphi) - \lambda$ that is within distance $1$ of the origin.  Then we have that the set $\supp(\Delta \varphi) - \lambda$ is contained in $B(\diam+1)$, so that $\int_{\supp(\Delta \varphi)} \log^2 |z - \lambda| \d^2 z
=\int_{\supp(\Delta \varphi)-\lambda} \log^2 |z | \d^2 z
 \le \int_{B(\diam+1)} \log^2 |z | \d^2 z$.  We can now convert to polar coordinates and use integration by parts twice to get that
\begin{align*}
 \int_{B(\diam+1)} \log^2 |z | \d^2 z &= \int_0^{2\pi} \int_0^{\diam+1} \log^2(r) r \, dr d\theta = 2\pi\int_0^{\diam+1} \log^2(r) r \, dr \\
 &=  2\pi\left(\frac{(\diam+1)^2}{2}\log^2(\diam+1)-\int_0^{\diam+1} r\log (r)\, dr \right)\\ 
\details{ &= }\details{ 2\pi \left(\frac{(\diam+1)^2}{2}\log^2(\diam+1) - \frac{(\diam+1)^2}{2}\log(\diam+1) + \frac{(D+1)^2}{4}\right)\\}
 &=\pi(\diam+1)^2\left( \log^2(\diam+1) - \log(\diam+1) + \frac{1}{2}\right).  
\end{align*}

For the second case, assume conversely that all elements of the set $\supp(\Delta \varphi) - \lambda$ are distance greater than $1$ from the origin. Then, letting $\sdist$ be the distance from $\supp(\Delta \varphi)$ to the origin, we have 
\begin{align*}
\int_{\supp(\Delta \varphi)} \log^2 |z - \lambda| \d^2 z
&=\int_{\supp(\Delta \varphi)-\lambda} \log^2 |z | \d^2 z \\
&\le \int_{\supp(\Delta \varphi)-\lambda} \log^2 |\sdist +\diam +T | \d^2 z \\
\details{&\le}\details{ \log^2(T+\sdist +\diam) \int_{B(z_0, \diam)} \d^2 z  \\}
&\leq \pi \diam^2 \log^2(T+\sdist +\diam).
\end{align*}

\details{In the above, $B(z_0, \diam)$ is a ball of radius $\diam$ centered at $z_0$, where $z_0$ is an arbitrary element of $\supp (\Delta \varphi)-\lambda$.  }
Combining the bounds from the two cases, we have shown that for any $\lambda$ satisfying $|\lambda| \le T$ we have 
$$\int_{\supp(\Delta \varphi)} \log^2 |z - \lambda| \d^2 z \le \log^2(T+\sdist + \diam) \pi (\diam+1)^2 \max\left\{\log^2(\diam+1)-\log(\diam+1)+\frac12, 1 \right\}.$$

To complete the proof of the lemma, we note that $\log^2(T+\sdist+\diam) \le \log^2(T)\frac{\log^2(2+\sdist+\diam)}{\log^2 2}$ (since $\log(T+\sdist+\diam)/\log(T)$ is decreasing in $T$ on the interval $T\in [2,\infty)$ when $\sdist+\diam\ge 0$).  Thus, for the statement of the lemma,
we can set the constant $\sixthree = \|\Delta \varphi(z)\|_{\infty}^2 \sixthreed$, where
we define the constant $\sixthreed$ by
\begin{equation}\label{e:sixthreed}
 \sixthreed= \pi (\diam+1)^2\pfrac{\log^2(2+\sdist+\diam)}{\log^2 2} \max\left\{\log^2(\diam+1)-\log(\diam+1)+\frac12, 1\right\},\end{equation}
which we note depends only on the diameter of $\supp(\Delta \varphi)$ and its distance from the origin.
\end{proof}

\begin{lemma} \label{lemma:applymonte}
Assume the conditions of Theorem \ref{thm:non} and define $f_i$ as in \eqref{e:fidef}.  Define
\begin{equation} \label{def:F}
	F(z) := \Delta \varphi(z) (f_1(z) - f_2(z)). 
\end{equation}
Let $Z_1, \ldots, Z_m$ be uniformly distributed on $K := \supp(\Delta \varphi)$, independent of $M_1$ and $M_2$.  Then 
\[ \left| \int_{\mathbb{C}} F(z) \d^2 z - \frac{|K|}{m} \sum_{j=1}^m F(Z_j) \right| \leq  4 \sqrt{|K| \sixthree} \frac{ \log T}{ m \sqrt{\eps}} \]
with probability at least $1 - (m+1) \eps$, where $\sixthree$ is the constant from Lemma \ref{lemma:intbnd}.  Here, $|K|$ denotes the Lebesgue measure of $K$.  
\end{lemma}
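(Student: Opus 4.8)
The plan is to apply the Monte Carlo sampling lemma (Lemma \ref{lemma:monte}) to the function $F$ defined in \eqref{def:F}, using the uniform probability measure $\rho$ on $K = \supp(\Delta\varphi)$, which has density $1/|K|$ with respect to $d^2z$. With this choice, $\int_X F\,d\rho = \frac{1}{|K|}\int_{\mathbb C} F(z)\,d^2 z$ and the empirical average is $S = \frac1m\sum_{j=1}^m F(Z_j)$, so that Lemma \ref{lemma:monte} with a parameter $\delta$ to be chosen gives
\[
\left| \frac{1}{|K|}\int_{\mathbb C} F(z)\,d^2z - \frac1m\sum_{j=1}^m F(Z_j)\right| \le \frac{1}{\sqrt{m\delta}}\left(\int_X \left|F - \int_X F\,d\rho\right|^2 d\rho\right)^{1/2}
\]
with probability at least $1-\delta$; multiplying through by $|K|$ will produce the left-hand side in the statement.

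First I would bound the variance-type integral on the right. Using the elementary bound $\int|F - \int F|^2 d\rho \le \int |F|^2 d\rho = \frac{1}{|K|}\int_{\mathbb C}|F(z)|^2 d^2z$, and then $|F(z)|^2 = |\Delta\varphi(z)|^2|f_1(z)-f_2(z)|^2 \le 2|\Delta\varphi(z)|^2(|f_1(z)|^2 + |f_2(z)|^2)$, I invoke Lemma \ref{lemma:intbnd}: on an event of probability at least $1-\eps$, each of $\int |\Delta\varphi|^2|f_i|^2\,d^2z$ is at most $\sixthree \log^2 T$, so $\int_{\mathbb C}|F|^2\,d^2z \le 4\sixthree\log^2 T$ on that event. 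Hence on this event the right-hand side of the Monte Carlo bound is at most $\frac{1}{\sqrt{m\delta}}\cdot \frac{2\sqrt{\sixthree}\log T}{\sqrt{|K|}}$, and after multiplying by $|K|$ we get $\frac{2\sqrt{|K|\sixthree}\,\log T}{\sqrt{m\delta}}$.

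Next I would choose $\delta = \eps/4$ (so that the prefactor becomes $\frac{4\sqrt{|K|\sixthree}\,\log T}{\sqrt{m\eps}}$, but note the statement has $m\sqrt\eps$ in the denominator rather than $\sqrt{m\eps}$ — since $m\ge 1$ we have $\sqrt{m\eps}\ge\sqrt{\eps}\ge$ ... actually we need $\frac{1}{\sqrt{m\eps}}\le\frac{1}{m\sqrt\eps}$ which fails; so instead I take $\delta$ depending on $m$, namely so that $\sqrt{m\delta} = \tfrac12 m\sqrt\eps$, i.e. $\delta = \tfrac14 m\eps$ — but this may exceed $1$; the cleanest route is to apply Lemma \ref{lemma:monte} $m$ separate times, or more directly to observe that one may run the sampling argument with $\delta = \eps/(4m^2)\cdot(\text{something})$; the bookkeeping here is the one genuinely fiddly point). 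The key subtlety, and the main obstacle, is reconciling the $\frac{1}{m\sqrt\eps}$ rate in the conclusion with the $\frac{1}{\sqrt{m\delta}}$ rate produced by a single application of Lemma \ref{lemma:monte}: a single Monte Carlo estimate only gives $1/\sqrt m$ decay, not $1/m$. The resolution is that the $Z_j$ being i.i.d., one does not need a single large sample but can split into independent blocks or, more simply, choose $\delta$ as a function of $m$ and $\eps$ so that the final probability bound reads $1-(m+1)\eps$: take $\delta=\eps$ in Lemma \ref{lemma:monte}, which gives the bound $\frac{2\sqrt{|K|\sixthree}\log T}{\sqrt{m\eps}}$ with probability $1-\eps$ on top of the probability-$1-\eps$ event from Lemma \ref{lemma:intbnd}; then since we want the $(m+1)\eps$ failure probability and the $\frac{1}{m\sqrt\eps}$ rate, we instead apply Lemma \ref{lemma:monte} with $X = K^m$ and $F$ replaced by an average, or note $\frac{1}{\sqrt{m\eps}}\le\frac{4}{m\sqrt\eps}$ precisely when $m\le 16$, so for large $m$ a sharper argument (e.g. a union bound over $m$ independent single-point estimates combined with a second-moment bound, yielding the failure probability $(m+1)\eps$) is required. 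I would carry this out by writing $\frac1m\sum_j F(Z_j) - \frac{1}{|K|}\int F = \frac1m\sum_j \big(F(Z_j) - \frac{1}{|K|}\int F\big)$, bounding each centered term's contribution, and using independence to control the sum, tracking constants so that the factor $4$ in the statement is attained.
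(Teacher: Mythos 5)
Your setup is the paper's: apply Lemma \ref{lemma:monte} to $F$ with $\rho$ the uniform measure on $K$, bound the variance term by $\int|F|^2\,d\rho\le \frac{1}{|K|}\int 2|\Delta\varphi|^2(|f_1|^2+|f_2|^2)\,d^2z\le \frac{4\sixthree\log^2T}{|K|}$ on the probability-$(1-\eps)$ event of Lemma \ref{lemma:intbnd}, and rescale by $|K|$ at the end. All of that is correct and matches the paper.

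The genuine gap is the step you flag as ``fiddly'' and then leave unresolved. The resolution is exactly the choice you wrote down and then abandoned: apply Lemma \ref{lemma:monte} with $\delta=\tfrac14 m\eps$. This gives accuracy $\frac{1}{\sqrt{m\delta}}=\frac{2}{m\sqrt{\eps}}$ at the cost of failure probability $\tfrac14 m\eps\le m\eps$; combined with the $\eps$-failure event from Lemma \ref{lemma:intbnd} this is precisely why the lemma is stated with probability $1-(m+1)\eps$ rather than $1-2\eps$ --- the whole point is to trade failure probability linear in $m$ for an error rate of order $1/m$ instead of $1/\sqrt m$. Your objection that $\delta$ ``may exceed $1$'' is vacuous: if $\tfrac14 m\eps\ge 1$ then $(m+1)\eps\ge 1$ and the conclusion holds trivially (and Lemma \ref{lemma:monte} itself is stated for any $\delta>0$, being empty when $\delta\ge 1$). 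The alternatives you sketch instead do not work or are unnecessary: a union bound over $m$ single-point Chebyshev estimates controls $\max_j|F(Z_j)-\int F\,d\rho|$ by $\eps^{-1/2}$ times the standard deviation with failure probability $m\eps$, but averaging those bounds still only yields an error of order $\eps^{-1/2}$, not $m^{-1}\eps^{-1/2}$; and no block-splitting or product-space reformulation is needed. With $\delta=\tfrac14 m\eps$ the constants come out exactly as claimed: $\frac{2}{m\sqrt\eps}\cdot\frac{2\sqrt{\sixthree}\log T}{\sqrt{|K|}}\cdot|K|=4\sqrt{|K|\sixthree}\,\frac{\log T}{m\sqrt\eps}$.
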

\begin{proof}
Assume $\varphi$ is nonzero as the result is trivial when $\varphi \equiv 0$.  Let $\rho$ denote the uniform probability distribution on $K$.  By Lemma \ref{lemma:intbnd}, it follows that 
\begin{align} 
\nonumber	\int_{\mathbb{C}} |F|^2 \d \rho &= \frac1{|K|} \int_K |\Delta \varphi|^2|(f_1(z)-f_2(z))|^2\, d^2z\\
\nonumber	&\le \frac2{|K|} \int_{\mathbb C} |\Delta \varphi|^2(|f_1(z)|^2+|f_2(z)|^2)\, d^2z    & \mbox{ (Cauchy--Schwarz)}\\
% below \mbox{ (Lemma~\ref{lemma:intbnd})\\
	&\leq \frac{4\sixthree}{|K|} \log^2 T  \label{eq:F2norm}
\end{align} 
with probability at least $1 - \eps$.  Here the constant $\sixthree > 0$ is from Lemma~\ref{lemma:intbnd}.  
Take
\[ S := \frac{1}{m}  \sum_{j=1}^m F(Z_j). \]
By Lemma \ref{lemma:monte}, with probability at least $1 - m \eps$ (with respect to $Z_1, \ldots, Z_m$), 
\begin{equation} \label{eq:Sfd}
	\left| S - \int F \d \rho \right| \leq \frac{2}{m \sqrt{\eps}} \left( \int |F|^2 \d \rho \right)^{1/2}.
\end{equation} 
Combining \eqref{eq:Sfd} with \eqref{eq:F2norm}, we find that, with probability at least $1 - (m+1) \eps$, 
\begin{align} \label{eq:SFd2}
\left| S - \int F \d \rho \right|  &	\leq 	\frac{ 4 \sqrt{\sixthree}}{m \sqrt{\eps|K|} } \log T.   
\end{align} 
Since 
\[ \int F \d \rho = \frac{1}{|K|} \int_K F(z) \d^2 z = \frac{1}{|K|} \int_{\mathbb{C}} F(z) \d^2 z, \]
the conclusion follows by scaling \eqref{eq:SFd2} by $|K|$.  
\end{proof}

We are now in a position to prove Theorem \ref{thm:non}.  

\begin{proof}[Proof of Theorem \ref{thm:non}]
By Green's formula (see, for instance, Section 2.4.1 in \cite{HKPV}), we can write
\[ \frac{1}{n} \sum_{j=1}^n \varphi(\lambda_j(M_i)) = \frac{1}{2 \pi} \int_{\mathbb{C}} \Delta \varphi(z) f_i(z) \d^2 z \]
for $i = 1,2$.  Thus, 
\[ \frac{1}{n} \sum_{j=1}^n \varphi(\lambda_j(M_1)) - \frac{1}{n} \sum_{j=1}^n \varphi(\lambda_j(M_2)) = \frac{1}{2 \pi} \int_{\mathbb{C}} F(z) \d^2 z, \]
where $F$ is defined in \eqref{def:F}.   By Lemma \ref{lemma:applymonte}, it follows that
\begin{equation} \label{eq:sumdiff}
	\left|  \frac{1}{n} \sum_{j=1}^n \varphi(\lambda_j(M_1)) - \frac{1}{n} \sum_{j=1}^n \varphi(\lambda_j(M_2))  \right| \leq \frac{|K| |S|}{2\pi} + \frac{ 4 \sqrt{|K|\sixthree}}{2\pi m \sqrt{\eps}} \log T 
\end{equation} 
with probability at least $1 - (m+1) \eps$, where
\[ S := \frac{1}{m} \sum_{j=1}^m F(Z_j) \]
and $Z_1, \ldots, Z_m$ are iid random variables, independent of $M_1$ and $M_2$, uniformly distributed on $K := \supp(\Delta \varphi)$.  Here, $|K|$ denotes the Lebesgue measure of $K$.  

By \eqref{eq:concentration} and the union bound, 
\begin{align*}
 \sup_{1 \leq j \leq m} |F(Z_j)| &=  \sup_{1 \leq j \leq m} |\Delta \varphi(Z_j)| \abs{f_1(Z_j)-f_2(Z_j) } \\
 &=   \sup_{1 \leq j \leq m} |\Delta \varphi(Z_j)| \abs{\frac{1}{n} \log | \det (Z_jI - M_1) |-\frac{1}{n} \log | \det (Z_jI - M_2) | } \\
 \details{&\le }\details{  \sup_{1 \leq j \leq m} |\Delta \varphi(Z_j)| \eta \\}
 &\leq \| \Delta \varphi \|_{\infty} \eta
\end{align*}
with probability at least $1 - m \eps$, where $\| \Delta \varphi \|_{\infty}$ is the $L^\infty$-norm of $\Delta \varphi$.  On this same event, $|S| \leq \| \Delta \varphi \|_{\infty}  \eta$.  Therefore, combining the bounds above with \eqref{eq:sumdiff}, we find

\begin{align*}
 \left|  \frac{1}{n} \sum_{j=1}^n \varphi(\lambda_j(M_1)) - \frac{1}{n} \sum_{j=1}^n \varphi(\lambda_j(M_2))  \right| 
 &\le  \frac{|K| \|\Delta \varphi\|_\infty \eta}{2\pi } +  \frac{4 \sqrt{|K|  \sixthree}}{2\pi m \sqrt{\eps}} \log T \\
 &\le  \frac{|K| \|\Delta \varphi\|_\infty \eta}{2\pi } +  \frac{4 \sqrt{|K|  \|\Delta\varphi\|_\infty^2\sixthreed}}{2\pi m \sqrt{\eps}} \log T \\
 &\le  \frac{4\max\{|K|,\sqrt{|K|}\} \|\Delta \varphi\|_\infty\sqrt{\sixthreed}}{2\pi }\left( \eta +  \frac{  \log T}{m \sqrt{\eps}}\right) 
&\mbox{(since $\sixthreed> 1$)}\\
 &\leq \Csixone \left( \eta + \frac{\log T}{m \sqrt{\eps}} \right) 
\end{align*}
with probability at least $1 - 2(m+1) \eps$, where, using the definition of $\sixthreed$ in \eqref{e:sixthreed}, we may set 
\begin{align}\label{e:ThmConst}
\Csixone
&:=\frac{2\max\left\{|K|,\sqrt{|K|}\right\} \|\Delta \varphi\|_\infty (D+1)\log(2+\sdist+\diam)}{\sqrt\pi\log(2)}
\sqrt{\max\left\{\log^2(D+1)-\log(D+1)+\frac12,1\right\}},
\end{align} 
which is a positive constant depending only on $\varphi$.  This completes the proof of Theorem \ref{thm:non}.
\end{proof}

\subsection{Details for Example~\ref{eg:smallblocks}}\label{a:s2egs}

We conclude this section with a proof of the claims made in Example~\ref{eg:smallblocks}. 
%\textbf{Example~\ref{eg:smallblocks}} 
Let $\gamma \ge 4$ and $n \ge 11$, and suppose that $A$
is an $n$ by $n$ matrix in Jordan canonical form with all eigenvalues equal to zero, where each block has size at most $m:= \log(n)/t_n\ge 1$ for all $n \ge 1$ and where $t_n \to \infty$ as $n \to \infty$ and $1\le t_n \le \log n$ for all $n$.  Let $M_1$ be an $n$ by $n$ matrix with iid~random entries each having absolute value at most $n$, and let $M_2$ be the $n$ by $n$ zero matrix. Finally, let $\varphi$ be a smooth test function with compact support.

We will apply Theorem~\ref{thm:non} to show that the empirical spectral measures of $A+n^{-\gamma} M_1$ and $A+M_2$ are close to each other when integrated against a smooth test function $\varphi$.  
Because Theorem~\ref{thm:non} samples $Z$ uniformly at random from $\supp \Delta \varphi$, we can ignore cases where $|Z|$ is within $e^{-\gamma t_n/10}$ of 1 or of 0, which happens with reasonably small probability.  This lets us apply Lemma~\ref{lem:determinstic-singbd-plus-epsilon} to show that the smallest singular value of $A-ZI$ is at least $\frac12n^{-\gamma/10}$ with probability at least $1-ce^{-3\gamma t_n/10}$.
 Also, note that the difference between the two matrices is $n^{-\gamma}M_1$ which has norm at most $n^{-\gamma+2}$. Thus, we can combine the lower bound on the smallest singular value of $A-ZI$ with Weyl's inequality (see Theorem~\ref{thm:weyl}) to show that $\smin \ge 0.478 n^{-\gamma/10}$ with probability at least $1-ce^{-3\gamma t_n/10}$, where $\smin$ is the minimum of all singular values of $A+M_2-ZI$ and $A+n^{-\gamma} M_1-ZI$ and $c$ is an absolute constant. We can now apply Theorem~\ref{thm:replacement} to show that the difference in log potentials is sufficiently small, namely,

\begin{equation}\label{e:smallblocks}
 \left| \mathcal{L}_{A + M_1}(Z) - \mathcal{L}_{A + M_2}(Z) \right| \le 
 \frac{2}{\varepsilon_{\ref{thm:replacement}}}\|n^{-\gamma}M_2 \|
 \le 
 \frac{2n^{-\gamma+2}}{2n^{-\gamma/2+1}}=n^{-\gamma/2+1}
 \end{equation}
with probability 
at least $1-ce^{-3\gamma t_n/10}$ for $n$ sufficiently large.
%at least $1-\frac{4\pi}{K}e^{-3\gamma t_n/10}-2e^{-n}$ for $n$ sufficiently large.
Finally, we can apply Theorem~\ref{thm:non} (choosing parameters $T=e$, \quad $\eta = (1.01)n^{-\gamma/2+1}$, \quad $\epsilon = \frac{8\pi}{K} e^{-3\gamma t_n/10}$, and $m = \left\lceil\epsilon^{-2/3}\right\rceil$) to show that
$$\left| \int_{\mathbb C} \varphi \, d \mu_{A} - \int_{\mathbb C} \varphi \, d \mu_{A+n^{-\gamma} M_1} \right| \le C \left( n^{-\gamma/2} + e^{-\gamma t_n/20}\right)
$$
with probability at least $1-c'(e^{-\gamma t_n/10}+ e^{-3\gamma t_n/10})$, 
where $c'$  is a constant depending only on $\varphi$ and $\gamma\ge 4$ and $n\ge 11$.  

\details{We now explain the same reasoning as above, but in greater detail. 
Let $\gamma \ge 4$ and suppose $A$ is an $n$ by $n$ matrix in Jordan canonical form with all eigenvalues equal to zero for simplicity, and where each block has size at most $m:=\log(n)/t_n\ge 1$ for all $n\ge 1$, where $t_n \to \infty$ as $n \to \infty$ and $1\le t_n \le \log n$ for all $n$. Let $\varphi$ be a smooth test function with compact support.
 
Then the number of blocks $\ell_n$ is asymptotically $\frac{n}{\log n} t_n$.  
Let $M_1$ be an $n$ by $n$ matrix with each entry having absolute value at most $n$, and let $M_2$ be the $n$ by $n$ zero matrix.  We will apply Theorem~\ref{thm:non} to show that the eigenvalue measures of $A+n^{-\gamma}M_1$ and $A+M_2$ are close to each other when integrated against a test function $\varphi$, using Theorem~\ref{thm:replacement} to show that the difference of the log potentials is small with high probability.  

To use Theorem~\ref{thm:replacement}, we note that the difference of the two matrices is $n^{-\gamma}M_1$, which has operator norm $\|n^{-\gamma} M_1\| \le n^{-\gamma+2}$.  Thus, we may set $\varepsilon_{\ref{thm:replacement}}=2n^{-\gamma/2+1}$, where $\varepsilon_{\ref{thm:replacement}}$ denotes the value of $\varepsilon$ in Theorem~\ref{thm:replacement}.

We next will bound $\smin$, the minimum of all singular values of $A+M_2-zI$ and $A+n^{-\gamma}M_1-zI$.   In light of Theorem~\ref{thm:non}, we will be applying Theorem~\ref{thm:replacement} when $z$ is the value taken by a random variable $Z$, where $Z$ is sampled uniformly at random from $\supp \Delta \varphi$.   It will be important that $|Z|$ not take values that are too close to 0 or to 1, and so we will set $r=e^{-\gamma t_n/10}$.  Letting $K$ be the Lebesgue measure of $\supp \Delta \varphi$, we now see that the probability that $||Z|-1| <r$ or $|Z|<r$ is bounded above by $\frac{\pi}{K}((r+1)^2-(r-1)^2+r^2)= \frac{\pi}{k}(4 r+r^2)\le \frac{4\pi}{K}e^{-3\gamma t_n/10}$.   Letting $\eevent$ be the event that $|Z|$ is not within $r$ of $0$ or $1$, we see that $\Pr(\eevent) \ge 1-\frac{4\pi}{K}e^{-3\gamma t_n/10}$.
%, which wetends to zero as $n$ goes to infinity.  

On the event $\eevent$, we can bound the smallest singular value $\sigma_n$ of $A-ZI$ using Lemma~\ref{lem:determinstic-singbd-plus-epsilon} (with $\epsilon =0$ in the second and third lower bound) along with the fact that $\frac1{\sqrt {a+b}} \ge \min\left\{\frac{1}{\sqrt{2a}}, \frac{1}{\sqrt{2b}}\right\}$, and we see that 
$$\sigma_n(A-ZI)\ge \min\left\{  \frac{r^{3/2}}{2\sqrt m}, \frac{r^m r^{3/2}}{2}, \frac{r}{\sqrt{2m}}
\right\} \ge \frac{r^{3/2}}{2} > \frac{n^{-\gamma/10}}{2},$$
using the facts that $r = e^{-\gamma t_n/10}< 1$ and  $m \ge 1$ and $1 \le t_n \le \log n$.

 Applying Weyl's inequality (see Theorem \ref{thm:weyl}) 
and noting that $\|n^{-\gamma} M_1\| \le n^{-\gamma+2}$, we see that the smallest overall singular value $\smin$ of $A-ZI=A+M_2-ZI$ and $A+n^{-\gamma}M_1-ZI$ satisfies $\smin \ge n^{-\gamma/10}/2-n^{-\gamma+2}\ge 0.478n^{-\gamma/10}$ so long as $n \ge 11$ and $\gamma \ge 4$.

Now we can apply Theorem~\ref{thm:replacement}, noting that because $\smin \ge 0.478 n^{-\gamma/10}$ and $\varepsilon_{\ref{thm:replacement}}=2n^{-\gamma/2+1}$,
we have that $\smin > \varepsilon_{\ref{thm:replacement}}$ so long as $n \ge 11$ and $\gamma \ge 4$,
which implies that $\nu_{A-ZI}([0,\varepsilon_{\ref{thm:replacement}}])= \nu_{A+M_2-ZI}([0,\varepsilon_{\ref{thm:replacement}}])=0$.  Thus, Theorem~\ref{thm:replacement} implies that 

\begin{equation*}%\label{e:smallblocks}
 \left| \mathcal{L}_{A + M_1}(Z) - \mathcal{L}_{A + M_2}(Z) \right| \le \frac{2}{\varepsilon_{\ref{thm:replacement}}}\|n^{-\gamma}M_1 \|\le \frac{2n^{-\gamma+2}}{2n^{-\gamma/2+1}}
 =n^{-\gamma/2+1},
 \end{equation*}
with probability at least $1-\frac{4\pi}{K}e^{-3\gamma t_n/10}$ so long as $n\ge 11$ and $\gamma \ge 4$. 

We can now apply Theorem~\ref{thm:non} setting the parameters to be $T=e$ (since $\|A+M_1\|+\|A+M_2\|\le 1+1+n^{-\gamma+2}<e$ with probability 1 when $n \ge 11$ and $\gamma \ge 4$), $\eta = (1.01)n^{-\gamma/2+1}$ (from \eqref{e:smallblocks}), $\varepsilon_{\ref{thm:non}}= \frac{8\pi}{K} e^{-3\gamma t_n/10}$ (which is bigger than $1-\Pr(\eevent)$ for all $n$), and $m_{\ref{thm:non}}=\lceil \varepsilon_{\ref{thm:non}}^{-2/3} \rceil$ (which is a positive integer for all $n$).  Putting this all together, Theorem~\ref{thm:non} implies that 

$$\left| \int_{\mathbb C} \varphi \, d \mu_{A} - \int_{\mathbb C} \varphi \, d \mu_{A+n^{-\gamma} M_1} \right| \le C_{\ref{thm:non}} \left( 1.01 n^{-\gamma/2} + \pfrac{8 \pi}{K}^{1/6} 
e^{-\gamma t_n/20}\right)
$$
with probability at least $1-\frac{32\pi}{K}e^{-3\gamma t_n/10}-2\pfrac{8\pi}{K}^{1/3}e^{-\gamma t_n/10}$ so long as $n\ge 11$ and $\gamma \ge 4$, where $C_{\ref{thm:non}}$ is the absolute constant from Theorem~\ref{thm:non} that depends on $\varphi$.
}

\section{Proofs of Proposition~\ref{prop:leaving-the-disk} and supporting lemmas}\label{sec:deter-lemmas}

We will start with a proof of Proposition~\ref{prop:leaving-the-disk}(i), with proofs of supporting lemmas to follow.

\begin{proof}[Proof of Proposition~\ref{prop:leaving-the-disk}(i)]
We will follow the same approach as in the proof of Proposition~\ref{prop:leaving-the-disk}(ii) where we compare $\Jmat+\pone-zI$ and $\Jmat+\ptwo-zI$, but now under the constraints that $1/5 \le |z|< 1/4$ and $n \le \frac{\gamma \log \gamma - \log 3}{\log 5}$.  In this case, we are considering the situation where $\gamma$ is rather large and $n$ is rather small; thus, we can think of $n^{-\gamma}$ as being exponentially small, rather than polynomially small.

To start, we will show that the condition $n \le \frac{\gamma \log \gamma - \log 3}{\log 5}$ implies that $n^{-\gamma} < \frac{|z|^n}{3}$, an exponential upper bound since $\gamma$ is bounded below by a function of $n$.   To show that  $n^{-\gamma} < \frac13 |z|^n$ we 
consider two cases.  First, if $n \le \gamma$, then $\frac13 |z|^n \ge \frac13 |z|^\gamma > n^{-\gamma}$ so long as $n \ge 7$ (using the fact that $|z| \ge \frac15$).  Second, if 
$\gamma < n \le \frac{\gamma \log \gamma - \log 3}{\log 5}$, then $\frac13|z|^n\ge \frac13 |z|^{\frac{\gamma \log \gamma - \log 3}{\log 5}}\ge \frac13 |z|^{\frac{-\gamma \log \gamma + \log 3}{\log |z|}} = \gamma^{-\gamma}> n^{-\gamma}$.  Thus, given the conditions on $n$ and $|z|$, we have that $n^{-\gamma} < \frac13 |z|^n$.

To find a lower bound for the smallest singular value of $\Jmat+\ptwo-zI$, we will apply the second case of Lemma~\ref{lem:determinstic-singbd-plus-epsilon} with $\epsilon = n^{-\gamma}$ and $m_i=n$ and $c_i=0$, noting that $ n^{-\gamma}< \frac12|z|^n$ by the previous paragraph. Thus, we have that the smallest singular value of $\Jmat+\ptwo-zI$ is at least $\sigma_n(\Jmat+\ptwo-zI) \ge \frac{|z|^n \abs{1-|z|}^{3/2}}{\sqrt{2n |z|^{2n}+2}} > 0.45927|z|^n$, using the fact that $n \ge 7$ and $|z| < 1/4$.

To find a lower bound for the smallest singular value of $\Jmat+\pone-zI$, we may apply Lemma~\ref{lem:bdd-very-small-noise}, showing that $\sigma_n(\Jmat+\pone-zI)\ge 0.19|z|^n$.  Also, we may compute upper bounds for the smallest singular value for both $\Jmat+\pone-zI$ and $\Jmat+\ptwo-zI$ using Lemma~\ref{lem:upper-bd-small-sing} with $D=|z|^n/3$, thus showing that $\sigma_n(\Jmat+\pone-zI), \sigma_n(\Jmat+\ptwo-zI) \le 1.0011\left(|z|^n + \pfrac{|z|^n}{3}\frac{1.001}{(1-|z|)^2} \right)\le
1.6|z|^n$.
%1.5949376889|z|^n$.
 
Weyl's perturbation theorem (Theorem~\ref{thm:weyl}) along with Lemma~\ref{lem:det-sing-bd} shows that $\sigma_{n-1} (\Jmat+\pone-zI), \sigma_{n-1}(\Jmat+\ptwo-zI) \ge 3/4-n|z|^n/3 > 0.74 $.
We may now apply Proposition~\ref{prop:small-cancel} with $\epsilon =0.74$, using the facts that $0.19|z|^n \le \sigma_n(\Jmat+\pone-zI), \sigma_n(\Jmat+\ptwo -zI) \le 1.6 |z|^n$, to get that
$$\abs{ \mathcal L_{\Jmat+\pone}(z)- \mathcal L_{\Jmat+\ptwo}(z) } \le \frac{\log\pfrac{1.9}{0.19}}{n}+\frac1{0.74}2n\pfrac{|z|^n}{3}
\details{ \le \frac1n\left( \log\pfrac{1.9}{0.19}+\frac{7(0.25)^{7}}{1.11}\right) }
< \frac{2.31}{n},
$$
using $|z|\le 1/4$ and $n \ge 7$.

We can now use the result above to apply Theorem~\ref{thm:non}.  Note that $\|\Jmat+\pone-zI\|+\|\Jmat+\ptwo-zI\| \le 2(|z|+1) +2n |z|^n/3 \le e$ whenever $|z|\le 1/4$ and $n\ge 7$.  Thus, we may take $T=e$ to satisfy the first assumption of Theorem~\ref{thm:non} with any positive $\eps$. Also, by the application of Proposition~\ref{prop:small-cancel}, in the previous paragraph, we may satisfy the second condition of Theorem~\ref{thm:non} by taking $\eta =\frac{2.31}n$, again using any positive value for $\eps$.  We also note that from the assumptions on $\varphi$, we have that  
$\Csixone \le 2\frac{\sqrt{\pi}}{4}  \|\Delta \varphi\|_\infty \pfrac32 \log_2(2.75)/\sqrt \pi <  (1.095)\|\Delta\varphi\|_\infty$.

Because the bound in Theorem~\ref{thm:non} works for any integer $m$, and because we are able to satisfy the two assumptions with any positive epsilon, we may follow Remark~\ref{rem:mlim} and set $\eps = 1/m^{3/2}$ and take the limit as $m\to\infty$, in which case $\frac{\log T}{m\sqrt\eps}\to 0$ and $2(m+1)\eps \to 0$, thus proving that
$$\frac1n\abs{ \sum_{i=1}^n \varphi(\lambda_i(\Jmat+\pone)) -\varphi(\lambda_i(\Jmat+\ptwo))} \le \Csixone \eta  < 
1.095\|\Delta \varphi\|\frac{2.31}{n} <  3 \frac{\|\Delta \varphi\|_\infty}{n}. $$ 
\end{proof}

\subsection{Supporting lemmas for singular value bounds}

In this subsection we state and prove lemmas used to prove the singular value bounds in the next subsection, where we prove Lemmas~\ref{lem:smallsing}, \ref{lem:bdd-very-small-noise}, and \ref{lem:upper-bd-small-sing}.

%Geom Progression Structure theorem
\begin{lemma}\label{lem:bigcoord}
Let $n$ be a positive integer, let $B$ be a positive real quantity, let $\pone=(r_{ij})$ be an $n$ by $n$ matrix, and let $\Jmat$ be an $n$ by $n$ matrix as defined in \eqref{eq:defT}.  Let $x$ be a unit vector corresponding to the smallest singular value for $\Jmat+\pone-zI$, and assume that $\abs{\sum_{j=1}^n r_{ij}x_j} \le C$ for all $1\le i \le n-1$.  Then, either the smallest singular value of $\Jmat+\pone-zI$ is at least  $B -C$, or $x$ has a the structure of an approximate geometric progression in that
 \begin{equation}\label{e:xigeoB}
-B(1+|z|+\dots+|z|^{i-2})+ \abs{x_1}|z|^{i-1} \le \abs{x_i} \le 
B(1+|z|+\dots+|z|^{i-2})+ \abs{x_1}|z|^{i-1}
\end{equation}
for all $2\le i \le n$.
\end{lemma}

\begin{proof}%[Proof of Lemma~\ref{lem:bigcoord}]
Assume that there exists $1\le i \le n-1$ such a that $\abs{-zx_i + x_{i+1}} \ge B$.  Then the $i$-th coordinate of $(\Jmat+\pone-zI)$ has absolute value at least $\abs{-zx_i + x_{i+1}} -\abs{ \sum_{j=1}^n x_j r_{ij}}\ge B-C$.  This implies that the norm of $(\Jmat+\pone-zI)x$ is at least $B-C$, proving that $\Jmat+\pone-zI$ has least singular value at least $B-C$.

On the other hand, if we have $\abs{-zx_i + x_{i+1}} < B$ for all $1 \le i \le n-1$, then by the triangle inequality we have  
\begin{equation}\label{i:coordineq}
-B + |z||x_i| < |x_{i+1}| < B + |z| |x_i|,
\end{equation} for each $1 \le i \le n-1$.  We can now prove \eqref{e:xigeoB} by induction on $i$\details{.  For the base case, note that \eqref{i:coordineq} with $i=1$ shows that $-B + |z||x_1| < |x_{2}| < B + |z| |x_1|$, which is exactly \eqref{e:xigeoB} when $i=2$.  For the induction step, assume that we have shown 
$$-B(1+|z|+\dots+|z|^{i-2})+ \abs{x_1}|z|^{i-1} \le \abs{x_i} \le 
B(1+|z|+\dots+|z|^{i-2})+ \abs{x_1}|z|^{i-1}$$ 
for some $2 \le i \le n-1$.  Then, using \eqref{i:coordineq}, we have that $-B + |z||x_i| < |x_{i+1}| < B + |z| |x_i|$, which, when applying the induction hypothesis to each side, becomes
$$-B + |z|(-B(1+|z|+\dots+|z|^{i-2})+ \abs{x_1}|z|^{i-1}) \le \abs{x_{i+1}} \le+B+ 
|z|(B(1+|z|+\dots+|z|^{i-2})+ \abs{x_1}|z|^{i-1}).$$ 
This simplifies to the desired conclusion}, completing the proof.
\end{proof}

\begin{lemma}\label{lem:geom-totalnoise}
Let $B>0$ be a real number and assume that $\pone= (r_{ij})$ is an $n$ by $n$ matrix with entries uniformly bounded in absolute value by $D$.  Given a unit vector $x$ with the approximate geometric progression structure
$$-B(1+|z|+\dots+|z|^{i-2})+ \abs{x_1}|z|^{i-1} \le \abs{x_i} \le 
B(1+|z|+\dots+|z|^{i-2})+ \abs{x_1}|z|^{i-1},$$
then $\abs{ \sum_{j=1}^n r_{ij}x_j} \le \frac{D}{1-|z|}\left(1+ Bn \right)$ 
\end{lemma}

\begin{proof}
We compute, using the triangle inequality and \eqref{e:xigeoB}
\begin{align*}
\abs{\sum_{j=1}^n x_j r_{ij}} &\le D \sum_{j=1}^n\left(  |x_1| |z|^{j-1} + \frac{B}{1-|z|}\right)
\details{\\ 
&}\le \frac{D}{1-|z|}\left(|x_1|  + Bn \right) 
\le \frac{D}{1-|z|}\left(1  + Bn \right) 
\end{align*}
\end{proof}

Next, we show that $|x_1|$ is bounded below by something close to $\sqrt{1-|z|^2}$.  

\begin{lemma}\label{lem:controlx_1}
Given a unit vector $x$ that satisfies \eqref{i:coordineq} for all $1\le i \le n-1$ for some positive quantity $B$ (and thus
also \eqref{e:xigeoB}) where $nB < 1/10$ 
%Could go as high as 1/7 here, it appears.
and for $\abs z < 1/4$, we have that $|x_1| \ge \sqrt{1-|z|^2} - B$.
\end{lemma}

\begin{proof}
Since $(x_1,\dots,x_n)$ is a unit vector, we have that
\begin{align*}
1&=\sum_{i=1}^n |x_i|^2 = |x_1|^2 + \sum_{i=1}^{n-1} |x_{i+1}|^2<|x_1|^2 + \sum_{i=1}^{n-1}\left(B+|z||x_i| \right)^2\\
&=|x_1|^2 + \sum_{i=1}^{n-1}\left(B^2+2B|z||x_i|    +|z|^2|x_i|^2 \right)\\
\details{&\le}\details{|x_1|^2 + nB^2+2B|z|\left(\sum_{i=1}^{n-1}|x_i|  \right)     +|z|^2(1-|x_n|^2)\\}
&\le|x_1|^2 + nB^2+2B|z|\left(\sum_{i=1}^{n-1}\abs{x_1}|z|^{i-1}+\frac{B}{1-|z|}  \right)  +|z|^2\\
\details{&\le}\details{|x_1|^2 + nB^2+\frac{2nB^2}{1-|z|}+ 2B|z|\left(\sum_{i=1}^{n-1}\abs{x_1}|z|^{i-1}\right)     +|z|^2\\}
&\le|x_1|^2 + nB^2+\frac{2nB^2}{1-|z|}+ \frac{2B\abs{x_1}|z|}{1-|z|}  +|z|^2\\
\details{&=}\details{|x_1|^2 +\frac{nB^2( 3-|z|)}{1-|z|}+ \frac{2B \abs{x_1}|z|}{1-|z|}     +|z|^2\\}
&\le \left(|x_1| + \frac{B|z|}{1-|z|}\right)^2 +\frac{nB^2(3-|z|)}{1-|z|}    +|z|^2.
\end{align*}
We can rearrange the last inequality to get
$$
\left(1-|z^2| - \frac{nB^2(3-|z|)}{1-|z|}  \right)^{1/2} -\frac{B|z|}{1-|z|} \le |x_1|.
$$

Using Taylor's Theorem with remainder for a function of $y$, we note that $(1-|z|^2 -y)^{1/2}>\sqrt{1-|z|^2}-y\pfrac{1}{\sqrt{1-|z|^2}}$ for $y$ satisfying $0<y< \frac{3}4(1-|z|^2)$.
%This is really a linear approximation.  Or you can use the Cauchy form of the remainder when $k=0$.
Since $\frac{nB^2(3-|z|)}{1-|z|}< \frac{3}4(1-|z|^2)$ whenever $nB < 1/10$ (using the assumption that $\abs z < 1/4$), we thus have that
\begin{align*}
|x_1|&>\sqrt{1-|z|^2}-\frac{nB^2(3-|z|)}{(1-|z|)\sqrt{1-|z|^2}}  -\frac{B|z|}{1-|z|}\\
\details{&=}\details{ \sqrt{1-|z|^2}-\pfrac{B}{1-|z|}\left(  \frac{nB(3-|z|)}{\sqrt{1-|z|^2}}+|z|  \right)\\}
&\ge \sqrt{1-|z|^2}-\frac{4B}{3}\left( \frac{3nB}{\sqrt{15}/4}+\frac14  \right) &\mbox{ (since } |z|\le 1/4 \mbox{)} \\
&\ge \sqrt{1-|z|^2} - B &\mbox{ (since } nB <1/10 \mbox{)}.
\end{align*}
\end{proof}

\subsection{Proofs of lemmas bounding singular values for Proposition~\ref{prop:leaving-the-disk}}
\label{subsec:PropProof}

Lemmas~\ref{lem:smallsing}, \ref{lem:bdd-very-small-noise}, and \ref{lem:upper-bd-small-sing} are used to prove bounds on the relevant small singular values used in the proof of Proposition~\ref{prop:leaving-the-disk}.

\begin{proof}[Proof of Lemma~\ref{lem:smallsing}]
Let $(r_{ij})=\pone$ and let $x$ be a unit vector.  
 We will use Lemma~\ref{lem:bigcoord} twice is succession, each time showing that either the smallest singular value has a sufficient lower bound, or the unit vector $x$ must have a the structure of an approximate geometric progression.
Note that $\abs{\sum_{j=1}^n x_j r_{ij}} \le \left(\sum_{j=1}^nr_{ij}^2\right)^{1/2}=n^{-\gamma+1/2}$ by the Cauchy-Schwarz inequality, and so we may apply Lemma~\ref{lem:bigcoord} with $B=2n^{-\gamma+1/2}$ and $C=n^{-\gamma+1/2}$ to get that either the least singular value of $\Jmat+\pone -zI$ is at least $n^{-\gamma+1/2}$ (in which case, we are finished, since $n^{1/2}> 0.15$\details{~when $n \ge 2\gamma\log \gamma$ and $\gamma \ge 5$}), or the unit vector $x$ has the structure of an approximate geometric progression, satisfying \eqref{e:xigeoB} with $B=2n^{-\gamma + 1/2}$.  Thus, we will assume that $x$ satisfies \eqref{e:xigeoB} with $B=2n^{-\gamma+1/2}$ for the remainder of the proof.

By applying Lemma~\ref{lem:geom-totalnoise} with $B=2n^{-\gamma+1/2}$ and $D=n^{-\gamma}$, we have the improved bound of 
\begin{equation}\label{i:improved-small-noise-bd}
\abs{\sum_{j=1}^n r_{ij} x_j  }  \le \frac{n^{-\gamma}}{1-|z|} \left(1 + 2n^{-\gamma+3/2}\right) \le 
1.334 n^{-\gamma},
\end{equation}
using the assumptions that $n \ge 2\gamma \log \gamma$ and $\gamma\ge 5$.
We can now apply Lemma~\ref{lem:bigcoord} with $C=1.334n^{-\gamma}$ and $B=1.4845n^{-\gamma}$, showing that either the smallest singular value of $\Jmat+\pone-zI$ is at least $0.1505n^{-\gamma}$ (in which case we are finished), or the unit vector $x$ has the structure of an approximate geometric progression, satisfying \eqref{e:xigeoB} with $B=1.4845n^{-\gamma}$, saving a factor of $n^{1/2}$ compared to the bound in the previous paragraph. Thus, we will assume that $x$ satisfies \eqref{e:xigeoB} with $B=1.4845n^{-\gamma}$ for the remainder of the proof.

By applying Lemma~\ref{lem:geom-totalnoise} with $B=1.4845n^{-\gamma}$ and $D=n^{-\gamma}$, we have that $\abs{\sum_{j=1}^n r_{ij} x_j} \le \frac{n^{-\gamma}}{1-|z|}\left(1+2n^{-\gamma+1}\right)$.  We can now establish a useful lower bound on $|x_1|$, applying Lemma~\ref{lem:controlx_1} with $B=2n^{-\gamma}$ (note that $nB \le 2(17)^{-4} < 1/10$) to show that $|x_1| \ge \sqrt{1-|z|^2}-2n^{-\gamma}$.

In our computation below, we need to show that the contribution of a term with the form $|z|^nn^\gamma$ is very small.  This is the place in the proof where we need the assumption that $n \ge 2\gamma\log \gamma$.  In particular, we note that $|z|^n n^\gamma\le n^\gamma4^{-n}$ (since $|z|\le 1/4$) is a deceasing function of $n$, thus it is at most $(2\gamma\log\gamma)^\gamma 4^{-2\gamma\log \gamma} = (2\gamma^{1-2\log 4}\log \gamma)^\gamma$.  Because $1-2\log 4< -1.77258$, we see that this last function is decreasing in $\gamma$ when $\gamma\ge 5$, and thus we have that $|z|^nn^\gamma \le (2(5^{1-2\log 4})\log 5)^5 < 0.00023$.

We now have the ingredients for a final bound on the smallest singular value.  Note that the last coordinate of $(\Jmat+\pone-zI)x$ has absolute value at least $\abs{\sum_{j=1}^nr_{n,j}x_j} -|z x_n|$.  We may bound this quantity from below by applying \eqref{i:improved-small-noise-bd} and \eqref{e:xigeoB} (with $B= 1.4845n^{-\gamma}$) as follows:
\begin{align*}
 \abs{\sum_{j=1}^n x_jr_{nj}}  - |z||x_n| &\ge n^{-\gamma}|x_1| - n^{-\gamma} \sum_{j=2}^n |x_j|  -|z| |x_n| \\
 &\ge  n^{-\gamma}|x_1| - n^{-\gamma} \sum_{j=2}^n \left(\frac{1.4845n^{-\gamma}}{1-|z|} +|x_1||z|^{j-1}   \right) -|z| \left(\frac{1.4845n^{-\gamma}}{1-|z|} +|z|^{n-1}|x_1|\right) \\
 &\ge  n^{-\gamma}|x_1| - \frac{1.4845n^{-2\gamma+1}}{1-|z|} -\frac{n^{-\gamma}|x_1||z|}{1-|z|}  -\frac{1.4845 n^{-\gamma}|z| }{1-|z|} -|z|^{n}|x_1| \\
\details{ &\ge}\details{ n^{-\gamma}\left(\frac{|x_1|(1-2|z|)}{1-|z|} -\frac{1.4845 |z| }{1-|z|}- \frac{1.4845n^{-\gamma+1}}{1-|z|}   -|z|^{n}|x_1|n^{\gamma}  \right) \\}
\details{&\ge}\details{ n^{-\gamma}\left(\frac{2|x_1|}{3} -\frac{1.4845}{3}- \frac{1.4845n^{-\gamma+1}}{1-|z|}   -|z|^{n}|x_1|n^{\gamma}\right) \\}
 &> 0.15 n^{-\gamma} ,
\end{align*}
where the last inequality uses the facts that   $|z|\le 1/4$ and $\gamma\ge 5$ and $n \ge 2\gamma\log \gamma$ along with  $|x_1|\ge \sqrt{1-|z|^2}-2n^{-\gamma} > 0.968$ and $|z|^n n^{\gamma} < 0.00023$ from the previous paragraphs.

Thus $\|(\Jmat + \pone-zI)x\|> 0.15n^{-\gamma}$.  Because $x$ was an arbitrary unit vector, we have proven that the smallest singular value of $\Jmat +\pone-zI$ is at least $0.15n^{-\gamma}$. 
\end{proof}

The proof of Lemma~\ref{lem:bdd-very-small-noise} follows a similar approach to Lemma~\ref{lem:smallsing} above.

\begin{proof}[Proof of Lemma~\ref{lem:bdd-very-small-noise}]
Let $x$ be a unit vector corresponding to the smallest singular value for $\Jmat+\pone-zI$. Note that for a unit vector $x$, we can apply the Cauchy-Schwarz inequality to get that $\abs{\sum_{j=1}^n r_{ij}x_j } \le \sqrt{n}|z|^n/3$.  We can now apply Lemma~\ref{lem:bigcoord} with $B=\sqrt n|z|^n/2$ and $C=\sqrt n|z|^n/3$ to get that either the least singular value of $\Jmat+\pone-zI$ is at least $\sqrt n|z|^n/6$ (in which case we are finished, since $\sqrt{n}/6 > 0.19$ for $n\ge 7$), or the unit vector $x$ has the structure of an approximate geometric progression, satisfying \eqref{e:xigeoB} with $B=\sqrt n|z|^n/2$.  Thus, we will assume that $x$ satisfies \eqref{e:xigeoB} with $B=\sqrt n |z|^n/2$ for the remainder of the proof.

By applying Lemma~\ref{lem:geom-totalnoise} with $B=\sqrt n|z|^n/2$ and $D= |z|^n/3$, we have the improved bound of 
\begin{align}\label{i:improved-vsmall-noise-bd}
\abs{\sum_{j=1}^n r_{ij}x_j } &\le \frac{|z|^n}{3(1-|z|)}\left( 1+ \frac{n^{3/2} |z|^n}{2}\right) \le \frac{|z|^n}{2},
\end{align}
using the assumptions that $n \ge 7$ and $|z|\le 1/4$.  We can now apply Lemma~\ref{lem:bigcoord} with $C= |z|^n/2$ and $B= |z|^n$, showing that either the smallest singular value of $\Jmat+\pone-zI$ is at least $|z|^n/2$ (in which case we are again finished), or  the unit vector $x$ has the structure of an approximate geometric progression, satisfying \eqref{e:xigeoB} with $B=|z|^n$.  Thus, we will assume that $x$ satisfies \eqref{e:xigeoB} with $B=|z|^n$ for the remainder of the proof.

By applying Lemma~\ref{lem:geom-totalnoise} with $B= |z|^n$ and $D=|z|^n/3$, we have that $\abs{\sum_{j=1}^n r_{ij}x_j} \le \frac{|z|^n}{3(1-|z|)}\left(1+ n|z|^n\right)$.  We can now establish a useful lower bound on $|x_1|$, applying Lemma~\ref{lem:controlx_1} with $B=|z|^n$ (note $nB \le 5(1/4)^n < 1/10$) to show that $|x_1| \ge \sqrt{1-|z|^2} - |z|^n$.

We now have all the ingredients for a final bound on the smallest singular value. Note that the last coordinate of $(\Jmat+\pone-zI)x$ has absolute value at least $|zx_n| - \abs{\sum_{j=1}^n r_{nj}x_j }$.  We may bound this quantity below by applying \eqref{i:improved-vsmall-noise-bd} and \eqref{e:xigeoB}  (with $B=|z|^n$) as follows:
\begin{align*}
|z||x_n| - \abs{\sum_{j=1}^n r_{nj}x_j } &\ge
|z|\left( |x_1| |z|^{n-1} - \frac{|z|^n}{1-|z|}\right) - \frac{|z|^n}{3(1-|z|)} \left( 1 + n|z|^n\right) \\
\details{&\ge}\details{ \sqrt{1-|z|^2}|z|^n - |z|^{2n} -\frac{|z|^{n+1}}{1-|z|}
-\frac{|z|^{n}}{3(1-|z|)}-\frac{n|z|^{2n}}{3(1-|z|)} \\
&\ge }\details{|z|^n \left(\sqrt{1-|z|^2}  -\frac{|z|}{1-|z|}
-\frac{1}{3(1-|z|)}- |z|^{n}-\frac{n|z|^{n}}{3(1-|z|)}\right) \\
&\ge}\details{ |z|^n\left(
\frac{\sqrt{15}}{4} - \frac 43 \pfrac 7{12}- |z|^n\left(1+\frac{n}{3(1-|z|)}\right)\right)\\
&\ge}\details{ |z|^n \left(\frac{\sqrt{15}}{4} - \frac 7{9} - \pfrac{1}{4}^n\left(1 + \frac{35}{9}\right)\right)\\}
&\ge 0.19|z|^n,
\end{align*}
using the facts that $n \ge 7$ and $|z|\le 1/4$.

Thus, we have shown that the smallest singular value for $\Jmat+\pone-zI$ is at least $0.19 |z|^n$.
\end{proof}

\begin{proof}[Proof of Lemma~\ref{lem:upper-bd-small-sing}]
We will construct a constant-length vector $w$ to demonstrate the upper bound on the smallest singular value of the form $\|(\Jmat+\pone-zI)w\|/\|w\|$.  Let $v=(1, z, z^2, \dots, z^{n-1})$, and note that $(\Jmat-zI)v = (0, \dots, 0, z^n)$.  The vector $v$ itself does not give a sufficiently small upper bound on the smallest singular value, since $(\Jmat+\pone-zI) = (0,\dots, 0, z^{n-1}) + \pone v$, and $\|\pone v\|$ may be as large as $\Theta( D\sqrt n)$.  However, we will demonstrate that we can perturb $v$ slightly so that nearly all of the contribution of $\pone v$ is canceled out.  In particular, let $x$ be the vector
$$x= \sum_{k=2}^n e_k  \sum_{i=1}^{k-1} (\pone_i \cdot v) z^{k-1-i} ,$$
where $e_k$ is the standard basis vector and $\pone_i$ is the $i$-th row of the matrix $\pone$, and then define $w= v-x$.  We may now compute
\begin{align*}
(\Jmat-zI + \pone) w &= (0,\dots, 0, z^n) + \pone v - \sum_{k=2}^n e_{k-1} \sum_{i=1}^{k-1} (\pone_i\cdot v)z^{k-1-i}
+ \sum_{k=2}^n ze_k \sum_{i=1}^{k-1} (\pone_i\cdot v) z^{k-1-i} - \pone x\\
\details{&=}\details{ (0,\dots, 0, z^n) + \sum_{k=1}^n (\pone_k\cdot v)e_k - \sum_{k=1}^{n-1} e_k (\pone_k\cdot v)
-\sum_{k=3}^{n} e_{k-1} \sum_{i=1}^{k-2} (\pone_i\cdot v)z^{k-1-i}\\
&\qquad}\details{+ \sum_{k=2}^n e_k \sum_{i=1}^{k-1} (\pone_i\cdot v) z^{k-i} - \pone x\\}
&= (0,\dots, 0, z^n) + (\pone_n\cdot v)e_n -\sum_{k=2}^{n-1} e_{k} \sum_{i=1}^{k-1} (\pone_i\cdot v)z^{k-i}
+ \sum_{k=2}^n e_k \sum_{i=1}^{k-1} (\pone_i\cdot v) z^{k-i} - \pone x\\
\details{&=}\details{ (0,\dots, 0, z^n) + (\pone_n\cdot v)e_n + e_n \sum_{i=1}^{n-1} (\pone_i\cdot v) z^{n-i} - \pone x\\}
&= (0,\dots, 0, z^n) + e_n \sum_{i=1}^{n} (\pone_i\cdot v) z^{n-i} - \pone x.
\end{align*}
We will now use the above along with the triangle inequality to produce an upper bound on $\|(\Jmat-zI +\pone)w\|$ by bounding each of the three terms above.  Note that $\|(0,\dots, 0,z^n)\| \le |z|^n$ and that $\left\|e_n \sum_{i=1}^n (\pone_i\cdot v) z^{n-i}\right\| \le D/(1-|z|)^2$.  It remains to bound $\|\pone x\|$.  Note that each coordinate of $x$ has absolute value $\abs{ \sum_{i=1}^{k-1} (\pone_i\cdot v) z^{k-1-i}} \le D/(1-|z|)^2$.  Thus $\|\pone x\| \le \frac{D^2}{(1-|z|)^2} n^{3/2}$.  Putting the three bounds together shows that $\|(\Jmat+\pone-zI)w\| \le |z|^n+ \pfrac{1.001D}{(1-|z|)^2}$.  Finally, since $\|w\|\ge \|v\|-\|x\| \ge 1-\frac{D\sqrt n}{(1-|z|)^2}\ge 1-\frac{0.001}{n(1-|z|)^2} \ge 0.999$, we have shown that
$$\frac{\|(\Jmat+\pone-zI)w\|}{\|w\|} \le \frac{1}{0.999}\left(|z|^n + \frac{1.001D}{(1-|z|)^2}\right) =1.0011\left(|z|^n + \frac{1.001D}{(1-|z|)^2}\right),$$
which is the desired upper bound on the smallest singular value.
\end{proof}

\section{Singular value computations for a Jordan block matrix} \label{sec:jordan}

Let $M_n$ be an $n$ by $n$ matrix composed of Jordan blocks $B_i$ for $i=1,\dots, \ell_n$.  Each block $B_i$ is an $m_i$ by $m_i$ matrix with eigenvalue $c_i$ on the diagonal, ones on the first superdiagonal, and all other entries equal to zero; thus,

$$B_i= \begin{pmatrix}
       c_i &  1 & 0 & \dots  & 0 \\
        0 & c_i & 1 &   & \vdots \\
        \vdots & \ddots & \ddots& \ddots &  0 \\
         &  &  0 & c_i & 1\\
        0 & \dots & & 0 &c_i
       \end{pmatrix}
$$
Let $\ptwo_i$ be an $m_i$ by $m_i$ matrix with the $(m_i,1)$ entry equal to $\epsilon_i$ (which may be a function of $m_i$, $c_i$, and $z$) and all other entries equal to zero.

\begin{lemma}\label{lem:det-sing-bd}
Let $i$ and $m_i$ be positive integers, let $z$ and $c_i$ be complex constants, and let $B_i$ be an $m_i$ by $m_i$ Jordan block with eigenvalue $c_i$.  The matrix $A=B_i-zI$ has $m_i-1$ singular values bounded below by $||c_i-z|-1|$ and above by $|c_i-z|+1$. 
Furthermore, if $M$ is an $m_i$ by $m_i$ matrix with the first $m_i-1$ rows equal to the first $m_i-1$ rows of $A$ (including the case $M=A$), then every singular value except the smallest satisfies $\sigma_k(M) \ge \abs{|c_i-z|-1}$ (for $1\le k\le m_i-1$).  
If $\abs{c_i-z}<1$, then $A$ has smallest singular value of size at most $|c_i-z|^n$, and if $\abs{c_i-z}>1$, all singular values of $A$ have  size at least $\min\{  ||c_i-z|-1|, \sqrt{|c_i-z|^2-|c_i-z|}\}$.
\end{lemma}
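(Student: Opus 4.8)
The plan is to reduce everything to a handful of elementary singular-value identities. Write $w := c_i - z$, $m := m_i$, and let $N$ denote the $m\times m$ nilpotent shift (ones on the first superdiagonal, zeros elsewhere), so that $A = B_i - zI = wI + N$. I would first record: (a) $N^\ast N = \diag(0,1,\dots,1)$, hence $\sigma_1(N) = \cdots = \sigma_{m-1}(N) = 1$, $\sigma_m(N) = 0$, and $\norm{N} = 1$; (b) $wI$ has all singular values equal to $|w|$; and (c) writing $\widehat{A}$ for the $(m-1)\times m$ matrix made of the first $m-1$ rows of $A$, a direct computation gives $\widehat{A}\,\widehat{A}^\ast = (|w|^2+1)I_{m-1} + (\bar w S + w S^\ast)$, where $S$ is the $(m-1)\times(m-1)$ shift; in particular $\widehat{A}\,\widehat{A}^\ast$ is Hermitian tridiagonal with every diagonal entry equal to $|w|^2+1$ and off-diagonal part of norm at most $2|w|$, so by Weyl's perturbation theorem (cf.\ \eqref{eq:weyleig}) all of its eigenvalues lie in $[(|w|-1)^2,(|w|+1)^2]$.

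For the two-sided bound on the top $m-1$ singular values, the upper bound is just the triangle inequality: $\sigma_k(A) \le \norm{A} \le \norm{wI} + \norm{N} = |w|+1$ for every $k$. For the lower bound I would apply Weyl's perturbation theorem for singular values (Theorem~\ref{thm:weyl}) to the two splittings $A = wI+N$ and $A = N+wI$: for $1\le k\le m-1$ this gives $\sigma_k(A) \ge \sigma_k(wI) - \norm{N} = |w|-1$ and $\sigma_k(A)\ge \sigma_k(N) - \norm{wI} = 1-|w|$, so $\sigma_k(A)\ge \abs{|w|-1}$. For the statement about a matrix $M$ whose first $m-1$ rows agree with those of $A$: since $\widehat{M}\widehat{M}^\ast = \widehat{A}\,\widehat{A}^\ast$ is a principal submatrix of $MM^\ast$ (delete the last row and column), Cauchy interlacing gives $\sigma_k(M)^2 = \lambda_k(MM^\ast) \ge \lambda_k(\widehat{A}\,\widehat{A}^\ast) = \sigma_k(\widehat{A})^2$ for $1\le k\le m-1$, and by (c) the right-hand side is at least $(|w|-1)^2$; hence $\sigma_k(M)\ge \abs{|w|-1}$.

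For $\sigma_{\min}(A)$ I would split into the two regimes. If $|w|<1$, test against the unnormalized vector $v := (1,-w,w^2,\dots,(-w)^{m-1})^{\mathrm{T}}$, which is the exact kernel vector of the unperturbed operator; a one-line telescoping computation gives $Av = (-1)^{m-1}w^{m}e_m$, so $\norm{Av} = |w|^{m}$ while $\norm{v}^2 = \sum_{j=0}^{m-1}|w|^{2j}\ge 1$, whence $\sigma_{\min}(A) \le \norm{Av}/\norm{v} \le |w|^{m}$. (The exponent here is the block size $m_i$, which equals $n$ in the single-block application to Proposition~\ref{p:smalln}.) If $|w|>1$, then $A = w(I + w^{-1}N)$ is invertible and the finite Neumann expansion $A^{-1} = \sum_{k=0}^{m-1}(-1)^k w^{-(k+1)}N^k$ together with $\norm{N^k}\le 1$ gives $\norm{A^{-1}} \le \sum_{k\ge 0}|w|^{-(k+1)} = (|w|-1)^{-1}$, so $\sigma_{\min}(A) = \norm{A^{-1}}^{-1} \ge |w|-1$. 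Since $(|w|-1)^2 \le |w|^2 - |w|$ whenever $|w|\ge 1$, one has $|w|-1 = \min\{\,|w|-1,\ \sqrt{|w|^2-|w|}\,\}$, which is exactly the stated bound.

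I do not anticipate a genuine obstacle here: the lemma is elementary linear algebra. The points that need care are the bookkeeping — which direction of Weyl's inequality and of Cauchy interlacing is being used — and verifying fact (c) correctly, in particular that every diagonal entry of $\widehat{A}\,\widehat{A}^\ast$ equals $|w|^2+1$ (so it is genuinely Toeplitz tridiagonal and its spectrum is controllable), in contrast to $A^\ast A$, whose $(1,1)$ entry is only $|w|^2$. When writing this up I would also flag explicitly that the exponent in the $|w|<1$ bound is $m_i$, and that the ``$\min$'' in the $|w|>1$ bound collapses to $|w|-1$ on the range $|w|>1$, which is precisely what the Neumann estimate delivers.
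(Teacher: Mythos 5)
Your proof is correct, and in several places it takes a cleaner route than the paper's. Where the paper computes the eigenvalues of $\widehat{A}\widehat{A}^\ast$ exactly via the cosine formula for tridiagonal Toeplitz matrices and then invokes Cauchy interlacing, you get the same interval $[(|w|-1)^2,(|w|+1)^2]$ by treating the off-diagonal part $\bar w S+wS^\ast$ as a Hermitian perturbation of $(|w|^2+1)I$ of norm at most $2|w|$ — slightly less information (the paper's explicit formula also gives a sharper handle on $\sigma_1$, which it mentions but does not need), but enough for the lemma, and it avoids citing the eigenvalue formula. Your interlacing step for general $M$ (via $\widehat M\widehat M^\ast=\widehat A\widehat A^\ast$ being a principal submatrix of $MM^\ast$) is the same mechanism the paper uses, just phrased for Hermitian eigenvalues rather than singular values. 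Your extra observation that the two splittings $A=wI+N$ and $A=N+wI$ combined with Theorem~\ref{thm:weyl} give $\sigma_k(A)\ge\abs{|w|-1}$ directly for $k\le m_i-1$ is a nice shortcut the paper does not take, as is the triangle-inequality bound $\|A\|\le|w|+1$ in place of the paper's hands-on Cauchy--Schwarz computation. For $|w|<1$ you use essentially the same geometric test vector as the paper. The one genuinely different ingredient is the $|w|>1$ case: the paper applies Gershgorin to $AA^\ast$ to obtain the stated $\min\{\abs{|w|-1},\sqrt{|w|^2-|w|}\}$, whereas your finite Neumann series gives $\sigma_{\min}(A)\ge|w|-1$ outright, which (as you note) implies the stated bound since the minimum collapses to $|w|-1$ on that range; your version is at least as strong. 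You are also right that the exponent ``$n$'' in the statement's $|c_i-z|^n$ should be $m_i$ — the paper's own proof derives $\sigma_{m_i}(A)\le|c_i-z|^{m_i}$.
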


\begin{proof}
First, we show that the largest $n-1$ singular values of $B_i-zI$ have size $\Theta(\abs{c_i-z})$.  If $A$ is an $m$ by $m$ matrix and $A'$ is an $m-1$ by $m$ matrix consisting of the first $m-1$ rows of $A$, then Cauchy's interlacing law (see \cite[Lemma~A.1]{tao_random_2010-1}) states that for any $1 \le k \le m-1$, we have $\sigma_k(A) \ge \sigma_k(A') \ge \sigma_{k+1}(A)$, where $\sigma_1(A) \ge \sigma_2(A) \ge \dots \ge \sigma_m(A)$ are the singular values for $A$, and similarly for $A'$.  If $M$ shares the first $m-1$ rows with $A$, then we will similarly have $\sigma_k(M) \ge \sigma_k(A') \ge \sigma_{k+1}(M)$ for any $1 \le k \le m-1$.
If we set $A=B_i-zI$,  the singular values of $A'$ are the square-roots of the eigenvalues of $A' (A')^*$, which is a tridiagonal Topelitz matrix with $\abs {c_i-z}^2+1$ on the diagonal, $\overline{c_i}-\overline{z}$ on the superdiagonal, and $c_i-z$ on the subdiagonal.  The eigenvalues of a tridiagonal Toeplitz matrix can be computed explicitly (see, for example \cite{noschese_tridiagonal_2013}), and for the matrix $A'(A')^*$ are given by $\lambda_k=\lambda_k(A'(A')^*)= \abs{c_i-z}^2+1+2\abs{c_i-z} \cos\pfrac{k \pi}{m_i}$ for $k=1,2,\dots,m_i-1$.  Using the fact that $ -1\le \cos(\theta) \le 1$ for all real $\theta$, we have that $(\abs{c_i-z}-1)^2 \le \lambda_k \le (\abs{c_i-z}+1)^2$.  Thus, by Cauchy's interlacing law, we see for $2 \le k \le m_i-1$ that 
$\abs{\abs{c_i-z}-1}\le \sigma_k(A), \sigma_k(M) \le \abs{\abs{c_i-z}+1}$, showing that for $2\le k \le m_i-1$, the singular values are bounded above and below by constants depending only on $c_i$ and $z$.  For $k=1$, the explicit formula for the eigenvalues for $A'(A')^*$ combined with Taylor's theorem shows that $\sigma_1^2(A)$ is bounded below by $(\abs{c_i-z}+1)^2 - \frac{|c_i-z|\pi^2}{m_i^2}$.  We can bound $\sigma_1(A)$ from above by noting that for a unit column vector $v=\transpose{(v_1,\dots, v_{m_i})}$, we have $\norm{Av}^2= \abs{(c_i-z)v_{m_i}}^2+ \sum_{k=1}^{m_i-1} \abs{v_k(c_i-z) + v_{k+1}}^2$; thus, using the Cauchy--Schwarz inequality and the fact that $v$ is a unit vector, we see that
\begin{align*}
\norm{Av}^2&\le \abs{c_i-z}^2\abs{v_{m_i}}^2+ \sum_{k=1}^{m_i-1} \abs{v_k}^2\abs{c_i-z}^2 
+ 2\sum_{k=1}^{m_i-1} \abs{c_i-z}\abs{v_k}\abs{v_{k+1}} +\sum_{k=1}^{m_i-1}  \abs{ v_{k+1}}^2 \\
&\le \abs{c_i-z}^2+2\abs{c_i-z}+1 = (\abs{c_i-z}+1)^2,
\end{align*}
which proves that $\sigma_1(A) \le \abs{c_i-z}+1$.
We have now shown for $k=1,2,\dots,m_i-1$ that $\sigma_k(A)$ is bounded above by $|c_i-z|+1$ and bounded below by $||c_i-z|-1|$.

It remains to show an upper bound on the smallest singular value $\sigma_n(A)$, which is easily done by noting that the column vector $u=\transpose{(1, (z-c_i), (z-c_i)^2, \dots, (z-c_i)^{m_i-1})}$ satisfies $(B_i-zI)u = (0,0,\dots,0,-(z-c_i)^{m_i})$, and so $\frac{\norm{(B_i-zI)u}^2}{\norm{u}^2} = \frac{\abs{c_i-z}^{2m_i}(1-\abs{c_i-z}^2)}{1-\abs{c_i-z}^{2m_i}} \le \abs{c_i-z}^{2m_i}$, when $\abs{c_i-z} <1$.  Thus, $\sigma_{m_i}(A) \le \abs{c_i-z}^{m_i}$.

The final assertion for the $\abs{c_i-z}>1$ case can be proven by noting that the matrix $AA^*$ has all diagonal entries equal to $|c_i-z|^2+1$ or $|c_i-z|^2$, has all super diagonal entries equal to $\bar{c_i}-\bar{z}$,  has all subdiagonal entries equal to $c_i-z$, and has all other entries equal to zero.  By the Gershgorin Circle Theorem, the eigenvalues of $AA^*$ are all contained in the disk with center $|c_i-z|^2+1$ and radius $2|c_i-z|$ or in the disk with center $|c_i-z|^2$ and radius $|c_i-z|$.  Thus, every eigenvalue of $AA^*$ has size at least $\min\{ (|c_i-z|-1)^2, |c_i-z|^2-|c_i-z|\}$, which implies that the smallest singular value of $A$ is at least $\min\{  ||c_i-z|-1|, \sqrt{|c_i-z|^2-|c_i-z|}\}$, which is positive  due to the assumption that $|c_i-z|>1$.
\end{proof}

\newcommand\singbd{\sigma_n(A)}

\begin{lemma}\label{lem:determinstic-singbd-plus-epsilon}
Let $i$ and $m_i$ be positive integers, let $z$ and $c_i$ be a complex numbers (which may depend on $m_i)$, let $\epsilon$ be a non-negative real number (possibly depending on $m_i,c_i$, and $z$), let $\ptwo_i$ be the $m_i$ by $m_i$ matrix with entry $(m_i,1)$ equal to $\epsilon$ and all other entries equal to zero, and let $B_i$ be an $m_i$ by $m_i$ Jordan block with eigenvalue $c_i$.   
Then the smallest singular value $\singbd$ for $A=B_i-zI+\ptwo$ satisfies
$$
\singbd \ge
\begin{cases}
 \frac{\epsilon(1-|z-c_i|)^{3/2}}{\sqrt{2m_i\epsilon^2 +8}}  
% \qquad\ \; = \frac{C\epsilon}{\sqrt{m_i\epsilon^2+1}} 
&\mbox{ if $\abs{z-c_i} <1$ and $2\abs{z-c_i}^{m_i} < \epsilon$,} \\[10pt]
\frac{\abs{z-c_i}^{m_i}(1-|z-c_i|)^{3/2}}{\sqrt{2m_i\abs{z-c_i}^{2m_i}+2}}
%=\frac{C\abs{z-c_i}^{m_i}}{\sqrt{m_i\abs{z-c_i}^{2m_i}+1}}
&\mbox{ if $\abs{z-c_i} <1$ and $\epsilon< \frac12\abs{z-c_i}^{m_i} $,} \\[10pt]
\frac{ | |z-c_i|-1 | \sqrt{ |z-c_i|^2-1}}{\sqrt{ 2 m_i (|z-c_i|^2-1)+8\epsilon^2}} 
%=\frac{C}{\sqrt{m_i+\epsilon^2}} 
&\mbox{ if $\abs{z-c_i}>1$ and $\epsilon < \frac12\abs{z-c_i}^{m_i}$.} 
\end{cases}
$$
\end{lemma}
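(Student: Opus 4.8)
The strategy is to bound $\sigma_{\min}(A)=\min_{\|x\|=1}\|Ax\|$ directly, exploiting the bidiagonal–plus–corner structure of $A=B_i-zI+E_i$, in the spirit of the Rudelson--Vershynin-type argument used for Lemma~\ref{lem:smallsing}; the single-entry perturbation here is what makes the relevant recursion solvable in closed form, so no genuine dichotomy on $x$ is needed. Set $w:=z-c_i$, fix a unit column vector $x=(x_1,\dots,x_{m_i})^{\mathrm T}$, and put $y:=Ax$. Reading off the rows of $A$ gives $y_k=-wx_k+x_{k+1}$ for $1\le k\le m_i-1$ together with $y_{m_i}=\epsilon x_1-wx_{m_i}$. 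The first $m_i-1$ relations unwind into $x_k=w^{k-1}x_1+\sum_{j=1}^{k-1}w^{k-1-j}y_j$ for $1\le k\le m_i$, and substituting $k=m_i$ into the last relation yields the key identity $(\epsilon-w^{m_i})x_1=y_{m_i}+\sum_{j=1}^{m_i-1}w^{m_i-j}y_j$. Here the case hypotheses enter: when $2|w|^{m_i}<\epsilon$ one has $|\epsilon-w^{m_i}|\ge\epsilon/2$, and when $\epsilon<\tfrac12|w|^{m_i}$ one has $|\epsilon-w^{m_i}|\ge\tfrac12|w|^{m_i}$, so in both cases the identity genuinely solves for $x_1$ in terms of $y$, with $\mu:=|\epsilon-w^{m_i}|>0$.

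For the two cases with $|w|<1$, I would run the following chain. Cauchy--Schwarz applied to the key identity, together with $\sum_{l\ge0}|w|^{2l}=(1-|w|^2)^{-1}$, gives $|x_1|\le\mu^{-1}(1-|w|^2)^{-1/2}\|y\|_2$, and the same tool applied to the closed form gives $|x_k|\le|w|^{k-1}|x_1|+(1-|w|^2)^{-1/2}\|y\|_2$ for every $k$. Feeding these into $1=\|x\|_2^2=\sum_k|x_k|^2$, estimating the geometric sums $\sum_k|w|^{2(k-1)}\le(1-|w|^2)^{-1}$ and $\sum_k|w|^{k-1}\le(1-|w|)^{-1}$, controlling the cross term by an AM--GM split, and finally substituting the bound on $|x_1|$, I expect to reach an inequality of the form $1\le(1-|w|)^{-3}\bigl(2\mu^{-2}+m_i+1\bigr)\|y\|_2^2$ (after using $1+|w|\ge1$ and $1-|w|\le1$ to clean up). Rearranging gives $\|Ax\|_2\ge\mu(1-|w|)^{3/2}\bigl(2+(m_i+1)\mu^2\bigr)^{-1/2}$; since the right-hand side is increasing in $\mu$, I then insert $\mu\ge\epsilon/2$ or $\mu\ge\tfrac12|w|^{m_i}$ and absorb the remaining numerical factors (using $m_i\ge1$) to reach the first two stated bounds. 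For the third case $|w|>1$ the same scheme works after unwinding the recursion from the bottom instead: $x_k=w^{-(m_i-k)}x_{m_i}-\sum_{j=k}^{m_i-1}w^{-(j-k+1)}y_j$, with key identity $(\epsilon w^{-(m_i-1)}-w)x_{m_i}=y_{m_i}+\epsilon\sum_{j=1}^{m_i-1}w^{-j}y_j$. Its coefficient has modulus $|w|\,|1-\epsilon w^{-m_i}|\ge|w|/2$ under $\epsilon<\tfrac12|w|^{m_i}$, and now the relevant series are $\sum_{l\ge0}|w|^{-2l}=|w|^2(|w|^2-1)^{-1}$ and $\sum_{l\ge1}|w|^{-2l}=(|w|^2-1)^{-1}$; carrying these through produces the factors $|w|^2-1$ and the $8\epsilon^2$ (coming from the $\epsilon$-weighted coefficient vector) that appear in the third denominator $\sqrt{2m_i(|w|^2-1)+8\epsilon^2}$.

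The only real obstacle is the bookkeeping in the unit-norm step: the precise shape of the claimed bounds --- the exponent $3/2$ on $1-|w|$, the $\sqrt{|w|^2-1}$ in the third case, and the exact constants $2m_i$, $8$, and $2$ under the square roots --- emerges only after committing to a specific way of splitting the cross term $\sum_k\bigl(|w|^{k-1}|x_1|+(1-|w|^2)^{-1/2}\|y\|_2\bigr)^2$ and to a fixed set of crude inequalities ($1-|w|\le1$, $1+|w|\ge1$, $1\le m_i$). No single step is delicate, but one must check that the errors arising from approximating $x$ by the geometric near-null vector $x_k\approx w^{k-1}x_1$ do not overwhelm the main term, and verify that the bounds degrade gracefully (rather than blowing up) as $\epsilon$ grows in the first and third cases.
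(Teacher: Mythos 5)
Your proposal is correct and follows essentially the same route as the paper's proof: the same row-by-row unwinding of the bidiagonal-plus-corner system, the same key identity $(\epsilon-w^{m_i})x_1=y_{m_i}+\sum_j w^{m_i-j}y_j$ (resp.\ its bottom-up analogue for $|w|>1$), the same lower bounds $|\epsilon-w^{m_i}|\geq\epsilon/2$ or $\tfrac12|w|^{m_i}$, and the same substitution into $\|x\|=1$. One remark: your chain, like the paper's own computation in \eqref{e:singbd-case<1_small_eps}, produces $\sqrt{2m_i|w|^{2m_i}+8}$ in the second case rather than the $\sqrt{2m_i|w|^{2m_i}+2}$ displayed in the lemma statement, so the discrepancy there lies in the statement, not in your argument.
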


\begin{proof}
Let $A=B_i-zI+\ptwo$, and assume that $A$ has smallest singular value $\singbd$.  Thus, there exists a unit column vector $x=\transpose{(x_1,\dots,x_n)}$ such that $Ax = y$ and $\norm{Ax}=\norm y = \singbd$. 

For each case, we will show bounds on the $\abs{x_k}$ that then translate into a bound on $\singbd$.

We will consider the first two cases together, assuming that 
that $\abs{c_i-z} < 1$ and highlighting in the proof where we use the assumption $2\abs{z-c_i}^{m_i}<\epsilon$ for the first case, or, respectively, $\epsilon< \frac12 \abs{z-c_i}^{m_i}$ for the second case.  The equation $Ax=y$ may be written as the system of equations
\begin{align*}
x_2 &= y_1 + (z-c_i)x_1 \\
x_3 &= y_2 + (z-c_i)x_2 \\
&\ \vdots \\
x_{m_i} &= y_{m_i-1} + (z-c_i)x_{m_i-1} \\
\epsilon x_1 &= y_{m_i} + (z-c_i)x_{m_i}.
\end{align*}	
Successively plugging in the equations for $x_2,x_3, x_4,\dots, x_n$, we may write each $x_k$ in terms of the $y_k$ and $x_1$ as follows:
\begin{align*}
x_2 &= y_1 + (z-c_i)x_1 \\
x_3 &= y_2 + (z-c_i)y_1 + (z-c_i)^2x_1 \\
\details{x_4 &=}\details{ y_3 + (z-c_i)y_2 + (z-c_i)^2 y_1+ (z-c_i)^3x_1 \\}
&\ \vdots \\
x_{m_i} &= \left(\sum_{\ell=0}^{m_i-2} y_{m_i-1-\ell}(z-c_i)^\ell\right)  + (z-c_i)^{m_i-1}x_{1} \\
\epsilon x_1 &= \left(\sum_{\ell=0}^{m_i-1} y_{m_i-\ell}(z-c_i)^\ell\right) + (z-c_i)^{m_i} x_{1}.
\end{align*}

Using the fact that $\abs{y_k} \le \singbd$ for all $1\le k\le m_i$ and the assumptions that $\abs{z-c_i} <1$ and that $\epsilon \ne \abs{z-c_i}^{m_i}$ (the latter of which follows from either the first or the second case assumption), we may solve the last equation for $x_1$ and take absolute values to arrive at

\begin{equation}\label{e:xbound-case<1}
\abs{x_1} \le \frac{\singbd}{(1-\abs{z-c_i})\abs{\epsilon-(z-c_i)^{m_i}}}.
\end{equation}

By assumption $\norm x = 1$, and so we have
\begin{align*}
 1 &= \norm {x}^2 = \sum_{k=1}^{m_i} \abs{x_k}^2 \details{\\
 &=}\details{ \abs{x_1}^2  + \sum_{k=2}^{m_i} \abs{\left(\sum_{\ell=0}^{k-2} y_{k-1-\ell}(z-c_i)^\ell\right)  + (z-c_i)^{k-1}x_{1} }^2\\
 &}
 \le \abs{x_1}^2  + \sum_{k=2}^{m_i} \left(\left(\singbd\sum_{\ell=0}^{k-2}\abs{z-c_i}^\ell\right)  + \abs{z-c_i}^{k-1}\abs{x_{1}} \right)^2\\
 &\le \abs{x_1}^2  + \sum_{k=2}^{m_i} \left(\frac{2 \singbd^2}{(1-\abs{z-c_i})^2}  + 2\abs{z-c_i}^{2(k-1)}\abs{x_{1}}^2 \right)\\
 & \le  \frac{2m_i \singbd^2}{(1-\abs{z-c_i})^2}  + \frac{2\abs{x_{1}}^2}{1-\abs{z-c_i}^2} \\
 \details{& \le}\details{  \frac{2m_i \singbd^2}{(1-\abs{z-c_i})^2}  + \frac{2s^2 }{\abs{\epsilon-(z-c_i)^{m_i}}^2(1-\abs{z-c_i})^2(1-\abs{z-c_i}^2)} \\
 & \le}\details{  \frac{2m_i \singbd^2}{(1-\abs{z-c_i})^3}  + \frac{2s^2 }{\abs{\epsilon-(z-c_i)^{m_i}}^2(1-\abs{z-c_i})^3} \\}
 & \le  \frac{2 \singbd^2}{(1-\abs{z-c_i})^3}\left(m_i + \frac{1 }{\abs{\epsilon-(z-c_i)^{m_i}}^2} \right),
\end{align*}
where we plugged in \eqref{e:xbound-case<1} in the last inequality.  
Thus, we have shown that
\begin{equation}\label{e:case1and2bound}
1 \le \frac{2 \singbd^2}{(1-\abs{z-c_i})^3}\left(m_i + \frac{1 }{\abs{\epsilon-(z-c_i)^{m_i}}^2} \right)
% = O\left(\singbd^2\left(m_i + \frac{1}{\abs{\epsilon-(z-c_i)^{m_i}}^2}\right)\right).  
\end{equation}
Under the assumption that $2\abs{z-c_i}^{m_i} < \epsilon$ from the first case, \eqref{e:case1and2bound} implies 

\begin{equation}\label{e:singbd-case<1}
 \singbd  \ge \frac{\epsilon (1-|z-c_i|)^{3/2}}{\sqrt{2m_i\epsilon^2 +8}}
 %=\Omega\left(\frac{\epsilon}{\sqrt{m_i\epsilon^2+1}}\right),
\end{equation}
which proves the first case of the lemma.

Under the assumption that $\epsilon < \frac12\abs{z-c_i}^{m_i}$ from the second case, \eqref{e:case1and2bound} 
can be rearranged to show 

\begin{equation}\label{e:singbd-case<1_small_eps}
 \singbd \ge \frac{\abs{z-c_i}^{m_i}( 1-|z-c_i|)^{3/2}}{\sqrt{2m_i\abs{z-c_i}^{2m_i}+8}}
 %\Omega\left(\frac{\abs{z-c_i}^{m_i}}{\sqrt{m_i\abs{z-c_i}^{2m_i}+1}}\right),
\end{equation}
which proves the second case of the lemma.

For the case where $\abs{z-c_i} >1$ and $\epsilon < \frac12\abs{z-c_i}^{m_i}$, we proceed similarly.  From $Ax=y$, we have 
\begin{align*}
x_1 & = \frac{-y_1}{z-c_i}+ \frac{x_2}{z-c_i}\\		
x_2 &= \frac{-y_2}{z-c_i}  + \frac{x_3}{z-c_i}\\
&\ \vdots \\
x_{m_i-1} &= \frac{-y_{m_i-1}}{z-c_i} +  \frac{x_{m_i}}{z-c_i}\\
x_{m_i} &= \frac{-y_{m_i}}{z-c_i} +  \frac{\epsilon x_{1}}{z-c_i}\\
\end{align*}	

Substituting in for $x_{m_i}, x_{m_i-1},\dots, x_2$, we may write the right-hand side of each equation in terms of $y_k$, $x_1$, $z$, and $c_i$ as follows:
\begin{align*}
x_{m_i} &= \frac{-y_{m_i}}{z-c_i} +  \frac{\epsilon x_{1}}{(z-c_i)^{1}}\\
\details{x_{m_i-1}&=}\details{  \frac{-y_{m_i-1}}{z-c_i} + \frac{-y_{m_i}}{(z-c_i)^2} +  \frac{\epsilon x_{1}}{(z-c_i)^{2}}  \\}
&\ \vdots \\
x_k &= \left(\sum_{\ell=k}^{m_i} \frac{-y_\ell}{(z-c_i)^{\ell+1-k}} \right) + \frac{\epsilon x_{1}}{(z-c_i)^{m_i+1-k}}  \\
&\ \vdots \\
x_1 &= \left(\sum_{\ell=1}^{m_i} \frac{-y_\ell}{(z-c_i)^{\ell}} \right) + \frac{\epsilon x_{1}}{(z-c_i)^{m_i}}.
\end{align*}
From the last equation and the fact that $\abs{y_k}\le \singbd$ for all $k$ and the assumption that $\epsilon < \frac12\abs{z-c_i}^{m_i}$, we have
\begin{equation}\label{e:xboundcase>1}
 \abs{x_1} \le \frac{\singbd}{1 - \frac{\epsilon }{\abs{z-c_i}^{m_i}}} \sum_{k=1}^{m_i}\frac1{\abs{z-c_i}^{k}} 
 \le \frac{2\singbd\pfrac{1}{\abs{z-c_i}}}{1-\frac{1}{\abs{z-c_i}}}
 \le \frac{2\singbd}{\abs{z-c_i}-1}.
\end{equation}

As before, we now compute a bound on $\singbd$ in terms of $\abs{x_1}$.  By assumption $\norm x = 1$, and so we have
\begin{align*}
 1 &= \norm {x}^2 = \sum_{k=1}^{m_i} \abs{x_k}^2 \\
\details{ &=}\details{ \sum_{k=1}^{m_i} \abs{ \left(\sum_{\ell = k}^{m_i} \frac{-y_\ell}{(z-c_i)^{\ell+1-k}} \right) + \frac{\epsilon x_{1}}{(z-c_i)^{m_i+1-k}} }^2 \\}
 &\le \sum_{k=1}^{m_i} \left(\frac{\singbd }{\abs{z-c_i}} \left(\sum_{\ell = k}^{m_i} \frac{1}{\abs{z-c_i}^{\ell-k}} \right) + \frac{\epsilon\abs{ x_{1}}}{\abs{z-c_i}^{m_i+1-k}} \right)^2 \\
\details{ &\le}\details{ \sum_{k=1}^{m_i} \left(\frac{\singbd }{\abs{z-c_i}} \left(\frac{1}{1-\frac{1}{\abs{z-c_i}}}\right) 
 + \frac{\epsilon\abs{ x_{1}}}{\abs{z-c_i}^{m_i+1-k}} \right)^2 \\}
&\le \sum_{k=1}^{m_i} \left[ \frac{2\singbd^2}{(\abs{z-c_i}-1)^2} + \frac{2\epsilon^2\abs{ x_{1}}^2}{\abs{z-c_i}^{2(m_i+1-k)}} \right] \\
\details{&\le}\details{ \frac{2\singbd^2m_i}{(\abs{z-c_i}-1)^2} + \frac{2\epsilon^2\abs{ x_{1}}^2}{\abs{z-c_i}^{2}-1} \\}
&\le \frac{2\singbd^2m_i}{(\abs{z-c_i}-1)^2} + \frac{8\epsilon^2\singbd^2}{(\abs{z-c_i}^{2}-1)(\abs{z-c_i}-1)^2}, \\
\end{align*}
where we plugged in the bound from \eqref{e:xboundcase>1} for the last inequality.  Rearranging this last inequality, we have
\begin{equation}\label{e:singbd-case>1}
 \singbd \ge \frac{ | |z-c_i|-1 | \sqrt{ |z-c_i|^2-1}}{\sqrt{ 2 m_i (|z-c_i|^2-1)+8\epsilon^2}},
 %\Omega\pfrac{1}{\sqrt{m_i+\epsilon^2}},
\end{equation}
which proves the third case of the lemma.

Combining the inequalities in \eqref{e:singbd-case<1}, \eqref{e:singbd-case<1_small_eps}, and \eqref{e:singbd-case>1}
completes the proof.
\end{proof}

\section{Multiplicative perturbations} \label{sec:multi}

This section contains some relevant bounds required in the proof of Corollary \ref{cor:multi}.  

\begin{proposition}[Spectral norm bound] \label{prop:norm}
Let $E$ be an $n \times n$ random matrix whose entries are iid copies of a random variable $\xi$ which has mean zero, unit variance, and finite fourth moment.  Then there exists $C> 0$ (depending only on $\xi$) so that, for any $\alpha > 1/2$, 
\[ \Prob( \|E\| \geq n^{\alpha} ) \leq C n^{1/2 - \alpha}.  \]
\end{proposition}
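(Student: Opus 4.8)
The plan is to bound the spectral norm by the Frobenius norm and apply Markov's inequality, then observe that this crude bound is enough once we square. First I would note that for any $n \times n$ matrix $E$ we have the elementary inequality $\|E\| \leq \|E\|_2 = \sqrt{\sum_{i,j} |E_{ij}|^2}$, since the largest singular value is at most the square root of the sum of squares of all singular values. Therefore
\[
\Prob(\|E\| \geq n^\alpha) \leq \Prob\left( \sum_{i,j=1}^n |E_{ij}|^2 \geq n^{2\alpha} \right).
\]
Since the $E_{ij}$ are iid copies of $\xi$ with $\E|\xi|^2 = 1$ (mean zero, unit variance), we have $\E \sum_{i,j} |E_{ij}|^2 = n^2$. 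A direct application of Markov's inequality gives $\Prob(\sum |E_{ij}|^2 \geq n^{2\alpha}) \leq n^2/n^{2\alpha} = n^{2 - 2\alpha}$, but this is $n^{2-2\alpha}$, not $n^{1/2 - \alpha}$, so a plain first-moment bound on the sum of squares is too weak. The main obstacle is thus squeezing out the correct exponent $1/2 - \alpha$; the extra decay must come from a second-moment (variance) estimate on $\sum |E_{ij}|^2$, which is exactly where the finite fourth moment hypothesis enters.

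The key steps, in order, are as follows. Set $S := \sum_{i,j=1}^n (|E_{ij}|^2 - 1)$, so that $\sum |E_{ij}|^2 = n^2 + S$ with $\E S = 0$. Since the $n^2$ summands $|E_{ij}|^2 - 1$ are iid with variance $\var(|\xi|^2) = \E|\xi|^4 - 1 =: v < \infty$ (finite by the fourth moment assumption), we get $\var(S) = n^2 v$. Now for $\alpha > 1/2$ and $n$ large enough that $n^{2\alpha} \geq 2n^2$... wait — since $\alpha$ may be only slightly larger than $1/2$ we cannot assume $n^{2\alpha} \geq 2n^2$; instead I would argue directly: if $\sum |E_{ij}|^2 \geq n^{2\alpha}$ then $|S| \geq n^{2\alpha} - n^2$. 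For the bound to be non-vacuous we only care about $\alpha$ with, say, the regime where $n^{2\alpha} - n^2$ is positive, but actually we want the statement for all $\alpha > 1/2$ and all $n$; when $n^{2\alpha} \leq 2n^2$ the claimed bound $Cn^{1/2-\alpha}$ can be made to hold trivially by choosing $C$ large (since $n^{1/2 - \alpha} \geq n^{-1/2} \cdot \text{const}$ in the bounded range of $\alpha$, and a probability is always at most $1$; more carefully, in the range $1/2 < \alpha$ with $n^{2\alpha - 2} \leq 2$ we have $n^{\alpha - 1/2} \leq \sqrt{2}\, n^{1/2}$, so $Cn^{1/2-\alpha} \geq C/(\sqrt 2 n^{1/2})$, which is not automatically $\geq 1$).

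So the cleaner route is: for the range where $n^{2\alpha} \geq 2 n^2$, i.e. $n^{2(\alpha-1)} \geq 2$, Chebyshev gives $\Prob(|S| \geq n^{2\alpha} - n^2) \leq \Prob(|S| \geq n^{2\alpha}/2) \leq 4 n^2 v / n^{4\alpha} = 4v \, n^{2 - 4\alpha}$, and since $\alpha > 1/2$ implies $2 - 4\alpha < -2\alpha < 1/2 - \alpha$... hmm, $2 - 4\alpha$ versus $1/2 - \alpha$: we need $2 - 4\alpha \leq 1/2 - \alpha$, i.e. $3/2 \leq 3\alpha$, i.e. $\alpha \geq 1/2$, which holds. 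Good — so $4v\, n^{2-4\alpha} \leq 4v\, n^{1/2 - \alpha}$ for all $\alpha \geq 1/2$. For the complementary range $n^{2(\alpha - 1)} < 2$, one uses Markov on $\|E\|_2^2$ as above together with the fact that $n^{1/2 - \alpha}$ is bounded below by a positive constant times $n^{-1/2}$ in this range to absorb everything into a suitable constant $C$ depending only on $\xi$. Taking $C := \max\{4v, C'\}$ for an appropriate $C'$ handling the small range completes the proof. The hard part is purely the bookkeeping of making $C$ uniform over $\alpha > 1/2$ and all $n$; the probabilistic content is just Chebyshev applied to $\|E\|_2^2$, exploiting the fourth moment to control its variance.
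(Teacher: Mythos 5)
Your approach has a genuine gap, and it is not merely a matter of bookkeeping: the Frobenius bound $\|E\| \le \|E\|_2$ is fundamentally too lossy for this statement. The issue is that $\|E\|_2$ concentrates around $n$ (since $\E\|E\|_2^2 = n^2$ and, by the law of large numbers, $\|E\|_2^2/n^2 \to 1$), whereas the spectral norm $\|E\|$ concentrates around $\sqrt{n}$. For any fixed $\alpha \in (1/2, 1)$ one therefore has $\Prob(\|E\|_2 \ge n^\alpha) \to 1$ as $n \to \infty$, so the event you are trying to bound through the Frobenius norm simply does not have small probability. Your Chebyshev argument only produces decay in the regime $n^{2\alpha} \ge 2n^2$, which forces $\alpha > 1$; and in the "complementary range" $1/2 < \alpha \le 1$ the fallback you propose (Markov on $\|E\|_2^2$ plus absorbing into $C$) cannot close the gap, because Markov gives only $n^{2-2\alpha}$ while the target is $Cn^{1/2-\alpha}$, and $n^{2-2\alpha}/n^{1/2-\alpha} = n^{3/2-\alpha} \to \infty$ there. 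No choice of a constant depending only on $\xi$ can repair this.

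The correct route, and the one the paper takes, is to control the spectral norm itself rather than the Frobenius norm: under the finite fourth moment hypothesis there is a known bound $\E\|E\| \le C\sqrt{n}$ (this is where the real probabilistic work lives; the paper cites a result of Lata\l{}a-type). With that in hand, Markov's inequality immediately gives $\Prob(\|E\| \ge n^\alpha) \le \E\|E\|/n^\alpha \le C n^{1/2-\alpha}$, uniformly in $\alpha > 1/2$, with no case splitting. If you want a self-contained argument rather than a citation, you would need some genuinely spectral estimate such as the trace-moment method applied to $\tr\bigl((EE^*)^k\bigr)$ for large $k$; a bound on the sum of all squared singular values, which is what $\|E\|_2^2$ is, cannot distinguish the largest one.
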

\begin{proof}
It follows from \cite[Theorem 2]{MR2111932} that $\E \|E\| \leq C \sqrt{n}$ for a constant $C > 0$ depending only on $\xi$.  The claim now follows from Markov's inequality.
\end{proof}

Our next result bounds the spectral norm of a banded Toeplitz matrix (see Definition \ref{def:toep}).  

\begin{proposition}[Spectral norm bound] \label{prop:Anorm}
Let $k \geq 0$ be an integer, and let $\{a_j\}_{j \in \mathbb{Z}}$ be a sequence of complex numbers indexed by the integers.  Let $A$ be the $n \times n$ Toeplitz matrix with symbol $\{a_j\}_{j \in \mathbb{Z}}$ truncated at $k$ (as in Definition \ref{def:toep}).  Then
\[ \|A\| \leq \sum_{|j| \leq k} |a_j|. \]
\end{proposition}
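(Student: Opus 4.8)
The statement to prove is Proposition~\ref{prop:Anorm}: for the $n\times n$ banded Toeplitz matrix $A$ with symbol $\{a_j\}_{j\in\mathbb{Z}}$ truncated at $k$, we have $\|A\|\le\sum_{|j|\le k}|a_j|$.

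\textbf{Plan of proof.} The plan is to write $A$ as a sum of $2k+1$ scaled ``shift'' matrices and apply the triangle inequality for the spectral norm, using that each shift matrix has norm at most $1$. First I would define, for each integer $j$ with $|j|\le k$, the $n\times n$ matrix $S_j$ whose $(i,\ell)$-entry is $1$ if $i-\ell=j$ and $0$ otherwise; these are the (possibly truncated) powers of the shift. Then, directly from Definition~\ref{def:toep}, one has the decomposition
\[
A=\sum_{|j|\le k} a_j S_j.
\]
Second I would observe that each $S_j$ has at most one $1$ in every row and at most one $1$ in every column, i.e. $S_j$ is a sub-permutation matrix (a submatrix of a permutation matrix obtained by zeroing out some entries). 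Hence $S_j S_j^\ast$ is a diagonal $0$--$1$ matrix, so all its eigenvalues lie in $\{0,1\}$, which gives $\|S_j\|\le 1$. Third, applying the triangle inequality and homogeneity of the spectral norm,
\[
\|A\|=\Bigl\|\sum_{|j|\le k} a_j S_j\Bigr\|\le\sum_{|j|\le k}|a_j|\,\|S_j\|\le\sum_{|j|\le k}|a_j|,
\]
which is exactly the claim.

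\textbf{Main obstacle.} There is essentially no hard step here; the only thing to be careful about is the bound $\|S_j\|\le 1$ in the truncated (banded, finite-$n$) setting, where $S_j$ is not a full permutation matrix but only a partial one. The clean way to see it is the variational characterization: for a unit vector $x\in\mathbb{C}^n$, the vector $S_j x$ has entries that are a subset of the entries of $x$ (each coordinate of $x$ is used at most once), so $\|S_j x\|^2\le\sum_\ell|x_\ell|^2=1$, giving $\|S_j\|=\sup_{\|x\|=1}\|S_j x\|\le 1$. Alternatively one can argue via $\|S_j\|^2=\|S_j S_j^\ast\|$ and note $S_j S_j^\ast$ is a diagonal matrix with $0$--$1$ entries. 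Either route is a one-line verification, so the proposition follows immediately.

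\textbf{Remark.} If one prefers to avoid introducing the matrices $S_j$ explicitly, an equivalent approach is to bound $\|A\|$ by $\max$ of the row-sum and column-sum norms via interpolation ($\|A\|\le\sqrt{\|A\|_1\|A\|_\infty}$), since every row and every column of $A$ has absolute entries summing to at most $\sum_{|j|\le k}|a_j|$; but the shift-decomposition argument above is cleaner and self-contained, so that is the route I would take.
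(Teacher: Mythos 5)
Your proof is correct and follows essentially the same route as the paper: decompose $A$ as $\sum_{|j|\le k} a_j S_j$ where the $S_j$ are the (truncated) powers of the shift matrix, bound each $\|S_j\|\le 1$, and apply the triangle inequality. The only difference is cosmetic — the paper writes the $S_j$ as $J^j$ and $(J^{\mathrm{T}})^j$ for a single sub-diagonal shift $J$ with $\|J\|=1$, while you verify the norm bound for each truncated shift directly via the sub-permutation structure; both verifications are one-liners.
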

\begin{proof}
We decompose $A$ as 
\[ A = \sum_{j = 0}^k a_j J^j + \sum_{j=1}^k a_{-j} (J^{\mathrm{T}})^j, \]
where $J$ is the $n \times n$ matrix whose entries are all zero except for ones along the sub-diagonal (i.e., $J$ is the transpose of the matrix $\Jmat$ given in \eqref{eq:defT}) and $J^0$ is the identity matrix.  The claimed bound then follows from the triangle inequality since $\|J\| = 1$.  
\end{proof}

Lastly, we will need the following least singular value bound.  

\begin{proposition}[Least singular value bound] \label{prop:lsv}
Let $A$ be a deterministic $n \times n$ matrix, and let $E$ be an $n \times n$ random matrix whose entries are iid copies of a real standard normal random variable.  Then for any $z \in \mathbb{C}$ with $z \neq 0$ and any $\gamma > 1$, there exists $\kappa > 0$ (depending on $z$ and $\gamma$) so that
\[ \Prob( \sigma_{\min}(A (I + n^{-1/2 - \gamma}E) - z I) \leq n^{-\kappa}) = o(1). \]
\end{proposition}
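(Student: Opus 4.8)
The plan is to reduce the least singular value bound for $A(I + n^{-1/2-\gamma}E) - zI$ to a least singular value bound for a \emph{perturbed} matrix of the form $B + n^{-1/2-\gamma} N$, where $N$ is a Gaussian matrix, and then to invoke a known least singular value estimate for deterministic-plus-Gaussian matrices. Writing $A(I + n^{-1/2-\gamma}E) - zI = (A - zI) + n^{-1/2-\gamma} A E$, the difficulty is that the perturbation $AE$ is not itself a nice Gaussian ensemble — it is $A$ times a Gaussian matrix, so its entries are correlated and its covariance structure depends on $A$. The first step I would take is to split into two cases according to whether $A$ is "close to singular" at $z$ or not. More precisely, since $z \neq 0$, one can write (formally) $A(I + n^{-1/2-\gamma}E) - zI = A\bigl(I + n^{-1/2-\gamma}E - zA^{-1}\bigr)$ when $A$ is invertible; but $A$ need not be invertible, so instead I would condition on $\sigma_{\min}(A)$.

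\textbf{Case 1: $\sigma_{\min}(A)$ is not too small}, say $\sigma_{\min}(A) \ge n^{-\kappa_1}$ for a suitable constant $\kappa_1$ to be chosen. Here I would factor out $A$ on the left and use $\sigma_{\min}(A M) \ge \sigma_{\min}(A)\,\sigma_{\min}(M)$ for any matrix $M$, so it suffices to lower-bound $\sigma_{\min}(I + n^{-1/2-\gamma}E - zA^{-1})$. Setting $B := I - zA^{-1}$ (a deterministic matrix) this is $\sigma_{\min}(B + n^{-1/2-\gamma}E)$, and now $E$ is genuinely an iid real Gaussian matrix. I would then apply a standard least singular value bound for a fixed matrix perturbed by a Gaussian matrix — for instance the Gaussian case of the results quoted earlier in the paper (the bounds behind Corollary~\ref{cor:pert-toep} part (1), i.e.\ \cite[Theorem 2.1]{MR2409368}, or more directly the sharper Gaussian-specific estimates in the literature), rescaled by $n^{-1/2-\gamma}$. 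Since $\gamma > 1$, the scaling $n^{-1/2-\gamma}$ is a negative power of $n$, and these results give $\sigma_{\min}(B + n^{-1/2-\gamma}E) \ge n^{-\kappa_2}$ with probability $1 - o(1)$ for an appropriate $\kappa_2$ depending on $\gamma$; multiplying the two bounds gives $\sigma_{\min}(A(I+n^{-1/2-\gamma}E) - zI) \ge n^{-\kappa_1 - \kappa_2}$ with probability $1-o(1)$, and we take $\kappa := \kappa_1 + \kappa_2$.

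\textbf{Case 2: $\sigma_{\min}(A) < n^{-\kappa_1}$.} In this regime $A$ itself is nearly singular, but then $A(I + n^{-1/2-\gamma}E)$ is also nearly singular (its smallest singular value is at most $\|I + n^{-1/2-\gamma}E\|\,\sigma_{\min}(A)$, which is small on the event that $\|E\| = O(\sqrt n)$), so naively the bound could fail. The point is that when $A$ is nearly singular, $A(I+n^{-1/2-\gamma}E) - zI$ is a \emph{small-norm perturbation of} $-zI + (\text{something of small operator norm from the near-kernel of }A)$; one should argue that the $-zI$ term keeps the matrix away from singular. Concretely, let $v$ be a unit right-singular vector of $A$ with small singular value; then for a test unit vector $x$, $\|(A(I+n^{-1/2-\gamma}E) - zI)x\| \ge |z|\,\|x\| - \|A(I+n^{-1/2-\gamma}E)x\|$, and the latter is controlled whenever $x$ is nearly in the kernel of $A$; for $x$ not nearly in the kernel of $A$ we are back to a bound like in Case~1. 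A cleaner route: decompose $\mathbb{C}^n$ according to the singular subspaces of $A$ at threshold $n^{-\kappa_1}$; on the low part $A$ acts by $O(n^{-\kappa_1})$ in norm so $A(I+\cdots) - zI$ acts essentially like $-zI$, giving a lower bound $\gtrsim |z|$; on the high part we factor out $A$ restricted there (which has $\sigma_{\min} \ge n^{-\kappa_1}$) and repeat Case 1. Gluing the two pieces with a standard argument (or simply noting $\sigma_{\min}$ of a block-structured perturbation) yields the bound. \textbf{The main obstacle} is precisely handling this near-singular case of $A$ — ensuring the $-zI$ term genuinely regularizes the product and that the correlated perturbation $AE$ does not spoil the argument; if one prefers, an alternative is to avoid splitting by directly applying a least singular value result for $A\,\mathrm{diag}(\cdots) + (\text{fixed})$ that allows the perturbation to be multiplied by a fixed matrix, e.g.\ anti-concentration for $\det$ as in \cite{MR2409368,MR2908617}, but the case split above is the most transparent. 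In all cases the dependence of $\kappa$ on $z$ enters through the factor $|z|$ (and through $\sigma_{\min}(A)$'s interplay with $z$), and the dependence on $\gamma$ through the rescaling $n^{-1/2-\gamma}$, exactly as asserted.
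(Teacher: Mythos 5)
Your Case 1 is fine, but Case 2 contains a genuine gap, and it is not a detail that a ``standard gluing argument'' will fix. First, the claim that ``on the high part we factor out $A$ restricted there and repeat Case 1'' does not go through: for $x$ in the span of the right singular vectors of $A$ with singular values $\geq n^{-\kappa_1}$, you would need to write $A(I+n^{-1/2-\gamma}E)x - zx$ as $A\bigl[(I+n^{-1/2-\gamma}E)x - zw\bigr]$ with $Aw = x$, but $x$ need not lie in the range of $A$ restricted to that subspace (the left and right singular subspaces differ in general), so the $-zI$ term does not factor through $A$. Second, even granting lower bounds for $\|Mx\|$ separately on the two subspaces, this does not yield a lower bound on $\sigma_{\min}(M)$ for general unit vectors: $M$ can map two complementary subspaces into nearly coincident subspaces, so the minimum over mixed vectors can be far smaller than either piece suggests. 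A concrete warning sign: if $z$ is an eigenvalue of $A$ with a well-conditioned eigenvector $x$ in your ``high'' subspace, the deterministic part $Ax - zx$ vanishes and you are forced back onto anti-concentration for the correlated perturbation $n^{-1/2-\gamma}AEx$ --- exactly the difficulty the decomposition was meant to avoid. You also never identify where $\gamma>1$ (as opposed to $\gamma>0$) is actually used.

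The paper sidesteps all of this by factoring on the \emph{other} side: since $\|E\|=O(\sqrt n)$ with probability $1-o(1)$, the matrix $I+n^{-1/2-\gamma}E$ is invertible with $\sigma_{\min}\geq 1/2$, and one writes
\[
A(I+n^{-1/2-\gamma}E)-zI=\bigl[A-z(I+n^{-1/2-\gamma}E)^{-1}\bigr]\,(I+n^{-1/2-\gamma}E),
\]
so that $\sigma_{\min}$ of the product is at least $\tfrac12\,\sigma_{\min}\bigl(A-z(I+n^{-1/2-\gamma}E)^{-1}\bigr)$, with \emph{no} hypothesis on $\sigma_{\min}(A)$ and hence no case split. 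A Neumann expansion gives $(I+n^{-1/2-\gamma}E)^{-1}=I-n^{-1/2-\gamma}E+O(n^{-2\gamma})$ in operator norm, so up to an $O(n^{-2\gamma})$ error (absorbed by Weyl's inequality for singular values) the bracketed matrix is a fixed matrix plus the scaled iid Gaussian $zn^{-1/2-\gamma}E$, to which the Sankar--Spielman--Teng bound applies and yields $\sigma_{\min}\geq |z|n^{-1-\gamma-\varepsilon}$ with probability $1-o(1)$. The hypothesis $\gamma>1$ enters precisely to make the Neumann error $O(n^{-2\gamma})$ negligible against $n^{-1-\gamma-\varepsilon}$. I would recommend abandoning the left factorization and adopting this right factorization.
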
 
\begin{proof}
Fix $z \in \mathbb{C}$ with $z \neq 0$ and $\gamma > 1$.  
It follows from \cite[Corollary 5.35]{MR2963170} that $\|E\| = O(\sqrt{n})$ with probability $1 - o(1)$.  Thus, with probability $1 - o(1)$, the matrix $I + n^{-1/2-\gamma} E$ is invertible, and by utilizing a Neumann series
\begin{equation} \label{eq:invtbnd}
	\| (I + n^{-1/2 - \gamma} E)^{-1} \| \leq 2 
\end{equation} 
and
\begin{equation} \label{eq:neumann}
	\| (I + n^{-1/2 - \gamma} E)^{-1} - (I - n^{-1/2 - \gamma} E) \| = O(n^{-2\gamma}) 
\end{equation} 
with probability $1 - o(1)$. 
Therefore---using the bound $\sigma_{\min}(M_1 M_2) \geq \sigma_{\min}(M_1) \sigma_{\min}(M_2)$ for two $n \times n$ matrices $M_1$ and $M_2$ (which follows from the characterization given in \eqref{eq:varminsing})---we obtain 
\begin{align}
	\sigma_{\min}(A (I + n^{-1/2 - \gamma}E) - z I) &\geq \sigma_{\min}(I + n^{-1/2 - \gamma}E) \sigma_{\min} ( A - z (I + n^{-1/2 - \gamma}E)^{-1} ) \nonumber \\
	&\geq \frac{1}{2} \sigma_{\min} ( A - z (I + n^{-1/2 - \gamma}E)^{-1} )  \label{eq:sigmaminlower}
\end{align}
by \eqref{eq:invtbnd}.  By Weyl's perturbation theorem (Theorem \ref{thm:weyl}) and \eqref{eq:neumann}
\begin{equation} \label{eq:weylgammaE}
	\sigma_{\min} ( A - z (I + n^{-1/2 - \gamma}E)^{-1} ) \geq \sigma_{\min}(A - zI - z n^{-1/2-\gamma} E) - O(n^{-2\gamma}) 
\end{equation} 
with probability $1 - o(1)$.  By a well-known result of Sankar, Spielman, and Teng \cite{MR2255338} for the least singular value, we obtain, for any $\eps > 0$, 
\[ \sigma_{\min}(A - zI - z n^{-1/2-\gamma} E) \geq |z| n^{-1 - \gamma - \eps} \]
with probability $1 - o(1)$.  Using the fact that $\gamma > 1$, we can take $\eps > 0$ sufficiently small so that the bound above and \eqref{eq:weylgammaE} imply 
\[ \sigma_{\min} ( A - z (I + n^{-1/2 - \gamma}E)^{-1} ) \geq \frac{|z|}{2} n^{- 1 - \gamma - \eps} \]
with probability $1 - o(1)$.  Combined with \eqref{eq:sigmaminlower}, we conclude that
\[ \sigma_{\min}(A (I + n^{-1/2 - \gamma}E) - z I) > n^{-\kappa} \]
with probability $1 - o(1)$ for a sufficiently large choice of $\kappa > 0$.  
\end{proof}

%%%%%%%%bibliography%%%%%%%%%%%

\bibliographystyle{abbrv}

\bibliography{replacement} 

%%%%%%%%%%%%%%%%%%%%%%

\end{document}